\documentclass[11pt]{article}
\usepackage{amsmath,amssymb}
\usepackage{amsthm}
\usepackage{mathtools}
\usepackage[noend]{algorithmic}
\usepackage[ruled,vlined]{algorithm2e}
\usepackage{url}
\usepackage{makeidx}

\usepackage{graphicx,float,psfrag,epsfig,caption}
\usepackage[usenames,dvipsnames,svgnames,table]{xcolor}
\definecolor{darkgreen}{rgb}{0.0,0,0.9}
\usepackage[pagebackref,letterpaper=true,colorlinks=true,pdfpagemode=none,citecolor=OliveGreen,linkcolor=BrickRed,urlcolor=BrickRed,pdfstartview=FitH]{hyperref}
\usepackage{epstopdf}
\usepackage{color}
\usepackage{xr}
\usepackage{subfig}
\usepackage{caption}
\usepackage{graphicx}
\usepackage[utf8]{inputenc}
\usepackage{tikz}
\usepackage[english]{babel}
\usepackage{am}
\usepackage{dsfont}

\usepackage{scalerel,stackengine}

\usepackage[mathscr]{euscript}

\DeclareSymbolFont{rsfs}{U}{rsfs}{m}{n}
\DeclareSymbolFontAlphabet{\mathscrsfs}{rsfs}
\usepackage[mathscr]{euscript}

\DeclareSymbolFont{rsfs}{U}{rsfs}{m}{n}
\DeclareSymbolFontAlphabet{\mathscrsfs}{rsfs}

\numberwithin{equation}{section}

\newtheoremstyle{myexample} 
    {\topsep}                    
    {\topsep}                    
    {\rm }                   
    {}                           
    {\bf }                   
    {.}                          
    {.5em}                       
    {}  

\makeatletter
\newcommand{\grad}{\nabla}
\newcommand{\event}{\mathcal{E}}
\newcommand*{\rom}[1]{\expandafter\@slowromancap\romannumeral #1@}
\makeatother

\setcounter{MaxMatrixCols}{10}

\usetikzlibrary{arrows,shapes}
\parskip 1.75\parskip plus 3pt minus 1pt

\newcommand{\be}{\begin{equation}}
\newcommand{\ee}{\end{equation}}
\newcommand{\ba}{\begin{aligned}}
\newcommand{\ea}{\end{aligned}}

\newcommand{\setsq}{\mathcal{S} }
\renewcommand{\d}{{\rm d}}

\newcommand{\ones}{\textbf{1}}
\newcommand{\ind}{\mathds{1}}

\newcommand{\one}{\textbf{1}}

\newcommand{\opt}{^*}
\newcommand{\owid}{^{*,{\mbox{\rm \tiny wide}}}}
\newcommand{\wtilde}{\widetilde}
\newcommand{\iid}{\stackrel{i.i.d}{\sim}}
\newcommand{\eps}{\varepsilon}
\newcommand{\la}{\lambda}

\newcommand{\margin}{\kappa}

\newcommand{\low}{\downarrow}
\newcommand{\upp}{\uparrow}

\def\ERM{{\rm ERM}}
\def\error{{\rm Err}}
\def\Pred{{\rm Err}}
\def\Bayes{{\rm Bayes}}
\def\erf{Q}
\def\cL{\mathcal{L}}
\def\cC{\mathcal{C}}

\def\cT{\mathcal{T}}

\def\hbtheta{\hat{\boldsymbol\theta}}
\def\hbu{\hat{\boldsymbol u}}

\def\sMM{\mbox{\tiny\rm MM}}
\def\sSM{\mbox{\tiny\rm SM}}
\def\sRF{\mbox{\tiny\rm RF}}

\def\snew{\mbox{\tiny\rm new}}
\def\siso{\mbox{\tiny\rm iso}}
\def\smiss{\mbox{\tiny\rm miss}}

\def\bSigma{{\boldsymbol \Sigma}}

\def\bLambda{{\boldsymbol \Lambda}}
\def\bTheta{{\boldsymbol \Theta}}
\def\bzero{{\boldsymbol 0}}
\def\bfone{{\boldsymbol 1}}
\def\by{{\boldsymbol y}}
\def\bx{{\boldsymbol x}}
\def\bu{{\boldsymbol u}}
\def\bxi{{\boldsymbol \xi}}
\def\bh{{\boldsymbol h}}
\def\bX{{\boldsymbol X}}

\def\blambda{{\boldsymbol \lambda}}
\def\btheta{{\boldsymbol \theta}}

\def\sT{{\sf T}}
\def\bg{{\boldsymbol g}}

\def\bw{{\boldsymbol w}}
\def\bv{{\boldsymbol v}}
\def\tbz{\tilde{\boldsymbol z}}
\def\bz{{\boldsymbol z}}
\def\bZ{{\boldsymbol Z}}
\def\bW{{\boldsymbol W}}
\def\bbeta{{\boldsymbol \beta}}

\def\bphi{{\boldsymbol \phi}}

\def\tX{\tilde{X}}
\def\tG{\tilde{G}}

\def\tx{\tilde{x}}
\def\tbx{\tilde{{\boldsymbol x}}}
\def\txi{\tilde{\xi}}

\def\Unif{{\sf Unif}}
\def\de{{\rm d}}
\def\id{{\boldsymbol I}}
\def\hy{\hat{y}}
\def\cbar{\bar{c}}
\def\kbar{\bar{\kappa}}

\def\<{\langle}
\def\>{\rangle}

\def\asL{\mathscrsfs{R}}
\def\asG{\mathscrsfs{R}_{\infty}}
\def\ed{\stackrel{{\rm d}}{=}}
\def\Ball{{\sf B}}
\def\set{{\mathcal S}}
\newcommand{\proj}{{\boldsymbol \Pi}}
\def\S{{\mathbb S}}

\def\lam{\lambda_{\textsf{M}}}

\newcommand{\pto}{\stackrel{p}{\to}}

\newcommand{\LL}{\mathcal{L}}


\newcommand{\lrho}{\lesssim_{\rho, f}}
\newcommand{\grho}{\gtrsim_{\rho, f}}

\newcommand{\hc}{\tilde{c}}
\newcommand{\las}{\lambda_{\textsf{s}}}

\newcommand{\lan}{\lambda_{\circ}}
\DeclareMathOperator{\plim}{p-lim}

\newcommand{\prob}{\P}

\begin{document}

\title{The generalization error of max-margin linear classifiers:\\ 
Benign overfitting and high dimensional asymptotics in the overparametrized regime}

\author{Andrea Montanari\thanks{Department of Electrical Engineering
    and Department of Statistics, Stanford University}, \;\;\;\;  Feng
  Ruan\thanks{Department of Statistics and Data Science, Northwestern University}, \;\;\;\; Youngtak Sohn\thanks{Department of Mathematics, Massachusetts Institute of Technology}, \;\;\;\; 
Jun Yan\thanks{Department of Statistics, Stanford University}}

\maketitle

\begin{abstract}
Modern machine learning classifiers often exhibit vanishing 
classification error on the training set. They achieve this by learning nonlinear
representations of the inputs that maps the data into linearly separable classes. 

Motivated by these phenomena, we revisit high-dimensional maximum margin classification
for linearly separable data. We consider a stylized setting in which 
data $(y_i,\bx_i)$, $i\le n$ are i.i.d.
with $\bx_i\sim\normal(\bzero,\bSigma)$  a
 $p$-dimensional Gaussian feature vector, and $y_i \in\{+1,-1\}$
a label whose distribution depends on a linear combination  of the covariates $\<\btheta_*,\bx_i\>$.
While the Gaussian model might appear extremely simplistic, universality arguments
can be used to show that the results derived in this setting also apply to the output of
certain nonlinear featurization maps. 

We consider the proportional asymptotics $n,p\to\infty$
with $p/n\to \psi$, and derive exact expressions for the limiting generalization error.
We use this theory to derive two  results of  independent interest:
$(i)$~Sufficient conditions on $(\bSigma,\btheta_*)$ for `benign overfitting' that parallel previously
derived conditions in the  case of linear regression; $(ii)$~An 
asymptotically exact expression for the generalization error when max-margin
classification is used in conjunction with feature vectors produced by random one-layer neural networks.
\end{abstract}

\tableofcontents

\section{Introduction}
\label{sec:Introduction}

\subsection{Background}

Modern machine learning models for classification, such as multi-layer neural networks,
are a composition of multiple nonlinear maps,
which produce increasingly simple representations of the data. 
A linear readout unit outputs the class label. In the case of binary classification, 
on input $\bz\in\reals^d$, such a model outputs
\begin{align}
\hy(\bz) = \sign\,  \<\btheta , \bphi(\bz;\bW)\>\, ,\label{eq:GenericModel}
\end{align}
where the featurization map $\bphi:\reals^d\to\reals^p$ encodes a nonlinear 
data representation, parametrized by weights $\bW$.
For instance, in the case of a multi-layer neural network,
 $\bphi(\bz;\bW) =  \sigma \circ \bW_1\circ \sigma\circ\cdots\sigma\circ\bW_L\bz$.
 
 In the practice of machine learning, it is often the case that these models
 achieve vanishing error on the training data and, despite this, they generalize well to unseen data.
Vanishing training error means that the representation $\bphi(\bz;\bW)$
 is able to map the data into linearly separable classes. There are two possible
mechanisms for this:
\begin{itemize}
\item[$(i)$] The parameters $\bW$ are also learnt from training data, and hence
$\bphi(\;\cdot\;;\bW)$  is a highly non-linear data-dependent map that
 makes the data separable. Notice that by allowing for a sufficiently rich class of 
 maps $\bphi(\;\cdot\;;\bW)$, linear separability can be achieved even with a low embedding 
 dimension  $p$. 
\item[$(ii)$] The map $\bphi(\;\cdot\;;\bW)$ is not learnt from the same training data and it is possibly entirely random. In this case, linear separability emerges mainly because of
the dimension blow up from $d$ to $p$.
\end{itemize}
While both mechanisms ---learning and dimensionality blow-up--- are relevant for fully
trained neural networks, this paper focuses on the second aspect. 
This is most important for nonlinear models in the neural tangent or lazy regime \cite{jacot2018neural}, but also for 
kernel methods \cite{hofmann2008kernel,wahba2002soft} and for their random features approximation 
\cite{neal1996priors,balcan2006kernels,rahimi2008random}.
Notice that in these applications, the interpretation of the features dimensions $p$
varies. For instance, in the case of neural nets in the lazy regime, $p$ is the overall number of parameter,
and not just the size of the last layer.
Given the current status
of mathematical technology, the dimensionality blow-up is more amenable to rigorous analysis and yet very challenging.
Indeed,  we will leave several mathematical questions unsolved and, despite the substantial follow-up work, many questions have been unsolved since the first appearance of this manuscript.

We are thus led to consider the set of linear classifiers with vanishing empirical error, namely:
\begin{align}
\ERM_0(\by,\bX) &:= \Big\{ \btheta\in \reals^p: \,\, \|\btheta\|_2=1,\;\;\min_{i\le n} y_i\<\btheta,\bx_i\>\geq 0\;\Big\}\, ,\label{eq:ERM1}\\
\bX & = \left[\begin{matrix}\;\text{---}\;\bx_1\;\text{---}\;\\
\vdots\\
\;\text{---}\;\bx_n\;\text{---}\;\end{matrix}\right]\, ,\;\;\;\; \bx_i = \bphi(\bx_i;\bW)\, , \label{eq:ERM2}
\end{align}
where $\bphi(\,\cdot\,;\bW)$ is a featurization map independent of the data.

A rich line of work supports the intuition that among all the possible classifiers 
with vanishing training error $\btheta\in \ERM_0(\by,\bX)$, the ones selected by  
optimization algorithms  used in practice have special `simplicity' properties
\cite{soudry2018implicit,gunasekar2018implicit,li2017algorithmic,gunasekar2018characterizing,arora2019implicit}. 
This phenomenon is commonly referred to as `implicit regularization.'

Of particular interest (and our focus in this paper) is the max-margin classifier:
\begin{align}
\hbtheta^{\sMM}(\by,\bX):=\arg\max \Big\{ \;\;\min_{i\le n} y_i\<\btheta,\bx_i\>:\;\;\; \|\btheta\|_2=1\;\;\Big\}\, .
\label{eq:MMdef}
\end{align}
Indeed, it was proven in \cite{soudry2018implicit} that gradient descent
(with respect to logistic loss) converges to $\hbtheta^{\sMM}(\by,\bX)$.
Namely, considering the gradient-descent iteration
\begin{align}
\hbtheta^{k+1} = \hbtheta^k-s_k \nabla \hat{L}_n(\hbtheta^k) \, ,\;\;\; \hat{L}_n(\btheta) \equiv \frac{1}{n}\sum_{i=1}^n\left\{-y_i\<\btheta,\bx_i\>+
\log\big(e^{\<\btheta,\bx_i\>}+e^{-\<\btheta,\bx_i\>} \big) 
\right\}\, .
\end{align}
we have $\hbtheta^k/\|\hbtheta^k\|\to \hbtheta^{\sMM}$ as $k\to\infty$. 

In this paper, we will  study the generalization error
of max-margin classification for i.i.d. separale data $(y_i,\bx_i)$, $i\le n$.
For this purpose, the form of the featurization map
$\bz_i\mapsto \bx_i = \bphi(\bz_i;\bW)$ only matters to the extent that it determines
the distribution of the feature vectors $\bx_i$ given the underlying distribution of the $\bz_i$.
We will consider a stylized model  whereby the features are Gaussian
with population covariance $\bSigma$: $\bx_i\sim\normal(0,\bSigma)$. At first sight, this might 
appear to be unrelated to the original problem. However, as further discussed below,
recent universality results \cite{hu2022universality, montanari2022universality}, as well as our companion paper \cite{montanari2023universality_MaxMargin}, indicate
that the characterization we obtain for Gaussian features applies to a class of featurization maps
provided we match the covariances $\bSigma = \E_{\bz}[\bphi(\bz;\bW)\bphi(\bz;\bW)^{\sT}]$.

Throughout this paper we will say that a model is \emph{overparametrized}
if the set of zero-error classifiers $\ERM_0(\by,\bX)$ is non-empty\footnote{As we will prove,
within the setting of the paper, this happens with high probability if $p/n>\psi^*$ 
for a certain threshold $\psi^*$ which we characterize.}.
Over the last couple of years, the generalization properties of overparametrized models have 
attracted considerable interest
(see also Section \ref{sec:Related}). 
A unified phenomenology has emerged from simulation studies with a number of  statistical models,
including kernel methods, random forests, and multilayer neural networks \cite{belkin2019reconciling}

In order to discuss this phenomenology, it is convenient to regard the prediction error as a function
of two quantities: the sample size $n$ and the number of parameters $p$. The classical
statistical theory assumes that $p$ is fixed and is related to the data distribution. 
$p$ can be either smaller or larger than $n$ depending on whether
the low-dimensional or high-dimensional regimes are considered, but typically larger $p$
is regarded as yielding a different, `harder', data distribution.
In contrast, we think here of the data distribution as fixed, and larger $p$ corresponds to different 
featurization maps.
When looking at the possibility of increasing $p$ in this way, two
 interesting statistical behaviors have been observed repeatedly:
\begin{enumerate}
\item \emph{Benign overfitting.} The excess error (difference between the prediction error and the Bayes error)
can vanish as $p,n$ get large, despite: $(i)$~the model is extremely overparametrized $p\gg n$;
 $(ii)$~the model is not regularized and in particular, it has vanishing training error. 
\item \emph{Optimality of overfitting.} For certain data distributions, the
 test error of overparametrized models  is smaller than the test error of
 underparametrized ones. In particular, the test  error is minimized when $p/n\gg 1$.
 \end{enumerate}
 
Rigorous confirmation of these phenomena were established in a number of papers 
\cite{belkin2019two,belkin2018overfitting,hastie2022surprises,bartlett2020benign,tsigler2020benign,
mei2019generalization,montanari2022interpolation}.
(See Section \ref{sec:Related} for further references.)
The bulk of these rigorous studies, and by far the most detailed picture was developed
 in the case of ridge regression and its ridge-less limit min-norm regression. 
 A number of models for the feature vectors $\bx_i$ were studied
 in this context: unstructured distributions with prescribed covariance
  \cite{hastie2022surprises,bartlett2020benign}, kernel methods \cite{liang2020just}
  random features models \cite{mei2022generalization}, and neural tangent
  features \cite{montanari2022interpolation}. However, all of these works rely on the 
  linear-algebraic
  structure of the ridge estimator, and leverage tools from random matrix theory
  to characterize its behavior.
  Moving beyond ridge regression is an important step that requires fundamentally 
  different mathematical ideas.
 
 At this point, it is legitimate to wonder whether max-margin classification warrants being
 revisited.
 After all, the machine learning community has devoted significant attention to the 
 analysis of max-margin classifiers. An incomplete
selection of references include \cite{bartlett1998sample,anthony2009neural,koltchinskii2002empirical,bartlett2002rademacher,kakade2009complexity,koltchinskii2011oracle}.
This line of work develops upper bounds on the generalization error (difference between test and training error)
 in terms of the complexity (e.g. the Radamacher complexity) of the underlying function class. In the case of maximum margin classification, this approach yields upper bounds 
that depend on the empirical margin or the empirical margin distribution.
 In this theory, the empirical margin concentrates around the population margin
 (or the population margin distribution).
 Intuitively, data are (approximately) separable because the signal-to-noise ratio is very strong.
 
 In contrast, we are interested in cases in which the signal-to-noise ratio is moderate
 and the population distribution is not linearly separable (not even approximately so). 
 In the regime studied here, separability is a high-dimensional phenomenon that arises because of 
 overparametrization. 
 Appendix \ref{sec:Special} illustrates this claim by providing concrete examples in which 
 classical margin-based bounds fail to capture the qualitative behavior of the generalization
 error.

\subsection{Overview of results}
\label{sec:Overview}

We assume the feature vectors $\bx_i$  to be independent draws from a $p$-dimensional centered Gaussian with covariance $\bSigma$,
and responses to be distributed according to
\begin{align}
\P\big( y_i = +1\big|\bx_i\big) &= 1-\P\big( y_i = -1\big|\bx_i\big)=f(\<\btheta_*,\bx_i\>)\, ,\label{eq:LabelProbability}\\
\bx_i&\sim\normal(\bzero,\bSigma)\, .\label{eq:Covariates}
\end{align}
We will assume throughout the proportional asymptotics 
\begin{equation}\label{eq:def:psi}
    n,p\to\infty\quad\textnormal{with}\quad \frac{p}{n}\to\psi\in(0,\infty),
\end{equation}
and determine the precise asymptotics
of the  test error. In what follows, we will index sequence of instances by $n\in \N$, and it will be understood that $p = p_n$.

In order for the limit to exist and be well defined, we need to make certain assumptions about the behavior of 
the covariance matrix $\bSigma = \bSigma_n$ and the `true' parameters vector $\btheta_*=\btheta_{*,n}$. 
These are -in a way-- analogous to the assumptions made in random matrix theory to derive
the asymptotics of the empirical spectral distribution.

Let 
$\bSigma_n= \sum_{i=1}^p \lambda_i\bv_i\bv_i^{\sT}$ be
the eigenvalue decomposition of $\bSigma$, with $\lambda_1\ge \lambda_2\ge \dots\ge \lambda_p$. 
Our first assumption requires that the eigenvalues of $\bSigma$ do not decay too rapidly. 
\begin{assumption}\label{assumption:Lambdas}
Denote $\lambda_{\max}(\bSigma_n)=\lambda_1(\bSigma_n)$. There exist constants $\lam, L>0,$ and $\eps>0$ such that
	\begin{equation}\label{eq:lambda:max}
	 \lambda_{\max}(\bSigma_n)\le \lam\,,
	\end{equation}
 and
 \begin{equation*}
 \frac{1}{p}\sum_{i=1}^{p} \left(\frac{1}{\la_i(\bSigma_n)}\right)^{1+\eps}\leq L.
 \end{equation*}
\end{assumption}

Our second assumption concerns the eigenvalue distribution of $\bSigma_n$ as well as the decomposition of $\btheta_{*,n}$ in the
basis of eigenvectors of $\bSigma_n$. 
\begin{assumption}\label{assumption:converge}
Let $\rho_n \equiv \<\btheta_{*,n},\bSigma_n\btheta_{*,n}\>^{1/2}$ and $\bar{w}_i \equiv  \sqrt{p\lambda_i} \<\bv_i,\btheta_{*,n}\>/\rho_n$.
Then the empirical distribution of $\{(\lambda_i,\bar{w}_i)\}_{1 \leq i \leq n}$ converges in Wasserstein-$2$ distance to a 
probability distribution $\mu$ on $\reals_{>0}\times \reals$:
\begin{align}
\frac{1}{p}\sum_{i=1}^p\delta_{(\lambda_i, \bar{w}_i)}\stackrel{W_2}{\Longrightarrow} \mu\, .
\label{eq:LambdaWconv}
\end{align}
In particular, $\int w^2 \mu(\de\lambda,\de w) = 1$, and we have that
\begin{equation}\label{eq:def:rho}
    \rho \equiv \lim_{n\to\infty} \rho_n = \int   (w^2/\lambda) \mu(\de\lambda,\de w).
\end{equation}
\end{assumption}
We refer the reader to Appendix \ref{app:Notation} for a reminder of the definition of the Wasserstein distance $W_2$.
 Here, we limit ourselves to mentioning that convergence 
in $W_2$ is equivalent to weak convergence plus the convergence of the second moment, see e.g. \cite{villani2008optimal}. 
In particular, the condition $\int (w^2/\lambda) \mu(\de\lambda,\de w) = 1/\rho^2$
implies $\lim_{n\to\infty}\|\btheta_{*,n}\|_2= 1$. Notice that this choice of normalization  implies no loss of generality: if 
$\lim_{n\to\infty}\|\btheta_{*,n}\|_2= c\neq 1$,   we can  rescale $\btheta_{*,n}$ (letting $\btheta^{\snew}_{*,n}=\btheta_{*,n}/c$) and the function
 $f$ (letting $f^{\snew}(t) = f(ct)$), as to satisfy the assumed normalization.

Finally, we state our assumptions on the function $f$.
\begin{assumption}
\label{assumption:non-degenerate-f}
Define $T = YG$ where 
\begin{equation}\label{eq:Y:G:dist}
    \P(Y = 1 \mid G)= 1-\P(Y = -1 \mid G)= f(\rho \cdot G),\quad G \sim \normal(0, 1). 
\end{equation}
We assume $f: \R \to [0,1]$ to be continuous, and it satisfies the following non-degeneracy condition: 
	\begin{equation*}
		\inf \Big\{x: \P(T<x)>0 \Big\} = -\infty
			~~\text{and}~~
		\sup \Big\{x: \P(T>x)>0   \Big\} = \infty\, .
	\end{equation*}
\end{assumption}
It is easy to check that the non-degeneracy condition is satisfied for most `reasonable' choices of $f$. In particular, it is sufficient that $f(x)\in (0,1)$
for all $x$.
\begin{remark}
At first sight, Assumption \ref{assumption:converge} is the strongest of our conditions.
Note however that the convergence of Eq.~\eqref{eq:LambdaWconv} always holds along subsequences
under some tightness condition (by Prokhorov's theorem). For instance, this is the case if we assume that $\sum_{i=1}^p|\lambda_i|^{2+\eps}\le C\, p$ and $\sum_{i=1}^p|\bar{w}_i|^{2+\eps}\le C\, p$ hold for some constants $C,\eps>0$.

Under tightness, we could always apply our theory to characterize each converging subsequence 
of instances.
\end{remark}

Under these assumptions, we establish the following results.
\begin{description}
\item[Asymptotic characterization of the maximum margin.] Define the maximum margin by
\begin{align}
\kappa_n(\by,\bX) \equiv \max\big\{ \min_{i\le n}y_i\<\btheta,\bx_i\>:\;\; \btheta\in\reals^p, \; \|\btheta\|_2 =1\big\}\, .
\end{align}
We prove that $\kappa_n(\by,\bX)\to \kappa\opt(\mu,\psi)$ in probability as $n\to\infty$ for some non-random 
asymptotic margin $\kappa\opt(\mu,\psi)$. We give an explicit characterization of the limiting value $\kappa\opt(\mu,\psi)$, stated in Section
\ref{sec:MainResults}. 

As a corollary, we derive the limiting value of the interpolation threshold, i.e. the minimum number of 
parameters per dimensions above which the data are linearly separable with a positive margin: $\psi\opt(\mu) \equiv \inf\{\psi\ge 0:\; \kappa\opt(\mu,\psi)>0\}$, and below which the data are non-separable.
(This generalizes the recent result of \cite{candes2018phase}.)
\item[Asymptotic characterization of prediction error.] Let the test error be defined by
\begin{align}\label{eq:def:pred:error}
\Pred_n(\by,\bX) \equiv \P\big(y^{\snew}\<\hbtheta^{\sMM}(\by,\bX),\bx^{\snew}\> \le 0\big)\, ,
\end{align}
where expectation is with respect to a fresh sample $(y^{\snew},\bx^{\snew})$ independent of the data $(\by,\bX)$.
We will sometimes refer to $\Pred_n(\by,\bX)$ as to the prediction error.
We prove that $\Pred_n(\by,\bX)\to \Pred\opt(\mu,\psi)$ in probability as $n\to\infty$ for a non-random limit $\Pred\opt(\mu,\psi)$, which we characterize explicitly,
 cf. Section \ref{sec:MainResults}. 
\item[Benign overfitting.] We use the asymptotic formula of the test error
$\Pred\opt(\mu,\psi)$ to characterize sequences $(\bSigma,\btheta_*)$ along which we
achieve benign overfitting. More precisely, for fixed $\eps>0$, we characterize those sequences along 
which $\Pred_n(\by,\bX)-\Bayes\le \eps$ when $n,p\to\infty$ with $p\asymp n$
(here $\Bayes$ denotes the Bayes error).
To the best of our knowledge, this is the first generalization of the  results of \cite{tsigler2020benign} 
beyond ridge regression.
\item[Random features models.]
We  apply our general theory to the random features model of
\cite{rahimi2008random}. This corresponds to the general setting of Eqs.~\eqref{eq:ERM1},
\eqref{eq:ERM2} with featurization map
\begin{align}
\bphi(\bz;\bW) := \big(\sigma(\bw_1^{\sT}\bz);\dots;\sigma(\bw_p^{\sT}\bz)\big)\, ,
\label{eq:FirstRF}
\end{align}
where $\bW= (\bw_i)_{i\le p}$ are i.i.d. random weights. In other words $\bphi(\bz;\bW)$
is the output of a one-layer random neural network with $p$ hidden neurons.
While the feature vectors $\bx_i=\bphi(\bz_i;\bW)$ are non-Gaussian,
universality results \cite{montanari2023universality_MaxMargin} will allow us to apply the Gaussian theory nevertheless.

We observe that the test  error decreases monotonically with the network width $p$
and is minimal in the limit of large overparametrization $p/n\gg 1$.
This confirms the general phenomenology described above
and provides the first exact asymptotics for random features models beyond simple ridge regression.
\item[Technical innovation.] Our analysis is based on Gordon's Gaussian comparison inequality \cite{Gordon88}
and, in particular, its application to convex-concave problems developed in  \cite{ThrampoulidisOyHa15}.
This approach allows us to replace the original optimization problem by a simpler one, which is 
nearly separable. By studying the asymptotics of this equivalent problem, it is possible to obtain 
a precise characterization of the original problem in terms of the solution of a set of nonlinear equations.

However, this asymptotic characterization holds only if the set of nonlinear equations admit a unique solution.
Proving uniqueness can be challenging, and is normally done on a case-by-case basis. Here, we develop a new technique 
to prove uniqueness. 
In extreme synthesis, we construct, in a natural way,  an infinite-dimensional convex problem
whose KKT conditions are equivalent to the same set of nonlinear equations. We exploit this underlying convex structure to prove uniqueness. We believe this technique is potentially applicable to a broad set of problems.
\end{description}

We will begin our exposition by applying the general theory to establish benign
overfitting in Section \ref{sec:Benign} and to study the random features model in Section
\ref{sec:RF_model}.
We will then survey related work in Section  \ref{sec:Related}, and state our general 
technical results in Section \ref{sec:MainResults}.
Section \ref{sec:ProofMain} outlines the proof of these results while deferring most of 
the technical work to the appendices.

\section{Benign overfitting and the role of overparametrization}
\label{sec:Benign}

In the context of binary classification, the Bayes error is defined as 
the minimum prediction error achieved by any predictor $\hy:\reals^d\to\{+1,-1\}$:
\begin{align}
\Bayes =  \inf_{\hy:\reals^p\to\{\pm 1\}} \P\big(\hy(\bx)\neq y\big)\, .
\end{align}
In this section, we characterize the sequences $(\bSigma_n,\btheta_n^*)$ for which 
the generalization 
error of the max-margin classifier gets arbitrarily close to the Bayes error. 
Conversely, we show that overparameterization 
is also necessary in order for the maximum margin classifier to attain near Bayes error.

In order to contain the  technical overhead, we 
assume link function $f$ is monotonically increasing with $f(0) = 1/2$. Under these
assumptions, the Bayes classifier is simply linear and is given by
$\hy(\bx) = \sign(\< \btheta^*, \bx\>$ and the Bayes error is simply
\begin{equation}\label{eq:def:Bayes}
    \Bayes:=\P(YG\leq 0)
\end{equation}
where the law of $(Y, G)$ is defined in Eq.~\eqref{eq:Y:G:dist}. 

\begin{theorem}\label{thm:benign}
Let $\{(n,p_n,\bSigma_n,\btheta^*_n)\}_{n\ge 1}$ be a sequence 
of instances such that $p_n /n\to\psi\in (0,\infty)$ and
$(\bSigma_n,\btheta^*_n)$, $f$ satisfy Assumptions 
\ref{assumption:Lambdas}-\ref{assumption:non-degenerate-f}, additionally
 assume that the link function
$f$ is almost everywhere differentiable, monotonically increasing with 
$f(0) = 1/2$ and $f^\prime(0)>0$.

\begin{itemize}
\item (Necessity) There exists a constant $c>0$ depending only on $\rho$, $\lam$, $f$ such that 
with probability converging to one
	\begin{equation}
	\label{eqn:error:lower:bound}
		\error_n(\by,\bX)-\Bayes \geq c \cdot \frac{n}{p}\,.
	\end{equation}
\item (Sufficiency) There exists a constant $C > 0$ depending only on $\rho$, $\lam$, $f$ such that 
for any $\lambda > 0$, the following holds with probability converging to one:
\begin{equation}\label{eq:error:upper:bound}
\error_n(\by,\bX)-\Bayes\leq C\cdot\big(\mathcal{B}_n(\lambda)+\mathcal{V}_n(\lambda)\big)\, .
\end{equation}
Here, $\mathcal{B}(\lambda)$ and $\mathcal{V}(\lambda)$ are given by
\begin{equation}\label{eq:def:bias:var}
\begin{split}
    &\mathcal{B}_n(\lambda):= 
    \frac{1}{\<\btheta_n^*,\bSigma_n\btheta_n^*\>}
    \left\{
    \Big(\frac{\lambda r_1(\lambda)}{n}\Big)^2 \sum_{i:\lambda_i>\lambda}
    \frac{1}{\lambda_i}\<\bv_i,\btheta^*_n\>^2+ \sum_{i:\lambda_i\le \lambda}
    \lambda_i\<\bv_i,\btheta^*_n\>^2
    \right\}
    \, ,\\
    &\mathcal{V}_n(\lambda):= \frac{r_0(\lambda)}{n}+\frac{n}{\overline{r}(\lambda)}\, ,
\end{split}
\end{equation}
where we defined, for $q\ge 0$,
\begin{align}
r_q(\lambda) :=\sum_{i:\lambda_i\le \lambda}\Big(\frac{\lambda_i}{\lambda}\Big)^q \, ,
\;\;\;\;\;\;\; \overline{r}(\lambda) := \frac{r_1(\lambda)^2}{r_2(\lambda)}\, .
\end{align}
\end{itemize}
\end{theorem}
Roughly speaking, $\mathcal{B}_n(\lambda)$ and $\mathcal{V}_n(\lambda)$ correspond
to a bias and variance term, despite the fact that an exact bias-variance decomposition 
does not hold for classification error. The structure of these terms is very similar to
the one of the bounds holding for ridge(-less) regression 
\cite{bartlett2020benign, tsigler2020benign}.
In particular, the excess error is small if:
$(i)$~the model is sufficiently overparameterized  (i.e., $\psi = p/n$ is large);
$(ii)$~the eigenvalues of the population covariance  $\bSigma$ are slowly decaying; and 
$(iii)$~the projection of the 
signal $\btheta^*$ onto the the span of eigenvectors of  $\bSigma$ that corresponds to 
small eigenvalues has small magnitude. 

In addition,  Theorem \ref{thm:benign} shows that overparameterization 
is necessary component for  max-margin estimator to achieve near Bayes risk 
in the high-dimensional setting studied here.

The proof of Theorem \ref{thm:benign} proceeds by applying our general characterization of the limit of $\error_n(\by,\bX)$ in Section \ref{sec:MainResults}. For it's technicality, the proof is deferred to Section \ref{sec:proof:benign}.

Below we provide two concrete examples of sequences of instances along which
the max-margin classification is `$\eps$-consistent', where the notion of `$\eps$-consistent' 
means that that the excess risk can be made smaller than $\eps$ for any pre-assigned $\eps > 0$. 

\vspace{0.5cm}
\begin{example}
Let $\{(n,p_n,\bSigma_n,\btheta^*_n)\}_{n\ge 1}$ denote a sequence of instances 
where $p_n/n\to\psi\in (0,\infty)$. Here, the matrix
$\bSigma_n=\diag(\la_1,\dots ,\la_p)$ and the truth $\btheta_n^*$
form a bilevel structure, meaning that there is a subset of $k_n$ covariates that are 
much more powerful than the rest $p_n-k_n$ junk covariates in terms of prediction, which is similar to the setup studied in \cite{muthukumar2021classification}. More precisely, taking constants $\lam > \las$ (independent of $n$), we consider
\begin{equation*}
 \la_i =
    \begin{cases}
      \lam &~~~i\le k_n\\
      \las &~~~i >k_n
    \end{cases}
    ~~~~~~~~
     (\theta_{*, n})_i =
    \begin{cases}
       \frac{1}{\sqrt{k}} &~~~i\le k_n\\
       0 &~~~i > k_n,
    \end{cases}
\end{equation*}
where $k_n/p_n \to \phi$ for some $\phi \in (0, \infty)$.
Under the conditions of  Theorem \ref{thm:benign}, there exists $C_{\lam, f}>0$ depending only on 
$\lam, f$ such that the following holds with probability converging to one:
\begin{equation*}
\error_n(\by,\bX)-\Bayes\leq C_{\lam, f}\cdot\big(\overline{\mathcal{B}}_n+\overline{\mathcal{V}}_n\big)\,,
\end{equation*}
where $\overline{\mathcal{B}}_n$ and $\overline{\mathcal{V}}_n$ are given by
\begin{equation*}
    \overline{\mathcal{B}}_n:=\left(\frac{p_n}{n} \cdot \las\right)^2\, ,~~~~~~~~~~
    \overline{\mathcal{V}}_n:=\frac{k_n}{n}+\frac{n}{p_n-k_n}\, .
\end{equation*}
In particular, for any $\eps>0$, one can first pick $\psi$ large enough (say, $\psi^{-1} \ll \eps$), and then 
$\phi, \las$ small enough (say, $\psi \phi < \eps$ and $\psi \las \ll \eps$), 
such that the excess error $\error_n(\by,\bX)-\Bayes\leq \eps$ with probability converging to one. 
\end{example}

\begin{example}
Here, we consider a more involved example where the eigenvalues of the covariance decay to zero at a certain 
delicate rate $\lambda_i \asymp g(i)$ ($i \to \infty$) where 
$g(j) \equiv j^{-1} (\log j)^{-\alpha}$ for some $\alpha > 1$. This is motivated by 
a setting recently proposed in the literature where \emph{benign overfitting}---under the context of ridgeless 
regression---is observed~\cite{bartlett2020benign}. In this example, we show that the \emph{benign overfitting} 
continues showing up when we change the setting from regression to classification. 

As before, consider a sequence of instances $\{(n,p_n,\bSigma_n,\btheta^*_n)\}_{n\ge 1}$
where $p_n/n\to\psi\in (0,\infty)$. Fixing an absolute constant 
$\lam > 1$, and taking $k \equiv k_n$ with $k_n/p_n \to \phi$, we consider a sequence of pair of covariance
$\bSigma_n=\diag(\la_1,\dots ,\la_p)$ and the ground truth $\btheta_n^* = ((\theta_{*, n})_1, \ldots, (\theta_{*, n})_p)$ 
where
\begin{equation*}
 \la_i =
    \begin{cases}
       g(k) \cdot \lam  &~~~i\leq k\\
       g(i) &~~~i >k
    \end{cases}
    ~~~~~~~~
     (\theta_{*, n})_i =
    \begin{cases}
       \frac{1}{\sqrt{k g(k)}} &~~~i\leq k\\
       0 &~~~i > k
    \end{cases}.
\end{equation*}
Above $g(j) = j^{-1} (\log j)^{-\alpha}$. Rescaling the eigenvalues 
$\lambda_i \mapsto \lambda_i / g(k)$ and the coordinates $(\theta_{*, n})_i \mapsto (\theta_{*, n})_i \cdot \sqrt{g(k)}$
allows us to apply Theorem  \ref{thm:benign}, which yields an error bound for the 
max-margin classifier for this  setup (below the constant $C_{\lam, f} > 0$
depends only on $\lam, f$)
\begin{equation*}
\error_n(\by,\bX)-\Bayes\leq C_{\lam, f}\cdot\big(\overline{\mathcal{B}}_n^{\;\prime} +\overline{\mathcal{V}}_n^{\;\prime} \big)\,
\end{equation*}
that holds with probability converging to one. Here the quantities
$\overline{\mathcal{B}}_n^{\;\prime}$ and $\overline{\mathcal{V}}_n^{\;\prime}$ are given by
\begin{equation*}
    \overline{\mathcal{B}}_n^{\;\prime} :=\left(\frac{k_n\log\big(p_n/k_n\big)}{n}\right)^2\, ,~~~~~~~~~~
    \overline{\mathcal{V}}_n^{\;\prime}:=\frac{k_n}{n}+\frac{n}{k_n\left(\log\big(p_n/k_n\big)\right)^2}\, .
\end{equation*}
In particular, for any $\eps>0$, one can first pick $\psi$ large enough and then $\phi$ small enough
such that the excess error $\error_n(\by,\bX)-\Bayes\le \eps$ holds with probability converging to one.
\end{example}

\begin{remark}
Let us emphasize that, while the bounds in Theorem \ref{thm:benign} and the above examples 
are of order one as $n,p\to\infty$ in the proportional asymptotics,
they reveal the dependence of the excess risk on $(\bSigma,\btheta_*)$ because the constant $C$ only depends on $\lam$, $f$. Hence, they allow to 
establish $\eps$-consistency results.
\end{remark}

\section{A random features model}
\label{sec:RF_model}

Random features methods originate in the work  of Neal \cite{neal1996priors}, Balcan, Blum, Vempala \cite{balcan2006kernels}, and of Rahimi, Recht 
\cite{rahimi2008random}. A sequence of recent papers \cite{jacot2018neural,du2018gradient,chizat2018note} suggests that in the so-called `lazy training' regime, 
the behavior of multilayer networks is well approximated by certain random features model, whereby the randomness is connected with the initialization of the training process. 

Under this model the feature vectors $\bx_i$ are obtained by mapping the covariates
$\bz_i$ through a nonlinear featurization map, see Eqs.~\eqref{eq:ERM2} and \eqref{eq:FirstRF}
and further explanation below. In particular, $\bx_i$ is non-Gaussian.
Our approach to the analysis of this model will be based on \emph{universality}.
Namely, the asymptotics of the margin and prediction error under the random feature models is 
the same as for a Gaussian model with matching second order statistics.

Universality results under random features models were proved for ridge regression 
in  \cite{mei2019generalization}, strongly convex empirical risk minimization in 
 \cite{hu2022universality} and nonconvex empirical risk minimization in \cite{montanari2022universality}.
(See also \cite{cheng2013spectrum,fan2019spectral} for related results in the context 
of random matrix theory.)
For technical reasons, these results do not apply to max-margin classification,
and we present an extension in a companion paper \cite{montanari2023universality_MaxMargin}.

\subsection{Classification using random features}
\label{sec:RFDef}
We assume to be given data $\{(y_i,\bz_i)\}_{i\le n}$, whereby $y_i\in\{+1,-1\}$, 
$\bz_i\in\normal(0,\id_d)$ and  
\begin{align}
\P\big(y_i=+1\big| \bz_i\big) = h(\<\bbeta_*,\bz_i\>)\, ,\;\;\; \|\bbeta_*\|_2=1\, .
\end{align}
Let us emphasize that $\bbeta_*\in\reals^d$ is the coefficient vector with
respect to the original covariates $\bz_i$. This is different from the coefficient vector
$\btheta_*$ of Eq.~\eqref{eq:LabelProbability}.

In order to perform classification, we proceed as follows:
$(i)$ We generate features $\tx_{ij}= \sigma(\<\bw_j,\bz_i\>)$ where $\sigma:\reals\to\reals$ is a non-linear
function. Here $\bw_j$, $j\le p$ are $d$-dimensional vectors which we draw uniformly on the unit 
sphere $\S^{d-1}(1)$, $(\bw_j)_{j\le p}\sim\Unif(\S^{d-1}(1))$.
$(ii)$~We find a max-margin separating hyperplane for data $\{(y_i,\tbx_i)\}_{i\le n}$, where $\tbx_i = (x_{ij})_{j\le p}$.

Equivalently, letting $\bW\in\reals^{p\times d}$ be the matrix with rows $\bw_i$, $1\le i\le p$, 
we compute the max-margin classifier $\hy(\bz) = \sign\,  \<\hbtheta^{\sMM} , \bphi(\bz;\bW)\>$
where $\hbtheta^{\sMM}$ is given by Eq.~\eqref{eq:MMdef} with featurization map 
\eqref{eq:FirstRF}. We summarize relevant formulas below for the readers convenience:
\begin{align}
\hbtheta^{\sMM}(\by,\bZ) &= \arg\max\Big\{\min_{i\le n} y_i\<\btheta,\bphi(\bz_i;\bW)\>:\;\;\; \|\btheta\|_2=1\Big\}\, ,\\
\bphi(\bz;\bW) &:= \big(\sigma(\bw_1^{\sT}\bz);\dots;\sigma(\bw_p^{\sT}\bz)\big)\, .
\end{align}
This can be described as a two layers neural network, with random first-layer weights which 
are fixed to $\bW$ and non-optimized.
Second-layer weights are instead given by $\btheta\in\reals^p$ and chosen as to maximize the margin.

\subsection{Asymptotics via equivalent Gaussian model and universality}
\label{sec:GaussianModel}

Following \cite{mei2019generalization}-- we will now construct a Gaussian covariates model 
that is asymptotically equivalent to the above random features model (in the sense of having same margin
and prediction error)
 in the limit
\begin{align}
\label{def:psionepsitwo}
    p,n,d\to\infty\quad\text{with}\quad p/d\to\psi_1\quad\text{and}\quad n/d \to\psi_2.
\end{align}

In order to motivate our construction,  we decompose the activation function in $\cL^2(\reals,\nu_G)$ (the space of square-integrable functions, with respect to
 $\nu_G$ the standard Gaussian measure)
as follows
\begin{align}
\sigma(u) = \gamma_0+\gamma_1\, u+\gamma_*\sigma_{\perp}(u)\, .
\end{align}
Here the constants $\gamma_0,\gamma_1,\gamma_{*}$ are given by
\begin{align}
\label{def:gamma}
\gamma_0 = \E\{\sigma(G)\}, \gamma_1 = \E\{G\sigma(G)\}\quad\text{and}\quad \gamma_*^2 = \E\{\sigma(G)^2\}- \E\{G\sigma(G)\}^2 -\E\{\sigma(G)\}^2,
\end{align}
where the expectation is over $G\sim\normal(0,1)$. We can then rewrite the random 
features model of the previous section as follows
\begin{align}
\tx_{ij} & = \gamma_0+\gamma_1\<\bw_j,\bz_i\> +\gamma_*\txi_{ij}\, , \;\;\;\; \txi_{ij}=\sigma_{\perp}(\<\bw_j,\bz_i\>)\, ,\\
g_i &= \<\bbeta_*,\bz_i\>\, , \;\;\;\;\;\;\;
\P(y_i = +1|g_i)  = h(g_i)\, .
\end{align}
In what follows, to simplify calculation we will assume $\gamma_0=0$ (activations are centered). 
The generalization to $\gamma_0\neq 0$ is quite natural.\footnote{Namely the formula on the right-hand
    side of \eqref{eq:AsyErrorDef}
    for the asymptotic prediction error is to be replaced by
    $\P(Y(b\opt\nu\opt(\psi) G+ \sqrt{1-\nu\opt(\psi)^2} Z) \le 0)$ for a suitable offset $b$.} 

Note that the random variables $\txi_{ij}$ have zero mean and unit variance by construction. Further $\E_{\bz_i}\{\txi_{ij}\<\bw_j,\bz_i\>\} = 0$
since by construction $\E\{\sigma_{\perp}(G) G\}=0$. This suggest to replace the $\txi_{ij}$ by a collection of independent random variables:
\begin{align}
x_{ij} & = \gamma_1\<\bw_j,\bz_i\> +\gamma_*\xi_{ij}\, ,  \;\;\;\; \xi_{ij}\sim\normal(0,1)\, ,\\
g_i &= \<\bbeta_*,\bz_i\>\, ,\, , \;\;\;\;\;\;\;
\P(y_i = +1|g_i)  = f(g_i)\, ,
\end{align}
Here  $(\xi_{ij})_{i\le n,j\le p}$ are drawn independently of
 $\{\bw_i\}_{i\le p}$, $\{\bx_j\}_{j\le p}$. These equations define
the `noisy linear features model.' 

Under the noisy linear features model $\bx_i$ and $g_i$ are jointly Gaussian.
Hence we can rewrite the joint distribution of $(\bx_i,y_i)$ in the form of 
Eq.~\eqref{eq:LabelProbability}, \eqref{eq:Covariates}
$\bSigma_n$, $\btheta_{*,n}$, $f_n$:
\begin{align}
\bSigma_n&:= \gamma_1^2\bW\bW^{\sT}+\gamma_*^2\id_p\, ,\label{eq:SigmaRF}\\
\btheta_{*,n} &:= \alpha_n^{-1}\gamma_1\big( \gamma_1^2\bW\bW^{\sT}+\gamma_*^2\id_p\big)^{-1}\bW\bbeta_*\, ,\label{eq:ThetaStarRF}\\
f_n(x) &:= \E\{h(\alpha_n\, x+\tau_n G)\}\, ,\label{eq:fn_RF}\\
\alpha_n^2 & = \gamma_1^2 \bbeta_*^{\sT}\bW^{\sT}\big( \gamma_1^2\bW\bW^{\sT}+\gamma_*^2\id_p\big)^{-2}\bW\bbeta_*\, ,\\
\tau^2_n& = 1-\gamma_1^2 \bbeta_*^{\sT}\bW^{\sT}\big( \gamma_1^2\bW\bW^{\sT}+\gamma_*^2\id_p\big)^{-1}\bW\bbeta_*\, .\label{eq:TaunRF}
\end{align}

We next verify that the parameters $\bSigma_n, \btheta_{*,n}$, and link function $f_n(x)$
satisfy the conditions of our general theory, namely 
Assumptions \ref{assumption:Lambdas},\ref{assumption:converge}, \ref{assumption:non-degenerate-f}.
Assumption \ref{assumption:Lambdas} immediately follows since $\lambda_{\min}(\bSigma_n)\ge \gamma_*^2>0$, and (for any $c>0$)
$\lambda_{\max}(\bSigma_n)\le \gamma_1(1+\sqrt{p/d}+c)^2+ \gamma_*^2$, with probability at least $1-\exp(-\Theta(d))$, by standard
bounds on the eigenvalues of Wishart random matrices \cite{vershynin2018high}. 

Next, we check Assumption \ref{assumption:converge} and determine the limit probability measure $\mu$.
Fix numbers $\gamma_1$, $\gamma_*, \psi_1, \psi_2 >0$, and consider the 
following probability measure on $(0,\infty)$:
\begin{align}
\mu_{s}(\de x) &= \begin{cases}
(1-\psi_1^{-1})\delta_{0} +\psi_1^{-2}\nu_{1/\psi_1}(x/\psi_1)\de x&\;\;\mbox{if $\psi_1>1$,}\\
\nu_{\psi_1}(x)\de x&\;\;\mbox{if $\psi_1\in (0,1]$,}
\label{eq-MPlaw}
\end{cases}\\
\nu_{\lambda}(x)& = \frac{\sqrt{(\lambda_+-x)(x-\lambda_-)}}{2\pi \lambda x} \, \bfone_{x\in [\lambda_-,\lambda_+]}\, ,\\
\lambda_{\pm} & =  (1\pm\sqrt{\lambda})^2\, .
\end{align}
By Marchenko-Pastur's law, the empirical spectral distribution of $\bW\bW^{\sT}$ converges in $W_2$ to $\mu_s$ almost surely as $p,d\to\infty$ \cite{BaiSilverstein}.
Let $\tX\sim \mu_s$ independent of $G\sim\normal(0,1)$.
Using Eq.~\eqref{eq:ThetaStarRF}, we obtain that (recalling from Assumption  \ref{assumption:converge} that $\bar{w}_i
= \sqrt{p\lambda_i} \<\bv_i,\btheta_{*,n}\>/\rho_n$, $\rho_n = \<\btheta_{*,n},\bSigma_n\btheta_{*,n}\>^{1/2}$)
\begin{align}
\label{eq:RF:mu}
\frac{1}{p}\sum_{i=1}^p\delta_{(\lambda_i,\bar{w}_i)}\stackrel{W_2}{\Longrightarrow} \mu := {\sf Law}(X,W)\, ,
\end{align}
where
\begin{align}
\label{eq:FirstXW}
X = \gamma_1^2\tX+\gamma_*^2\, ,\;\;\;\;\;\;\;\;\;\;
W = \frac{\gamma_1\sqrt{\psi_1 \tX}\, G}{C_0 (\gamma_1^2\tX+\gamma_*^2)^{1/2}}\, ,\;\;\;\;\;\;\;\;\;\;
C_0 = \E\Big\{\frac{\gamma_1^2 \psi_1 \tX}{(\gamma_1^2\tX+\gamma_*^2)}\Big\} ^{1/2} \, .
\end{align}

Finally, we need to check Assumption \ref{assumption:non-degenerate-f}.
Using Eq.~\eqref{eq:TaunRF}, we obtain $\tau_n\to\tau$ as $n\to\infty$, where:
\begin{align}
\label{eqn:def:tau}
\tau^2 = 1-\psi_1\E\Big\{\frac{\gamma_1^2 \tX}{\gamma_1^2\tX+\gamma_*^2}\Big\}
\end{align}
Since $\tau^2>0$, it follows from Eq.~(\ref{eq:fn_RF})  that Assumption  \ref{assumption:non-degenerate-f}  holds.

In our companion paper \cite{montanari2023universality_MaxMargin}, we prove that the margin and
test error of the random features model are universal, namely they coincide
with the ones of the equivalent Gaussian model we defined in this section. As a consequence
of our main results presented in Section \ref{sec:MainResults}, we obtain the following sharp characterization
of the random features model. 
\begin{theorem}
\label{thm:RF:Gaussian}
Let $\kappa\opt_{\sRF,n}(\by,\bZ)$ and $\error\opt_{\sRF,n}(\by,\bZ)$ be the maximum margin and
 test error of the random features model of Section \ref{sec:RFDef}. 
Further, let $\kappa\opt(\mu,\psi)$ and  $\Pred\opt(\mu,\psi)$ be the theoretical predictions for 
the max margin and the test error given in Definition~\ref{def:KappaE} below.
Then, in the limit $p,n,d\to \infty$ with $p/d\to\psi_1$ and $n/d\to\psi_2$, we have
\begin{align}
\plim_{p,n,d\to\infty}\kappa\opt_{\sRF,n}(\by,\bZ) &= \kappa\opt(\mu,\psi)\, ,\\
\plim_{p,n,d\to\infty}\error\opt_{\sRF,n}(\by,\bZ) &= \Pred\opt(\mu,\psi)\, ,
\end{align}
where $\mu =  {\sf Law}(X,W)$ is defined by Eq.~\eqref{eq:FirstXW}, and $\psi:=\psi_1/\psi_2$.
\end{theorem}

\begin{remark}
  Independently of its relationship with the nonlinear random features model,
  the noisy linear features model is a valid statistical method, which is of independent interest.
  Given data $\{(y_i,\bz_i)\}_{i\le n}$ which are potentially non-separable, it embeds them in $p$ dimensions
  via the noisy linear map $\bz_i\mapsto \gamma_1\bW\bz_i+\gamma_*\bxi_i$: this map can be 
  implemented in practice. 
\end{remark}

\subsection{Numerical experiments}

\begin{figure}[t]
\phantom{A}\hspace{-0.5cm}
\includegraphics[width=0.54\textwidth]{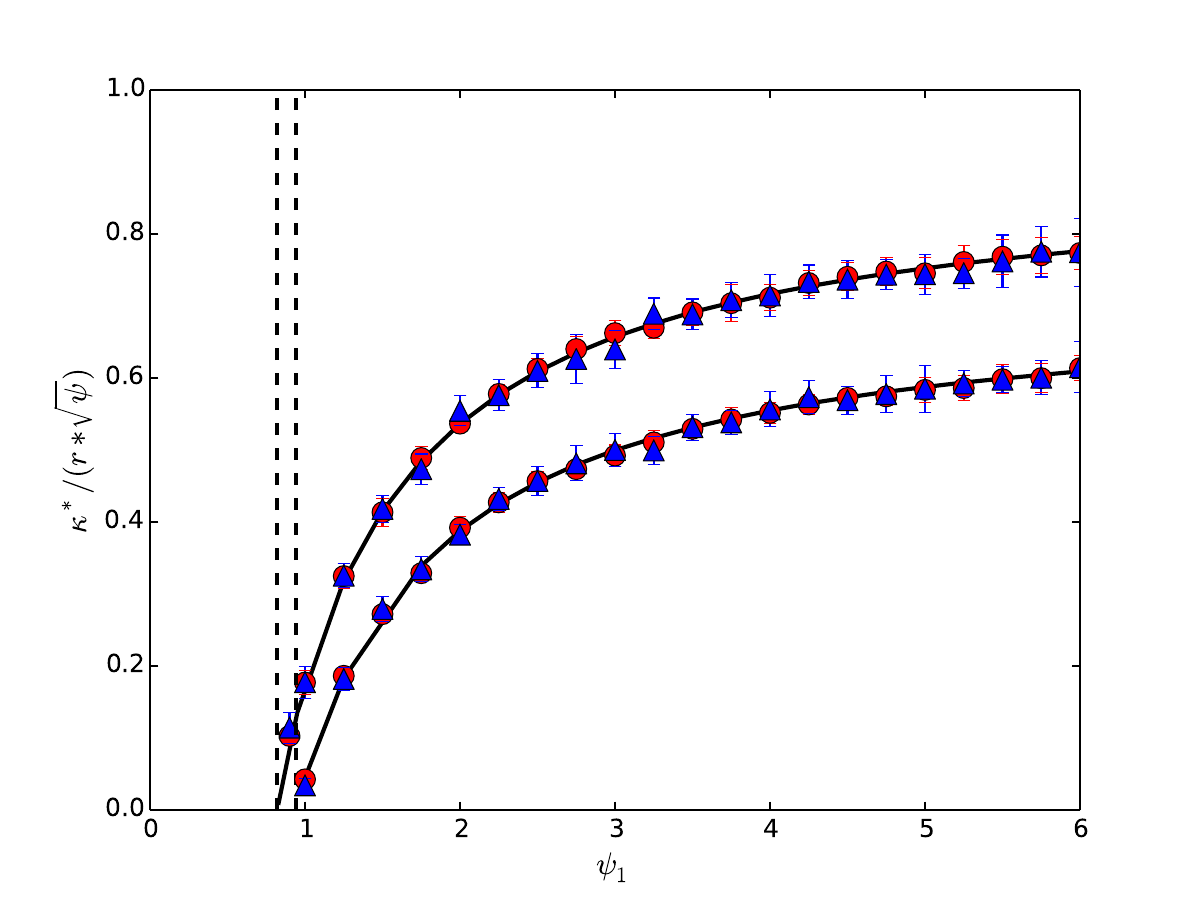}\hspace{-0.5cm}
\includegraphics[width=0.54\textwidth]{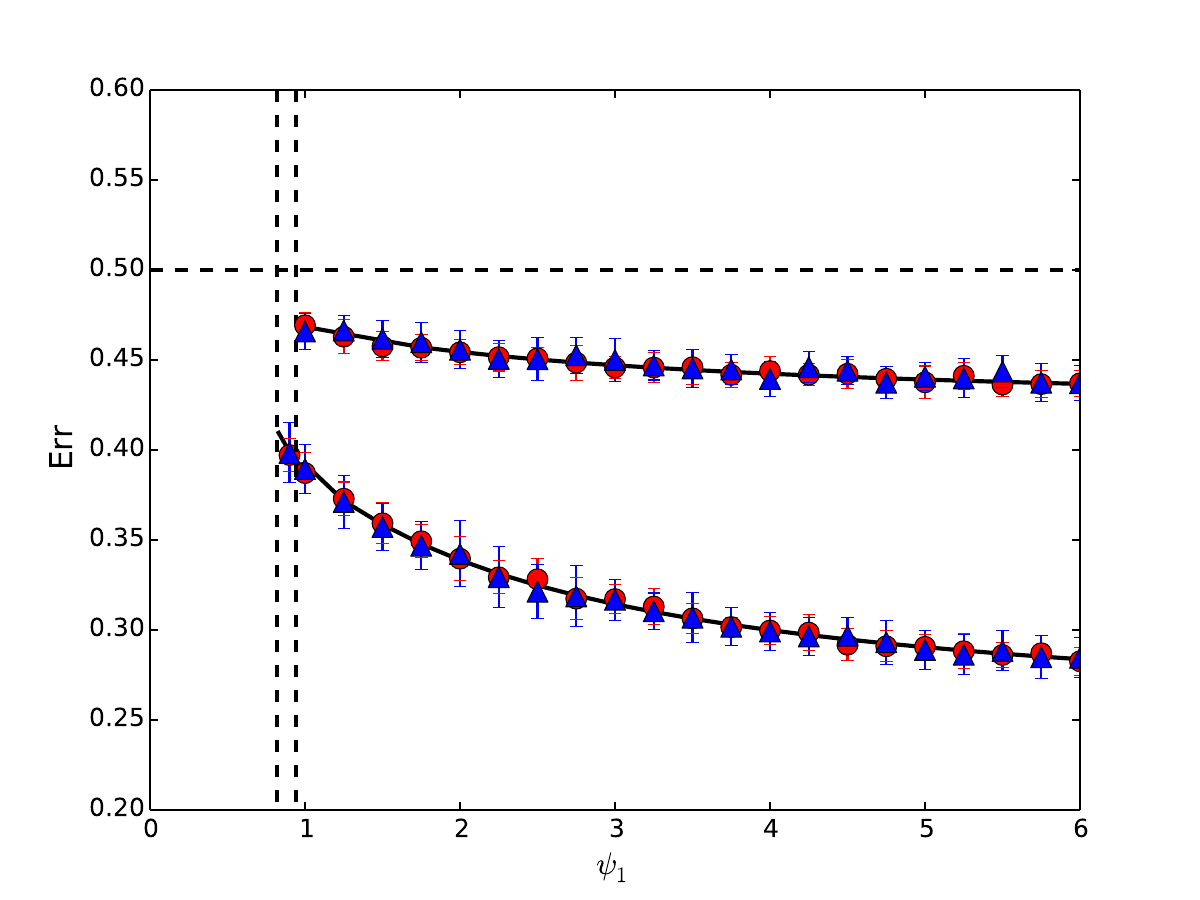}
\caption{Random features model, with ReLU activations. 
Left: maximum margin. Right: test error. Labels are generated using the logistic function 
$f(x) = (1+e^{-\beta x})^{-1}$ with $\beta =1, 4$ (bottom to top on the left and top to bottom on the right).
Here $\psi_2 = n/d= 2$, and red circles, blue triangles stand for $d = 400, 200$ respectively. Results were averaged over $20$ samples,
Dashed lines report the interpolation threshold, and continuous lines are the predicted test error, both within the Gaussian covariates model of Section 
\ref{sec:GaussianModel}.}\label{fig:random-features}
\end{figure}

\begin{figure}[t]
\phantom{A}\hspace{-0.5cm}
\includegraphics[width=0.54\textwidth]{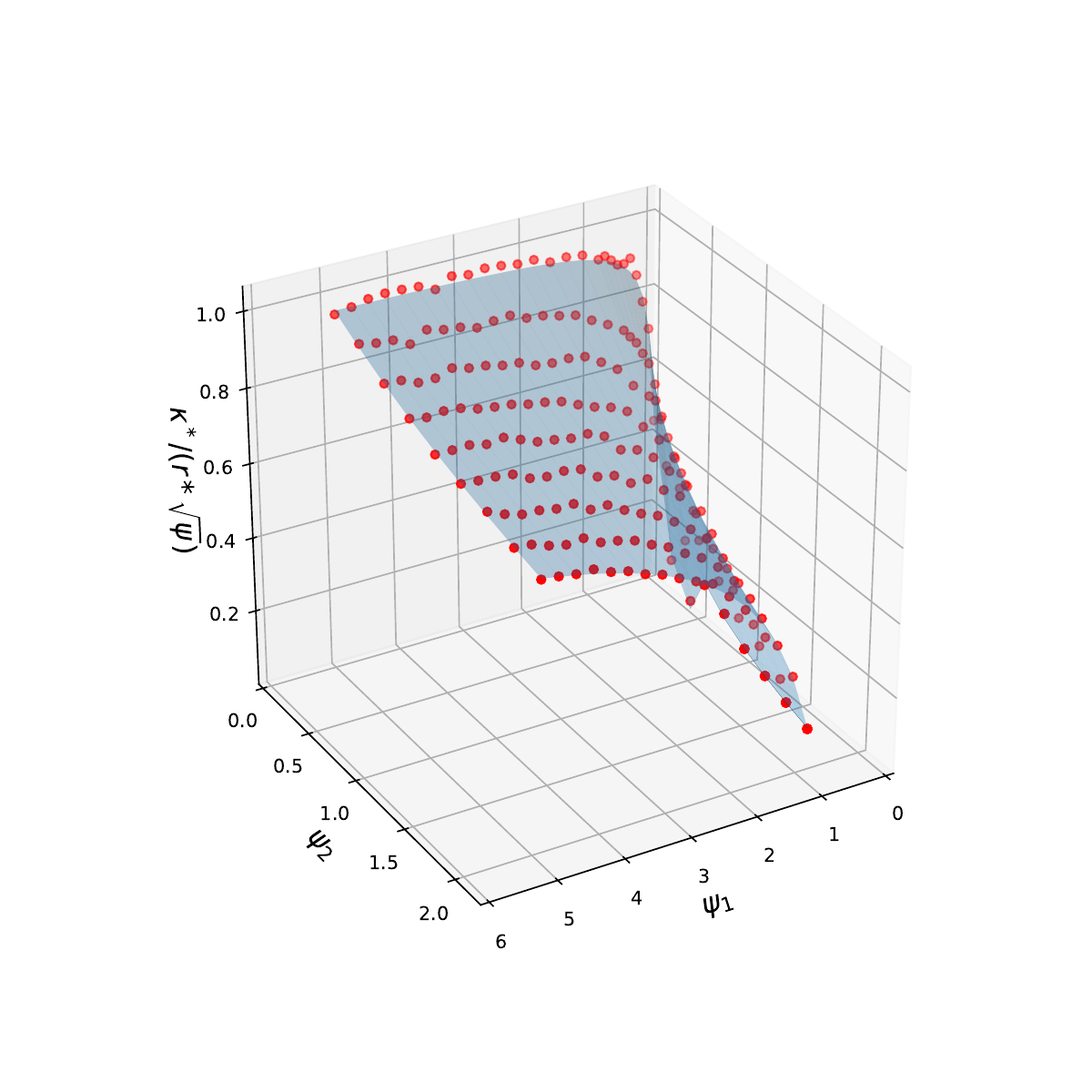}\hspace{-0.5cm}
\includegraphics[width=0.54\textwidth]{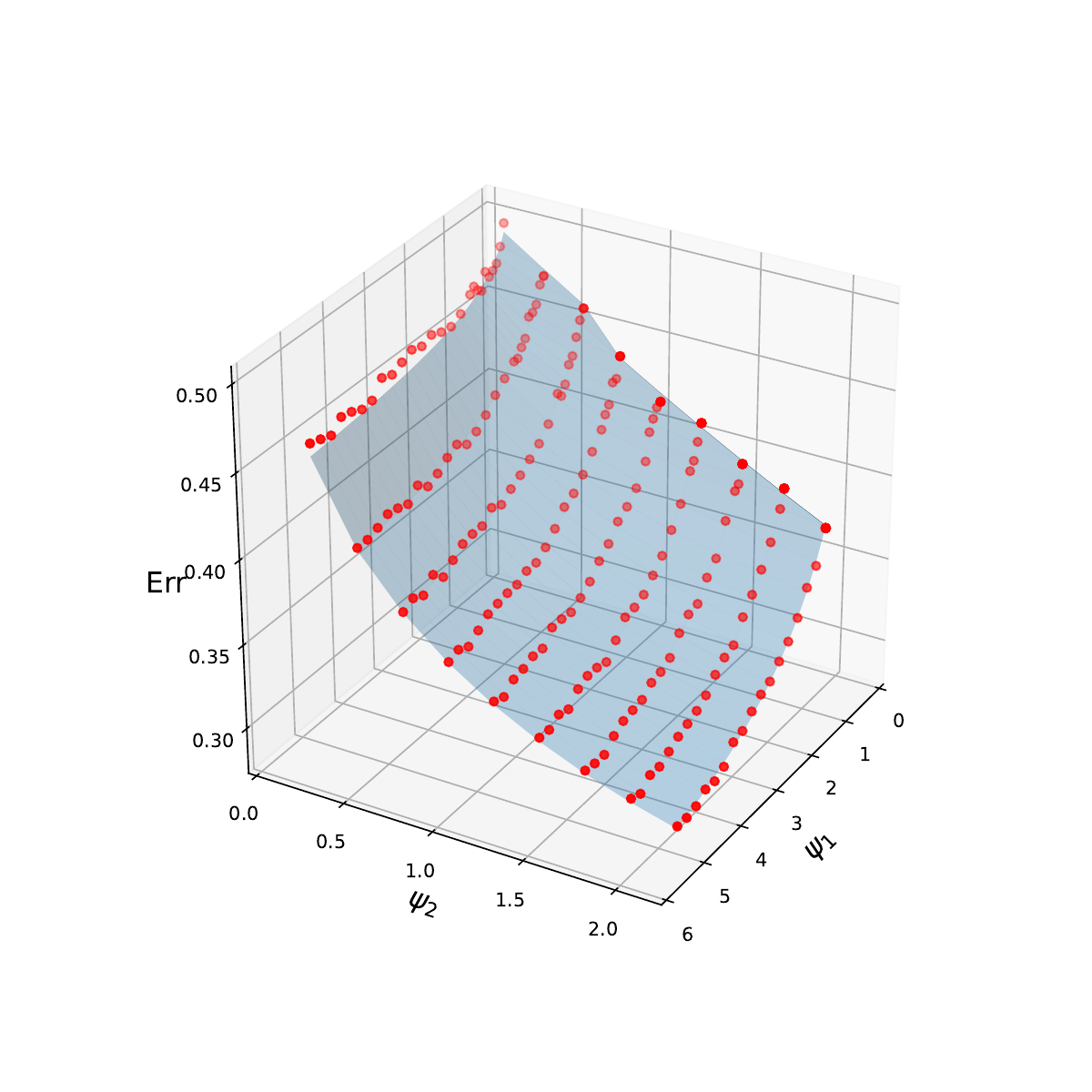}
\caption{Random features model, with ReLU activations. Recall the notation $\psi_1$ and $\psi_2$ from \eqref{def:psionepsitwo}.
Left: maximum margin. Right: test error. Labels are generated using the logistic function 
$f(x) = (1+e^{-\beta x})^{-1}$ with $\beta = 4$.
Here red circles stand for empirical values for $d = 400$, and results were averaged over $20$ samples.
Blue surfaces are the predicted values, both within the Gaussian covariates model of Section 
\ref{sec:GaussianModel}.}\label{fig:random-features-3d}
\end{figure}

\begin{figure}[t]
\phantom{A}\hspace{-0.5cm}
\includegraphics[width=0.54\textwidth]{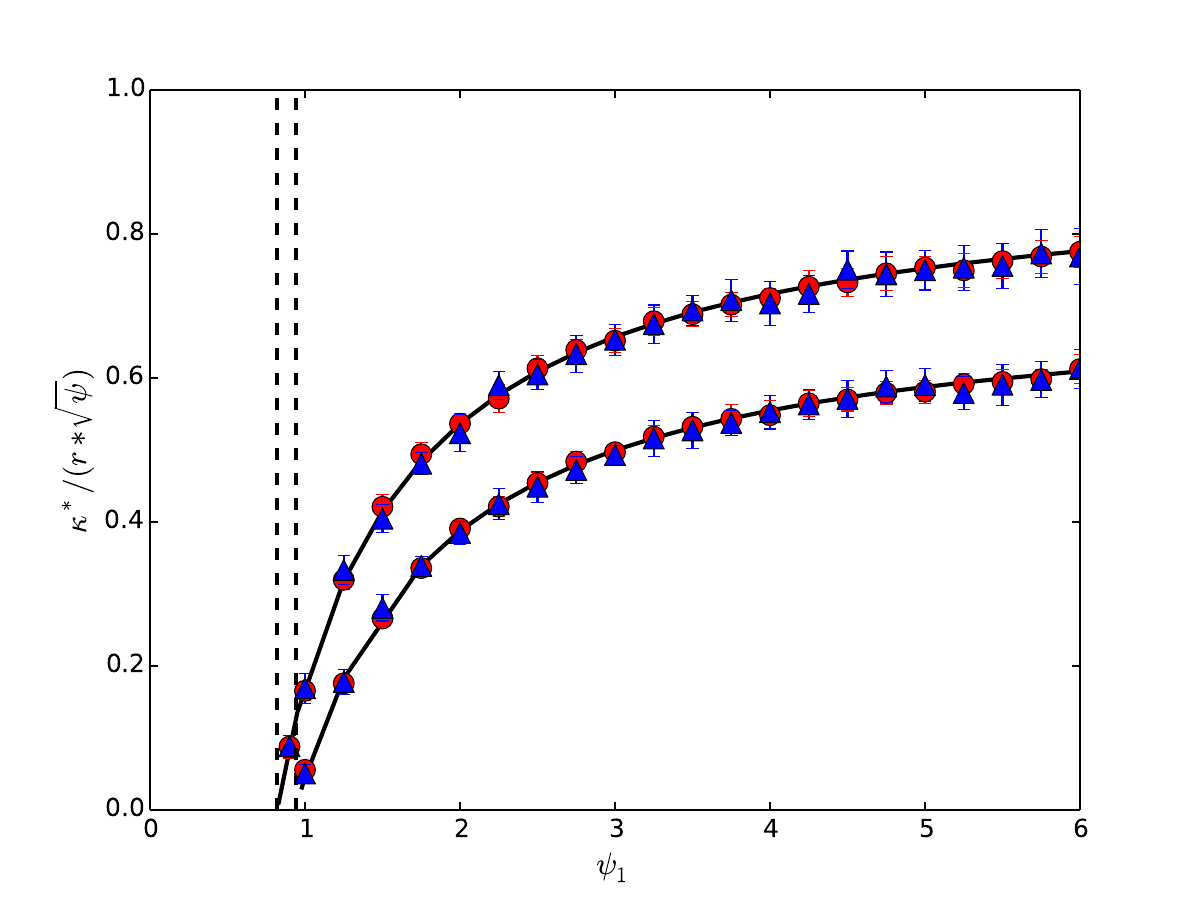}\hspace{-0.5cm}
\includegraphics[width=0.54\textwidth]{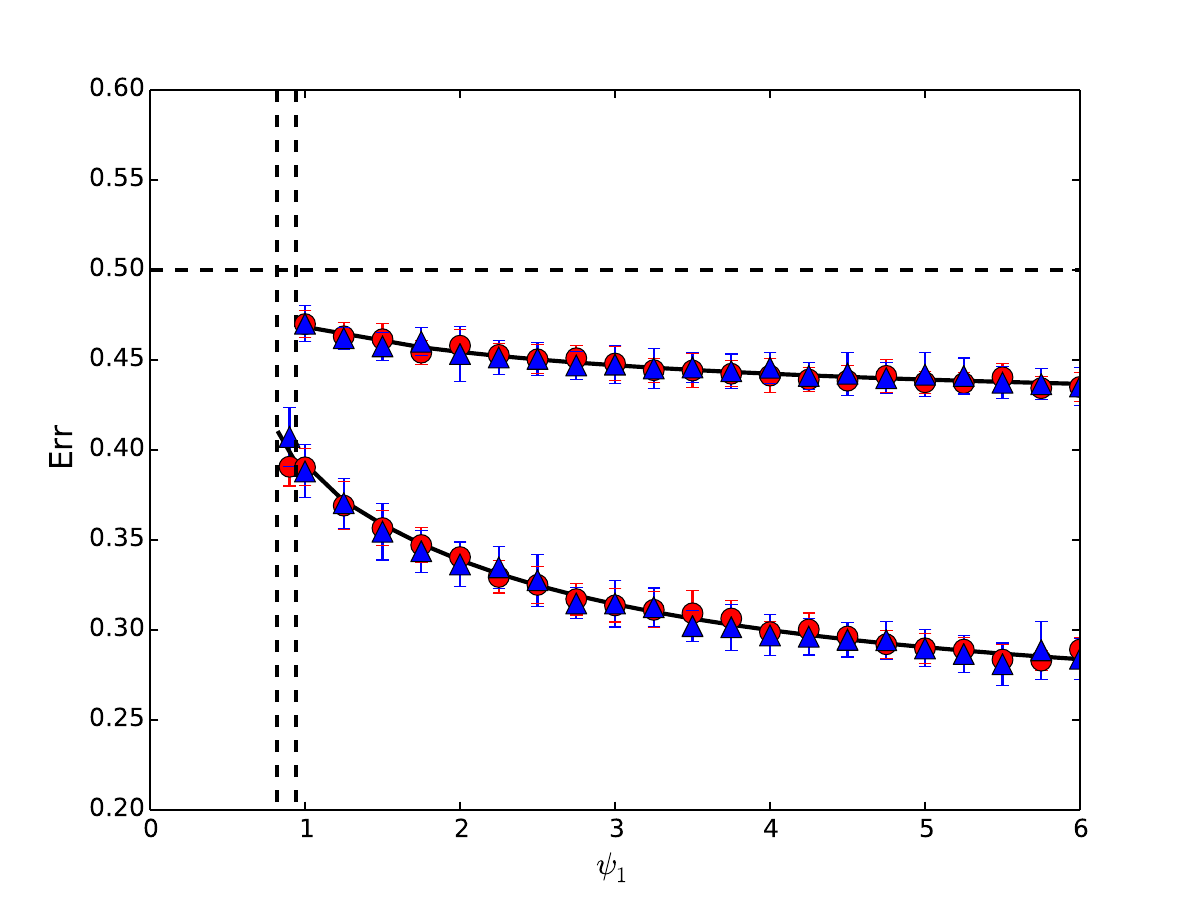}
\caption{Random features model: same setting as in Figure \ref{fig:random-features}, for a different activation function 
$\sigma_1(x)$ (see text). This activation is expected to have asymptotically the same maximum margin and test error as ReLU.}\label{fig:random-features-sigma2}
\end{figure}

In Figures \ref{fig:random-features}, \ref{fig:random-features-3d}  we report the results of 
numerical simulations within the random features model with ReLU
activations.
We compare the outcome of these simulation  with the analytical predictions 
of Theorem~\ref{thm:RF:Gaussian}.
 The agreement is excellent already at moderate values of $n,p,d$. 
 \begin{itemize}
 \item Vertical lines correspond to the analytical predictions for  
 the interpolation threshold $\psi\opt$. For $p/n\to \psi_1/\psi_2>\psi\opt$ 
 the data have (With high probability) a strictly positive margin.
 Indeed the margin appears to vanish linearly as $p/n$ approaches $\psi\opt$.
 \item The test error is monotonically decreasing with the overparametrization ratio 
$p/n = \psi_1/\psi_2$,
and its global minimum is achieved at large overparametrization $\psi_1/\psi_2\gg 1$.
\item  The margin is monotonically increasing in $\psi$ for  $\psi>\psi\opt$. 
\end{itemize}
At first sight, the last two observations might suggest that the decrease of 
test error can be explained by the increase of the margin using standard margin theory.
In order to understand whether this is the case, we consider for instance
 \cite[Theorem 26.14]{shalev2014understanding}, which implies, with our notations,
\begin{align}
\Pred_n(\by,\bX) \le \frac{4r\sqrt{\psi}}{\kappa\opt(\psi)} +o_n(1)\, ,\;\;\;\;\; r^2:=
 \int x\, \mu(\de x,\de w)\, . \label{eq:MarginBound}
\end{align}
Here $r$ is the typical  radius of the feature vectors, namely the asymptotic value of  
$r_n^2 = \E\|\bx_1\|^2/p$, (in the ReLU case, $r=1/\sqrt{2}$).
Even discarding the factor $4$ (which we do in Figure \ref{fig:IsoBound}), this upper bound has the wrong qualitative dependence
on $\psi$ and is never non-trivial in the present setting (never smaller than 1).
As it can be seen from the plots of the margin, this upper bound is 
is always larger than one in this application (even neglecting the factor $4$),
and therefore vacuous.
 
One particular prediction of our theory is that the test error and the margin should 
depend on the activation function only through the two coefficients $\gamma_1$
and $\gamma_*$. We check this numerically by repeating the same simulation  of Figure
 \ref{fig:random-features}, but using a different activation
function. Namely, we use activation $\sigma_2(x) = 
0.5 x_{+} + a_0 x_{+}^2 + a_1 x_{+} (1+x_{+})^{-1}$, where we choose $a_0$ and $a_1$ as to 
obtain the same values of $\gamma_1, \gamma_*$ as for ReLU. 
Figure \ref{fig:random-features-sigma2} reports the outcome of numerical simulations with 
activation $\sigma_2$.
As conjectured, the two sets of numerical results in Figures \ref{fig:random-features} 
and \ref{fig:random-features-sigma2} are hardly distinguishable.

\subsection{Wide network asymptotics}

\begin{figure}[t]
  \begin{center}
    \includegraphics[width=0.4\textwidth]{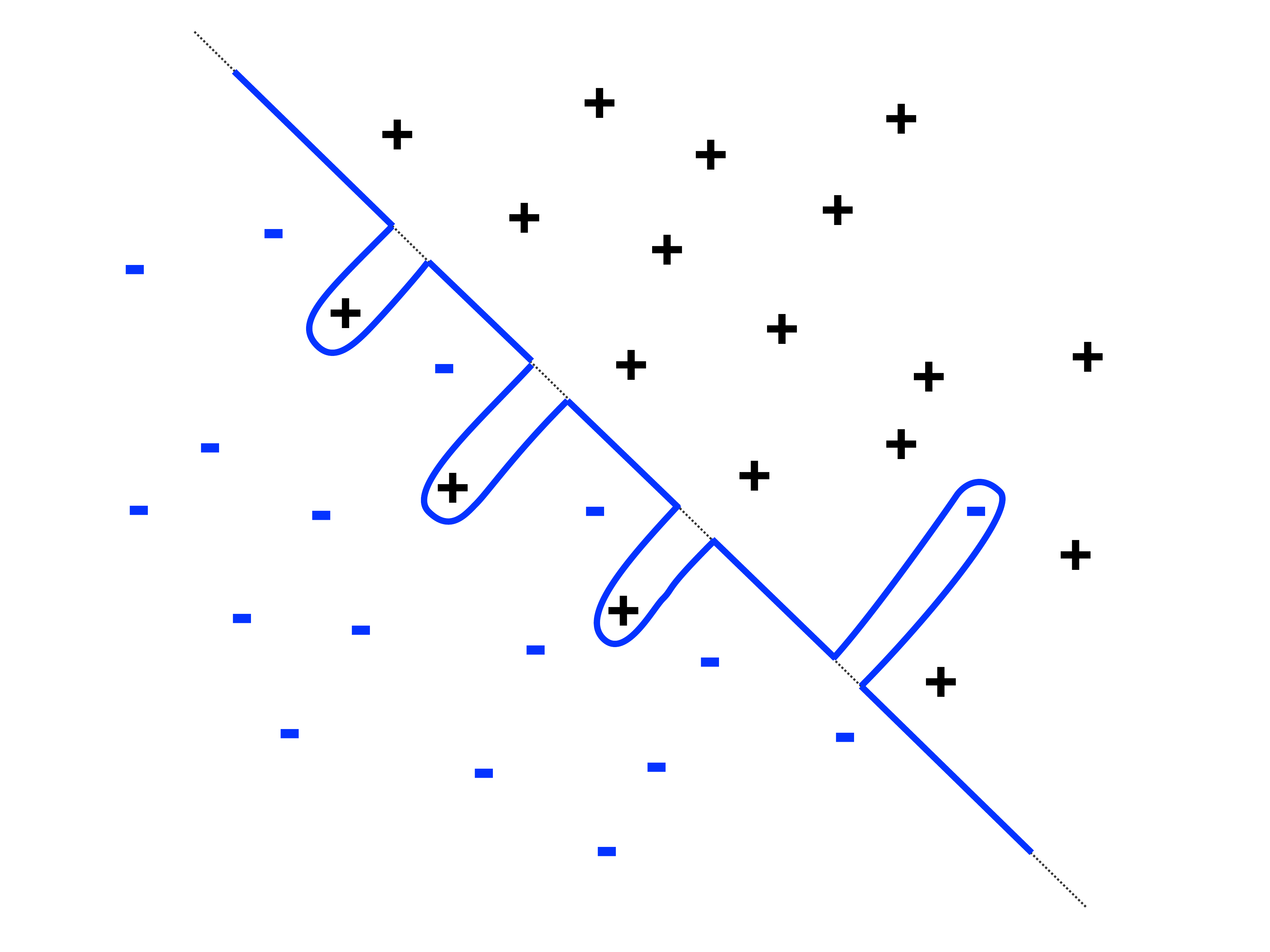}
    \end{center}
\caption{A cartoon of random features max margin classifiers. This is
  a pictorial representation of Proposition \ref{propo:Soft-Margin}.
  The random features classifier (continuous line) achieves
vanishing training error, but behaves as a linear classifier with a soft margin (dotted line).}\label{fig:cartoon-rf}
\end{figure}

Of  particular interest for the random features model is the wide network asymptotics
$\psi_1\to\infty$, at fixed $\psi_2$. This corresponds to a large number of neurons per dimension
 ($p/d$ large),
while the number of samples per dimension $n/d$ stays constant. It is important to bear
 in mind that these limits are taken \emph{after}
$p,n,d\to\infty$ with $p/d=\psi_1$, $n/d=\psi_2$: hence $p/d$ should be interpreted here as 
large  but of order one.

The next proposition characterizes this limit: its proof is deferred to 
Appendix \ref{sec:AsymptoticsRF}.
\begin{proposition}\label{prop:asymptoticRF}
 Let  $\kappa\opt(\mu_{\psi_1},\psi)$, $\Pred\opt(\mu_{\psi_1},\psi)$ be the asymptotic 
 maximum margin and classification error of the random features model
 defined in Section \ref{sec:RFDef}, cf. Theorem \ref{thm:RF:Gaussian}.
  For $\psi_2,\gamma_1,\gamma_{*}>0$, define $T_{\infty}(\,\cdot\,; \psi_2,\gamma_1,\gamma_{*}):\R_{>0}\to \R$ by 
    \begin{equation}
    \label{eqn:def:T-infty:0}
        T_{\infty}(\kbar; \psi_2,\gamma_1,\gamma_{*})= \min_{\substack{d_1^2+d_2^2 \leq 1,\\d_2\geq 0}} \bigg\{F_{\kbar}(\sqrt{\psi_2}\gamma_1 d_1,\sqrt{\psi_2}\gamma_1 d_2)-\gamma_1 d_2-\gamma_{*}\sqrt{1-d_1^2-d_2^2}\bigg\}\, .
      \end{equation}
      where $F_{\kappa}$ is defined in Eq.~\eqref{eqn:def-F-kappa} below.
      Denote the unique minimizer of this optimization  problem by $(d_1\opt,d_2\opt)=(d_1\opt(\kbar),d_2\opt(\kbar))$,
      and let
       \begin{align}
         \label{eq:kwideDef}
       \kbar\owid(\psi_2,\gamma_1,\gamma_{*}) \equiv \inf\big\{\kbar>0:\;\;T_{\infty}(\kbar; \psi_2,\gamma_1,\gamma_{*})=0\big\}\, .
       \end{align}
       Finally define $d_i\owid=d_i\opt(\kbar\owid)$ and
       \begin{align}\label{eq:PredErrorWide}
         &\Pred\owid(\psi_2,\gamma_1,\gamma_{*})\equiv \P\left(\nu\owid Y_0G+\sqrt{1-(\nu\owid)^{2}}Z \leq 0\right)\,,\\
         &\nu\owid=\nu\owid(\psi_2,\gamma_1,\gamma_{*})=\frac{d_1\owid}{\sqrt{(d_1\owid)^2+(d_2\owid)^2}},~~\text{where}~~d_i\owid=d_i\opt(\kbar\owid)~~\text{for}~~i=1,2\, ,
       \end{align}
       where expectation is with respect to $Z$ independent of $(Y_0, G)$, $\P(Y_0 = +1 \mid G) = h(G)$, $\P(Y_0 = -1 \mid G) = 1-h(G)$, $Z , G\iid \normal(0, 1)$.
        
       Then
       \begin{align}
         \lim_{\psi_1\to\infty}\frac{\kappa\opt(\mu_{\psi_1},\psi))}{\sqrt{\psi}} &= \kbar\owid(\psi_2,\gamma_1,\gamma_{*})\, ,
         \label{eq:Kwide}\\
         \lim_{\psi_1\to\infty}\Pred\opt(\mu_{\psi_1},\psi) & = \Pred\owid(\psi_2,\gamma_1,\gamma_{*})\, .  \label{eq:Predwide}
         \end{align}  
       \end{proposition}
        It turns out that Proposition
       \ref{prop:asymptoticRF} has a remarkably simple interpretation. In the wide, high-dimensional limit,
       the random features model behaves as a simple linear model in the covariates $\bz_i$, whereby instead of the
       maximum margin classifier of Eq.~\eqref{eq:MMdef}, we solve the following soft margin problem
       \begin{align}
         (\hbtheta^{\sSM},\hbu^{\sSM})\in\argmax_{\btheta\in\reals^d,\, \bu\in\reals^n}\left\{\min_{i\le n}
         \big[\gamma_1y_i\<\btheta,\bz_i\>+\gamma_*u_i\big]\; :\;\;\;
         \|\btheta\|_2^2+\frac{\|\bu\|_2^2}{d}= 1
         \right\}\, .\label{eq:SMClassifier}
       \end{align}
       We denote by $\kappa_n^{\sSM}(\by,\bZ)$ the corresponding soft margin, namely the
       value of this optimization problem. (Here $\bZ\in\reals^{n\times d}$ is the matrix whose $i$-th row is $\bz_i$.)
       \begin{proposition}\label{propo:Soft-Margin}
         Let $\kbar\owid(\psi_2,\gamma_1,\gamma_{*})$,
         $\Pred\owid(\psi_2,\gamma_1,\gamma_{*})$ be the asymptotic margin and classification
         error of the noisy linear features model,
         as defined in Proposition \ref{prop:asymptoticRF}, Eqs.~\eqref{eq:kwideDef},
         \eqref{eq:PredErrorWide}. Further, denote by $\kappa_n^{\sSM}(\by,\bZ)$
         the soft-margin, that is the optimum value of problem \eqref{eq:SMClassifier}, and 
         $\Pred_n^{\sSM}(\by,\bZ)$ the corresponding classification error. Consider the proportional
         asymptotics $n,d\to\infty$ with $n/d\to\psi_2\in(0,\infty)$. Then
         \begin{align}
           \lim_{n\to\infty}\kappa_n^{\sSM}(\by,\bZ) =\frac{\kbar\owid(\psi_2,\gamma_1,\gamma_{*})}{\sqrt{\psi_2}}\,,
           \;\;\;\;\;\;\;\;\;
               \lim_{n\to\infty}\Pred_n^{\sSM}(\by,\bZ) =\Pred\owid(\psi_2,\gamma_1,\gamma_{*})\, .
         \end{align}
       \end{proposition}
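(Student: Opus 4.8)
The plan is to attack the soft-margin problem \eqref{eq:SMClassifier} head-on with Gordon's Gaussian comparison inequality \cite{Gordon88,ThrampoulidisOyHa15} --- the same device that drives the proof of Theorem \ref{theorem:main} --- and then to observe that the scalar variational problem it produces is exactly the one defining $\kbar\owid$ and $\Pred\owid$ in Proposition \ref{prop:asymptoticRF}. First I would put \eqref{eq:SMClassifier} in saddle-point form. Taking $\bbeta_*={\boldsymbol e}_1$ without loss of generality, split $\bz_i=(\langle\bbeta_*,\bz_i\rangle,\bz_i^\perp)$ with $\langle\bbeta_*,\bz_i\rangle\sim\normal(0,1)$ and $\bz_i^\perp\in\reals^{d-1}$ independent of it and of $y_i$; set ${\boldsymbol t}=(y_i\langle\bbeta_*,\bz_i\rangle)_{i\le n}$, whose entries are i.i.d.\ copies of $T=YG$ with $\P(Y=+1\mid G)=f_0(G)$, and let ${\boldsymbol G}$ be the matrix with rows $y_i(\bz_i^\perp)^{\sT}$, which is a standard Gaussian matrix independent of ${\boldsymbol t}$. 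Representing $\min_{i\le n}(\cdot)_i$ through the simplex $\Delta_n$ and relaxing the constraint sphere to the ball (the inner objective is positively $1$-homogeneous and the soft margin is strictly positive --- take $\btheta=\bzero$, $\bu\propto\bfone$ --- so the maximum is attained on the sphere), we arrive at the convex-concave max-min problem over compact convex domains
\[
\kappa_n^{\sSM}=\max_{\theta_1^2+\|\btheta^\perp\|_2^2+\|\bu\|_2^2/d\le 1}\ \min_{\blambda\in\Delta_n}\Big[\gamma_1\theta_1\langle\blambda,{\boldsymbol t}\rangle+\gamma_1\langle\blambda,{\boldsymbol G}\btheta^\perp\rangle+\gamma_*\langle\blambda,\bu\rangle\Big],
\]
where $\btheta=(\theta_1,\btheta^\perp)$.

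Only the term $\gamma_1\langle\blambda,{\boldsymbol G}\btheta^\perp\rangle$ involves ${\boldsymbol G}$, which is independent of everything else, and the objective is concave in $(\theta_1,\btheta^\perp,\bu)$ over the ball and convex in $\blambda$ over $\Delta_n$; so Gordon's comparison (in the convex-concave form of \cite{ThrampoulidisOyHa15}, applied conditionally on ${\boldsymbol t}$, just as in the proof of Theorem \ref{theorem:main}) shows that $\kappa_n^{\sSM}$ has the same almost-sure limit as the auxiliary problem obtained by replacing $\langle\blambda,{\boldsymbol G}\btheta^\perp\rangle$ with $\|\btheta^\perp\|_2\langle\bg,\blambda\rangle+\|\blambda\|_2\langle\bh,\btheta^\perp\rangle$, for $\bg\sim\normal(\bzero,\id_n)$, $\bh\sim\normal(\bzero,\id_{d-1})$ independent (the signs are immaterial, by Gaussian symmetry and the optimizations that follow). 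Parametrizing $d_1=\theta_1$, $d_2=\|\btheta^\perp\|_2\ge 0$, $\|\bu\|_2^2/d=1-d_1^2-d_2^2$, and then optimizing out the direction of $\btheta^\perp$ (align with $\bh$: $\langle\bh,\btheta^\perp\rangle=d_2\|\bh\|_2$) and the vector $\bu$ (one further Sion interchange is valid since the residual $\blambda$-objective stays convex, after which $\max_{\|\bu\|_2\le\rho}\gamma_*\langle\blambda,\bu\rangle=\gamma_*\rho\|\blambda\|_2$), the auxiliary problem collapses to
\[
\widehat\kappa_n^{\sSM}=\max_{\substack{d_1^2+d_2^2\le 1\\ d_2\ge 0}}\ \min_{\blambda\in\Delta_n}\Big[\big\langle\blambda,\ \gamma_1 d_1{\boldsymbol t}+\gamma_1 d_2\bg\big\rangle+\big(\gamma_1 d_2\|\bh\|_2+\gamma_*\sqrt{d(1-d_1^2-d_2^2)}\,\big)\|\blambda\|_2\Big].
\]

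Next I would evaluate the inner problem. A short Lagrangian computation gives, for $\bv\in\reals^n$ and $a\ge 0$, that $\min_{\blambda\in\Delta_n}[\langle\blambda,\bv\rangle+a\|\blambda\|_2]$ equals the unique $\kappa^\star$ solving $\sum_{i\le n}(\kappa^\star-v_i)_+^2=a^2$. Applying this with $\bv=\gamma_1 d_1{\boldsymbol t}+\gamma_1 d_2\bg$ (i.i.d.\ entries distributed as $\gamma_1 d_1\,YG+\gamma_1 d_2\,Z$, with $Z$ independent of $(Y,G)$) and $a=\gamma_1 d_2\|\bh\|_2+\gamma_*\sqrt{d(1-d_1^2-d_2^2)}$, dividing by $n$, and using $\|\bh\|_2/\sqrt d\to 1$, $n/d\to\psi_2$, and a uniform law of large numbers (over the compact $(d_1,d_2)$-domain and over compact $\kappa$-ranges), the inner value converges to the $\kappa^\star=\kappa^\star(d_1,d_2)$ solving $F_{\kappa^\star}(\gamma_1 d_1,\gamma_1 d_2)=\psi_2^{-1/2}\big(\gamma_1 d_2+\gamma_*\sqrt{1-d_1^2-d_2^2}\,\big)$. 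By the degree-one homogeneity $F_{t\kappa}(tc_1,tc_2)=tF_{\kappa}(c_1,c_2)$, the substitution $\kbar=\sqrt{\psi_2}\,\kappa^\star$ rewrites this as $F_{\kbar}(\sqrt{\psi_2}\gamma_1 d_1,\sqrt{\psi_2}\gamma_1 d_2)-\gamma_1 d_2-\gamma_*\sqrt{1-d_1^2-d_2^2}=0$; that is, $\kbar(d_1,d_2):=\sqrt{\psi_2}\,\kappa^\star(d_1,d_2)$ is the (unique, since $F_{\kbar}$ is strictly increasing in $\kbar$) zero in $\kbar$ of the bracketed expression in \eqref{eqn:def:T-infty} at the point $(d_1,d_2)$. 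Since $T_\infty$ is then continuous and nondecreasing with $T_\infty(0)\le-\gamma_*<0$ and $T_\infty(\kbar)\to\infty$, I would conclude $\lim_n\kappa_n^{\sSM}=\max_{d_1,d_2}\kappa^\star(d_1,d_2)=\psi_2^{-1/2}\sup\{\kbar>0:T_\infty(\kbar)\le 0\}=\kbar\owid(\psi_2,\gamma_1,\gamma_*)/\sqrt{\psi_2}$, with maximizing pair equal to the (unique, by Proposition \ref{prop:asymptoticRF}) minimizer $(d_1\owid,d_2\owid)$ of $T_\infty(\kbar\owid;\cdot)$. For the prediction error, uniqueness of the auxiliary maximizer upgrades Gordon's comparison to convergence of the soft-margin optimizer, $\hbtheta^{\sSM}_1\to d_1\owid$ and $\|(\hbtheta^{\sSM})^\perp\|_2\to d_2\owid$ in probability; then, writing $G^{\snew}=\langle\bbeta_*,\bz^{\snew}\rangle\sim\normal(0,1)$, on which $y^{\snew}$ depends as $\P(y^{\snew}=+1\mid G^{\snew})=f_0(G^{\snew})$, we have $\langle\hbtheta^{\sSM},\bz^{\snew}\rangle=\hbtheta^{\sSM}_1 G^{\snew}+\langle(\hbtheta^{\sSM})^\perp,(\bz^{\snew})^\perp\rangle$ with the second term $\normal(0,\|(\hbtheta^{\sSM})^\perp\|_2^2)$ and independent of $(G^{\snew},y^{\snew})$, so $\Pred_n^{\sSM}=\P\big(y^{\snew}(\hbtheta^{\sSM}_1 G^{\snew}+\|(\hbtheta^{\sSM})^\perp\|_2 Z)\le 0\big)\to\P\big(Y_0(d_1\owid G+d_2\owid Z)\le 0\big)=\Pred\owid$, the last step rescaling the argument by the positive constant $((d_1\owid)^2+(d_2\owid)^2)^{1/2}$.

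The main obstacle will be the Gordon step: justifying the comparison for \eqref{eq:SMClassifier} with its nonstandard normalization $\|\btheta\|_2^2+\|\bu\|_2^2/d=1$ and slack vector $\bu$ --- checking that the convex-concave structure and compactness survive each $\max$/$\min$ interchange, and closing \emph{both} the lower and the upper comparison so that the auxiliary value \emph{pins down} (not merely brackets) the limit --- and then propagating it to convergence of the optimizer $(\hbtheta^{\sSM}_1,(\hbtheta^{\sSM})^\perp)$, for which the uniqueness of the $T_\infty$-minimizer established in Proposition \ref{prop:asymptoticRF} is needed. Everything else --- the two Sion interchanges, the Lagrangian identity for the simplex subproblem, the uniform law of large numbers, and the homogeneity bookkeeping that matches the scalarized problem to $\kbar\owid$ and $\Pred\owid$ --- is routine.
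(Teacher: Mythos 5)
Your proposal is correct, and it takes a genuinely different route from the paper's. Both derivations end at the same scalar variational problem $T_\infty$, but they reach it by different means. The paper's proof first eliminates the slack vector $\bu$ \emph{before} any Gaussian comparison, using the explicit identity $\min\{\|(\bv-\bu)_+\|_2:\|\bu\|_2\le r\}=(\|\bv_+\|_2-r)_+$; in a feasibility form $\omega_{n,\psi_2,\kappa}$, this reduces the soft-margin problem to a one-parameter family (over $s=\|\btheta\|_2$) of the already-analyzed isotropic quantity $\xi_{n,\psi_2^{-1},\tilde\kappa}$ from Section~\ref{sec:ProofMain}. The paper therefore never redoes a Gordon argument for the soft-margin problem: it reuses Lemmas~\ref{lemma:xi-0-xi-1}--\ref{lemma:xi-1-xi-2} and Proposition~\ref{proposition:xi-n-2-xi-infty} with $p\leftarrow d$, $\psi\leftarrow\psi_2^{-1}$, takes a uniform limit over the compact $s$-range, and then does homogeneity bookkeeping to identify $T_\infty$. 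Your plan instead applies Gordon from scratch in max-min form, carrying both $\theta_1$ and $\bu$ through the comparison and eliminating them afterwards by Sion interchanges, and then evaluates the auxiliary problem via the Lagrangian identity $\min_{\blambda\in\Delta_n}[\langle\blambda,\bv\rangle+a\|\blambda\|_2]=\kappa^\star$ where $\kappa^\star$ solves $\sum_i(\kappa^\star-v_i)_+^2=a^2$ --- an ingredient not used in the paper, and a nice one. What the paper's route buys is that it sidesteps the most delicate step you correctly flag: applying the comparison to a saddle problem in which the outer variable $(\theta_1,\btheta^\perp,\bu)$ interacts with the Gaussian matrix only through $\btheta^\perp$, so that the extra outer variables must be folded into the $T$-term (with the auxiliary carrying $\|\btheta^\perp\|_2$, not the norm of the full block), and one has to verify that the convex-concave structure survives each fold so that both directions of Gordon pin down the limit. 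This is surmountable --- the paper faces a closely related issue inside Lemma~\ref{lemma:xi-0-xi-1} when it conditions on $(\bu,\by)$ and works on $\bw^\perp$ --- but it is genuine work and you are right to single it out. What your route buys is a self-contained derivation independent of the heavy machinery of Appendices~B and C, plus a clean closed form for the inner minimum. Your treatment of the prediction error (upgrade value-convergence to optimizer-convergence via uniqueness of the $T_\infty$-minimizer from Proposition~\ref{prop:asymptoticRF}, then compute the test error from the two surviving statistics $(d_1,d_2)$) matches what the paper compresses into the closing remark that \eqref{eq:Predwide} ``follows by keeping track of the minimizer.''
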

       The proof of Proposition \ref{propo:Soft-Margin} is deferred to Appendix \ref{sec:appendix:soft:margin}.
         \begin{remark}
           It is worth emphasizing that Proposition \ref{propo:Soft-Margin} establishes the equivalence of two
           classifiers that ---at first sight--- are very different. The first one is the original random features
           max-margin classifier which is linear in the $p$-dimensional lifted space, but non-linear in the underlying $d$-dimensional space, and has vanishing training error. The second one is the soft-margin classifier of
           Eq.~\eqref{eq:SMClassifier}. This is a linear classifier in $d$ dimensions.
  A pictorial representation of this result is given in Figure \ref{fig:cartoon-rf}.
  \end{remark}
      
\section{Further related work}
\label{sec:Related}

\paragraph{`Noisy' high-dimensional statistics.} Classical high-dimensional statistics 
\cite{buhlmann2011statistics}  studies the regime 
$n\lesssim p$  but under the assumption that the parameters' vector is highly structured.
For instance,  the sparsity $s_0$ is assumed satisfy $s_0\ll n/\log p$.
Concentration of measure is sufficient to prove consistency in such highly structured problems.

The present work contributes to a growing body of work focuses on a different `noisy high-dimensional regime'  
In this setting, the sample size is proportional to the number of parameters, and the estimation error 
(suitably rescaled) converges to a non-trivial limit \cite{montanari2018mean}. 
Asymptotically exact results have been obtained in a large array of problems
including sparse regression using $\ell_1$ penalization (Lasso) \cite{BayatiMontanariLASSO,miolane2018distribution}, 
general regularized linear regression \cite{donoho2013accurate,amelunxen2014living,ThrampoulidisOyHa15},
robust regression \cite{el2013robust,donoho2016high,el2018impact,thrampoulidis2018precise}, Bayesian estimation within generalized linear models
\cite{barbier2019optimal}, logistic regression \cite{candes2018phase,sur2019modern}, low rank matrix estimation 
\cite{deshpande2016asymptotic,lelarge2019fundamental,barbier2018rank}, and so on.
Several new mathematical techniques have been developed to address this regime: constructive methods based on message passing algorithms 
\cite{BM-MPCS-2011,bayati2015universality}; Gaussian comparison methods based on Gordon's inequality \cite{Gordon88,ThrampoulidisOyHa15};
interpolation techniques motivated from statistical physics \cite{barbier2019adaptive}. 

The study of this `noisy high-dimensional' regime has a long history in statistical physics. Non-rigorous methods 
from spin glass theory have been successfully used since the eighties in this context. 
We refer to \cite{engel2001statistical} for an overview of this early work, and to \cite{SpinGlass,MezardMontanari} for general introductions.
An early breakthrough in this line of work was the result by Elizabeth Gardner \cite{gardner1988space}, who computed the maximum margin 
$\kappa\opt(\psi)$ for the special case of isotropic features and purely random labels (i.e. $\bSigma_n = \id_p$ and $f(x) = 1/2$).

Here we follow the approach based on  Gordon's inequality \cite{Gordon88}, as formalized by Thrampoulidis, Oymak, Hassibi \cite{ThrampoulidisOyHa15}. Gordon's inequality was previously used to study various statistical learning problems such as compressed sensing \cite{stojnic2010l1, ChandrasekaranRecht} and Lasso \cite{stojnic2013lasso,oymak2013lasso}.
The closest results to ours in the earlier literature are the analysis of logistic regression by Sur and Cand\'es  \cite{candes2018phase,sur2019modern}, and the 
recent paper on regularized logistic regression by Salehi, Abbasi and Hassibi \cite{salehi2019impact}. Both of these analyses focus on the 
underparametrized regime, in which the maximum likelihood estimator is well defined and (with high probability)  unique. By contrast,
we focus on the overparametrized regime here. Further, while earlier work assumes isotropic covariates $\bx_i\sim\normal(0,\id_p)$,
we consider a general covariance structure, under Assumptions \ref{assumption:Lambdas} and \ref{assumption:converge}. This is crucial in order to be
able to capture the behavior of overparametrized random features models. From a technical point of view, our approach is related to the
one of  \cite{salehi2019impact}. Notice however that  \cite{salehi2019impact} does not prove uniqueness of the minimizer of Gordon's optimization
problem, while this is the main technical challenge that we address in our proof (in a more complicated setting, due to the general covariance).

\paragraph{Overparametrization and overfitting.} As discussed in the introduction, our work is
 connected to a substantial line of research that investigates the behavior of 
generalization error in overparametrized model that interpolate the data. 
This work was largely motivated
by the empirical observation that deep neural networks fit perfectly the data and yet generalize well 
\cite{zhang2016understanding,neyshabur2017exploring}.
It was noticed in \cite{belkin2018understand} that this behavior is significantly more general 
than neural networks, while \cite{belkin2019reconciling} pointed out 
that the classical U-shaped curve describing the behavior of generalization error 
as a function of number of parameters does not hold in general.
 Independently, \cite{geiger2019scaling} observed the 
same phenomenon in the context of multilayer networks, and connected it to phase transitions in physics. 

Mathematical results about generalization behavior in the overparametrized  regime have been obtained in several recent papers
\cite{belkin2018overfitting,belkin2019two,liang2020just,rakhlin2018consistency,muthukumar2019harmless,hastie2022surprises,bartlett2020benign,mei2019generalization}.
However, all of earlier work has focused on least squares (or ridge) regression with square loss. (Certain nearest-neighbor-like methods are also considered in
\cite{belkin2018overfitting}.)  The closest earlier
results in this literature are  \cite{hastie2022surprises,mei2019generalization} which 
use random matrix theory to characterize ridge regression in the proportional 
asymptotics $p,n\to\infty$ with $p/n=\psi$.
Moving beyond ridge regression requires abandoning the powerful tools of
 random matrix  theory and developing new mathematical tools.

\paragraph{Threshold for linear separability.} As a corollary of our theory,
we characterize a the threshold for linear separability $\psi\opt=\psi\opt(f)$.
For $p/n\to \psi >\psi\opt$ data are with high probability separable (with a margin
bounded away from $0$), while for $p/n\to \psi <\psi\opt$ they are not.
A classical result of Cover 
\cite{cover1965geometrical} yields $\psi\opt=1/2$ for the special case in which $y_i\sim\Unif(\{+1,-1\})$ independently of $\bx_i$,
provided the $(\bx_i)_{i\le n}$ are in generic positions.  This result was recently generalized by  
Cand\'es and Sur \cite{candes2018phase} for the more challenging setting in which
 $\P(y_i=+1|\bx_i)= (1+e^{-\<\btheta_*,\bx_i\>})^{-1}$ and $\bx_i$ is Gaussian. 
Our results yield a  generalization of the threshold obtained in \cite{candes2018phase},
but also characterize the margin in the overparametrized regime $\psi>\psi\opt$.
(The latter is significantly more challenging because the margin depends on all the entries of $(\bSigma,\btheta_*)$
while the separability does not.)

\paragraph{Independent and follow-up work.} The special case of
isotropic covariates $\bSigma = \id_p$ was treated independently from our work
in \cite{deng2022model}.

After the first version of this paper was posted online, 
our results were generalized in a number of significant ways by several authors.
 A few pointers to this literature are
\cite{goldt2020modelling,kini2020analytic,taheri2020fundamental,gerace2020generalisation,kini2021label,liang2022precise,javanmard2022precise}
(we limit ourselves to papers that derive sharp asymptotics in the proportional
regime).
The most closely related work is \cite{liang2022precise} that adapts the techniques of the present 
paper to analyze max $\ell_1$-margin (rather than max $\ell_2$ as we do here).

Non-asymptotic bounds on 
max-margin classification were proven in \cite{chatterji2021finite},
implying benign overfitting in certain settings in certain settings. 
Namely, the data distribution is a mixture $(\prob_+ +\prob_-)/2$
where $\prob_+(\bx_i\in\,\cdot\,)$ and 
$\prob_-(\bx_i\in\,\cdot\, )$ are assumed to be well separated, and
labels noise is independent of the features:
 $\prob_+(y_i=-1|\bx_i)= \prob_-(y_i=+1|\bx_i)= \eta$ independent of $\bx_i$.
This analysis was generalized to two-layer ReLU networks in 
\cite{frei2022benign,frei2023benign}.

Finally, the recent paper \cite{zhou2022non} (building on the earlier work \cite{koehler2021uniform})
obtains non-asymptotic bound on population loss in generalized linear models.
Their approach also uses Gaussian comparison inequalities as ours and imply 
certain benign overfitting guarantees. 

While limited to proportional asymptotics, our work is the first one characterizing 
covariances and parameters' sequences  $(\bSigma,\btheta_*)$ under which max-margin classification
displays benign overfitting in classification accuracy.

\section{Main results}
\label{sec:MainResults} 

Our main technical theorem characterizes the asymptotic value of the maximum margin $\kappa\opt(\mu,\psi)$ and the asymptotic generalization error 
of the max-margin classifier, to be denoted by $\error\opt(\mu,\psi)$. 

\subsection{Introducing the asymptotic predictions}

We start by defining our general analytical predictions  $\kappa\opt(\mu,\psi)$ and $\error\opt(\mu,\psi)$. Recall that $\rho\in\reals_{>0}$
and the probability measure $\mu$ on $\reals_{>0}\times\reals$ are
defined by Assumption \ref{assumption:converge}.
For any $\kappa\ge 0$, 
define $F_{\kappa}: \R \times \R_{\ge 0} \to \R_{\ge 0}$ by
\begin{equation}
\label{eqn:def-F-kappa}
F_{\kappa}(c_1, c_2) = \left(\E \left[(\kappa - c_1 YG - c_2 Z)_+^2\right]\right)^{1/2}
~~\text{where}~
	\begin{cases}
		Z \perp (Y, G) \\
		Z \sim \normal(0, 1), G \sim \normal(0, 1) \\
		\P(Y = +1 \mid G) = f(\rho \cdot G) \\
		\P(Y = -1 \mid G) = 1-f(\rho \cdot G)
	\end{cases}
\end{equation}
Let the random variables $X,W$ be such that $(X, W) \sim \mu$. Introduce the constants 
\begin{equation}
\zeta = \left(\E_{\mu} [X^{-1}W^2]\right)^{-1/2}~\text{and}~\omega = \left(\E_{\mu} [(1-\zeta^2 X^{-1})^2W^2]\right)^{1/2}. 
\end{equation}
Define the functions $\psi_+: \R_{>0} \to \R$~~\text{and}~$\psi_-: \R_{>0} \to \R$ by
\begin{equation}
\begin{split}
\psi_+(\kappa) &= 
	\begin{cases}
		0~~&\text{if $\partial_1 F_{\kappa}(\zeta, 0) > 0$,} \\
			\partial_2^2 F_{\kappa}(\zeta, 0) - \omega^2 \partial_1^2 F_{\kappa}(\zeta, 0)~~~~~~~&\text{if otherwise,}
	\end{cases}
	\\ 
\psi_-(\kappa) &= 
	\begin{cases}
		0~~&\text{if $\partial_1 F_{\kappa}(-\zeta, 0) > 0$,} \\
			\partial_2^2 F_{\kappa}(-\zeta, 0) - \omega^2 \partial_1^2 F_{\kappa}(-\zeta, 0)~~&\text{if otherwise.}
	\end{cases}
\end{split}
\end{equation}
Finally, we define $\psi\opt(0)$ and $\psi^{\low}:\R_{>0}\to \R_{\ge 0}$ by 
\begin{align}
	\psi\opt(0) &= \min_{c\in \R} F_0^2 (c, 1)\, ,\label{eq:Psistar0}\\
\psi^{\low}(\kappa) &= \max\{\psi\opt(0), \psi_+(\kappa), \psi_-(\kappa)\}.\label{eq:PsilowDef}
\end{align}

The next proposition guarantees that the definition of $\kappa\opt(\mu, \psi)$, and $\error\opt(\mu, \psi)$ given below are meaningful.
Its proof is deferred to Appendix~\ref{sec:all-property-optimization}. In the following, we will often omit the argument $\mu$
from $\kappa\opt$, $\error\opt$, $\LL\opt$.
\begin{proposition}
\label{proposition:system-of-Eq-T}
\begin{enumerate}
\item[$(a)$]  For any $\psi > \psi^{\low}(\kappa)$, the following system of equations has unique solution $(c_1, c_2, s) \in \R \times \R_{>0} \times \R_{>0}$
(here expectation is taken with respect to $(X,W)\sim \mu$):
	\begin{equation}
	\label{eqn:system-of-equations-c-1-c-2-s-with-no-G}
		\begin{split}
		-c_1 &= \E_{\mu} \left[\frac{ \left(\partial_1 F_{\kappa}(c_1, c_2) -  
			c_1 c_2^{-1} \partial_2 F_{\kappa}(c_1, c_2)\right) W^2 X^{1/2}}
				{ c_2^{-1} \partial_2 F_{\kappa}(c_1, c_2) X^{1/2} + \psi^{1/2} s X^{-1/2}}\right] \, , \\
		c_1^2 + c_2^2 &=  \E_{\mu} \left[\frac{\psi X + 
			 \left(\partial_1 F_{\kappa}(c_1, c_2) -  c_1 c_2^{-1} \partial_2 F_{\kappa}(c_1, c_2)\right)^2 W^2 X}
				{( c_2^{-1} \partial_2 F_{\kappa}(c_1, c_2) X^{1/2} + \psi^{1/2} s X^{-1/2})^2}\right] \, ,\\
		1 &= \E_{\mu}
			\left[\frac{\psi + 
			 \left(\partial_1 F_{\kappa}(c_1, c_2) -  c_1 c_2^{-1} \partial_2 F_{\kappa}(c_1, c_2)\right)^2 W^2}
				{( c_2^{-1} \partial_2 F_{\kappa}(c_1, c_2) X^{1/2} + \psi^{1/2} s X^{-1/2})^2}\right]\, .
		\end{split}
	\end{equation}
\item[$(b)$] Define the function $T: (\psi, \kappa) \to \R$ (for any $\psi > \psi^{\low}(\kappa)$) by
	\begin{equation}
	\label{eqn:def-T}
		T(\psi, \kappa) =  \psi^{-1/2} \left(F_{\kappa}(c_1, c_2) - c_1 \partial_1 F_{\kappa}(c_1, c_2) -	
		c_2 \partial_2 F_{\kappa}(c_1, c_2) \right) - s\, ,
	\end{equation}
	where $(c_1(\psi, \kappa), c_2(\psi, \kappa), s(\psi, \kappa))$ is the unique solution of Eq
	\eqref{eqn:system-of-equations-c-1-c-2-s-with-no-G} in $\R \times \R_{>0} \times \R_{>0}$. 
	Then we have 
	\begin{enumerate}
	\item[$(i)$] $T(\cdot, \cdot), c_1(\cdot, \cdot), c_2(\cdot, \cdot), s(\cdot, \cdot)$ are continuous functions in the domain 
		$\left\{(\psi, \kappa): \psi > \psi^{\low}(\kappa)\right\}$. 
	\item[$(ii)$] For any $\kappa > 0$, the mapping $T(\, \cdot\, , \kappa)$ is strictly monotonically decreasing, and satisfies
	\begin{equation}
		\lim_{\psi \nearrow +\infty} T(\psi, \kappa) < 0 < \lim_{\psi \searrow \psi^{\low}(\kappa)} T(\psi, \kappa). \label{eq:Tpsi}
	\end{equation}
	\item[$(iii)$] For any $\psi > 0$, the mapping $T(\psi, \cdot)$ is strictly monotonically increasing, and satisfies
	\begin{equation}
		\lim_{\kappa \nearrow +\infty} T(\psi, \kappa) = \infty. 
	\end{equation}
	\end{enumerate}
\end{enumerate}
\end{proposition}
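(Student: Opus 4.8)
The plan is to realize the three equations in \eqref{eqn:system-of-equations-c-1-c-2-s-with-no-G} as the first-order optimality conditions of a convex variational problem, and to read off existence, uniqueness, and the regularity properties of $T$ from the structure of that problem --- this is the ``hidden convexity'' idea advertised in the introduction. Concretely, starting from the Gordon auxiliary optimization that underlies the proof of Theorem \ref{theorem:main} and passing to the $n\to\infty$ limit, one obtains a minimization problem whose decision variable is a profile $h\in L^2(\mu)$ ---the asymptotic coordinates of $\btheta$ in the eigenbasis of $\bSigma$, reparametrized through $(X,W)\sim\mu$--- together with a few scalars. For fixed values of the scalars playing the role of $(c_1,c_2)$ the inner minimization decouples coordinatewise and its value is expressed via $F_\kappa$ of \eqref{eq:FkDef}; this is the origin of the expectations $\E_\mu[\cdot]$ appearing in \eqref{eqn:system-of-equations-c-1-c-2-s-with-no-G}, with $s$ entering as the multiplier of a norm constraint. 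The problem is jointly convex: $F_\kappa$ is convex since $(c_1,c_2)\mapsto(\kappa-c_1YG-c_2Z)_+$ is convex pointwise and $v\mapsto(\E v^2)^{1/2}$ is convex and monotone on the nonnegative cone, while the remaining terms are affine or quadratic.

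\textbf{Existence and part (a).} I would show this convex problem is lower semicontinuous and coercive whenever $\psi>\psi^{\low}(\kappa)$, so a minimizer exists; writing its KKT conditions and eliminating $h$ reproduces \eqref{eqn:system-of-equations-c-1-c-2-s-with-no-G}. Coercivity in the ``signal'' directions uses Assumption \ref{assumption:converge} ($\E_\mu W^2=1$ and $\E_\mu[X^{-1}W^2]<\infty$) together with the well-conditioning of $\bSigma$ (Assumption \ref{assumption:Lambdas}), and coercivity in the ``noise'' direction uses Assumption \ref{assumption:non-degenerate-f}, which forces $F_\kappa>0$ and $F_\kappa\to\infty$ at infinity. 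The quantity $\psi^{\low}(\kappa)=\max\{\psi\opt(0),\psi_+(\kappa),\psi_-(\kappa)\}$ is exactly the threshold below which this fails: $\psi\opt(0)$ of \eqref{eq:Psistar0} governs feasibility at $c_2=0$, while $\psi_\pm(\kappa)$ capture the second-order (curvature) behavior at the boundary points $c_1=\pm\zeta,\,c_2=0$ ---which is why those particular curvature expressions appear in \eqref{eq:PsilowDef}. A boundary analysis (the objective diverges as $c_2\downarrow0$ or $s\downarrow0$) then shows the optimizer is interior, yielding $c_2>0$ and $s>0$.

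\textbf{Uniqueness.} By convexity the minimizers form a convex set; I would upgrade this to a singleton by establishing strict convexity along every nonzero feasible direction. The only potentially non-strict directions are degenerate directions of $F_\kappa$, and the non-degeneracy condition of Assumption \ref{assumption:non-degenerate-f} (the support of $YG$ unbounded on both sides) rules these out, ensuring $\partial_1^2F_\kappa,\partial_2^2F_\kappa>0$ on the relevant region; uniqueness of the optimizing profile then forces uniqueness of $(c_1,c_2,s)$. I expect this to be the main obstacle: pinning down uniqueness when the problem is only weakly convex in some directions, and correctly identifying the convex problem whose KKT system matches \eqref{eqn:system-of-equations-c-1-c-2-s-with-no-G} exactly (including the interiority giving $c_2,s>0$); verifying coercivity sharply at the threshold $\psi^{\low}(\kappa)$, and thereby justifying the curvature formulas defining $\psi_\pm$, is the second delicate point.

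\textbf{Part (b): continuity, monotonicity, limits.} Continuity of $(c_1,c_2,s)$, hence of $T$ (an explicit continuous function of them through \eqref{eqn:def-T}), follows from stability of the unique minimizer of a strictly convex problem under joint perturbation of $(\psi,\kappa)$ (Berge's theorem plus the uniqueness just proved). Strict monotonicity of $\psi\mapsto T(\psi,\kappa)$ and the two-sided bound \eqref{eq:Tpsi} come from a monotone-comparison/envelope argument: $\psi$ enters the objective monotonically, the quantity in \eqref{eqn:def-T} is of value-minus-multiplier type, and the limits are obtained by letting the problem trivialize as $\psi\to\infty$ (forcing $T<0$) and degenerate along the boundary analyzed above as $\psi\downarrow\psi^{\low}(\kappa)$ (forcing $T>0$). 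Strict monotonicity of $\kappa\mapsto T(\psi,\kappa)$ and $T(\psi,\kappa)\to\infty$ follow because $F_\kappa(c_1,c_2)$ is increasing and convex in $\kappa$ with $F_\kappa\sim\kappa$ as $\kappa\to\infty$, which dominates the subtracted terms.
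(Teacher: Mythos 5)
Your high-level architecture matches the paper's: realize \eqref{eqn:system-of-equations-c-1-c-2-s-with-no-G} as the KKT system of an infinite-dimensional convex problem over $h\in\cL^2(\Q_\infty)$ with constraint $\|h\|\le1$ (this is precisely $\asL_{\psi,\kappa,\Q_\infty}$ of Eq.~\eqref{eq:asLDef}), derive uniqueness from strict convexity, and obtain the properties of $T=\asL^*_{\psi,\kappa,\Q_\infty}$ from the variational formulation. The strict-convexity mechanism you sketch is also correct in spirit: strict convexity of $F_\kappa$ (via Assumption~\ref{assumption:non-degenerate-f}) plus the injectivity of $h\mapsto X^{1/2}h$ (via $X>0$ from Assumption~\ref{assumption:Lambdas}) forces the minimizer to be unique.

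However two concrete steps would fail as written. First, there is no coercivity to check: the constraint ball $\{\|h\|_{\Q_\infty}\le1\}$ is weak-$*$ compact by Banach--Alaoglu, so existence follows from lower semicontinuity alone, and $\psi^{\low}(\kappa)$ has nothing to do with coercivity. Second, and more seriously, the claim that ``the objective diverges as $c_2\downarrow0$ or $s\downarrow0$'' is false --- $\asL_{\psi,\kappa,\Q_\infty}(h)$ is finite for every $h$ in the ball, and in fact for $\psi\le\psi^{\low}(\kappa)$ the minimizer has $c_2=0$ or $s=0$. The interiority ($c_2>\Delta$, $s>0$) is obtained in the paper's Lemma~\ref{lemma:technical-3} by a contradiction argument \emph{through the KKT conditions}: assuming $s=0$ or $c_2\le\Delta$ and tracking the resulting identities (using Lemma~\ref{lemma:alpha-upper-bound}) produces a violation of the conditions defining $\psi>\psi^{\low}(\kappa)$. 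Replacing this with a naive blow-up argument would leave the core of part $(a)$ unproved. Relatedly, for continuity in part $(b.i)$, Berge's theorem does not by itself close the argument: one must rule out limit points of $(c_1,c_2,s)$ with $s=0$, which the paper handles via a separate lemma (Lemma~\ref{lemma:nonzero-c-2-t}) before invoking uniqueness at the limit point.
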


We are now in position  to define $\kappa\opt(\mu,\psi)$, $\error\opt(\mu,\psi)$ and $\mathcal{L}\opt(\mu, \psi)$. 
\begin{definition}\label{def:KappaE}
Recall the function $T$ defined at Eq.~\eqref{eqn:def-T}. For any $\psi\ge \psi\opt(0)$, we define the \emph{asymptotic max-margin}
as 
\begin{equation*}
\kappa\opt(\mu,\psi) = \inf\left\{\kappa \ge 0: T(\psi, \kappa) = 0\right\}. 
\end{equation*}
We further define the \emph{asymptotic generalization error} $\error\opt: (0, \infty) \to [0, 1]$ by
\begin{align}
\Pred\opt(\mu,\psi) &=  \P\left(\nu\opt(\psi) YG+ \sqrt{1-\nu\opt(\psi)^2} Z \le 0\right)\, ,\label{eq:AsyErrorDef}\\
  \nu\opt(\psi) & \equiv \frac{c_1\opt(\psi)}{\sqrt{(c_1\opt(\psi))^2 + (c_2\opt(\psi))^2}}\, ,
\end{align} 
where probability is over  $Z \perp (Y, G)$, with $G,Z \sim \normal(0, 1)$ and $
\P(Y= +1\mid G) = f(\rho \cdot G) =1-\P(Y= -1\mid G)$. Further $c_i\opt(\psi) = c_i(\psi, \kappa\opt(\psi))$, $i\in\{1,2\}$.
Lastly, for each $\psi >  \psi^{\low}(\kappa)$, we introduce the
random variable 
\begin{equation}
H_{\psi, \kappa}(G, X, W) = 
	-\frac{\psi^{1/2}G + \left(\partial_1 F_{\kappa}(c_1, c_2) -  c_1 c_2^{-1} \partial_2 F_{\kappa}(c_1, c_2)\right) W}
		{ c_2^{-1} \partial_2 F_{\kappa}(c_1, c_2) X^{1/2} + \psi^{1/2} s X^{-1/2}}.
\end{equation}
where $(c_1, c_2, s)$ is defined as in Proposition~\ref{proposition:system-of-Eq-T}. We use $\mathcal{L}_{\psi, \kappa}$
to denote the distribution of the random variable $(X, W, H_{\psi,
  \kappa}(G, X, W))$ when $(G, X, W) \sim \normal(0, 1) \otimes \mu$.
Define
\begin{equation*}
\mathcal{L}\opt(\mu, \psi) = \mathcal{L}_{\psi, \kappa\opt(\psi)}.
\end{equation*} 
\end{definition}
Proposition~\ref{proposition:system-of-Eq-T} shows that the mapping $\psi \to \kappa\opt(\mu,\psi)$ is well-defined, 
strictly monotonically increasing, and satisfies $\lim_{\psi \to \infty} \kappa\opt(\mu,\psi) = \infty$. 
\subsection{Main statement}

Below we present the main mathematical result of this paper. Section \ref{sec:ProofMain} presents the
proof of this theorem, with most technical legwork deferred to the appendices.
%
\begin{theorem}\label{theorem:main}
Consider i.i.d. data $(\by,\bX)=\{(y_i,\bx_i)\}_{i\le n}$ where $\bx_i\sim\normal(\bzero,\bSigma_n)$ and $ \P\big( y_i = +1\big|\bx_i\big) =f(\<\btheta_{*,n},\bx_i\>)$.
Assume $n,p\to\infty$ with $p/n\to \psi\in(0,\infty)$, and  
$\btheta_{*,n},\bSigma_n,f$ satisfying Assumptions
\ref{assumption:Lambdas}, \ref{assumption:converge},
\ref{assumption:non-degenerate-f}. (In particular, $\rho, \mu$ are defined by Assumption \ref{assumption:converge}.)

Let $\psi\opt(0)$ be defined as per  Eq.~(\ref{eq:Psistar0}), and $\kappa\opt(\mu,\psi)$, $\error\opt(\mu,\psi)$ be determined as per
Definition \ref{def:KappaE}.
Then the following hold:
\begin{enumerate}
\item[$(a)$] With probability tending to one, the data are linearly separable if $\psi>\psi\opt(0)$ 
and are not linearly separable if $\psi < \psi\opt(0)$. 
%
\item[$(b)$]  Let $\kappa_n(\by,\bX) \equiv\max_{\|\btheta\|_2 =1}\min_{i\le n}y_i\<\btheta,\bx_i\>$ be the maximum margin for data $(\by,\bX)$.
In the overparametrized regime $\psi>\psi\opt(0)$ we have, as $n \to \infty$,
\begin{align}\label{eq:maxmargin:converge}
\kappa_n(\by,\bX) \pto \kappa\opt(\mu,\psi)\, .
\end{align}
\item[$(c)$] Let $\Pred_n(\by,\bX) \equiv \P\big(y^{\snew}\<\hbtheta^{\sMM}(\by,\bX),\bx^{\snew}\> \le 0\big)$ be the prediction error of the maximum margin classifier.
In the overparametrized regime $\psi>\psi\opt(0)$ we have,  as $n \to \infty$,
\begin{align}\label{eq:error:converge}
\Pred_n(\by,\bX) \pto \Pred\opt(\mu,\psi)\, .
\end{align}
\item[$(d)$] Recall that $\{\lambda_i\}_{i \in [p]}$, $\{\bv_i\}_{i \in [p]}$ are the eigenvalues and eigenvectors of $\bSigma_n$ and 
$\bar{w}_i = \sqrt{p\lambda_i} \<\bv_i,\btheta_{*,n}\>/\rho_n$  for $\rho_n = \<\btheta_{*,n},\bSigma_n\btheta_{*,n}\>^{1/2}$ (see
Section~\ref{sec:Overview}).
Let $\hat{\LL}_n(\by, \bX)$ denote the empirical distribution induced by 
$\{(\lambda_i, \bar{w}_i, \sqrt{p}\langle \hat{\btheta}_n^{{\sMM}}, \bv_i\rangle)\}_{i \in [p]}$, i.e., 
$
\hat{\LL}_n(\by, \bX) = \frac{1}{p} \sum_{i=1}^p \delta_{(\lambda_i, \bar{w}_i, \sqrt{p}\langle \hat{\btheta}_n^{{\sMM}}, \bv_i\rangle)}.
$
In the overparametrized regime $\psi>\psi\opt(0)$ we have, as $n \to \infty$,
	\begin{equation}\label{eq:coord:dist:converge}
		W_2\left(\hat{\LL}_n(\by, \bX), \LL\opt(\mu, \psi)\right) \pto 0.
	\end{equation}

\end{enumerate}
\end{theorem}

\begin{remark}
Point $(a)$ in Theorem \ref{theorem:main} is a generalization of the recent result of \cite{candes2018phase}, which concerns the
case in which $f(x)$ is a logistic function. 

The main content of Theorem \ref{theorem:main} is in parts $(b)$, $(c)$ and $(d)$. To the best of our knowledge, the only case that had been characterized before
is the one of isotropic covariates and purely random labels (i.e.  $\bSigma_n = \id_{p(n)}$ and $f(x) = 1/2$). In this case the
asymptotic value of the maximum margin was first determined rigorously by  Shcherbina and Tirozzi \cite{shcherbina2003rigorous}, confirming
the non-rigorous result by Gardner \cite{gardner1988space}.
\end{remark}

\subsection{Proof technique}
\noindent{\bf Parts $(a)$, $(b)$.} Consider first the problem of determining the asymptotics of the maximum margin. Recall that $\bX\in\reals^{n\times p}$
denotes the matrix with rows $\bx_1,\dots, \bx_n$ and,
for any $\kappa > 0$, define the event  
\begin{equation*}
\event_{n, \psi, \margin} = \left\{ \exists \btheta \in \R^{p}, ~\ltwo{\btheta} \le 1, 
	\text{such that}~y_i \langle \bx_i, \btheta \rangle \ge \kappa~\text{for $i \in [n]$} \right\}.
\end{equation*}
In order to prove Theorem \ref{theorem:main}.$(b)$, we would like to determine for which pairs $(\psi,\kappa)$
we have $\P(\event_{n, \psi, \margin})\to 1$ and for which pairs instead $\P(\event_{n, \psi, \margin})\to 0$.

To this end, we define $\xi_{n, \psi, \kappa}$ by 
\begin{equation}
\label{eq:XiDef}
\xi_{n, \psi, \kappa} = \min_{\ltwo{\btheta} \le 1}~
		\max_{\ltwo{\blambda} \le 1, \by \odot \blambda \ge 0} \frac{1}{\sqrt{p}}\blambda^{\sT} (\kappa \by - \bX \btheta)\, .
\end{equation}
We then have:
\begin{equation}
\label{eqn:equiv-xi-event-want}
		\left\{\xi_{n, \psi, \kappa} > 0 \right\} \iff \event^c_{n, \psi, \margin}
			~~\text{and}~~
		\left\{\xi_{n, \psi, \kappa} = 0 \right\} \iff  \event_{n, \psi, \margin}\, .
\end{equation}
This equivalence follows immediately from the following identities
\begin{align*}
	\event_{n, \psi, \margin} &= \left\{ \exists \btheta \in \R^{p}, 
			\ltwo{\btheta} \le 1~\text{such that } \ltwo{(\kappa\one- (\by \odot \bX \btheta))_+}=0 \right\}\, ,\\
		\xi_{n, \psi, \kappa} &= \min_{\ltwo{\btheta} \le 1}~
			\frac{1}{\sqrt{p}}\ltwo{(\kappa\one- (\by \odot \bX \btheta))_+}\, .
\end{align*}
We are then reduced to study the typical value of the minimax problem \eqref{eq:XiDef}.
Notice that this problem is convex in $\btheta$, concave in $\blambda$, and linear in the Gaussian random matrix $\bX$.
We use Gordon's Gaussian comparison inequality \cite{Gordon88} (and in particular a refinement due to 
Thrampoulidis, Oymak, Hassibi \cite{ThrampoulidisOyHa15}) to study the asymptotics of 	$\xi_{n, \psi, \kappa}$.

The result on data separability (Theorem \ref{theorem:main}.$(a)$) essentially follows from the analysis 
of the maximum margin. First, a direct implication of Theorem \ref{theorem:main}.$(b)$ is the separability 
of data (with high probability) if $\psi > \psi^*(0)$. To show the other way 
around, we consider instead $\Xi_{n, \psi}$, whose definition replaces the constraint $\ltwo{\btheta} \le 1$
 in equation~\eqref{eq:XiDef} by   $\norm{\btheta}_{\bSigma_n}:=\langle \btheta, \bSigma_n \btheta \rangle^{1/2} = 1$ and substitutes $\kappa$ by $0$: 
\begin{equation}
\label{eqn:def-XI}
	\Xi_{n, \psi} = \min_{\norm{\btheta}_{\bSigma_n} = 1}~
		\max_{\ltwo{\blambda} \le 1, \by \odot \blambda \ge 0} -\frac{1}{\sqrt{p}}\blambda^{\sT}  \bX \btheta\,
		=  \min_{\norm{\btheta}_{\bSigma_n} = 1} \frac{1}{\sqrt{p}} \norm{(- \by \odot \bX \btheta)_+}_2.
\end{equation}
We can then apply the same technique as before to show the typical value of 
$\Xi_{n, \psi}$ is strictly positive, indicating non-separability of the data, under the situation when $\psi < \psi^*(0)$.

\vspace{0.25cm}

\noindent{\bf Part $(c)$.} 
Let $G, Z$ be independent $\normal(0, 1)$. Define for $r\in \R_{\ge 0}$ and $\nu \in [-1, 1]$ the error function: 
\begin{equation}\label{eq:def:Q}
	\erf(r, \nu) = \P\left(\nu YG+ \sqrt{1-\nu^2} Z \le 0\right)~~\text{where}~~
		\begin{cases}
			\P(Y= +1\mid G) = f(r \cdot G) \\
			\P(Y= -1\mid G) = 1 -f(r \cdot G)
		\end{cases}
\end{equation}
Note that in the expression \eqref{eq:def:pred:error}, $y^{\snew}$ depends on $\<\btheta_{*,n}, \bx^{\snew}\>$. Conditional on $(\by,\bX)$, $\<\btheta_{*,n}, \bx^{\snew}\>$ and $\<\hbtheta^{\sMM}(\by,\bX),\bx^{\snew}\>$ are jointly Gaussian with covariance $\langle \hbtheta^{\sMM}, \btheta_{*, n}\rangle_{\bSigma_n}:= \langle \hbtheta^{\sMM}, \bSigma_n \btheta_{*, n}\rangle$. Thus, it is straightforward to see that the generalization error of the max-margin classifier is given by
\begin{equation}
	\label{eqn:generalization-error-expression}
	\error_n(\by,\bX) = 
		\erf\left(\norm{\btheta_{*, n}}_{\bSigma_n}, 
			\langle \hbtheta^{\sMM}, \btheta_{*, n}\rangle_{\bSigma_n}/(\norm{\btheta_{*, n}}_{\bSigma_n} 
				\normsmall{\hat{\btheta}^{\sMM}}_{\bSigma_n})\right)\, .
\end{equation}
Comparing this expression to Theorem \ref{theorem:main}.$(c)$, and recalling that
$\norm{\btheta_{*, n}}_{\bSigma_n}:= \langle \btheta_{*,n}, \bSigma_n \btheta_{*,n}\rangle^{1/2}\to \rho$ by Assumption \ref{assumption:converge},  we see that 
it is sufficient to prove that for each $\psi > \psi\opt (0)$,
\begin{equation}
	 \lim_{n\to \infty} 
	\frac{\< \hbtheta^{\sMM}, \btheta_{*, n}\>_{\bSigma_n}}{\norm{\btheta_{*, n}}_{\bSigma_n} \normsmall{\hat{\btheta}^{\sMM}}_{\bSigma_n}}
	= \nu\opt(\psi) \,, \label{eq:LimitScalarProd}
\end{equation}
where $\nu^\star(\psi)$ is defined in \eqref{eq:AsyErrorDef}. To this end, we generalize the definition of $\xi_{n, \psi, \kappa}$ as follows. For any compact set
$\bTheta_p \subseteq \reals^p$, we define the quantity $\xi_{n, \psi, \kappa}(\bTheta_p)$ by
\begin{equation}\label{eq:xiDef}
\xi_{n, \psi, \kappa}(\bTheta_p)= \min_{\btheta \in \bTheta_p}~
		\max_{\ltwo{\blambda} \le 1, \by \odot \blambda \ge 0} \frac{1}{\sqrt{p}}\blambda^{\sT} (\kappa \by - \bX \btheta)\, .
\end{equation}
Notice that if $\xi_{n, \psi, \kappa}(\bTheta_p)>\xi_{n, \psi, \kappa}$ with high probability, then $\P(\hbtheta^{\sMM}_{n}\in \bTheta_p)\to 0$
as $n\to\infty$. 
In order to control the left hand side of Eq.~\eqref{eq:LimitScalarProd}, we consider sets of the form
\begin{align}
\bTheta_p = \left\{\btheta\in \reals^p :\; \|\btheta\|_2\le 1 \, ,\;\;\;\frac{\< \hbtheta^{\sMM}, \btheta_{*, n}\>_{\bSigma_n}}{
\norm{\btheta_{*, n}}_{\bSigma_n} \normsmall{\hat{\btheta}^{\sMM}}_{\bSigma_n}}\in J_n\right\}\, ,
\end{align}
for suitable sequences of compact sets $J_n\subseteq\reals$. 
Using Gordon's inequality to lower bound
$\xi_{n, \psi, \kappa}(\bTheta_p)$, we can guarantee that  Eq.~\eqref{eq:LimitScalarProd} holds. 

\section{Proofs}
\label{sec:ProofMain}

This section provides a complete outline of the proof of Theorem \ref{theorem:main}, deferring 
most technical steps to the appendices. 

Notice that the definition of the joint distribution of
$(y,\bx)$ and the statements in Theorem \ref{theorem:main} are independent of the choice of a basis on $\reals^p$.
We can therefore work in the basis in which $\bSigma$ is diagonal. This amounts to assuming that $\bSigma = \bLambda$
is diagonal. 

The proof proceeds through a sequence of steps to progressively simplify the quantities 
$\xi_{n, \psi, \kappa}$ $\xi_{n, \psi, \kappa}(\bTheta_p)$. We begin by setting $\xi_{n, \psi, \kappa}^{(0)}\equiv \xi_{n, \psi, \kappa}$ 
and $\xi^{(0)}_{n, \psi, \kappa}(\bTheta_p) \equiv \xi_{n, \psi, \kappa}(\bTheta_p)$. By Gordon's inequality and concentration, we will reduce  $\xi^{(0)}_{n, \psi, \kappa}$ to quantities $\xi^{(i)}_{n, \psi, \kappa}, i=1,2$, to be defined below.

\paragraph{Step 1: Reduction from $\xi_{n, \psi, \kappa}^{(0)}$ to $\xi_{n, \psi, \kappa}^{(1)}$ via Gordon's comparison inequality} 
We use Gordon's comparison 
inequality to reduce the original minimax of a complicated Gaussian process to that of a much simpler Gaussian process. 
We state Gordon's comparison inequality below for reader's convenience~\cite{Gordon88, ThrampoulidisOyHa15}.

\begin{theorem}[Theorem 3 from~\cite{ThrampoulidisOyHa15}]
\label{thm:Gordon-improve}
Let $\mathcal{C}_1 \subseteq \R^p$ and $\mathcal{C}_2 \subseteq \R^n$ be two compact sets and let 
$T: \mathcal{C}_1 \times \mathcal{C}_2 \to \R$ be a continuous function. Let 
$\bX = (X_{i, j})\iid \normal(0, 1) \in \R^{p\times n}$, $\bg \sim \normal(0, I_p)$ and 
$\bh \sim \normal(0, I_n)$ be independent vectors and matrices. Define, 
\begin{align*}
Q_1(\bX) &= \min_{\bw_1 \in \mathcal{C}_1}\max_{\bw_2 \in \mathcal{C}_2} \bw_1^{\sT} \bX \bw_2 + T(\bw_1, \bw_2) \\
Q_2(\bg, \bh) &= \min_{\bw_1 \in \mathcal{C}_1}\max_{\bw_2 \in \mathcal{C}_2} \ltwo{\bw_2} \bg^{\sT} \bw_1 + \ltwo{\bw_1} \bh^{\sT} \bw_2 + T(\bw_1, \bw_2)
\end{align*}
Then the following hold: 
\begin{enumerate}
\item For all $t\in \R$
	\begin{equation*}
	\P(Q_1(\bX) \le t) \le 2\, \P(Q_2(\bg, \bh) \le t). 
	\end{equation*}
\item Suppose $\mathcal{C}_1$ and $\mathcal{C}_2$ are both convex, and $T$ is convex concave in $(\bw_1, \bw_2)$. Then, for all $t\in \R$
	\begin{equation*}
	\P(Q_1(\bX) \ge t) \le 2\, \P(Q_2(\bg, \bh) \ge t). 
	\end{equation*}
\end{enumerate}
\end{theorem}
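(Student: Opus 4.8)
The plan is to derive both inequalities from Gordon's Gaussian min--max comparison lemma (a min--max strengthening of Slepian's inequality, \cite{Gordon88}), which I would either invoke directly or, for a self-contained treatment, prove by the standard Gaussian interpolation scheme: regularize the inner $\max$ and outer $\min$ by soft-max/soft-min, interpolate $\bX_u=\sqrt u\,\bX^{(1)}+\sqrt{1-u}\,\bX^{(2)}$ between the two Gaussian ensembles, differentiate in $u$, integrate by parts via Stein's identity, read off the sign of the derivative from the covariance comparisons, and finally remove the regularization. The two Gaussian processes indexed by $(\bw_1,\bw_2)\in\mathcal{C}_1\times\mathcal{C}_2$ that I would feed into the lemma are
\[
X_{\bw_1,\bw_2}=\bw_1^{\sT}\bX\bw_2+g_0\,\|\bw_1\|_2\|\bw_2\|_2,\qquad Y_{\bw_1,\bw_2}=\|\bw_2\|_2\,\bg^{\sT}\bw_1+\|\bw_1\|_2\,\bh^{\sT}\bw_2,
\]
where $g_0\sim\normal(0,1)$ is an extra independent scalar included only to equalize second moments.

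The first step is to verify the covariance hypotheses, with $\bw_1$ in the role of the ``min/block'' index and $\bw_2$ the ``max'' index: (i) $\E[X_{\bw_1,\bw_2}^2]=\E[Y_{\bw_1,\bw_2}^2]=2\|\bw_1\|_2^2\|\bw_2\|_2^2$; (ii) for fixed $\bw_1$ one has the identity $\E[X_{\bw_1,\bw_2}X_{\bw_1,\bw_2'}]=\E[Y_{\bw_1,\bw_2}Y_{\bw_1,\bw_2'}]=\|\bw_1\|_2^2\big(\langle\bw_2,\bw_2'\rangle+\|\bw_2\|_2\|\bw_2'\|_2\big)$, i.e.\ equality within a block; and (iii) for $\bw_1\neq\bw_1'$,
\[
\E[X_{\bw_1,\bw_2}X_{\bw_1',\bw_2'}]-\E[Y_{\bw_1,\bw_2}Y_{\bw_1',\bw_2'}]=\big(\|\bw_1\|_2\|\bw_1'\|_2-\langle\bw_1,\bw_1'\rangle\big)\big(\|\bw_2\|_2\|\bw_2'\|_2-\langle\bw_2,\bw_2'\rangle\big)\ge 0,
\]
which is nonnegative by Cauchy--Schwarz. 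These are exactly the conditions under which Gordon's lemma gives $\P\big(\min_{\bw_1}\max_{\bw_2}(X_{\bw_1,\bw_2}+T)\le t\big)\le\P\big(\min_{\bw_1}\max_{\bw_2}(Y_{\bw_1,\bw_2}+T)\le t\big)$. Since the lemma is stated for finite index sets, I would first replace $\mathcal{C}_1,\mathcal{C}_2$ by finite $\eps$-nets, apply it there, and then let the nets refine, using continuity of $T$ and sample-path continuity of both processes (so that the discretized min--max converge uniformly) to pass to the compact sets.

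For part 1, set $\bar Q_1(g_0)=\min_{\bw_1}\max_{\bw_2}(\bw_1^{\sT}\bX\bw_2+g_0\|\bw_1\|_2\|\bw_2\|_2+T)$; the comparison just established gives $\P(\bar Q_1\le t)\le\P(Q_2\le t)$. On $\{g_0\le 0\}$ the extra nonpositive term can only lower the inner $\max$ and hence the outer $\min$, so $\bar Q_1\le Q_1$ there; since $g_0$ is independent of $\bX$, $\tfrac12\P(Q_1\le t)=\P(Q_1\le t,\,g_0\le 0)\le\P(\bar Q_1\le t)\le\P(Q_2\le t)$, which is the first assertion. For part 2 the only use of convexity is a single application of Sion's minimax theorem to the \emph{primary} problem: since $(\bw_1,\bw_2)\mapsto\bw_1^{\sT}\bX\bw_2+T$ is convex in $\bw_1$ and concave in $\bw_2$ on the convex compact sets $\mathcal{C}_1,\mathcal{C}_2$, we may exchange to get $Q_1=\max_{\bw_2}\min_{\bw_1}(\bw_1^{\sT}\bX\bw_2+T)=-\min_{\bw_2}\max_{\bw_1}\big(\bw_1^{\sT}(-\bX)\bw_2-T\big)$, hence $\{Q_1\ge t\}=\{R\le -t\}$ with $R=\min_{\bw_2}\max_{\bw_1}\big(\bw_2^{\sT}(-\bX^{\sT})\bw_1-T\big)$. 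Now $-\bX^{\sT}$ is again an i.i.d.\ Gaussian matrix, so part 1 applies with the two blocks of variables swapped and $T$ replaced by $-T$, giving $\P(R\le -t)\le 2\,\P(R_2\le -t)$ where $R_2=\min_{\bw_2}\max_{\bw_1}\big(\|\bw_1\|_2\tilde\bg^{\sT}\bw_2+\|\bw_2\|_2\tilde\bh^{\sT}\bw_1-T\big)$. Finally, by the elementary inequality $\max_{\bw_2}\min_{\bw_1}(\cdot)\le\min_{\bw_1}\max_{\bw_2}(\cdot)$ applied to the \emph{auxiliary} objective (no convexity needed), together with the symmetries $-\tilde\bg\stackrel{\mathrm{d}}{=}\tilde\bg$, $-\tilde\bh\stackrel{\mathrm{d}}{=}\tilde\bh$ and a relabeling $\tilde\bg\leftrightarrow\bh$, $\tilde\bh\leftrightarrow\bg$, one obtains $\P(R_2\le -t)=\P(-R_2\ge t)\le\P(Q_2\ge t)$, whence $\P(Q_1\ge t)\le 2\,\P(Q_2\ge t)$.

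I expect the main obstacle to be keeping the min--max exchanges and the factors of $2$ straight. In particular, Sion's theorem may be used \emph{only} on the primary problem: the auxiliary objective $\|\bw_2\|_2\bg^{\sT}\bw_1+\|\bw_1\|_2\bh^{\sT}\bw_2+T$ need not be convex--concave, so only the weak inequality $\max\min\le\min\max$ is available there and the whole argument must be arranged so that this suffices; and the variance-equalizing scalar $g_0$ must be threaded through so that conditioning on its sign produces exactly the stated factor $2$ and nothing worse. The finite-to-compact approximation, though routine, is where the continuity of $T$ and compactness of $\mathcal{C}_1,\mathcal{C}_2$ are genuinely needed; and if one chooses to prove Gordon's lemma from scratch rather than cite it, the interpolation-and-integration-by-parts computation, with the soft-max/soft-min regularization in place to justify differentiating under the expectation, is the real technical core.
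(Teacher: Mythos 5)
The paper does not prove this statement: it is Theorem~3 of Thrampoulidis--Oymak--Hassibi, quoted verbatim and cited, with no proof attempted in the text. So there is no ``paper's own proof'' to compare against. Your sketch, however, is a correct and complete outline of the standard derivation, and as far as I can tell it recovers exactly the argument used in the cited reference.

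A few points worth confirming are handled correctly. The covariance comparison is set up with the right sign: you get equality within a fixed $\bw_1$-block and $\E X_{\bw_1,\bw_2}X_{\bw_1',\bw_2'}\ge\E Y_{\bw_1,\bw_2}Y_{\bw_1',\bw_2'}$ across blocks, which is precisely the hypothesis under which Gordon's inequality yields $\P(\min_{\bw_1}\max_{\bw_2}(X+T)\le t)\le\P(\min_{\bw_1}\max_{\bw_2}(Y+T)\le t)$ (via $\P(\cap_i\cup_j\{X_{ij}\ge\lambda_{ij}\})\ge\P(\cap_i\cup_j\{Y_{ij}\ge\lambda_{ij}\})$ with $\lambda_{ij}=t-T_{ij}$). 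The variance-equalizing scalar $g_0$ and the conditioning on $\{g_0\le 0\}$ produce exactly one factor of $2$ in part~1. In part~2 you correctly apply Sion only on the primary objective (which is convex--concave), reduce $\{Q_1\ge t\}$ to an event of the form $\{R\le -t\}$ for a min--max with the blocks swapped and $T\mapsto -T$, invoke part~1 once (the second factor of $2$), and then use only the elementary inequality $\max\min\le\min\max$ on the auxiliary objective together with the sign symmetry of the Gaussians and the relabeling $\tilde\bg\leftrightarrow\bh$, $\tilde\bh\leftrightarrow\bg$ to land on $\P(Q_2\ge t)$. The passage from finite nets to compact sets is the routine step you describe, requiring continuity of $T$ and of the linear Gaussian process, and can be made rigorous by shifting $t$ by $\pm\delta$ before taking limits. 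I see no gap.
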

\noindent\noindent
Let $\bg\sim \normal(0, I_p)$, $\bh \sim \normal(0, I_n)$, $\bu \sim \normal(0, I_n)$ 
be independent Gaussian vectors and $\bw$ the unit vector in the direction of $\bLambda^{1/2}\btheta_*$, i.e.
$\bw = \bLambda^{1/2}\btheta_*/\|\bLambda^{1/2}\btheta_*\|_2$. 
Further, let $\by\in\{+1,-1\}^n$ be such that $y_i$ is conditional independent of $\bg,\bh$ and $(u_j)_{j\neq i}$ given $u_i$,
with $\P(y_i=+1|u_i) = f(\rho_n u_i)$ and $\rho_n=\|\bLambda^{1/2}\btheta_*\|_2$.
Define $\xi_{n, \psi, \kappa}^{(1)}$ and $\xi_{n, \psi, \kappa}^{(1)}(\bTheta_p)$ by letting
\begin{equation}
\label{eqn:def-xi-1}
 \xi_{n, \psi, \kappa}^{(1)}(\bTheta_{p}) \defeq  
 	\min_{\btheta \in \bTheta_{p}}
 	\max_{\ltwo{\blambda} \le 1, \blambda \odot \by \geq 0}~
		\frac{1}{\sqrt{p}}  \left(\blambda^{\sT} (\kappa \by- \langle \bLambda^{1/2} \bw, \btheta \rangle \bu - 
			\ltwobig{\proj_{\bw^{\perp}} \bLambda^{1/2} \btheta}\bh) + \ltwo{\blambda} \bg^{\sT}\proj_{\bw^{\perp}}\bLambda^{1/2} \btheta\right),
\end{equation}
and $\xi_{n, \psi, \kappa}^{(1)}=\xi_{n, \psi, \kappa}^{(1)}(\Ball^p(1))$.

We can apply Gordon's inequality (Theorem~\ref{thm:Gordon-improve}) to relate $\xi_{n, \psi, \kappa}^{(0)}$
to $\xi_{n, \psi, \kappa}^{(1)}$: the result is given in the next lemma, whose proof can be found in  
Appendix~\ref{sec:proof-lemma-xi-0-xi-1}.
\begin{lemma}
\label{lemma:xi-0-xi-1}
The following inequalities hold for any $t\in \R$, any compact  set $\bTheta_p \subseteq \reals^p$:
\begin{equation}
\begin{split}
&\P(\xi_{n, \psi, \kappa}^{(0)} \le t ) \le 2 \,\P(\xi_{n, \psi, \kappa}^{(1)}  \le t ) ~\text{and}~
\P(\xi_{n, \psi, \kappa}^{(0)}  \ge t ) \le 2\, \P(\xi_{n, \psi, \kappa}^{(1)}  \ge t ).  \\
&\P(\xi_{n, \psi, \kappa}^{(0)} (\bTheta_{p}) \le t ) \le 2 \P(\xi_{n, \psi, \kappa}^{(1)} (\bTheta_{p}) \le t )\, .
\end{split}
\end{equation}
Further, if $\bTheta_p$ is convex,  we have
 \begin{equation}
\P(\xi_{n, \psi, \kappa}^{(0)} (\bTheta_{p}) \ge t ) \le 2 \,\P(\xi_{n, \psi, \kappa}^{(1)} (\bTheta_{p}) \ge t )\, .
\end{equation}
\end{lemma}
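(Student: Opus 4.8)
The plan is to apply Gordon's comparison inequality (Theorem \ref{thm:Gordon-improve}) in the form stated, after massaging the minimax \eqref{eq:xiDef} into the bilinear template $\min_{\bw_1}\max_{\bw_2}\bw_1^{\sT}\bX\bw_2+T(\bw_1,\bw_2)$. First I would rewrite the Gaussian randomness: since $\bx_i\sim\normal(\bzero,\bLambda)$ we can write $\bx_i=\bLambda^{1/2}\bg_i$ with $\bg_i\iid\normal(\bzero,\id_p)$, so $\bX=\bG\bLambda^{1/2}$ for an i.i.d.\ standard Gaussian matrix $\bG\in\reals^{n\times p}$. The crucial observation is that the label $y_i$ depends on $\bx_i$ only through $\<\btheta_{*,n},\bx_i\>=\<\bLambda^{1/2}\btheta_{*,n},\bg_i\>=\rho_n\<\bw,\bg_i\>$, where $\bw=\bLambda^{1/2}\btheta_{*,n}/\rho_n$ is a unit vector. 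Therefore we should decompose each $\bg_i$ into its component along $\bw$, call it $u_i=\<\bw,\bg_i\>$, and its component in $\bw^\perp$; the $u_i$ carry all the label information (giving $\P(y_i=+1|u_i)=f(\rho_n u_i)$), while the $\bw^\perp$-components form a fresh Gaussian matrix independent of $\by$. Concretely, write $\bX\btheta=\bG\bLambda^{1/2}\btheta = (\bG\bw)\<\bLambda^{1/2}\btheta,\bw\> + \bG\,\proj_{\bw^\perp}\bLambda^{1/2}\btheta$, and the first term $\bG\bw$ has i.i.d.\ $\normal(0,1)$ entries which we can identify with $\bu$, while $\bG$ restricted to $\bw^\perp$ provides the Gaussian matrix to which we apply Gordon.

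Next I would set up the correspondence with Theorem \ref{thm:Gordon-improve}. In \eqref{eq:xiDef} the variable $\btheta$ ranges over the compact set $\bTheta_p$ (or $\Ball^p(1)$), and $\blambda$ ranges over $\mathcal{C}_2=\{\|\blambda\|_2\le 1,\ \by\odot\blambda\ge 0\}$, which is compact and convex. Conditionally on $\by$ (hence on $\bu$), the term $\frac{1}{\sqrt p}\blambda^{\sT}(\kappa\by-(\bG\bw)\<\bLambda^{1/2}\btheta,\bw\>)$ is a deterministic-in-$\bG|_{\bw^\perp}$ function $T(\btheta,\blambda)$ — note $\bG\bw=\bu$ is independent of $\bG|_{\bw^\perp}$ — and the remaining term $-\frac{1}{\sqrt p}\blambda^{\sT}\bG\,\proj_{\bw^\perp}\bLambda^{1/2}\btheta$ is exactly the bilinear form in the fresh Gaussian matrix, after reparametrizing $\bw_1\leftrightarrow \proj_{\bw^\perp}\bLambda^{1/2}\btheta$ (a linear, hence range-compact, image of $\bTheta_p$) and $\bw_2\leftrightarrow -\blambda/\sqrt p$. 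Gordon's theorem then replaces $\bw_1^{\sT}\bG|_{\bw^\perp}\bw_2$ by $\|\bw_2\|\,\bg^{\sT}\bw_1+\|\bw_1\|\,\bh^{\sT}\bw_2$, with $\bg,\bh$ independent standard Gaussians independent of everything; translating back, $\|\bw_1\|=\|\proj_{\bw^\perp}\bLambda^{1/2}\btheta\|$ and $\|\bw_2\|=\|\blambda\|/\sqrt p$, which produces exactly the expression \eqref{eqn:def-xi-1} defining $\xi^{(1)}_{n,\psi,\kappa}(\bTheta_p)$ (the normalization factors $1/\sqrt p$ are absorbed consistently). The two one-sided probability bounds in the first display of the lemma follow from the two parts of Theorem \ref{thm:Gordon-improve}: part~1 (valid for arbitrary continuous $T$ and compact sets) gives the $\le t$ bounds, and part~2 gives the $\ge t$ bound whenever the constraint set is convex and $T$ is convex--concave. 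Here $\bTheta_p=\Ball^p(1)$ and $\mathcal{C}_2$ are convex, and $T$ is affine in $\btheta$ and affine in $\blambda$ after the reparametrization, so it is trivially convex--concave; this yields both $\ge$ and $\le$ directions for $\xi^{(1)}_{n,\psi,\kappa}$. For general compact (not necessarily convex) $\bTheta_p$ only part~1 applies, giving the $\le t$ bound for $\xi^{(1)}_{n,\psi,\kappa}(\bTheta_p)$; and when $\bTheta_p$ is additionally convex, the linear image $\proj_{\bw^\perp}\bLambda^{1/2}\bTheta_p$ is convex and part~2 delivers the $\ge t$ bound in the final display. The factor $2$ is exactly the one in Theorem \ref{thm:Gordon-improve}.

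The main technical nuisance — rather than a deep obstacle — is handling the conditioning correctly: one must condition on $\bu$ (equivalently on $\by$), verify that $\by$ is a measurable function of $\bu$ alone and that $\bg,\bh,\bu$ together with $\bG|_{\bw^\perp}$ have the right joint independence structure, then apply Gordon's inequality in the conditional probability space and integrate out. Because the bound in Theorem \ref{thm:Gordon-improve} holds for every fixed $t$ with a universal constant $2$ and for every fixed choice of the function $T$ (here $T$ depends on the frozen randomness $\bu$), the conditional bounds integrate directly to the unconditional ones by the tower property. A secondary point to check is that $\bTheta_p\subseteq\Ball^p(1)$ in our applications so that the set of admissible $\bw_1=\proj_{\bw^\perp}\bLambda^{1/2}\btheta$ is contained in a fixed ball (hence compact), which is needed for Theorem \ref{thm:Gordon-improve} to apply; this is immediate since $\|\proj_{\bw^\perp}\bLambda^{1/2}\btheta\|_2\le \lambda_{\max}(\bLambda)^{1/2}\|\btheta\|_2\le C^{1/2}$ under Assumption \ref{assumption:Lambdas}. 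I would relegate the bookkeeping of these measurability and compactness checks to Appendix \ref{sec:proof-lemma-xi-0-xi-1} and keep the main-text statement as the clean input for the subsequent reduction steps.
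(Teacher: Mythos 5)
Your argument matches the paper's proof in Appendix~\ref{sec:proof-lemma-xi-0-xi-1}: write $\bX=\bar\bX\bLambda^{1/2}$, extract $\bu=\bar\bX\bw$ (which carries all the label dependence), observe that $\bar\bX\proj_{\bw^\perp}$ is independent of $(\bu,\by)$, condition on $(\bu,\by)$, apply Theorem~\ref{thm:Gordon-improve} conditionally, and integrate out via the tower property. Two small cleanups for a final write-up: the conditioning must be on the \emph{pair} $(\bu,\by)$, not on $\by$ alone (knowing $\by$ does not determine $\bu$); and the CGMT minimizing variable should remain $\btheta$ (or its bijective image $\bLambda^{1/2}\btheta$, split into the coordinate $\langle\bLambda^{1/2}\btheta,\bw\rangle$ and the $\bw^\perp$-coordinates) rather than the non-injective image $\proj_{\bw^\perp}\bLambda^{1/2}\btheta$, because your $T(\btheta,\blambda)$ depends on $\langle\bLambda^{1/2}\btheta,\bw\rangle$, which is lost after projection — the Gaussian bilinear form then acts only on the $\bw^\perp$-coordinates, with the remaining coordinate entering only through $T$, a standard and easily justified extension of the theorem as stated.
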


\paragraph{Step 2: Reduction from $\xi_{n, \psi, \kappa}^{(1)}$ to $\xi_{n, \psi, \kappa}^{(2)}$.}
A simple calculation gives 
\begin{equation}
\label{eqn:def-xi-1-xi-1-bar}
\xi_{n, \psi, \kappa}^{(1)} = \left(\bar{\xi}_{n, \psi, \kappa}^{(1)}\right)_+
~\text{and}~
\xi_{n, \psi, \kappa}^{(1)}(\bTheta_p) = \left(\bar{\xi}_{n, \psi, \kappa}^{(1)}(\bTheta_p)\right)_+,
\end{equation}
where 
\begin{equation}
\label{eqn:def-xi-1-bar}
\begin{split}
 \bar{\xi}_{n, \psi, \kappa}^{(1)} \defeq \min_{\ltwo{\btheta} \le 1}
 	\frac{1}{\sqrt{p}}\left(\ltwobigg{\left(\kappa \one - \langle \bLambda^{1/2} \bw, \btheta \rangle (\by \odot \bu) 
		- \ltwobig{\proj_{\bw^{\perp}} \bLambda^{1/2} \btheta} (\by \odot \bh)\right)_+}  
		+ \bg^{\sT}\proj_{\bw^{\perp}}\bLambda^{1/2} \btheta\right) \\
 \bar{\xi}_{n, \psi, \kappa}^{(1)}(\bTheta_p) \defeq \min_{\btheta \in \bTheta_{p}}
 	\frac{1}{\sqrt{p}}\left(\ltwobigg{\left(\kappa \one - \langle \bLambda^{1/2} \bw, \btheta \rangle (\by \odot \bu) 
		- \ltwobig{\proj_{\bw^{\perp}} \bLambda^{1/2} \btheta} (\by \odot \bh)\right)_+}  
		+ \bg^{\sT}\proj_{\bw^{\perp}}\bLambda^{1/2} \btheta\right) 
\end{split}
\end{equation}
Recall the definition of $F_{\kappa}$. Now we define the quantity $\xi_{n, \psi, \kappa}^{(2)}$ and $\xi_{n, \psi, \kappa}^{(2)}$ by,
\begin{equation}
\label{eqn:def-xi-2}
\begin{split}
 \xi_{n, \psi, \kappa}^{(2)} &\defeq 
	\min_{\ltwo{\btheta} \le 1} 
		\psi^{-1/2} \cdot F_{\kappa} \left(\langle \bLambda^{1/2} \bw, \btheta \rangle, \ltwobig{\proj_{\bw^{\perp}} \bLambda^{1/2} \btheta} \right) +
			 \frac{1}{\sqrt{p}} \bg^{\sT} \proj_{\bw^{\perp}}\bLambda^{1/2} \btheta \nonumber  \\
 \xi_{n, \psi, \kappa}^{(2)}(\bTheta_{p}) &\defeq 
	\min_{\btheta \in \bTheta_{p}} ~~
		\psi^{-1/2} \cdot F_{\kappa} \left(\langle \bLambda^{1/2} \bw, \btheta \rangle, \ltwobig{\proj_{\bw^{\perp}} \bLambda^{1/2} \btheta} \right) +
			 \frac{1}{\sqrt{p}} \bg^{\sT} \proj_{\bw^{\perp}}\bLambda^{1/2} \btheta \nonumber
\end{split}
\end{equation}
The next lemma allows us to move from $\xi_{n, \psi, \kappa}^{(1)}$ to $(\xi_{n, \psi, \kappa}^{(2)})_+$. 
\begin{lemma}
\label{lemma:xi-1-xi-2}
The following convergence holds for any sequence of compact sets $\{\bTheta_p\}_{p\in \N}$ 
satisfying $\bTheta_p \subseteq \{\btheta \in \R^p: \ltwo{\btheta} \le 1\}$: 
\begin{equation}
\left|\xi_{n, \psi, \kappa}^{(1)}(\bTheta_p) -  \left(\xi_{n, \psi, \kappa}^{(2)}(\bTheta_p)\right)_+\right| \pto 0.
 \end{equation}
 In particular, we have 
 \begin{equation}
 \left| \xi_{n, \psi, \kappa}^{(1)} -  \left(\xi_{n, \psi, \kappa}^{(2)}\right)_+\right| \pto 0.
\end{equation}
\end{lemma}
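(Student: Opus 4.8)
The plan is to establish the stronger statement $\bar{\xi}_{n, \psi, \kappa}^{(1)}(\bTheta_p) - \xi_{n, \psi, \kappa}^{(2)}(\bTheta_p) \pto 0$. Granting this, the lemma follows: by \eqref{eqn:def-xi-1-xi-1-bar} we have $\xi_{n, \psi, \kappa}^{(1)}(\bTheta_p) = \bigl(\bar{\xi}_{n, \psi, \kappa}^{(1)}(\bTheta_p)\bigr)_+$, and since $x\mapsto x_+$ is $1$-Lipschitz, $\bigl|\xi_{n, \psi, \kappa}^{(1)}(\bTheta_p) - \bigl(\xi_{n, \psi, \kappa}^{(2)}(\bTheta_p)\bigr)_+\bigr| \le \bigl|\bar{\xi}_{n, \psi, \kappa}^{(1)}(\bTheta_p) - \xi_{n, \psi, \kappa}^{(2)}(\bTheta_p)\bigr| \pto 0$; the ``in particular'' assertion is the case $\bTheta_p = \Ball^p(1)$, since $\xi_{n, \psi, \kappa}^{(i)} = \xi_{n, \psi, \kappa}^{(i)}(\Ball^p(1))$ for $i=1,2$.

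The key structural observation is that the two objectives minimized over $\bTheta_p$ differ only in a term depending on $\btheta$ through the scalar pair $\bigl(a_1(\btheta),a_2(\btheta)\bigr):=\bigl(\langle\bLambda^{1/2}\bw,\btheta\rangle,\ \|\proj_{\bw^{\perp}}\bLambda^{1/2}\btheta\|_2\bigr)$. Writing $B_n(\btheta):=p^{-1/2}\bg^{\sT}\proj_{\bw^{\perp}}\bLambda^{1/2}\btheta$ for the affine term common to both, one has $\bar{\xi}_{n, \psi, \kappa}^{(1)}(\bTheta_p)=\min_{\btheta\in\bTheta_p}\bigl\{\Phi_n(a_1(\btheta),a_2(\btheta))+B_n(\btheta)\bigr\}$ and $\xi_{n, \psi, \kappa}^{(2)}(\bTheta_p)=\min_{\btheta\in\bTheta_p}\bigl\{\psi^{-1/2}F_{\kappa}(a_1(\btheta),a_2(\btheta))+B_n(\btheta)\bigr\}$, where
\begin{equation*}
\Phi_n(a_1,a_2):=\sqrt{n/p}\,\Bigl(\frac{1}{n}\sum_{i=1}^{n}(\kappa-a_1 y_iu_i-a_2 y_ih_i)_+^2\Bigr)^{1/2}.
\end{equation*}
Since $\|\btheta\|_2\le 1$ on $\bTheta_p$ and $\lambda_{\max}(\bLambda_n)\le C$ by Assumption~\ref{assumption:Lambdas}, $a_1(\btheta)^2+a_2(\btheta)^2=\|\bLambda^{1/2}\btheta\|_2^2\le C$, so $(a_1(\btheta),a_2(\btheta))$ lies in the fixed compact set $K:=\{(a_1,a_2)\in\R\times\R_{\ge 0}:a_1^2+a_2^2\le C\}$ for every $\btheta\in\bTheta_p$ and every $n$. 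Using $|\min_{\btheta}(A+B)-\min_{\btheta}(A'+B)|\le\sup_{\btheta}|A-A'|$, the problem reduces to the uniform law of large numbers $\sup_{(a_1,a_2)\in K}\bigl|\Phi_n(a_1,a_2)-\psi^{-1/2}F_{\kappa}(a_1,a_2)\bigr|\pto 0$.

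To prove this I would fix $(a_1,a_2)\in K$ first. The triples $(y_i,u_i,h_i)$, $i\le n$, are i.i.d.\ with $h_i$ independent of $(y_i,u_i)$, so $y_ih_i\sim\normal(0,1)$ independently of $y_iu_i$; and using that $u_i$ is symmetric Gaussian with $\P(y_i=+1\mid u_i)=f(\rho_n u_i)$ one checks that $y_iu_i$ has the same law as $Y_nG$ with $G\sim\normal(0,1)$, $\P(Y_n=+1\mid G)=f(\rho_n G)$. Hence the summands of $\Phi_n(a_1,a_2)^2$ are i.i.d.\ with mean $\E[(\kappa-a_1 Y_nG-a_2 Z)_+^2]$, which is bounded uniformly in $n$ by $\E[(\kappa+\sqrt C|G|+\sqrt C|Z|)^2]$ and, by dominated convergence (using $\rho_n\to\rho$ and continuity and boundedness of $f$), converges to $F_{\kappa}(a_1,a_2)^2$; Chebyshev then gives the pointwise statement $\Phi_n(a_1,a_2)\pto\psi^{-1/2}F_{\kappa}(a_1,a_2)$. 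To upgrade to uniformity, note that on $K$ the map $(a_1,a_2)\mapsto(\kappa-a_1t_1-a_2t_2)_+^2$ is Lipschitz with constant at most $L(t_1,t_2):=2(|t_1|+|t_2|)(\kappa+\sqrt C|t_1|+\sqrt C|t_2|)$, and crucially $L(y_iu_i,y_ih_i)=L(u_i,h_i)$ does not involve the labels, so $\frac{1}{n}\sum_{i=1}^{n}L(y_iu_i,y_ih_i)\pto\E[L(G',Z)]<\infty$ with $G',Z\iid\normal(0,1)$; the same bound makes $F_{\kappa}^2$ Lipschitz on $K$. Covering $K$ by a finite $\eps$-net, combining the pointwise convergence at the finitely many net points with these Lipschitz estimates, and sending $\eps\downarrow 0$, yields $\sup_K|\Phi_n^2-\psi^{-1}F_{\kappa}^2|\pto 0$; finally $|\Phi_n-\psi^{-1/2}F_{\kappa}|\le|\Phi_n^2-\psi^{-1}F_{\kappa}^2|^{1/2}$ completes the argument.

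The routine ingredients are the distributional identity for $y_iu_i$, the $\rho_n\to\rho$ limit by dominated convergence, and the Chebyshev bound. The only step demanding any care is the uniform law of large numbers, and what makes it painless is precisely the reduction above: the random objective depends on the high-dimensional variable $\btheta$ only through the two \emph{bounded} scalar summaries $(a_1(\btheta),a_2(\btheta))$ ranging over a fixed planar compact set $K$, so stochastic equicontinuity is supplied by the single integrable Lipschitz envelope $L(u_i,h_i)$.
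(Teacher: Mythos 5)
Your proposal is correct and follows essentially the same route as the paper's proof: both reduce the comparison to a uniform law of large numbers over a fixed two-dimensional compact set by observing that the two objectives depend on $\btheta$ only through the bounded scalar pair $\bigl(\langle\bLambda^{1/2}\bw,\btheta\rangle,\ \|\proj_{\bw^{\perp}}\bLambda^{1/2}\btheta\|_2\bigr)$, then prove the ULLN via pointwise concentration at a finite net plus a stochastic Lipschitz/equicontinuity estimate. The only (inessential) differences are that the paper carries the Lipschitz estimate on the square-rooted empirical process $g^{(1)}_{n,\psi,\kappa}$ directly, using $\frac{1}{\sqrt p}(\|\bu\|_2+\|\bh\|_2)\le 3\psi^{-1/2}$ with high probability together with a Bernstein bound and Borel--Cantelli to get almost sure convergence, whereas you bound the Lipschitz constant of the squared summand and pass through $|a-b|\le|a^2-b^2|^{1/2}$ at the end with Chebyshev, giving convergence in probability, which is all the lemma asserts.
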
\noindent\noindent
One important benefit of this reduction is that both $\xi_{n, \psi, \kappa}^{(2)}$ and $\xi_{n, \psi, \kappa}^{(2)}(\bTheta_p)$ 
(for $\bTheta_p$ convex) are minima of convex optimization problems (in contrast, the optimization problems 
defining $\bar{\xi}_{n, \psi, \kappa}^{(1)}$ and $\bar{\xi}_{n, \psi, \kappa}^{(1)}(\bTheta_p)$ are not convex). Indeed, 
notice that the function $F_{\kappa}(c_1, c_2)$ is convex in $(c_1, c_2)$. We
collect all the useful properties of the function $F_{\kappa}(c_1, c_2)$ in the next lemma,  
whose proof is deferred to Appendix~\ref{sec: proof-lemma-F-convex-increasing}.
\begin{lemma}
\label{lemma:F-convex-increasing}
The following properties hold for $F_{\kappa}:\reals\times\reals\to\reals$:
\begin{itemize}
\item[$(a)$] If $\kappa > 0$, then the function $F_{\kappa}:\R \times \R \to \R$ is strictly convex.
\item[$(b)$] For any fixed $c_1 \in \R$, the function $c_2 \mapsto F_{\kappa}(c_1, c_2)$ is strictly increasing for $c_2\in\reals_{\ge 0}$. 
\item[$(c)$] The function $F_{\kappa}:\R \times \R\to \R$ is continuously differentiable.
\item[$(d)$] For any $c_1,c_2\in\reals$, the function $\kappa\mapsto F_{\kappa}(c_1,c_2)$ is strictly increasing for $\kappa\in\reals$.
\end{itemize}
\end{lemma}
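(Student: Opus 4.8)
The plan is to prove all four items by working with the representation $F_\kappa(c_1,c_2)=\big\|(\kappa-c_1T-c_2Z)_+\big\|_{L^2}$, where $T:=YG$ is the variable appearing in Assumption~\ref{assumption:non-degenerate-f} and $Z\perp T$ with $Z\sim\normal(0,1)$; I will write $c=(c_1,c_2)$ and $c\cdot(T,Z)=c_1T+c_2Z$. Three observations about the joint law $\nu$ of $(T,Z)$ will be used repeatedly: (I) $\nu$ is non-atomic and gives zero mass to every affine line of $\R^2$ (vertical lines because $T$ has no atoms, non-vertical lines because $Z\mid T$ is a non-degenerate Gaussian); (II) $\P\big(c_1T+c_2Z<t\big)>0$ for every $t\in\R$ and every $(c_1,c_2)\neq(0,0)$ --- clear if $c_2\neq0$, and a restatement of the unboundedness of $\operatorname{supp}(T)$ from Assumption~\ref{assumption:non-degenerate-f} if $c_2=0$; (III) $\phi(x):=x_+^2$ is nonnegative, convex, nondecreasing, strictly convex on $(0,\infty)$, and $C^1$ with Lipschitz derivative $\phi'(x)=2x_+$. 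Each of $(a)$--$(d)$ reduces to a pointwise inequality in $(t,z)$, lifted to $F_\kappa$ via monotonicity and the triangle inequality of $\|\cdot\|_{L^2}$, together with an equality-case analysis using (I) and (II).

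For $(a)$: convexity of $t\mapsto t_+$ gives, for $\lambda\in(0,1)$ and $\bar c=\lambda c+(1-\lambda)c'$, the pointwise bound $(\kappa-\bar c\cdot(T,Z))_+\le\lambda(\kappa-c\cdot(T,Z))_+ +(1-\lambda)(\kappa-c'\cdot(T,Z))_+$ with all terms nonnegative, so monotonicity of $\|\cdot\|_{L^2}$ and Minkowski yield $F_\kappa(\bar c)\le\lambda F_\kappa(c)+(1-\lambda)F_\kappa(c')$, proving convexity. Now let $\kappa>0$ and $c\neq c'$. Equality would force equality in Minkowski, i.e. $(\kappa-c\cdot(T,Z))_+=\theta\,(\kappa-c'\cdot(T,Z))_+$ a.s. for some $\theta\ge0$; by (II) and $\kappa>0$ neither side is a.s.\ zero, so $\theta>0$, and then on the positive-probability event $\{\kappa-c'\cdot(T,Z)>0\}$ one gets $(1-\theta)\kappa=(c_1-\theta c_1')T+(c_2-\theta c_2')Z$, which by (I) forces $(c_1,c_2)=\theta(c_1',c_2')$, and then $(1-\theta)\kappa=0$ with $\kappa>0$ gives $\theta=1$, i.e. $c=c'$ --- a contradiction. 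Hence $F_\kappa$ is strictly convex.

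For $(d)$: monotonicity of $t\mapsto t_+$ gives $(\kappa-c_1T-c_2Z)_+\le(\kappa'-c_1T-c_2Z)_+$ pointwise when $\kappa\le\kappa'$, hence $F_\kappa\le F_{\kappa'}$, with strict inequality on $\{\kappa'-c_1T-c_2Z>0\}$, which by (II) has positive probability once $(c_1,c_2)\neq(0,0)$; so $F_\kappa(c_1,c_2)<F_{\kappa'}(c_1,c_2)$. For $(b)$: fix $c_1$ and $0\le c_2<c_2'$. Since the variances agree, $c_2'Z$ has the same distribution as $c_2Z_1+\sqrt{(c_2')^2-c_2^2}\,Z_2$, jointly with $T$, where $Z_1,Z_2\iid\normal(0,1)$ are independent of $T$; hence $F_\kappa(c_1,c_2')^2=\E\big[\phi\big(\kappa-c_1T-c_2Z_1-\sqrt{(c_2')^2-c_2^2}\,Z_2\big)\big]$. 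Conditioning on $(T,Z_1)$, the argument has a $\normal\big(\kappa-c_1T-c_2Z_1,\ (c_2')^2-c_2^2\big)$ law, so Jensen gives $\E[\,\cdot\mid T,Z_1]\ge\phi(\kappa-c_1T-c_2Z_1)$, strictly for every $(T,Z_1)$ since a non-degenerate Gaussian charges $(0,\infty)$ where $\phi$ is strictly convex (III). Integrating, $F_\kappa(c_1,c_2')^2>F_\kappa(c_1,c_2)^2$.

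For $(c)$: write $G_\kappa:=F_\kappa^2=\E[\phi(\kappa-c_1T-c_2Z)]$. By (III) the integrand and its gradient $-2\,(T,Z)\,(\kappa-c_1T-c_2Z)_+$ are, locally uniformly in $(c_1,c_2)$, dominated by an integrable function of $(T,Z)$ (using that $T,Z$ are sub-Gaussian), so differentiation under the expectation and dominated convergence give $G_\kappa\in C^1(\R^2)$; by (II), $G_\kappa(c_1,c_2)=0$ forces $(\kappa-c_1T-c_2Z)_+=0$ a.s., possible only at $(c_1,c_2)=(0,0)$ with $\kappa\le0$, so for $\kappa>0$ we have $G_\kappa>0$ on $\R^2$ and $F_\kappa=\sqrt{G_\kappa}$ is $C^1$ as a composition of $C^1$ maps. (The lone exceptional point $(0,0)$ with $\kappa\le0$ is irrelevant to the applications; for $\kappa<0$ it is handled by $G_\kappa(c_1,c_2)\le 4(c_1^2+c_2^2)\,\E\big[(T^2+Z^2)\mathbf 1_{\{(c_1^2+c_2^2)(T^2+Z^2)\ge\kappa^2\}}\big]=o(c_1^2+c_2^2)$, so $F_\kappa$ is differentiable there with vanishing gradient.) I expect the only genuinely delicate step to be the equality analysis for strict convexity in $(a)$: ruling out proportionality of the two clipped affine forms is exactly where the non-atomicity of $\nu$ on lines (I) and the unboundedness of $\operatorname{supp}(T)$ from Assumption~\ref{assumption:non-degenerate-f} (II) enter; the remaining items are routine consequences of the pointwise monotonicity and convexity of $(\cdot)_+$ and of standard dominated-convergence arguments.
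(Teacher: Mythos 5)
Your proof is correct and follows essentially the same route as the paper's: for each item you use the same key device (Minkowski together with convexity of $x\mapsto x_+$ and an equality-case analysis for $(a)$; the Gaussian decomposition $c_2'Z\ed c_2Z_1+\sqrt{(c_2')^2-c_2^2}\,Z_2$ plus strict Jensen for $(b)$; differentiation under the integral for $F_\kappa^2$ together with positivity of $F_\kappa$ for $(c)$; pointwise monotonicity of $x\mapsto x_+$ for $(d)$). The only difference is that you spell out the Minkowski equality case in $(a)$ more explicitly --- ruling out a.s.\ proportionality of the two clipped affine forms via non-atomicity of the law of $(T,Z)$ on lines and the unbounded support of $T$, where the paper simply asserts this follows from Assumption~\ref{assumption:non-degenerate-f} --- and you add a treatment of the exceptional point $(c_1,c_2)=(0,0)$ with $\kappa\le 0$ in $(c)$ that the paper glosses over.
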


\paragraph{Step 3: Analysis of $\xi_{n, \psi, \kappa}^{(2)}$} 
Characterizing the limit of $\xi_{n, \psi, \kappa}^{(2)}$ is the technically most challenging part. Our approach 
is to find a new representation of $\xi_{n, \psi, \kappa}^{(2)}$ that allows one to easily guess
its asymptotic behavior. Let us define $\bar{\bw} = \sqrt{p} \bw$. Recall 
\begin{align}
\xi_{n, \psi, \kappa}^{(2)}
	= \min_{\ltwo{\btheta} \le 1} ~~
		\psi^{-1/2} \cdot F_{\kappa} \left(\langle \btheta, \bLambda^{1/2}\bw\rangle, 
			\ltwobig{\proj_{\bw^{\perp}} \bLambda^{1/2} \btheta} \right) 
				+ \frac{1}{\sqrt{p}} \bg^{\sT} \proj_{\bw^{\perp}} \bLambda^{1/2} \btheta \label{eqn:xi-2-expansion} 
\end{align}
Thus we have (note we rescale $\btheta$ by $\sqrt{p}$)
\begin{align}
	\xi_{n, \psi, \kappa}^{(2)} =\min_{\frac{1}{p}\sum_{i=1}^p \theta_i^2 \le 1} ~~
		\psi^{-1/2} \cdot F_{\kappa} \left(\frac{1}{p}\sum_{i=1}^p \theta_i \lambda_i^{1/2} \bar{w}_i, 
			\left(\frac{1}{p} \sum_{i=1}^p (\proj_{\bw^{\perp}}(\bLambda^{1/2} \btheta))_i^2\right)^{1/2} \right) 
				+ \frac{1}{p}\sum_{i=1}^p \left(\proj_{\bw^{\perp}} \bg\right)_i \lambda_i^{1/2} \theta_i.
				\label{eqn:xi-2-expansion-two}
\end{align}
Let $\Q_n$ be the empirical distribution of the coordinates of $(\bg, \blambda, \bar{\bw})$, i.e.
the probability measure on $\reals^3$ defined by
\begin{equation}
    \Q_n =\frac{1}{p}\sum_{i=1}^{p}\delta_{(g_i,\lambda_i,\bar{w}_i)}.
\end{equation}
Let $\cL^2(\Q_n) = \cL^2(\Q_n,\reals^3)$ be the space of functions $h:\reals^3\to\reals$, $(g,\lambda,w)\mapsto h(g,\lambda,w)$ that are square integrable 
with respect to $\Q_n$. Notice that the $n$ points that form $\Q_n$ are almost surely distinct, and therefore we can identify this space with the
space of vectors $\btheta\in\reals^p$. We also define three random variables in the same space by $G(g,\lambda,w) = g$, 
$X(g,\lambda,w) = \lambda$, $W(g,\lambda,w) = w$. Denote $\langle \cdot,\cdot\rangle_{\Q_n}$ (resp. $\norm{\cdot}_{\Q_n}$) denote the inner product (resp.norm) in  $\cL^2(\Q_n)$. With these definitions, we can rewrite the expression in Eq.~\eqref{eqn:xi-2-expansion-two} as
\begin{equation}
\label{eqn:xi-2-new-rep}
\xi_{n, \psi, \kappa}^{(2)} =
	\min_{h \in \cL^2(\Q_n): \norm{h}_{\Q_n} \le 1} 
		\psi^{-1/2} \cdot F_{\kappa} \left( \langle X^{1/2} h, W \rangle_{\Q_n}, 
			\normbig{\proj_{W^{\perp}}(X^{1/2} h)}_{\Q_n} \right)  + \langle X^{1/2} \proj_{W^{\perp}}(G), h\rangle_{\Q_n}.
\end{equation}
Now we define $\Q_{\infty} \defeq \normal(0, 1) \otimes \mu$. By Assumption \ref{assumption:converge}, the following convergence holds almost surely
\begin{equation}
\label{eqn:Q-n-converge-to-Q-infty}
\Q_n \stackrel{W_2}{\Longrightarrow} \Q_{\infty}.
\end{equation}
Motivated by the representation in Eq.~\eqref{eqn:xi-2-new-rep} and the convergence in Eq.~\eqref{eqn:Q-n-converge-to-Q-infty}, 
we define $\xi_{\psi, \kappa}$ by 
\begin{equation}
\label{eqn:def-xi-infty-psi-kappa}
\xi_{\psi, \kappa} \defeq 
	\min_{h = h(g, X, W) \in \cL^2(\Q_{\infty}): \norm{h}_{\Q_{\infty}} \le 1} 
		\psi^{-1/2} \cdot F_{\kappa} \left( \langle X^{1/2} h, W \rangle_{\Q_\infty}, 
			\normbig{\proj_{W^{\perp}}(X^{1/2} h)}_{\Q_\infty} \right)  + \langle X^{1/2} \proj_{W^{\perp}}(G), h\rangle_{\Q_\infty}.
\end{equation}
In other words, in defining $\xi_{\psi, \kappa}$, we replace $\Q_n$ on the right-hand side of Eq.~\eqref{eqn:xi-2-new-rep} by its limit 
$\Q_{\infty}$. Proposition~\ref{proposition:xi-n-2-xi-infty} below characterizes both the asymptotic behavior of the optimal value 
$\xi^{(2)}_{n, \psi, \kappa}$ and the optimal solution $\hat{\btheta}_{n, \psi, \kappa}^{(2)}$ of the problem defined in 
Eq.~\eqref{eqn:xi-2-expansion} (note that $\xi^{(2)}_{n, \psi, \kappa}$ and $\hat{\btheta}_{n, \psi, \kappa}^{(2)}$ are random).
The proof of Proposition~\ref{proposition:xi-n-2-xi-infty} can be found  in Appendix~\ref{sec:all-property-optimization}.
\begin{proposition}
\label{proposition:xi-n-2-xi-infty}
\begin{enumerate}
\item[$(a)$] If $\psi \le \psi^{\low}(\kappa)$, then almost surely 
	\begin{equation*}
	\liminf_{n\to \infty, p/n\to \psi} \xi_{n, \psi, \kappa}^{(2)} > 0.
	\end{equation*} 
\item[$(b)$] If $\psi > \psi^{\low}(\kappa)$, then 
	\begin{itemize}
	\item the minimum value $\xi_{n, \psi, \kappa}^{(2)}$ satisfies the almost sure convergence
		\begin{equation}
			\label{eqn:goal-finite-infinite}
		\lim_{n\to \infty, p/n\to \psi} \xi_{n, \psi, \kappa}^{(2)} = \xi_{\psi, \kappa} = T(\psi, \kappa).
		\end{equation}
	\item the minimum $\hat{\btheta}_{n, \psi, \kappa}^{(2)}$ (of the problem defined in Eq.~\eqref{eqn:xi-2-expansion})
		is uniquely defined. It satisfies the almost sure convergence 
		\begin{equation}
		\label{eqn:lim-btheta_n_2_c1c2}
		\begin{split}
			\lim_{n\to \infty, p/n\to \psi} \langle \hat{\btheta}_{n, \psi, \kappa}^{(2)}, \bLambda^{1/2}\bw\rangle &= c_1(\psi, \kappa), \\
			\lim_{n\to \infty, p/n\to \psi} \normbig{\hat{\btheta}_{n, \psi, \kappa}^{(2)}}_{\bLambda^{1/2}} &= 
				\left(c_1^2(\psi, \kappa) + c_2^2(\psi, \kappa)\right)^{1/2}
		\end{split}
		\end{equation}
		where $(c_1(\psi, \kappa), c_2(\psi, \kappa))$ is defined as in Proposition~\ref{proposition:system-of-Eq-T}.  
		Further, denote $\mathcal{L}_{n, \psi, \kappa}^{(2)}$ to be the empirical distribution of	 
		$\{(\lambda_i, \bar{w}_i, \sqrt{p}\hat{\btheta}_{n, \psi, \kappa; i}^{(2)})\}_{i \in [p]}$ where 
		$\{\hat{\btheta}_{n, \psi, \kappa; i}^{(2)}\}_{i \in [p]}$ are the coordinates of $\btheta_{n, \psi, \kappa}^{(2)}$. 
		Then $\mathcal{L}_{n, \psi, \kappa}^{(2)}$ satisfies the almost sure convergence  
		\begin{equation}
			\label{eqn:W-2-converge}
			\lim_{n\to \infty, p/n\to \psi} 
				W_2 \left(\mathcal{L}_{n, \psi, \kappa}^{(2)}, \mathcal{L}_{\psi, \kappa}\right) = 0,
		\end{equation}
		where $L_{\psi, \kappa}$ is defined in Definition~\ref{def:KappaE}. 
	\end{itemize}
\end{enumerate}
\end{proposition}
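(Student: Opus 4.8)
The plan is to treat both $\xi_{n,\psi,\kappa}^{(2)}$ and its candidate limit $\xi_{\psi,\kappa}$ (defined in \eqref{eqn:def-xi-infty-psi-kappa}) as \emph{strictly convex} optimization problems over the unit ball of $\cL^2(\Q_n)$, respectively $\cL^2(\Q_\infty)$, and then (i) identify $\xi_{\psi,\kappa}$ with $T(\psi,\kappa)$ by writing the KKT conditions of the limiting problem, and (ii) transfer the convergence $\Q_n\stackrel{W_2}{\Longrightarrow}\Q_\infty$ into convergence of the optimal values and the optimal solutions. The key structural inputs are Lemma~\ref{lemma:F-convex-increasing} (joint strict convexity and monotonicity of $F_\kappa$ for $\kappa>0$), Assumption~\ref{assumption:Lambdas} (which gives $X\in[c,C]$ a.s.\ and hence controlled growth of all integrands), Assumption~\ref{assumption:converge} (in particular $\|W\|_{\Q_n}=\|W\|_{\Q_\infty}=1$ and $G\perp(X,W)$), and Proposition~\ref{proposition:system-of-Eq-T} (unique solvability of the system \eqref{eqn:system-of-equations-c-1-c-2-s-with-no-G} for $\psi>\psi^{\low}(\kappa)$). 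For the limiting problem one first checks well-posedness: the functional in \eqref{eqn:def-xi-infty-psi-kappa} is convex (Lemma~\ref{lemma:F-convex-increasing}(a),(b)) and weakly lower semicontinuous, the closed unit ball is weakly compact, so a minimizer exists; and since $X\ge c>0$ a.s.\ the map $h\mapsto X^{1/2}h$ is injective, which together with strict convexity and monotonicity of $F_\kappa$ makes the functional strictly convex on the ball, so the minimizer $h^\star$ is unique (for $\kappa=0$ uniqueness may fail, but this is immaterial to Theorem~\ref{theorem:main}, where $\kappa=\kappa\opt(\psi)>0$).

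Next I would write the stationarity/KKT conditions for the limiting problem, with $s\ge 0$ the multiplier of $\|h\|_{\Q_\infty}\le 1$. Using $G\perp(X,W)$, $\E G=0$ and $\E_\mu W^2=1$ (so that $\proj_{W^\perp}(G)=G$ in $\cL^2(\Q_\infty)$), solving the stationarity equation for $h$ yields exactly
\[
h^\star \;=\; -\,\frac{\psi^{1/2}G + \big(\partial_1 F_\kappa(c_1,c_2)-c_1 c_2^{-1}\partial_2 F_\kappa(c_1,c_2)\big)W}{c_2^{-1}\partial_2 F_\kappa(c_1,c_2)\,X^{1/2}+\psi^{1/2}s\,X^{-1/2}}\;=\;H_{\psi,\kappa}(G,X,W),
\]
where $c_1=\langle X^{1/2}h^\star,W\rangle_{\Q_\infty}$ and $c_2=\|\proj_{W^\perp}(X^{1/2}h^\star)\|_{\Q_\infty}$. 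Substituting this form into the defining relations for $c_1$ and for $c_1^2+c_2^2=\|X^{1/2}h^\star\|_{\Q_\infty}^2$, together with the active-constraint identity $\|h^\star\|_{\Q_\infty}=1$, produces precisely the three equations of \eqref{eqn:system-of-equations-c-1-c-2-s-with-no-G}; by Proposition~\ref{proposition:system-of-Eq-T}(a) their unique solution is $(c_1(\psi,\kappa),c_2(\psi,\kappa),s(\psi,\kappa))$, so $\cL_{\psi,\kappa}$ is exactly the law of $(X,W,h^\star)$. Finally, taking the $\cL^2(\Q_\infty)$-inner product of the stationarity equation with $h^\star$ and simplifying with the constraint relations gives $\langle X^{1/2}\proj_{W^\perp}(G),h^\star\rangle_{\Q_\infty}=-\psi^{-1/2}(c_1\partial_1 F_\kappa+c_2\partial_2 F_\kappa)-s$, hence the optimal value equals $\psi^{-1/2}(F_\kappa-c_1\partial_1 F_\kappa-c_2\partial_2 F_\kappa)-s=T(\psi,\kappa)$ by \eqref{eqn:def-T}; that is, $\xi_{\psi,\kappa}=T(\psi,\kappa)$.

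For part (b) I would then prove $\xi_{n,\psi,\kappa}^{(2)}\to\xi_{\psi,\kappa}$ a.s.\ by a two-sided argument. The upper bound comes from evaluating the finite-$n$ problem at the fixed function $H_{\psi,\kappa}(g,\lambda,w)$ restricted to the atoms of $\Q_n$: since $\psi>\psi^{\low}(\kappa)$ forces $c_2,s>0$, the denominator is bounded away from $0$ on $\{c\le X\le C\}$, so $H_{\psi,\kappa}$ is continuous with at most linear growth in $g$, and $W_2$ convergence gives $\|H_{\psi,\kappa}\|_{\Q_n}\to1$ and convergence of the objective (after a vanishing rescaling to restore feasibility). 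The lower bound uses the scalarized representation $\xi_{n,\psi,\kappa}^{(2)}=\min_{c_1\in\R,\,c_2\ge0}\{\psi^{-1/2}F_\kappa(c_1,c_2)+g_n(c_1,c_2)\}$, where $g_n(c_1,c_2)$ is the value of the inner problem of minimizing the linear functional $h\mapsto\langle X^{1/2}\proj_{W^\perp}(G),h\rangle_{\Q_n}$ over $\{\|h\|_{\Q_n}\le1\}$ subject to the two scalar constraints, a problem with a closed-form value in a few $\Q_n$-moments of $(G,X,W)$; $W_2$ convergence plus the uniform bound $X\in[c,C]$ gives $g_n\to g_\infty$ uniformly on compacts, the minimizing pairs stay in a fixed compact set by coercivity (here $\zeta=\rho$, $\omega$, and the threshold $\psi^{\low}(\kappa)$ enter), and hence $\liminf_n\xi_{n,\psi,\kappa}^{(2)}\ge\min(\psi^{-1/2}F_\kappa+g_\infty)=\xi_{\psi,\kappa}$. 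Convergence of the minimizer then follows: the unique minimizing pair $(c_1^{(n)},c_2^{(n)})$ and the multiplier $s^{(n)}$ converge to $(c_1,c_2,s)$ (uniform convergence of strictly convex objectives on a common compact, plus uniqueness of the limit minimizer), and, with these converging parameters, $h_n=\sqrt{p}\,\hat\btheta_{n,\psi,\kappa}^{(2)}$ is the solution of the inner quadratic problem, which has the same closed form as $H_{\psi,\kappa}$ with $\Q_\infty$ replaced by $\Q_n$ and $(c_1,c_2,s)$ by $(c_1^{(n)},c_2^{(n)},s^{(n)})$ (and with $\proj_{W^\perp}(G)$ genuinely projected, the correction being $o(1)$); plugging converging parameters into a fixed continuous, linearly growing function then yields $W_2(\cL_{n,\psi,\kappa}^{(2)},\cL_{\psi,\kappa})\to0$ a.s., and specializing gives $\langle\hat\btheta_{n,\psi,\kappa}^{(2)},\bLambda^{1/2}\bw\rangle=c_1^{(n)}\to c_1$ and $\|\hat\btheta_{n,\psi,\kappa}^{(2)}\|_{\bLambda^{1/2}}^2=(c_1^{(n)})^2+(c_2^{(n)})^2\to c_1^2+c_2^2$. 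For part (a), when $\psi\le\psi^{\low}(\kappa)=\max\{\psi\opt(0),\psi_+(\kappa),\psi_-(\kappa)\}$ the scalar problem above has no minimizer of the required type with the ball constraint active: if $\psi\le\psi\opt(0)$ this is the non-separability direction of Theorem~\ref{theorem:main}(a) ($g_\infty$ cannot offset $F_0\ge0$), while if $\psi\le\psi_\pm(\kappa)$ the obstruction is at the boundary $c_2=0$, $c_1=\pm\zeta$, with $\psi_\pm$ being exactly the second-order conditions deciding whether the scalar minimizer has $c_2>0$; in all cases one extracts a strictly positive lower bound on $\xi_{n,\psi,\kappa}^{(2)}$ that is uniform in $n$, so $\liminf_n\xi_{n,\psi,\kappa}^{(2)}>0$ a.s.

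I expect the main obstacle to be Step~2 together with its interface to Proposition~\ref{proposition:system-of-Eq-T}: one must justify that the KKT conditions genuinely characterize the optimizer (no duality gap, a Slater-type constraint qualification, and activeness of $\|h\|\le1$ in the regime $\psi>\psi^{\low}(\kappa)$), and then exploit unique solvability of the nonlinear system — which is the paper's central technical innovation, obtained precisely by recognizing that system as the stationarity conditions of the strictly convex program \eqref{eqn:def-xi-infty-psi-kappa}. A secondary difficulty is the lower-bound half of the value convergence and all of part (a): controlling $\psi^{-1/2}F_\kappa+g_n$ uniformly in $n$, pinning down the compact region where the scalar minimizers live, and taming the randomness entering through $\Q_n$ (i.e.\ through $\bg$) with a quantitative uniform-integrability estimate — this is where the uniform spectrum bounds of Assumption~\ref{assumption:Lambdas}, the normalization $\E_\mu W^2=1$ from Assumption~\ref{assumption:converge}, and the precise form of $\zeta,\omega,\psi_\pm$ all do real work.
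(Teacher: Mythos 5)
Your overall architecture matches the paper's closely — treat $\xi^{(2)}_{n,\psi,\kappa}$ and $\xi_{\psi,\kappa}$ as strictly convex programs, establish the KKT characterization $h^\star = H_{\psi,\kappa}$, identify $\xi_{\psi,\kappa}$ with $T(\psi,\kappa)$, and invoke unique solvability from Proposition~\ref{proposition:system-of-Eq-T}. These steps are carried out in the paper via Lemmas~\ref{lemma:technical-1}--\ref{lemma:technical-3} and Corollary~\ref{coro:PsiLarge}, and your account of them is faithful, including the a priori compactness bound on $(c_1,c_2,s)$ and the $o(1)$ correction from $\proj_{W^\perp,\Q_n}(G)\ne G$.

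Where you genuinely diverge is the mechanism for convergence in part (b). The paper does \emph{not} run a two-sided upper/lower bound on the value: it proves $(c_{1,\psi,\kappa}(\Q_n),c_{2,\psi,\kappa}(\Q_n),s_{\psi,\kappa}(\Q_n))\to(c_1,c_2,s)$ directly by establishing uniform convergence of the nonlinear system's expectations $\E_{\Q_n}[V_{i,n}]\to\E_{\Q_\infty}[V_{i,\infty}]$ over the compact $\setsq$ (Lemma~\ref{lemma:uniform-convergence-c-1-c-2-t}), extracting subsequential limits, and using Lemma~\ref{lemma:nonzero-c-2-t} to rule out a spurious $\tilde s=0$ limit before invoking uniqueness. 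All of part (b) then follows from convergence of the triple. Your lower-bound route via the scalarized $g_n(c_1,c_2)$ is an attractive alternative, but it is not as clean as claimed: the inner problem has the \emph{nonconvex} equality constraint $\E_{\Q_n}[Xh^2]=c_1^2+c_2^2$, so the ``closed-form value in a few $\Q_n$-moments'' requires either a strong-duality / S-lemma argument for this quadratic trust-region-type subproblem or a reformulation that reintroduces a second multiplier in addition to $s$ — at which point one is effectively back to the paper's three-equation system. This is a fillable wrinkle but a real one, and the uniform-on-compacts convergence $g_n\to g_\infty$ would need the same careful moment control (Assumption~\ref{assumption:Lambdas}, $\E_{\Q_n}[W^2]=1$, $\langle G,W\rangle_{\Q_n}\to 0$) that the paper spends Lemma~\ref{lemma:uniform-convergence-c-1-c-2-t} on.

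Part (a) is where your sketch is genuinely too thin. The paper's positivity uniformly in $n$ rests on Lemma~\ref{lemma:technical-2}: one introduces a quantitative margin $\eta>0$ depending on $\kappa$ and $x_{\max}$, strengthens the three defining conditions of $\psi^{\low}(\kappa)$ by $\eta$, and shows each strengthened condition forces $\asL^\opt_{\psi,\kappa,\P}\ge\psi^{-1/2}x_{\max}^{1/2}\eta>0$ for \emph{any} $\P$ with the right moment structure; $W_2$ convergence then propagates the strengthened conditions from $\Q_\infty$ to $\Q_n$ for large $n$ (Corollary~\ref{coro:Psilow}). Your proposal identifies the right obstructions (non-separability when $\psi\le\psi^\opt(0)$, the boundary $c_2=0$, $c_1=\pm\zeta$ when $\psi\le\psi_\pm(\kappa)$), but ``one extracts a strictly positive lower bound uniform in $n$'' glosses over the actual mechanism; naively the finite-$n$ optimum could slide to zero along a sequence, and preventing this is precisely what the $\eta$-margin device accomplishes.
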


Let us emphasize that the almost sure convergence from $\lim_{n\to \infty}\xi_{n, \psi, \kappa}^{(2)} = \xi_{\psi, \kappa}$ (i.e., 
Eq.~\eqref{eqn:goal-finite-infinite}) is not an immediate consequence of the convergence 
$\Q_n \stackrel{W_2}{\Longrightarrow} \Q_{\infty}$  (Eq.~\eqref{eqn:Q-n-converge-to-Q-infty}).
Indeed, the optimization problem defining $\xi_{n, \psi, \kappa}^{(2)}$ has dimension $p$ increasing with $n$,
while  the problem defining $\xi_{\psi, \kappa}$ is infinite-dimensional (cf. Eq.~\eqref{eqn:xi-2-new-rep} 
and Eq.~\eqref{eqn:def-xi-infty-psi-kappa}). As a consequence, elementary arguments from empirical process theory do not apply: 
we refer to Appendix~\ref{sec:all-property-optimization} for details. 

As an immediate consequence of Proposition~\ref{proposition:system-of-Eq-T} and Proposition~\ref{proposition:xi-n-2-xi-infty},  
we obtain that, 
\begin{equation*}
\begin{split}
\liminf_{n \to \infty, p/n\to \psi} \xi_{n, \psi, \kappa}^{(2)} > 0~~~\text{for $\kappa > \kappa\opt(\psi)$}. \\
\lim_{n \to \infty, p/n\to \psi} \xi_{n, \psi, \kappa}^{(2)} \le 0~~~\text{for $\kappa \le \kappa\opt(\psi)$}. 
\end{split}
\end{equation*}

Together with Lemma~\ref{lemma:xi-0-xi-1} and Lemma~\ref{lemma:xi-1-xi-2}, we can pass the above result to $\xi_{n, \psi, \kappa}$:
\begin{itemize}
\item For $\kappa > \kappa\opt(\psi)$ 
	\begin{equation}
	\label{eqn:summary-xi-pos}
		\lim_{n \to \infty, p/n\to \psi} \P\left(\xi_{n, \psi, \kappa} > 0\right) = 1.
	\end{equation}
\item For $\kappa < \kappa\opt(\psi)$ 
	\begin{equation}
	\label{eqn:summary-xi-neg}
		\lim_{n \to \infty, p/n\to \psi} \P\left(\xi_{n, \psi, \kappa} = 0\right) = 1.
	\end{equation}
\item For $\kappa = \kappa\opt(\psi)$, we have for any $\eps > 0$, 
	\begin{equation}
	\label{eqn:summary-xi-0}
		\lim_{n \to \infty, p/n\to \psi} \P\left(\xi_{n, \psi, \kappa}\in [0, \eps]\right) = 1.
	\end{equation}
\end{itemize}
This characterizes the asymptotics of $\xi_{n,\psi,\kappa}$. We proceed analogously to characterize the behavior
of $\xi_{n, \psi, \kappa}(\bTheta_p)$ and therefore determine the high-dimensional limit of
$\langle \hat{\btheta}_n^{\sMM}, \btheta_{*, n}\rangle_{\bSigma_n}/(\norm{\btheta_{*, n}}_{\bSigma_n} \cdot \normsmall{\hat{\btheta}_n^{\sMM}}_{\bSigma_n})$.
The main result of this analysis is presented in the next proposition, whose proof is given in appendix~\ref{sec:proof-proposition:nu-n-nu-infty}.
\begin{proposition}
\label{proposition:nu-n-nu-infty}
Let $\psi > \psi\opt(0)$. For the max-margin linear classifier $\bx \to \sign(\langle \hat{\btheta}_n^{\sMM}, \bx \rangle)$, we have 
\begin{equation*}
	\frac{\< \hat{\btheta}^{\sMM}, \btheta_{*, n}\>_{\bSigma_n}}{\norm{\btheta_{*, n}}_{\bSigma_n} \cdot \normsmall{\hat{\btheta}^{\sMM}}_{\bSigma_n}}
		 \pto \nu\opt(\psi).
\end{equation*}
\end{proposition}

As a direct generalization of Proposition~\ref{proposition:nu-n-nu-infty}, Proposition~\ref{proposition:ECD-maxmargin} below provides asymptotics 
of the empirical distribution of $\{(\lambda_i, \bar{w}_i, \sqrt{p} \hat{\btheta}_{n, i}^{\sMM})\}_{i \in [p]}$. 
We defer the proof to appendix~\ref{sec:proof-proposition:ECD-maxmargin}.

\begin{proposition}
\label{proposition:ECD-maxmargin}
Let $\psi > \psi\opt(0)$. Recall that  $\hat{\LL}_n$ is the empirical distribution induced by\\
$\{(\lambda_i, \bar{w}_i, \sqrt{p}\langle \hat{\btheta}_n^{{\sMM}}, \bv_i\rangle)\}_{i \in [p]}$.
Then  $\hat{\LL}_n$ 
converges in probability to $\LL_{\psi, \kappa\opt(\psi)}$ in $W_2$ distance: 
	\begin{equation*}
		W_2(\hat{\LL}_n, \LL_{\psi, \kappa\opt(\psi)}) \pto 0.
	\end{equation*}
\end{proposition}

Finally,  we discuss the proportional asymptotics of $\Xi_{n, \psi}$ (defined in equation~\eqref{eqn:def-XI}).
Using the same proof techniques as above, we show the following result. The proof is in Appendix~\ref{sec:data-separability}.
\begin{lemma}
\label{lemma:proportional-asymptotics-of-Xi}
For $\psi < \psi^*(0)$, we have $\lim_{n \to \infty, p/n \to \psi} \P(\Xi_{n, \psi} > 0) = 1$.
\end{lemma}

The proof of Theorem~\ref{theorem:main} now follows: 
Parts $(a)$, $(b)$ follow immediately from Eq.~\eqref{eqn:equiv-xi-event-want}, 
	Eq.~\eqref{eqn:summary-xi-pos}, Eq.~\eqref{eqn:summary-xi-neg} and 
	Lemma~\ref{lemma:proportional-asymptotics-of-Xi}.
Part $(c)$ follows from Eq.~\eqref{eqn:generalization-error-expression} and 
	Proposition~\ref{proposition:nu-n-nu-infty}. Part $(d)$ follows from 
Proposition~\ref{proposition:ECD-maxmargin}. The proof of Theorem~\ref{thm:benign} proceeds by bounding $\error^\star(\mu,\psi)$ in Theorem \ref{theorem:main} in a delicate manner: for it's technicality, the proof is deferred to Section ~\ref{sec:proof:benign}. The proof of Theorem ~\ref{thm:RF:Gaussian} follows from Theorem ~\ref{theorem:main} and the universality of the random features model and the equivalent Gaussian model (cf. Section \ref{sec:GaussianModel}) proven in \cite{montanari2023universality_MaxMargin}. The proof of Propositions~\ref{prop:asymptoticRF} and  \ref{propo:Soft-Margin} are deferred to Sections~ \ref{sec:AsymptoticsRF} and \ref{sec:appendix:soft:margin} for their technicality.

\section*{Acknowledgements}

This work was partially supported by grants NSF CCF-1714305, IIS-1741162, and ONR
N00014-18-1-2729. We also acknoweldge support
 NSF through award DMS-2031883, the Simons Foundation
through Award 814639 for the Collaboration on the Theoretical Foundations of Deep Learning.

\appendix

\section{Notations}
\label{app:Notation}

We typically use lower case letters to denote scalars (e.g. $a,b,c,\dots\in\reals$), boldface lower case to denote vectors
(e.g. $\bu,\bv,\bw,\dots\in\reals^d$), and boldface upper case to denote matrices (e.g. $\bX, \bZ,\dots\in\reals^{d_1\times d_2}$).
The standard scalar product of two vectors $\bu,\bv\in\reals^d$ will be denoted by $\<\bu,\bv\>=\bu^{\sT}\bv = \sum_{i=1}^d u_iv_i$.
The corresponding norm is $\|\bv\|_2 = \<\bv,\bv\>^{1/2}$.  We will define other norms and scalar products within the text.

We occasionally use the notation $[a\pm b] \equiv[a-b,a+b] \equiv\{x\in\reals:\; a-b\le x\le a+b\}$ for intervals on the real line.

Given two probability measures $\mu_1, \mu_2$ on $\reals^d$, their Wasserstein distance $W_2$ is defined as
\begin{align}
W_2(\mu_1,\mu_2) \equiv \left\{\inf_{\gamma\in \cC(\mu_1,\mu_2)} \int
  \|\bx_1-\bx_2\|_2^2\gamma(\de\bx_1,\de\bx_2)\right\}^{1/2}\, ,
\end{align}
where the infimum is taken over the set of couplings $\cC(\mu_1,\mu_2)$ of $\mu_1$, $\mu_2$. 

Throughout the paper, we are interested in the limit $n,p\to\infty$, with $p/n\to \psi\in(0,\infty)$. We do not write this explicitly
each time, and often only write $n\to\infty$ (as in, for instance, $\lim_{n\to\infty}$). It is understood that $p=p_n$ is such that
$p_n/n\to \psi$.

\section{Properties of the asymptotic optimization problem}
\label{sec:all-property-optimization}

In this appendix we derive some important properties of the asymptotic optimization problem that 
determines the asymptotic maximum margin and prediction error. This has two formulations: the one 
given in Proposition \ref{proposition:system-of-Eq-T}, in terms of the three parameters $(c_1,c_2,s)$ and
the infinite-dimensional optimization in Eq.~\eqref{eqn:def-xi-infty-psi-kappa}. 

We begin by recalling some definitions, and introducing new ones in the next subsection. We will then establish  
some useful properties of the function $F$ in Section \ref{sec: proof-lemma-F-convex-increasing}, and 
of the asymptotic optimization problem \eqref{eqn:def-xi-infty-psi-kappa} in Sections \ref{sec:TechnicalLemmas} to
\ref{sec:ProofEqT}.

\subsection{Definitions}

Given a probability distribution $\P$ on $\reals^m$, we write $\cL^2(\P) =\cL^2(\reals^m,\P)$ for the Hilbert space of square integrable functions
$h:\reals^m\to\reals$, with scalar product
\begin{equation*}
\langle h_1, h_2 \rangle_{\P} = \E_{\P}\big\{h_1(\bZ) h_2(\bZ)\big\} = \int h_1(\bz)h_2(\bz) \, \P(\de\bz), 
\end{equation*}
and corresponding norm  $\norm{h}_{\P} = \langle h, h\rangle_{\P}^{1/2}$. (As usual, measurable functions are 
considered modulo the equivalence relation $h_1\sim h_1\Leftrightarrow \P(h_1\neq h_1) =0$.)
 
We use $W^{\perp}(\P)$ to denote the subspace of $\cL^2(\P)$ orthogonal to the random variable $W\in \cL^2(\P)$:
\begin{equation*}
W^{\perp}(\P) = \left\{ h \in \cL^2(\P): \langle h, W\rangle_{\P} = \E_{\P}[h W] = 0\right\}.
\end{equation*}
We denote by $\proj_{W^{\perp}, \P}$  the orthogonal projection operator onto the orthogonal complement $W^{\perp}(\P)$, i.e., for any 
$h \in \cL^2(\P)$, we define 
\begin{equation}
\proj_{W^{\perp}, \P}(h) = h - \frac{\langle h, W\rangle_{\P}}{\norm{W}_{\P}^2} W \, .
\end{equation}
Notice that the projector $\proj_{W^{\perp}, \P}$ depends on $\P$ because the scalar product $\langle h, W\rangle_{\P}$
and the norm $\norm{W}_{\P}$ do. However, we typically will drop this dependency as it is clear from the context.

In all of our applications, we  will actually consider $m=3$, denote by $(g,x,w)$ the coordinates in $\reals^3$,
and by $G(g,x,w) =g$, $X(g,x,w)=x$, $W(g,x,w) = w$ the corresponding random variables. 
We will be particularly interested in two cases: 
\begin{itemize}
\item[$(i)$] $\P=\Q_{\infty} \defeq \normal(0, 1) \otimes \mu$, with $\mu$ as per Assumption \ref{assumption:converge}.
\item[$(ii)$] $\P = \Q_n = p^{-1}\sum_{i=1}^{p}\delta_{(g_i,\lambda_i,\bar{w}_i)}$, the empirical distribution defined in Section \ref{sec:ProofMain}.
\end{itemize}

Define $\asL_{\psi, \kappa, \P}: \cL^2(\P)\to \R$ by
\begin{equation}
\asL_{\psi, \kappa, \P}(h) =  \psi^{-1/2} \cdot F_{\kappa}\left(\<h, X^{1/2}W \>_{\P},\normbig{\proj_{W^{\perp}}(X^{1/2}h)}_{\P}\right)+
	\< h, X^{1/2} \proj_{W^{\perp}}(G) \>_{\P},\label{eq:asLDef}
\end{equation}
where $F_{\kappa}(c_1,c_2)$ is defined as per Eq.~\eqref{eq:FkDef}.
We consider the optimization problem: 
\begin{equation}
\label{eqn:infinite-dim-prob}
\begin{split}
	\minimize~~ &\asL_{\psi, \kappa, \P}(h) \, ,\\
	\subjectto~~  &\norm{h}_{\P} \le 1\, ,
\end{split}
\end{equation}
and denote its minimum value by $\asL_{\psi, \kappa, \P}\opt$, i.e., 
\begin{equation}
\label{eqn:def-L-star}
\asL_{\psi, \kappa, \P}\opt = \min\Big\{ \asL_{\psi, \kappa, \P}(h) \mid \norm{h}_{\P} \le 1\Big\}\, .
\end{equation}

\subsection{Properties of the function $F$:
Proof of Lemma~\ref{lemma:F-convex-increasing}}
\label{sec: proof-lemma-F-convex-increasing}

\begin{proof-of-lemma}[\ref{lemma:F-convex-increasing}]
Recall the definition of $F$
\begin{equation}
F_{\kappa}(c_1, c_2) = \left(\E \left[(\kappa - c_1 YG - c_2 Z)_+^2\right]\right)^{1/2} \, ,
\end{equation}
with expectation taken w.r.t $Z \perp (Y, G)$, $Z , G\sim \normal(0, 1)$
$\P(Y = +1 \mid G) = f(G) =1-\P(Y = -1 \mid G)$.

\vspace{0.25cm}

\noindent\emph{(a)  $F_{\kappa}:\R \times \R \to \R$  is strictly convex.}
 Note for any random variables $W_1$ and $W_2$: 
\begin{equation}
\label{eqn:inequality-basic-one}
\left(\E [W_1^2]\right)^{1/2} + \left(\E [W_2^2]\right)^{1/2}
	\ge \left(\E [(W_1 + W_2)^2]\right)^{1/2}
\end{equation}
with equality if and only if $b_1 W_1 = b_2 W_2$ for some nonzero pair $(b_1, b_2)$. 
Therefore, for any $(c_1, c_2), (c_1^\prime, c_2^\prime) \in \R \times \R_{\ge 0}$, 
\begin{align}
F_{\kappa}(c_1, c_2) + F_{\kappa}(c_1^\prime, c_2^\prime) 
	&=  \left[\left(\E [\margin - c_1 T - c_2 Z]_+^2\right)^{1/2} + 
		\left(\E [\margin - c_1^{\prime} T - c_2^{\prime} Z]_+^2\right)^{1/2}\right] \nonumber  \\
	&\stackrel{(i)}{\ge}  \left[\left(\E\left[ \big((\margin - c_1 T - c_2 Z)_+ 
		+  (\margin - c_1^{\prime} T - c_2^{\prime} Z)_+\big)^2\right]\right)^{1/2}\right] \nonumber \\
	&\stackrel{(ii)}{\ge} 2  \left(\E\left[ \left(\margin -\half\left(c_1 + c_1^\prime\right) T -
		\half\left(c_2 + c_2^\prime\right)  Z\right)_+^2\right] \right)^{1/2}  \nonumber \\
	&= 2 F_{\kappa}\left(\frac{c_1+c_1^\prime}{2}, \frac{c_2 + c_2^\prime}{2}\right). 
	\label{eqn:F-convex}
\end{align}
where $(i)$ follows from inequality~\eqref{eqn:inequality-basic-one} and $(ii)$ follows from
convexity of $x\mapsto x_+$. Equation~\eqref{eqn:F-convex} gives the convexity of 
$F_{\kappa}$. Assumption~\ref{assumption:non-degenerate-f} implies that, when 
$(c_1, c_2) \neq (c_1^\prime, c_2^\prime)$ and $\kappa \neq 0$, 
\begin{equation}
	\P\left(b (\margin - c_1 T - c_2 Z)_+ \neq b^\prime (\margin - c_1^\prime T - c_2^\prime Z)_+\right) > 0.
\end{equation}
for any nonzero pair $(b, b^\prime)$. Hence, inequality $(i)$ holds strictly for any 
$(c_1, c_2) \neq (c_1^\prime, c_2^\prime)$. This proves that the function $(c_1, c_2) \to F_{\kappa}(c_1, c_2)$ 
is strictly convex.

\vspace{0.25cm}

\noindent\emph{(b) The function $c_2 \mapsto F_{\kappa}(c_1, c_2)$ is strictly increasing for $c_2\ge 0$.} 
Denote $Z_1, Z_2, Z_3$ to  be mutually independent $\normal(0, 1)$ random variables. Note for any $c_2^{(1)} \ge c_2^{(2)} \ge 0$, 
there exist  $c_3^{(2)} \ge $ such that
\begin{equation}
c_2^{(1)} Z_1 \ed c_2^{(2)} Z_2 + c_2^{(3)} Z_3\, .
\end{equation}
Thus, when $c_2^{(1)} \ge c_2^{(2)} \ge 0$, we have that 
\begin{align*}
F_{\kappa}(c_1, c_2^{(1)}) &=   \left(\E [\margin - c_1 T - c_2^{(1)} Z_1]_+^2\right)^{1/2} 
	=  \left(\E [\margin - c_1 T - c_2^{(2)} Z_2 - c_2^{(3)} Z_3]_+^2\right)^{1/2} \nonumber \\
	&\stackrel{(i)}{\ge}  \left(\E [\margin - c_1 T - c_2^{(2)} Z_2 - c_2^{(3)} \E Z_3]_+^2\right)^{1/2} 
	=   \left(\E [\margin - c_1 T - c_2^{(2)} Z_2]_+^2\right)^{1/2}  = F_{\kappa}(c_1, c_2^{(2)})
\end{align*}
where $(i)$ holds due to Jensen's inequality. Note that $(i)$ becomes a strict inequality whenever 
$c_2^{(1)} > c_2^{(2)}$. 

\vspace{0.25cm}

\noindent\emph{(c) $F_{\kappa}:\R \times \R\to \R$ is continuously differentiable.}
This follows by an application of dominated convergence, by using two facts: 
$(i)$ the mapping $x \to (x)_+^2$ is continuously differentiable, and 
$(ii)$ $F_{\kappa}(c_1, c_2) > 0$ for any $(c_1, c_2) \in \R \times \R_{\ge 0}$ (indeed $(\kappa-c_1YG-c_2Z)_+^2\ge 0$ 
and by Assumption~\ref{assumption:non-degenerate-f} the inequality is strict with positive probability).

\vspace{0.25cm}

\noindent\emph{(d)  The function $\kappa\mapsto F_{\kappa}(c_1,c_2)$ is strictly increasing.}
Let $\kappa_2>\kappa_1$. Then $F_{\kappa_2}(c_1,c_2)^2-F_{\kappa_1}(c_1,c_2)^2 = \E\{g(Y,G,Z)\}$,
where $g(Y,G,Z) = (\kappa_2-c_1YG-c_2Z)_+^2-(\kappa_1-c_1YG-c_2Z)_+^2$ is non-negative, and strictly positive with positive probability
(again by Assumption~\ref{assumption:non-degenerate-f}).
\end{proof-of-lemma}

\begin{lemma}
\label{lemma:alpha-upper-bound}
Suppose $(c_1, c_2) \in \R \times \R_{\ge 0}$ satisfies the condition  
\begin{equation*}
	\partial_1 F_{\kappa}(c_1, c_2) = 0. 
\end{equation*}
Then, we have the estimate: 
\begin{equation*}
\partial_2 F_{\kappa} (c_{1}, c_{2}) \le \min_{c \in \R} F_0(c, 1). 
\end{equation*}
\end{lemma}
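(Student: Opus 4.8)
The plan is to compute the two partial derivatives of $F_{\kappa}$ explicitly, read the hypothesis $\partial_1 F_{\kappa}(c_1,c_2)=0$ as a stationarity (first-order) condition, and then use that condition to rewrite the numerator of $\partial_2 F_{\kappa}$ in a shape to which Cauchy--Schwarz directly applies. First I would reduce to the case $c_2>0$: when $c_2=0$ the variable $Z\sim\normal(0,1)$ is independent of $T\equiv YG$, so $\partial_2 F_{\kappa}(c_1,0)=0$ and the bound is trivial; the fully degenerate case $F_{\kappa}(c_1,c_2)=0$ cannot occur for $c_2>0$ under Assumption~\ref{assumption:non-degenerate-f} (one can make $Z$ arbitrarily negative). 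Setting $R=\kappa-c_1 T-c_2 Z$ and differentiating $F_{\kappa}^2=\E[R_+^2]$ under the expectation — justified exactly as in the proof of Lemma~\ref{lemma:F-convex-increasing}(c), since $x\mapsto (x)_+^2$ is $C^1$ and $F_{\kappa}>0$ — gives
\[
F_{\kappa}(c_1,c_2)\,\partial_1 F_{\kappa}(c_1,c_2)=-\E[T R_+],\qquad
F_{\kappa}(c_1,c_2)\,\partial_2 F_{\kappa}(c_1,c_2)=-\E[Z R_+].
\]
In particular the hypothesis is equivalent to $\E[T R_+]=0$.

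The key step is then the following substitution: for every $a\in\R$, write $-Z=(-aT-Z)+aT$, so that
\[
F_{\kappa}(c_1,c_2)\,\partial_2 F_{\kappa}(c_1,c_2)=\E[(-Z)R_+]=\E[(-aT-Z)R_+]+a\,\E[T R_+]=\E[(-aT-Z)R_+],
\]
using $\E[T R_+]=0$. Since $R_+\ge 0$ pointwise and $x\le (x)_+$, we have $(-aT-Z)R_+\le(-aT-Z)_+R_+$ pointwise, and Cauchy--Schwarz yields
\[
\E[(-aT-Z)R_+]\le\E[(-aT-Z)_+R_+]\le\big(\E[(-aT-Z)_+^2]\big)^{1/2}\big(\E[R_+^2]\big)^{1/2}=F_0(a,1)\,F_{\kappa}(c_1,c_2).
\]
Dividing by $F_{\kappa}(c_1,c_2)>0$ gives $\partial_2 F_{\kappa}(c_1,c_2)\le F_0(a,1)$ for every $a\in\R$; taking the infimum over $a$ gives $\partial_2 F_{\kappa}(c_1,c_2)\le\min_{c\in\R}F_0(c,1)$. (The infimum is attained, since $c\mapsto F_0(c,1)$ is continuous and coercive by Assumption~\ref{assumption:non-degenerate-f}; in any case $\inf$ suffices for the inequality.) This is exactly the claim.

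I do not expect a serious obstacle: once the substitution is spotted the argument is two lines. The only things requiring care are the routine justification of differentiating $\E[R_+^2]$ under the expectation sign (identical to Lemma~\ref{lemma:F-convex-increasing}(c)) and the boundary/degenerate cases mentioned above. The real content is the observation that the stationarity condition in the first coordinate is precisely the algebraic identity $\E[T R_+]=0$ that kills the $aT$ term — without it one only obtains the useless bound $\partial_2 F_{\kappa}(c_1,c_2)\le F_0(a,1)+a\,\partial_1 F_{\kappa}(c_1,c_2)$.
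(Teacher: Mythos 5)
Your argument is correct and takes a genuinely different route from the paper's. The paper invokes convexity abstractly: from the supporting-hyperplane inequality $F_{\kappa}(ct,t)\ge F_{\kappa}(c_1,c_2)+\partial_1F_{\kappa}(c_1,c_2)(ct-c_1)+\partial_2F_{\kappa}(c_1,c_2)(t-c_2)$, the hypothesis $\partial_1F_{\kappa}(c_1,c_2)=0$ kills the first-order term, and then dividing by $t$ and sending $t\to\infty$ (using $F_{\kappa}(ct,t)/t\to F_0(c,1)$, which follows from the $1$-homogeneity of $F_0$ and dominated convergence) gives $F_0(c,1)\ge\partial_2F_{\kappa}(c_1,c_2)$ directly. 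You instead compute the gradient explicitly, read the hypothesis as the orthogonality relation $\E[TR_+]=0$, and close the estimate with a Cauchy--Schwarz step. Both are valid; the paper's is a two-line limiting argument that leans entirely on the already-established convexity of $F_{\kappa}$, while yours is more self-contained and makes the precise algebraic role of the stationarity condition transparent (it is exactly what removes the $a\,\partial_1 F_{\kappa}$ term). Your handling of the boundary case $c_2=0$ (where $\partial_2F_{\kappa}(c_1,0)=0$ by independence of $Z$ from $T$) and the degenerate case $F_{\kappa}=0$ (excluded by Assumption~\ref{assumption:non-degenerate-f}) is careful and correct; the paper's argument does not need this case split since the supporting-hyperplane inequality holds for all $(c_1,c_2)$.
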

\begin{proof}
Since $F_{\kappa}$ is convex by Lemma~\ref{lemma:F-convex-increasing}, we have for all $c\in \R, t\in \R_{\ge 0}$,
\begin{equation*}
F_{\kappa}(ct, t) \ge F_{\kappa}(c_1, c_2) + \partial_2 F_{\kappa}(c_1, c_2)(t-c_2).
\end{equation*}
This shows in particular that for any $c \in \R$
\begin{equation*}
F_0(c, 1) = \lim_{t\to \infty} \frac{F_{\kappa}(ct, t)}{t} \ge \partial_2 F_{\kappa}(c_1, c_2).
\end{equation*}
Taking minimum over $c$ on both sides gives the desired claim. 
\end{proof}

\begin{lemma}
\label{lemma:continuity-partial-f}
For any $\kappa > 0$, the limit $\lim_{c_2 \to 0^+} c_2^{-1} F(c_1, c_2)$ exists for all $c_1$ and the function $G_\kappa$ defined below 
is continuous on $\R \times \R_{\ge 0}$: 
\begin{equation*}
	G_\kappa(c_1, c_2) \defeq
		\begin{cases}
		c_2^{-1} F_\kappa(c_1, c_2)~~&\text{if $c_2 > 0$}. \\
		 \lim_{c_2 \to 0^+} c_2^{-1} F_\kappa(c_1, c_2)~~&\text{if $c_2 = 0$}.
		 \end{cases}
\end{equation*}
\end{lemma}

\begin{proof}
For all $\kappa > 0$, $F_\kappa(c_1, c_2) > 0$ for all $c_1, c_2$ (where we use Assumption~\ref{assumption:non-degenerate-f}). 
This gives us
\begin{equation*}
	c_2^{-1} F_\kappa (c_1, c_2) = -\frac{\E[(\kappa-c_1 YG - c_2 Z)_+ Z]}{c_2F_\kappa(c_1, c_2)}.
\end{equation*}
As $(c_1, c_2) \mapsto F_\kappa(c_1, c_2)$ is continuous, it suffices to show that 
the mapping $(c_1, c_2) \mapsto H(c_1, c_2)$ defined below 
\begin{equation*}
	H(c_1, c_2) \defeq \frac{1}{c_2} \E[(\kappa-c_1 YG - c_2 Z)_+ Z]
\end{equation*}
has $\lim_{c_2 \mapsto 0^+} H(c_1, c_2)$ and  can be continuously extended to $\R \times \R_{\ge 0}$. 

This can be easily done by using dominated convergence theorem. The key to the proof is to notice that 
\begin{equation*}
	H(c_1, c_2) = \E \left[ \frac{1}{c_2} \Big((\kappa-c_1 YG - c_2 Z)_+ - (\kappa-c_1 YG)_+\Big) Z\right]. 
\end{equation*}
where we've used that $Z \perp (Y, G)$ and $\E[Z] = 0$.  Now that if we denote 
\begin{equation*}
	J(Y,G,Z; c_1,c_2) =  \frac{1}{c_2} \Big((\kappa-c_1 YG - c_2 Z)_+ - (\kappa-c_1 YG)_+\Big),
\end{equation*}
then we have $|J(Y,G,Z; c_1,c_2)| \le |Z|$ for all $c_1, c_2$, with the limit 
\begin{equation*}
	\lim_{c_2 \to 0^+} J(Y,G,Z; c_1,c_2)
		= - \left(Z \mathbf{1}_{\kappa-c_1 YG > 0} + (Z)_+ \mathbf{1}_{\kappa-c_1 YG = 0}\right)
\end{equation*}
Dominated convergence theorem immediately yields the existence of the limit $\lim_{c_2 \mapsto 0^+} H(c_1, c_2)$, 
and as a consequence, the mapping $(c_1, c_2) \mapsto H(c_1, c_2)$ can be extended continuously to $\R \times \R_{\ge 0}$.
\end{proof}

\subsection{Properties of $\asL_{\psi,\kappa,\P}$}
\label{sec:TechnicalLemmas}

In this section we state three lemmas establishing several properties of the variational problem \eqref{eqn:def-L-star}.
We will  prove these properties in the next subsections.

\begin{lemma}
\label{lemma:technical-1}
Assume that $\E_{\P}[W^2] = 1$ and $\P(X \in (0, x_{\max}]) = 1$  for some $x_{\max} < \infty$.

Then  the function $\asL_{\psi,\kappa,P}:\cL^2(\P)\to\reals$ is lower semicontinuous (in the weak topology) and strictly convex.

As a consequence, the minimum of the optimization problem~\eqref{eqn:infinite-dim-prob} is achieved at a unique function
	$h\opt \in \cL^2(\P)$. (Uniqueness holds in the sense that, 
	any other minimizer $\tilde{h}\opt$ must satisfy $\P(\tilde{h}\opt \neq h\opt) = 0$.)
\end{lemma}

\begin{lemma}\label{lemma:technical-2}
Under the assumptions of Lemma \ref{lemma:technical-1}, 
define   $\zeta(\P)$, $\eta>0$ by 
%
\begin{align}
\label{eqn:def-zeta}
\zeta(\P) &= \normbig{X^{-1/2} W}_{\P}^{-1}\, ,\\
\eta &= \frac{1}{6 x_{\max}^{1/2}} \cdot 
	\min\left\{\min_{|c_1| \le x_{\max}^{1/2}, 0\le c_2 \le x_{\max}^{1/2}}\{F_{\kappa}(c_1, c_2) - F_0(c_1, c_2)\},
	\min_{|c_1| \le x_{\max}^{1/2}} F_{\kappa}(c_1, 0) \right\}.
\end{align}
 (Note $\eta$ is independent of $\P$.) 

Further, the call that $\psi\opt(0) \equiv \min_{c} F_0(c,1)^2$, and 
assume one of the three conditions below to be satisfied: 
\begin{itemize}
\item[{\sf A1.}] $\psi\opt(0)^{1/2} \ge \psi^{1/2}\norm{\proj_{W^{\perp}}(G)}_{\P} - \eta$.
\item[{\sf A2.}] 
	$\partial_1 F_{\kappa}(\zeta, 0) \le \eta$
	and 
	$\partial_2 F_{\kappa}(\zeta , 0) \ge 
				\normbig{\partial_1 F_{\kappa}(\zeta , 0)(1-\zeta^2 X^{-1}) W + \psi^{1/2} \proj_{W^{\perp}}(G)}_{\P} -  \eta$.
\item[{\sf A3.}] $\partial_1 F_{\kappa}(-\zeta, 0) \ge -\eta$ and
	$\partial_2 F_{\kappa}(-\zeta , 0) \ge 
				\normbig{\partial_1 F_{\kappa}(-\zeta , 0)(1-\zeta^2 X^{-1}) W + \psi^{1/2} \proj_{W^{\perp}}(G)}_{\P} -  \eta$.
\end{itemize}
Then we have 
\begin{equation}
\asL_{\psi, \kappa, \P}\opt \ge \psi^{-1/2} x_{\max}^{1/2} \eta > 0. 
\end{equation}
\end{lemma}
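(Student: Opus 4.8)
The plan is to lower bound $\asL_{\psi,\kappa,\P}(h)$ uniformly over the feasible set $\{\norm{h}_\P\le 1\}$ by splitting $h$ along the direction of $X^{-1/2}W$ (which captures the first argument $\langle h,X^{1/2}W\rangle_\P$ of $F_\kappa$) and its orthogonal complement. Writing a generic feasible $h$ as $h = a\,X^{-1/2}W/\norm{X^{-1/2}W}_\P + h_\perp$ where $\langle h_\perp, X^{-1/2}W\rangle_\P = 0$ and $a^2 + \norm{h_\perp}_\P^2 \le 1$, one computes $c_1 := \langle h,X^{1/2}W\rangle_\P = a\zeta$ with $|c_1|\le x_{\max}^{1/2}$ (using $\norm{W}_\P=1$ and $X\le x_{\max}$), and $c_2 := \norm{\proj_{W^\perp}(X^{1/2}h)}_\P$. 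The linear term $\langle h, X^{1/2}\proj_{W^\perp}(G)\rangle_\P$ is then bounded below by $-\norm{X^{1/2}\proj_{W^\perp}(G)}_\P\cdot\norm{h}_\P$, or more carefully by projecting $X^{1/2}\proj_{W^\perp}(G)$ onto the span of $X^{-1/2}W$ and its complement, which is exactly where $\partial_1 F_\kappa(\pm\zeta,0)$ and the quantity $\partial_1 F_\kappa(\pm\zeta,0)(1-\zeta^2 X^{-1})W + \psi^{1/2}\proj_{W^\perp}(G)$ in {\sf A2}, {\sf A3} enter: those come from the first-order (KKT) stationarity conditions of the problem at a candidate optimum with $c_2=0$.

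The key steps, in order, would be: (1) reduce to controlling $\inf\{\psi^{-1/2}F_\kappa(c_1,c_2) + (\text{linear term})\}$ over the parametrization above; (2) handle the regime where the optimal $c_2$ is bounded away from $0$ by a fixed amount $\gtrsim \eta$, using that $F_\kappa(c_1,c_2)\ge F_\kappa(c_1,0) + (\text{slope in } c_2)$ together with monotonicity (Lemma~\ref{lemma:F-convex-increasing}(b)) and the crude bound $F_\kappa(c_1,c_2)\ge F_0(c_1,c_2)\ge F_0(c_1/c_2,1)\,c_2 \ge \psi\opt(0)^{1/2}c_2$ when $c_2>0$ — this is where condition {\sf A1} directly gives $\psi^{-1/2}F_\kappa - \norm{\proj_{W^\perp}(G)}_\P\norm{h}_\P \ge \psi^{-1/2}(\psi\opt(0)^{1/2} - \psi^{1/2}\norm{\proj_{W^\perp}(G)}_\P)c_2 + \ldots \ge \psi^{-1/2}\eta\, c_2$ type bounds; (3) handle the regime $c_2$ small (close to the boundary $c_2=0$) via a first-order expansion of $F_\kappa$ around $(c_1,0)$ with $c_1 = \pm\zeta$ at the extreme $|a|=1$: here $F_\kappa(c_1,c_2) \approx F_\kappa(c_1,0) + c_1'\partial_1 + c_2\partial_2$ and the linear term combines with $\partial_1 F_\kappa$, so conditions {\sf A2}/{\sf A3} are precisely what is needed to keep the sum $\ge \psi^{-1/2}x_{\max}^{1/2}\eta$; (4) observe that the definition of $\eta$ (the $\min$ over $F_\kappa(c_1,0)$ and over $F_\kappa - F_0$) is tailored so that the two regime bounds both yield the claimed constant, and combine.

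I expect the main obstacle to be step (3): carefully controlling the non-smooth/near-boundary regime $c_2\downarrow 0$ so that the first-order information encoded in {\sf A2}, {\sf A3} actually certifies a strict positive lower bound rather than just non-negativity. The subtlety is that $F_\kappa$ is smooth (Lemma~\ref{lemma:F-convex-increasing}(c)) but one must quantify the remainder of the expansion and show it is dominated by the $\eta$-slack; convexity of $F_\kappa$ (Lemma~\ref{lemma:F-convex-increasing}(a)) lets us replace the expansion by a global supporting-hyperplane inequality $F_\kappa(c_1,c_2)\ge F_\kappa(\pm\zeta,0) + \partial_1 F_\kappa(\pm\zeta,0)(c_1\mp\zeta) + \partial_2 F_\kappa(\pm\zeta,0)c_2$, which avoids the remainder entirely, so the real work is just the bookkeeping of combining this with the Cauchy--Schwarz bound on the linear term and tracking constants. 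A secondary point to be careful about is that $\eta$ must be shown strictly positive, which follows because $F_\kappa(c_1,c_2) > F_0(c_1,c_2)$ and $F_\kappa(c_1,0)>0$ pointwise (Lemma~\ref{lemma:F-convex-increasing}(d) and part (c)'s positivity argument) and the minima are over compact sets with continuous integrands.
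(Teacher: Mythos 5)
There is a genuine gap, and it is structural: the lemma's three conditions {\sf A1}, {\sf A2}, {\sf A3} are \emph{alternatives}, so the proof must exhibit a uniform lower bound over all feasible $h$ using \emph{one} of the conditions at a time. Your outline instead splits the feasible set by regime (step~(2): $c_2$ large, invoke {\sf A1}; step~(3): $c_2$ small, invoke {\sf A2}/{\sf A3}), which tacitly requires more than one condition to be available. Under, say, {\sf A1} alone, your step~(2) argument must already handle \emph{every} feasible $h$, including those with small or zero $c_2$, and your step~(3) argument cannot be invoked at all. The paper's proof is three independent arguments, one per condition, each covering all $h$.

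A concrete symptom of the gap is a sign error in your {\sf A1} sketch. Condition {\sf A1} gives $\psi\opt(0)^{1/2}-\psi^{1/2}\norm{\proj_{W^{\perp}}(G)}_\P \ge -\eta$, so the coefficient of $c_2$ in your bound can be as negative as $-\psi^{-1/2}\eta$; the claimed ``$\ge \psi^{-1/2}\eta\,c_2$'' is wrong. What saves the argument is a strictly positive \emph{additive} slack: by Lemma~\ref{lemma:F-convex-increasing}.$(d)$, $c_{\kappa}:=\min_{|c_1|\le x_{\max}^{1/2},\,0\le c_2\le x_{\max}^{1/2}}\{F_{\kappa}(c_1,c_2)-F_0(c_1,c_2)\}>0$, giving $\asL_{\psi,\kappa}(h)\ge \asL_{\psi,0}(h)+\psi^{-1/2}c_\kappa$. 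Combining with $F_0(c_1,c_2)\ge \psi\opt(0)^{1/2}c_2$ and Cauchy--Schwarz on the linear term yields $\asL_{\psi,\kappa}(h)\ge -\psi^{-1/2}\eta\,c_2 + \psi^{-1/2}c_\kappa$, and since $c_2\le x_{\max}^{1/2}$ and $c_\kappa$ is precisely (up to the factor $6x_{\max}^{1/2}$) the first quantity in the $\min$ defining $\eta$, this gives the claimed $\psi^{-1/2}x_{\max}^{1/2}\eta$. This is the role of that first term in the definition of $\eta$; the second term $\min_{|c_1|\le x_{\max}^{1/2}}F_\kappa(c_1,0)$ plays the analogous additive role in the {\sf A2}/{\sf A3} cases. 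Your step~(3) idea for {\sf A2}/{\sf A3}---replacing the Taylor expansion at $(\pm\zeta,0)$ by the global supporting-hyperplane inequality via convexity---is exactly right and matches the paper; but it, too, is applied to \emph{all} $h$, not just small $c_2$, and is paired with the Cauchy--Schwarz identity $\langle h,X^{1/2}W\rangle_\P-\zeta\le \langle\proj_{W^{\perp}}(X^{1/2}h),(1-\zeta^2X^{-1})W\rangle_\P$ (itself using $(1-\zeta^2X^{-1})W=\proj_{W^\perp}((1-\zeta^2X^{-1})W)$) to absorb the linear term.

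A minor further point: the claimed identity $c_1=\langle h,X^{1/2}W\rangle_\P=a\zeta$ for the decomposition $h=a\,X^{-1/2}W/\norm{X^{-1/2}W}_\P+h_\perp$ with $h_\perp\perp X^{-1/2}W$ is not correct, because $\langle h_\perp, X^{1/2}W\rangle_\P$ need not vanish (the two inner products weight by $X^{\pm 1/2}$). The paper's proof sidesteps any such parametrization and works directly with $c_1$, $c_2$ and the estimates above.
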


\begin{lemma}\label{lemma:technical-3}
Under the assumptions of Lemma \ref{lemma:technical-2}, further 
assume all of the three conditions below are satisfied: 
\begin{itemize}
\item[{\sf B1.}] $\psi\opt(0)^{1/2} < \psi^{1/2}\normbig{\proj_{W^{\perp}}(G)}_{\P}$.
\item[{\sf B2.}] Either $\partial_1 F_{\kappa}(\zeta, 0) > 0$ or
	$\partial_2 F_{\kappa}(\zeta , 0) < 
				\normbig{\partial_1 F_{\kappa}(\zeta , 0)(1-\zeta^2 X^{-1}) W + \psi^{1/2} \proj_{W^{\perp}}(G)}_{\P} $.
\item[{\sf B3.}] Either $\partial_1 F_{\kappa}(-\zeta, 0) < 0$ or
	$\partial_2 F_{\kappa}(-\zeta , 0) < 
				\normbig{\partial_1 F_{\kappa}(-\zeta , 0)(1-\zeta^2 X^{-1}) W + \psi^{1/2} \proj_{W^{\perp}}(G)}_{\P}$.
\end{itemize}
Then the following hold
\begin{itemize}
	\item[$(a)$] The system of equations below 
		\begin{equation}
		\label{eqn:system-of-equations-P}
		\begin{split}
		-c_1 &= \E_{\P} \left[\frac{(\psi^{1/2} \proj_{W^{\perp}}(G) +  \left(\partial_1 F_{\kappa}(c_1, c_2) -  
			c_1 c_2^{-1} \partial_2 F_{\kappa}(c_1, c_2)\right) W)WX^{1/2}}
				{c_2^{-1} \partial_2 F_{\kappa}(c_1, c_2) X^{1/2} + \psi^{1/2} s X^{-1/2}}\right]  \\
		c_1^2 + c_2^2 &=  \E_{\P} \left[\frac{(\psi^{1/2} \proj_{W^{\perp}}(G) +  
			 \left(\partial_1 F_{\kappa}(c_1, c_2) -  c_1 c_2^{-1} \partial_2 F_{\kappa}(c_1, c_2)\right) W)^2 X}
				{(c_2^{-1} \partial_2 F_{\kappa}(c_1, c_2) X^{1/2} + \psi^{1/2} s X^{-1/2})^2}\right] \\
		1 &= \E_{\P}
			\left[\frac{\left(\psi^{1/2}\proj_{W^{\perp}}(G) + 
				\left(\partial_1 F_{\kappa}(c_1, c_2) -  c_1 c_2^{-1} \partial_2 F_{\kappa}(c_1, c_2)\right) W\right)^2}
				{(c_2^{-1} \partial_2 F_{\kappa}(c_1, c_2) X^{1/2} + \psi^{1/2} s X^{-1/2})^2}\right].
		\end{split}
		\end{equation}
		admits a unique solution in $\R \times \R_{>0} \times \R_{>0}$. 
	\item[$(b)$] The unique minimizer $h\opt$ of the optimization problem~\eqref{eqn:infinite-dim-prob} must satisfy 
		\begin{equation}
		\label{eqn:representation-of-h}
		h\opt = -\frac{\psi^{1/2}\proj_{W^{\perp}}(G) + \left(\partial_1 F_{\kappa}(c_1, c_2) -  c_1 c_2^{-1} \partial_2 F_{\kappa}(c_1, c_2)\right) W}
			{ c_2^{-1} \partial_2 F_{\kappa}(c_1, c_2) X^{1/2} + \psi^{1/2} s X^{-1/2}},
		\end{equation}
		where $(c_1, c_2, s)$
		denotes the unique solution of Eq.~\eqref{eqn:system-of-equations-P} in $\R \times \R_{>0} \times \R_{>0}$. 
		Moreover, 
		\begin{equation}
			c_1 = \langle X^{1/2}h\opt, W\rangle~~\text{and}~~c_2 = \normbig{\proj_{W^{\perp}}(X^{1/2} h\opt)}.
		\end{equation}
	\item[$(c)$] We have
		\begin{equation}
		\label{eqn:new-representation}
			\asL_{\psi, \kappa, \P}\opt =  
				\psi^{-1/2} \Big(F_{\kappa}(c_1, c_2) - c_1 \partial_1 F_{\kappa}(c_1, c_2)- c_2\partial_2 F_{\kappa}(c_1, c_2)\Big)-s
		\end{equation}
		where $(c_1, c_2, s)$
		denotes the unique solution of Eq.~\eqref{eqn:system-of-equations-P} in $\R \times \R_{>0} \times \R_{>0}$.
	\item[$(d)$] The unique solution of Eq.~\eqref{eqn:system-of-equations-P}, $(c_1, c_2, s) \in \R\times \R_{>0}\times \R_{>0}$, satisfies the bound
		\begin{equation}
			|c_1| \le x_{\max}^{1/2}~~\text{and}~~0 < c_2 \le x_{\max}^{1/2}
		\end{equation} 
%
\end{itemize}
\end{lemma}

\subsubsection{Proof of Lemma \ref{lemma:technical-1}}

Throughout this proof, we keep $\psi,\kappa,\P$ fixed, and hence we drop them from the the arguments of $\asL$ to
simplify notations (hence writing $\asL(h) = \asL_{\psi,\kappa,\P}(h)$). Further, we will drop the subscripts $\P$ from 
$\<h_1,h_2\>_{\P}$ and $\|h\|_{\P}$.

We begin by noticing that $\asL: \cL^2(\P)\to \R$ is lower semicontinuous with respect to the weak-$*$ topology
(which coincide with the weak topology since $\cL^2(\P)$ is an Hilbert space). 
Indeed note that: $(i)$ The mappings $h\mapsto \<h, X^{1/2}W \>$ and $h\mapsto \< h, X^{1/2} \proj_{W^{\perp}}(G) \>$ are continuous;
$(ii)$ The mapping $h \to \norm{\proj_{W^{\perp}}(X^{1/2}h)}$ is lower semicontinuous 
$(iii)$ $(c_1,c_2)\mapsto F_{\kappa}(c_1,c_2)$ is continuous (by Lemma~\ref{lemma:F-convex-increasing}.$(c)$), 
and monotone increasing in $c_2$ (by Lemma~\ref{lemma:F-convex-increasing}.$(b)$).

 Together, $(i)$, $(ii)$, $(iii)$ imply the lower semicontinuity of $\asL$.
Since the constraint set $\{h: \norm{h} \le 1\}$ is sequentially compact w.r.t the weak-$*$ topology by the 
Banach-Alaoglu Theorem, this immediately implies that the minimum of the optimization 
problem~\eqref{eqn:infinite-dim-prob} is achieved by some $h\opt \in \cL^2(\P)$.

In order to prove uniqueness of the minimizer, we show that  $\asL: \cL^2(\P)\to \R$ is strictly convex
	\begin{equation}
	\label{eqn:strict-convex-meaning}
		\half \left(L(h_0) + L(h_1)\right) > L\left(\half(h_0 + h_1)\right)~~\text{for any $h_0, h_1$ such that 
				$P(h_0 \neq h_1) > 0$}. 
	\end{equation}
	Pick $h_0, h_1 \in \cL^2(\P)$ such that $\P(h_0 \neq h_1) > 0$. Denote $h_{1/2} = \half(h_0 + h_1)$. Notice that  
	\begin{equation*}
	\begin{split}
	&\half(\asL(h_0) + \asL(h_1)) - \asL(h_{1/2}) \\
	= &~\psi^{-1/2} \cdot \Bigg\{\half F_{\kappa}\left(\langle h_0,X^{1/2}W \rangle,\normbig{\proj_{W^{\perp}}(X^{1/2}h_0)}\right) + 
				\half F_{\kappa}\left(\langle h_1, X^{1/2} W \rangle,\normbig{\proj_{W^{\perp}}(X^{1/2}h_1)}\right)  \\
	&~~~~~~~~~~~~~~~~~		- F_{\kappa}\left(\langle h_{1/2}, X^{1/2} W \rangle ,\normbig{\proj_{W^{\perp}}(X^{1/2}h_{1/2})}\right)\Bigg\} \\
	\stackrel{(i)}{\ge} &~ \psi^{-1/2}
		 \Bigg\{F_{\kappa}\left(\langle h_{1/2},X^{1/2}W \rangle, \half\left(\normbig{\proj_{W^{\perp}}(X^{1/2}h_0)} +
\normbig{\proj_{W^{\perp}}(X^{1/2}h_1)}\right)\right)  \\
	&~~~~~~~~~~~~~~~~~- F_{\kappa}\left(\langle h_{1/2},X^{1/2} W \rangle ,\normbig{\proj_{W^{\perp}}(X^{1/2}h_{1/2})}\right)\Bigg\} \\
	\stackrel{(ii)}{\ge} &~0.
	\end{split}
	\end{equation*}
	where $(i)$ follows since $F_{\kappa}$ is convex by Lemma~\ref{lemma:F-convex-increasing}.$(a)$ and $(ii)$ follows 
	since $F_{\kappa}$ is increasing with respect to its second argument by Lemma~\ref{lemma:F-convex-increasing}.$(b)$. 

Next  we prove that one of the inequalities $(i)$ and $(ii)$ must be strict when $\P(h_0 \neq h_1) > 0$.
To see this, suppose both inequalities $(i)$ and $(ii)$ become equalities  for some $h_0, h_1$. By Lemma~\ref{lemma:F-convex-increasing}.$(a)$, 
	we know that $F_{\kappa}$ is strictly convex, and strictly increasing w.r.t its second argument. Thus, if
	both inequalities $(i)$ and $(ii)$ become equalities, $h_0, h_1$ and $h_{1/2} = \half(h_0 + h_1)$ 
	need to satisfy 
	\begin{equation}
	\label{eqn:strict-one}
	\begin{split}
		\langle h_0,X^{1/2}W \rangle &= \langle h_1,X^{1/2}W \rangle \, ,\\
		\normbig{\proj_{W^{\perp}}(X^{1/2}h_0)} &= \normbig{\proj_{W^{\perp}}(X^{1/2}h_1)}\, , \\
		\normbig{\proj_{W^{\perp}}(X^{1/2}h_{1/2})} &= 
			\half\left(\normbig{\proj_{W^{\perp}}(X^{1/2}h_0)} + \normbig{\proj_{W^{\perp}}(X^{1/2}h_1)}\right).
	\end{split}
	\end{equation}
	Now,  the first equality of Eq.~\eqref{eqn:strict-one} is equivalent to
	\begin{equation}
	\label{eqn:strict-two}
		\langle X^{1/2}(h_0 - h_1), W\rangle = 0, 
	\end{equation}
	and the last two equalities are equivalent to
	\begin{equation}
	\label{eqn:strict-three}
	\proj_{W^{\perp}}(X^{1/2} (h_0 - h_1)) = 0~~\P-a.s.
	\end{equation}
	Thus, if both inequalities $(i)$ and $(ii)$ become equalities,
	it must happen that 
	\begin{equation*}
	X^{1/2} (h_0 - h_1) = 0~~\P-a.s.
	\end{equation*}
	which implies $\P(h_0 \neq h_1)=0$  since we assumed $\P(X > 0) = 1$. This completes the proof that $\asL: \cL^2(\P)\to \R$ is 
	strictly convex in the sense of Eq.~\eqref{eqn:strict-convex-meaning}.

Strict convexity implies immediately uniqueness of the minimizer of $\asL$. Given two minimizers $h\opt$ and
$\tilde{h}\opt$, we must have $\P(\tilde{h}\opt \neq h\opt) = 0$, because otherwise $h_{1/2} = (h\opt+\tilde{h}\opt)/2$ would achieve a 
strictly smaller cost.

\subsubsection{Proof of Lemma \ref{lemma:technical-2}}

Throughout this proof, we keep $\P$ fixed, and hence we drop it from the the arguments of $\asL$ to
simplify notations (hence writing $\asL_{\psi,\kappa}(h) = \asL_{\psi,\kappa,\P}(h)$), and from 
$\<h_1,h_2\>_{\P}$ and $\|h\|_{\P}$.

We organize the proof in three parts depending on which of the three conditions {\sf A1}, {\sf A2} or {\sf A3} holds.

\vspace{0.25cm}

\noindent\emph{Condition {\sf A1} holds:}
	\begin{equation}
	\label{eqn:first-assumption}
		\psi\opt(0)^{1/2} \ge \psi^{1/2} \norm{\proj_{W^{\perp}}(G)} - \eta\, . 
	\end{equation}
	Define the constant $c_{\kappa}$ by
	\begin{equation}
	\label{eqn:def-c-kappa}
		c_{\kappa} = \min_{|c_1| \le x_{\max}^{1/2}, 0 \le c_2 \le x_{\max}^{1/2}} 
				\left\{F_{\kappa}(c_1, c_2) - F_0(c_1, c_2)\right\}. 
	\end{equation}  
Note that $c_{\kappa}>0$ strictly since $\kappa\mapsto F_{\kappa}(c_1,c_2)$ is strictly increasing.

	Since for any $h$ such that $\norm{h} \le 1$, we have 
	\begin{equation}
	\label{eqn:basic-estimate}
		\langle h, X^{1/2} W\rangle \le \normbig{X^{1/2}W} \norm{h} \le x_{\max}^{1/2}
			~~\text{and}~~
		\normbig{\proj_{W^{\perp}}(X^{1/2}h)} \le \normbig{X^{1/2} h} \le x_{\max}^{1/2}, 
	\end{equation}
	the definition of $c_{\kappa}$ at Eq.~\eqref{eqn:basic-estimate} implies for any 
	$h$ satisfying $\norm{h} \le 1$,
	\begin{equation}
		\label{eqn:L-strict-kappa}
	\asL_{\psi, \kappa}(h) \ge \asL_{\psi, 0}(h) + \psi^{-1/2}c_{\kappa}.
	\end{equation} 
        Now we note that, by Cauchy-Schwartz,
        \begin{equation}
        \label{eqn:weird-bound-exp}
        		\big|\langle X^{1/2} h, \proj_{W^{\perp}} (g) \rangle \big| 
			= \big|\langle \proj_{W^{\perp}}(X^{1/2} h),  \proj_{W^{\perp}}(G) \rangle \big| 
				\le \normbig{\proj_{W^{\perp}}(X^{1/2}h)}\norm{\proj_{W^{\perp}}(G)}
        \end{equation}
        Thus we have that for any $h \in \cL^2(\P)$ satisfying $\norm{h} \le 1$, 
        \begin{equation*}
        \begin{split}
        		\asL_{\psi, \kappa}(h) &\stackrel{(i)}{\ge} \asL_{\psi, 0}(h) + \psi^{-1/2}c_{\kappa}
       		 	= \psi^{-1/2} \left(F_0\left(\langle h,X^{1/2}W \rangle,\normbig{\proj_{W^{\perp}}(X^{1/2}h)}\right) + c_{\kappa}\right)
        				+ \langle X^{1/2} h, \proj_{W^{\perp}} (G)\rangle \\
       	 	&\stackrel{(ii)}{\ge} \left(\psi^{-1/2} \cdot \min_{c \in \R} F_0(c, 1)\right) \cdot \normbig{\proj_{W^{\perp}}(X^{1/2}h)} + \psi^{-1/2}c_{\kappa}
        				- \norm{\proj_{W^{\perp}}(G)}\normbig{\proj_{W^{\perp}}(X^{1/2}h)} \\ 
		&= \left((\psi\opt(0)/\psi)^{1/2} - \norm{\proj_{W^{\perp}}(G)}\right) \normbig{\proj_{W^{\perp}}(X^{1/2}h)} + \psi^{-1/2} c_{\kappa} \\
		&\stackrel{(iii)}{\ge} \psi^{-1/2}c_{\kappa} - \psi^{-1/2} x_{\max}^{1/2}\eta \stackrel{(iv)}{\ge} \psi^{-1/2}x_{\max}^{1/2} \eta.
        \end{split}
        \end{equation*}
        where in $(i)$, we use Eq.~\eqref{eqn:L-strict-kappa}; in $(ii)$, we use the bound in Eq.~\eqref{eqn:weird-bound-exp}
        and the fact that $F_0(c_1, c_2) = c_2 F_0(c_1/c_2, 1) \ge c_2 \min_{c\in \R} F_0(c, 1)$ for any $(c_1, c_2) \in \R\times \R_+$;
         in $(iii)$, we use the assumption in Eq.~\eqref{eqn:first-assumption} and the fact that $\normbig{\proj_{W^{\perp}}(X^{1/2}h)} \le \norm{X^{1/2}h} 
        \le x_{\max}^{1/2}$ for all $h$ satisfying $\norm{h} \le 1$; $(iv)$ follows from the definition of $c_{\kappa}$ and $\eta$. 
        This proves that 
        \begin{equation*}
       		 \asL_{\psi, \kappa}\opt = \min_{h: \norm{h} \le 1} \asL_{\psi, \kappa}(h) \ge \psi^{-1/2}x_{\max}^{1/2}\eta > 0.
        \end{equation*}

\vspace{0.25cm}

\noindent\emph{Condition {\sf A2} holds:}
	\begin{equation}
	\label{eqn:priori-two-case-one}
	\partial_1 F_{\kappa}(\zeta, 0) \le \eta
	~~\text{and}~~
	\partial_2 F_{\kappa}(\zeta , 0) \ge 
				\normbig{\partial_1 F_{\kappa}(\zeta , 0)(1-\zeta^2 X^{-1}) W + \psi^{1/2} \proj_{W^{\perp}}(G)} - \eta.
	\end{equation}
	To start with, by Lemma~\ref{lemma:F-convex-increasing}, $F_{\kappa}$ is convex. Hence, for any $h$, 
	\begin{equation}
	\label{eqn:priori-two-convex}
	\begin{split}
	\asL_{\psi, \kappa}(h) &= \psi^{-1/2} \cdot F_{\kappa}(\langle h, X^{1/2} W\rangle, \normbig{\proj_{W^{\perp}}(X^{1/2}h)}) 
		+  \langle \proj_{W^{\perp}}(G), X^{1/2} h\rangle\\
	&\ge \psi^{-1/2} \left(F_{\kappa}(\zeta, 0) +
		\partial_1 F_{\kappa}(\zeta , 0)\left(\langle h, X^{1/2} W\rangle -\zeta \right) + 
			\partial_2 F_{\kappa}(\zeta , 0) \normbig{\proj_{W^{\perp}}(X^{1/2}h)} \right) \\
		&~~~~~~~~~~~~+ \langle \proj_{W^{\perp}}(G),  \proj_{W^{\perp}} (X^{1/2} h)\rangle. 
	\end{split}
	\end{equation}
	By Cauchy-Schwartz inequality, the inequality below holds for any $h$ such that $\norm{h} \le 1$: 
	\begin{equation}
	\label{eqn:weird-CS}
	\zeta =  \zeta^2\normbig{X^{-1/2}W} \ge \zeta^2 \langle h, X^{-1/2}W \rangle = 
		\zeta^2 \langle X^{1/2} h, X^{-1} W\rangle.
	\end{equation}
	As a consequence, we obtain for any $h$ such that $\norm{h} \le 1$, 
	\begin{equation}
	\label{eqn:key-bound-weird-CS}
	\begin{split}
	\langle h, X^{1/2} W\rangle -\zeta &\le \langle X^{1/2}h, W \rangle -  \langle X^{1/2} h, \zeta^2 X^{-1} W\rangle
		= \langle X^{1/2} h, (1-\zeta^2 X^{-1}) W\rangle  \\
	\end{split}
	\end{equation}
	As $\langle W, (1-\zeta^2 X^{-1}) W\rangle = 0$ (since $\|W\|=1$), we obtain
	$(1-\zeta^2 X^{-1}) W = \proj_{W^{\perp}}\left((1-\zeta^2 X^{-1}) W\right)$. Hence, 
	\begin{equation}
	\label{eqn:key-bound-weird-CS-final}
		\langle h, X^{1/2} W\rangle -\zeta \le \langle X^{1/2}h, \proj_{W^{\perp}}((1-\zeta^2 X^{-1}) W)\rangle 
			=  \langle \proj_{W^{\perp}}(X^{1/2} h), (1-\zeta^2 X^{-1}) W\rangle
	\end{equation}
	Therefore, we have for all $h$ such that $\norm{h} \le 1$,
	\begin{align}
	&\partial_1 F_{\kappa}(\zeta , 0)\left(\langle h, X^{1/2} W\rangle -\zeta \right) \nonumber \\
	=&~ \left(\partial_1 F_{\kappa}(\zeta , 0) - \eta \right)\left(\langle h, X^{1/2} W\rangle -\zeta \right) 
		+ \eta \left(\langle h, X^{1/2} W\rangle -\zeta \right) \nonumber \\
	\stackrel{(i)}{\ge}&~ \left(\partial_1 F_{\kappa}(\zeta , 0) - \eta \right) \langle \proj_{W^{\perp}}(X^{1/2} h), (1-\zeta^2 X^{-1}) W\rangle
		+ \eta \left(\langle h, X^{1/2} W\rangle -\zeta \right) \nonumber  \\
	{=}&~\partial_1 F_{\kappa}(\zeta , 0)\langle \proj_{W^{\perp}}(X^{1/2} h), (1-\zeta^2 X^{-1}) W\rangle
		+ \eta  \left(\langle h, X^{1/2} W\rangle -\zeta - \langle X^{1/2} h, (1-\zeta^2 X^{-1}) W\rangle\right) \nonumber  \\
	\stackrel{(ii)}{\ge}&~\partial_1 F_{\kappa}(\zeta , 0)\langle \proj_{W^{\perp}}(X^{1/2} h), (1-\zeta^2 X^{-1}) W\rangle
					- 4 \eta x_{\max}^{1/2}.
	\label{eqn:main-term-bound}
	\end{align}
	where in $(i)$ we use Eq.~\eqref{eqn:key-bound-weird-CS-final} and the assumption 
	$\partial_1 F_{\kappa}(\zeta , 0) \le \eta$; 
         in $(ii)$, we use the bounds below that hold for all $h$ with $\norm{h} \le 1$:
	\begin{equation*}
	\begin{split}
	&\left|\langle h, X^{1/2} W\rangle\right| \le \normbig{X^{1/2}W} \le x_{\max}^{1/2}, 
		~~~~~~
	\zeta \le (x_{\max}^{-1/2} \norm{W})^{-1} = x_{\max}^{1/2}\, ,
	 \\
	&\langle X^{1/2} h, (1-\zeta^2 X^{-1}) W\rangle \le \normbig{X^{1/2}(1-\zeta^2 X^{-1}) W} 
	\le \normbig{X^{1/2}W} + \zeta^2 \normbig{X^{-1/2}W} = x_{\max}^{1/2} + \zeta \le 2x_{\max}^{1/2}. 
	\end{split}
	\end{equation*}
	Substituting Eq.~\eqref{eqn:main-term-bound} into Eq.~\eqref{eqn:priori-two-convex}, we have for all 
	$h$ satisfying $\norm{h} \le 1$, 
	\begin{equation*}
	\begin{split}
	 \asL_{\psi, \kappa}(h) &\ge \psi^{-1/2} \cdot \left(  F_{\kappa}(\zeta, 0) +
		\partial_2 F_{\kappa}(\zeta , 0) \normbig{\proj_{W^{\perp}}(X^{1/2}h)} \right) - 4 \psi^{-1/2}x_{\max}^{1/2}\eta  \\
		&~~~~~~
		+ \langle \psi^{-1/2}\partial_1 F_{\kappa}(\zeta , 0)(1-\zeta^2 X^{-1}) W + \proj_{W^{\perp}}(G), \proj_{W^{\perp}}(X^{1/2}h) \rangle
				\\
		&\stackrel{(i)}{\ge} \psi^{-1/2} F_{\kappa}(\zeta, 0) 	-  4 \psi^{-1/2}x_{\max}^{1/2}\eta \\
		&~~~~~~	+ \psi^{-1/2}\left(\partial_2 F_{\kappa}(\zeta , 0) -
				\normbig{\partial_1 F_{\kappa}(\zeta , 0)(1-\zeta^2 X^{-1}) W + \psi^{1/2}\proj_{W^{\perp}}(G)}\right) \normbig{\proj_{W^{\perp}}(X^{1/2}h)} \\
		&\stackrel{(ii)}{\ge} \psi^{-1/2} F_{\kappa}(\zeta, 0) - 5 \psi^{-1/2}x_{\max}^{1/2}\eta \ge \psi^{-1/2}x_{\max}^{1/2}\eta.
	\end{split}
	\end{equation*}
	where, in $(i)$, we use the Cauchy-Schwartz inequality and in $(ii)$, we use the assumption \eqref{eqn:priori-two-case-one}
	and the bound $\normbig{\proj_{W^{\perp}}(X^{1/2}h)}\le x_{\max}^{1/2}$ that holds whenever $\norm{h} \le 1$. 
	As a consequence, this proves that 
	\begin{equation*}
       		 \asL_{\psi, \kappa}\opt = \min_{h: \norm{h} \le 1} \asL_{\psi, \kappa}(h) \ge \psi^{-1/2}x_{\max}^{1/2}\eta.
        \end{equation*}

\vspace{0.25cm}

\noindent\emph{Condition {\sf A3} holds:}
	\begin{equation*}
	\partial_1 F_{\kappa}(-\zeta, 0) \ge -\eta
	~~\text{and}~~
	\partial_2 F_{\kappa}(-\zeta , 0) \ge 
				\normbig{\partial_1 F_{\kappa}(-\zeta , 0)(1-\zeta^2 X^{-1}) W + \psi^{1/2}\proj_{W^{\perp}}(G)} - \eta. 
	\end{equation*}
In this case,  the inequality $\asL_{\psi, \kappa}\opt \ge  \psi^{-1/2}x_{\max}^{1/2}\eta > 0$ follows from essentially the same 
argument as in the previous point. We omit the details.

\subsubsection{Proof of Lemma \ref{lemma:technical-3}}

Throughout the proof, we will drop $\P$ from  subscripts in order to lighten the notations.

By Lemma~\ref{lemma:F-convex-increasing}, the function $h\mapsto \asL_{\psi, \kappa, \P}(h)$ is strictly convex. 
Hence, the unique minimizer of problem  \eqref{eqn:def-L-star} is determined by the Karush---Kuhn---Tucker (KKT) conditions.
Namely, $h$ is the minimum of problem~\eqref{eqn:def-L-star} if and only if, 
for some scalar $s$ and some measurable function $Z = Z(g, x, w)$, the following hold
\begin{equation}
\label{eqn:KKT-opt-general}
\begin{split}
& X^{1/2}\proj_{W^{\perp}}(G) 
	+ \psi^{-1/2} X^{1/2} \Big( \partial_1F_{\kappa}(\langle h, X^{1/2} W\rangle, \normbig{\proj_{W^{\perp}}(X^{1/2}h)}) W+ \\
&~~~~~~~~~~~~~~~~~~~~~	
	+ \partial_2 F_{\kappa}(\langle h, X^{1/2} W\rangle, \normbig{\proj_{W^{\perp}}(X^{1/2}h)}) \proj_{W^{\perp}}(Z) \Big) + sh = 0.\\
&~Z = \begin{cases}
			\normbig{\proj_{W^{\perp}}(X^{1/2}h)}^{-1} \cdot \proj_{W^{\perp}}(X^{1/2}h)  ~&\text{if $\normbig{\proj_{W^{\perp}}(X^{1/2}h)} > 0$} \\
			Z^\prime(g, x, w) ~~\text{where $\norm{Z^\prime} \le 1$} ~&\text{if $\normbig{\proj_{W^{\perp}}(X^{1/2}h)}= 0$}. 
		\end{cases}  \\
& s(\norm{h} - 1) \ge 0, ~s \ge 0, ~\norm{h} \le 1.
\end{split}
\end{equation}
%
For completeness, we provide a derivation of the KKT conditions  in Appendix~\ref{sec:proof-KKT-condition}. 

We claim that the KKT conditions \eqref{eqn:KKT-opt-general} imply that
any minimizer $h$ and its associated dual variable $s$ must satisfy 
\begin{equation}
\label{eqn:structure-KKT}
s > 0~~\text{and}~~\normbig{\proj_{W^{\perp}}(X^{1/2}h)}  \neq 0. 
\end{equation}

To show this, \emph{first assume by contradiction $s=0$}. Denote 
\begin{equation*}
c_1 = \langle h, X^{1/2} W\rangle,~\text{and}~c_2 = \normbig{\proj_{W^{\perp}}(X^{1/2}h)}.
\end{equation*}
Since $X > 0$ by assumption, Eq.~\eqref{eqn:KKT-opt-general} now implies
\begin{equation}
\label{eqn:s-equals-zero}
    \proj_{W^{\perp}}(G) + \psi^{-1/2} \cdot \Big( \partial_1F_{\kappa}\left(c_1, c_2\right)W+
    		\partial_2 F_{\kappa}\left(c_1, c_2 \right) \proj_{W^{\perp}}(Z) \Big)= 0.
\end{equation}
By taking inner products with $W$ on both sides of Eq.~\eqref{eqn:s-equals-zero}, and using $\|W\|=1$, we get that
\begin{equation}
\label{eqn:partialone-equals-zero}
    \psi^{-1/2} \cdot \partial_1F_{\kappa}\left(c_1, c_2\right)= 0. 
\end{equation}
Plugging Eq.~\eqref{eqn:partialone-equals-zero} into Eq.~\eqref{eqn:s-equals-zero}, we obtain the identity: 
\begin{equation}
\label{eqn:s-equals-zero-partialtwo}
\psi^{-1/2} \cdot 
	\partial_2F_{\kappa}\left(c_1, c_2\right)  \proj_{W^{\perp}}(Z)=- \proj_{W^{\perp}}(G).
\end{equation}
Note that $\norm{\proj_{W^{\perp}}(Z)} \le \norm{Z} \le 1$. By taking norm 
on both sides of Eq.~\eqref{eqn:s-equals-zero-partialtwo}, we get the bound: 
\begin{equation}
\label{eqn:s-equals-zero-alpha-lower}
    \psi^{-1/2} \cdot \partial_2 F_{\kappa}\left(c_1, c_2\right) \ge \norm{\proj_{W^{\perp}}(G)}.
\end{equation}
Now, we recall Lemma~\ref{lemma:alpha-upper-bound}. 
By Lemma~\ref{lemma:alpha-upper-bound}, 
Eq.~\eqref{eqn:partialone-equals-zero} and 
Eq.~\eqref{eqn:s-equals-zero-alpha-lower} imply that
\begin{equation}
(\psi\opt(0)/\psi)^{1/2}
	= \psi^{-1/2} \cdot \min_{c\in \R} F_0(c, 1) \ge \norm{\proj_{W^{\perp}}(G)}.
\end{equation}
This contradicts our  assumption on $\psi$. We therefore conclude that $s>0$.

Next, again by contradiction, assume $s>0$ but $\normbig{\proj_{W^{\perp}}(X^{1/2}h)} =0$. 
Then $X^{1/2} h = c W$, or 
\begin{equation}
	h = c X^{-1/2} W~~\text{for some $c \in \R$}.
\end{equation} 
Note that $\norm{h} = 1$ since $s > 0$ and the KKT condition, we obtain $|c| = \norm{X^{-1/2}W}^{-1} = \zeta$. 
Now, we divide our discussion into two cases, based on the value of $c = + \zeta$ and $c = - \zeta$:
\begin{enumerate}
\item $c = \zeta$. Multiplying Eq.~\eqref{eqn:KKT-opt-general} by $X^{-1/2}$, we reach the identity: 
	\begin{equation}
	\label{eqn:case-2-KKT}
  		 \proj_{W^{\perp}}(G) + \psi^{-1/2} \big( \partial_1F_{\kappa}(\zeta, 0) W+\partial_2 F_{\kappa}(\zeta, 0) \proj_{W^{\perp}}(Z) \big) 
    			+ s \zeta X^{-1} W = 0.
	\end{equation}
	Taking inner products with $W$ on both sides of Eq.~\eqref{eqn:case-2-KKT}, we obtain (recall: $\zeta = \norm{X^{-1/2}W}^{-1}$)
	\begin{equation}
	\label{eqn:case-2-s-one}
		\psi^{-1/2}\partial_1 F_{\kappa}(\zeta, 0)
			+ s \zeta^{-1}  = 0.
	\end{equation}
	Now we can eliminate the variable $s$ from Eq.~\eqref{eqn:case-2-KKT} and Eq.~\eqref{eqn:case-2-s-one} and get 
	\begin{equation}
	\label{eqn:case-2-s-two}
	\begin{split}
		&\psi^{1/2}\proj_{W^{\perp}}(G) +\partial_1F_{\kappa}(\zeta, 0) W+\partial_2 F_{\kappa}(\zeta, 0) 
	   	\proj_{W^{\perp}}(Z) 
		= \zeta^2 \partial_1 F_{\kappa}(\zeta, 0) X^{-1}W.
	\end{split}
	\end{equation}
	Simple algebraic manipulation of Eq.~\eqref{eqn:case-2-s-two} yields 
	\begin{equation}
		\label{eqn:case-2-KKT-2-one}
		-\partial_2 F_{\kappa}(\zeta, 0) \proj_{W^{\perp}} (Z) 
			= \partial_1 F_{\kappa}(\zeta, 0) (1- \zeta^2 X^{-1} ) W + \psi^{1/2}\proj_{W^{\perp}}(G).
	\end{equation}
	Now that $\norm{\proj_{W^{\perp}} (Z)} \le \norm{Z} \le 1$. By taking norm on both sides of 
	Eq.~\eqref{eqn:case-2-KKT-2-one}, we get
	\begin{equation}
	\partial_2 F_{\kappa}(\zeta, 0) \ge 	
		\normbig{\partial_1 F_{\kappa}(\zeta, 0) (1- \zeta^2 X^{-1} ) W + \psi^{1/2} \proj_{W^{\perp}}(G)}.
	\end{equation}
	Moreover, since $s > 0$, Eq.~\eqref{eqn:case-2-s-one} yields $\partial_1 F_{\kappa}(\zeta, 0) < 0$.
	
	Summarizing, we see that the case where $s>0$, $\normbig{\proj_{W^{\perp}}(X^{1/2}h)} = 0$ can happen, only if 
	\begin{equation*}
	\partial_1 F_{\kappa}(\zeta, 0) \le 0~\text{and}~
	\partial_2 F_{\kappa}(\zeta, 0) \ge 	
		\normbig{\partial_1 F_{\kappa}(\zeta, 0) (1- \zeta^2 X^{-1} ) W + \psi^{1/2}\proj_{W^{\perp}}(G)}.
	\end{equation*}	
	which contradicts the assumed condition on $\psi, \zeta, \Delta$.
\item $c = -\zeta$. Similar to the previous case, one can show that, this can happen only if 
	\begin{equation*}
	\partial_1 F_{\kappa}(-\zeta, 0) \ge 0~\text{and}~
	\partial_2 F_{\kappa}(-\zeta, 0) \ge 	
		\normbig{\partial_1 F_{\kappa}(-\zeta, 0) (1- \zeta^2 X^{-1} ) W +  \psi^{1/2}\proj_{W^{\perp}}(G)}.
	\end{equation*}
	which contradicts the assumed condition on $\psi, \zeta, \Delta$.
\end{enumerate}
Summarizing the above discussion, we have shown the desired result in Eq.~\eqref{eqn:structure-KKT}.

Using the fact that $s>0$ and $\normbig{\proj_{W^{\perp}}(X^{1/2}h)}>0$, we can simplify the KKT condition~\eqref{eqn:KKT-opt-general}. 
Denote
\begin{equation}
\label{eqn:def-c-1-c-2}
c_1 = \langle h, X^{1/2} W\rangle~~\text{and}~~c_2 = \normbig{\proj_{W^{\perp}}(X^{1/2}h)}.
\end{equation}
The KKT condition (i.e., Eq.~\eqref{eqn:KKT-opt-general}) can be equivalently written as: 
\begin{equation*}
\begin{split}
& X^{1/2}\proj_{W^{\perp}}(G) + \psi^{-1/2} X^{1/2} \big( \partial_1F_{\kappa}(c_1, c_2) W+
	\partial_2 F_{\kappa}(c_1, c_2) \proj_{W^{\perp}}(Z) \big) + sh = 0\\
& \norm{h} = 1, ~Z = c_2^{-1} \cdot \proj_{W^{\perp}}(X^{1/2}h)  
\end{split}
\end{equation*}
Note:
$
\proj_{W^{\perp}}(Z) = c_2^{-1} \proj_{W^{\perp}}(X^{1/2}h) = c_2^{-1}\left(X^{1/2} h - \langle X^{1/2}h, W\rangle W\right) 
	= c_2^{-1}\left(X^{1/2} h - c_1 W\right). 
$
The KKT condition can be equivalently represented as 
\begin{equation}
\label{eqn:KKT-opt-general-equiv}
\proj_{W^{\perp}}(G) + \psi^{-1/2}\left(\partial_1 F_{\kappa}(c_1, c_2) W + c_2^{-1}\partial_2 F_{\kappa}(c_1, c_2)(X^{1/2} h - c_1 W)\right)
 + sX^{-1/2} h = 0~\text{and}~\norm{h} = 1.
\end{equation}
The first equation now immediately implies
\begin{equation}
h = -\frac{\psi^{1/2}\proj_{W^{\perp}}(G) + \left(\partial_1 F_{\kappa}(c_1, c_2) -  c_1 c_2^{-1} \partial_2 F_{\kappa}(c_1, c_2)\right) W}
	{ c_2^{-1} \partial_2 F_{\kappa}(c_1, c_2) X^{1/2} + \psi^{1/2} s X^{-1/2}}.
\end{equation}
Plug in the above expression of $h$ into the three equations below (cf. Eq.~\eqref{eqn:def-c-1-c-2} and Eq.~\eqref{eqn:KKT-opt-general-equiv}): 
\begin{equation*}
c_1 = \E_{\P}[WX^{1/2} h], ~~c_1^2 + c_2^2 = \E_{\P}[Xh^2]~~\text{and}~~1 = \E_{\P}[h^2].
\end{equation*}
We derive the following system of equations:  
\begin{equation}
\label{eqn:three-equations}
\begin{split}
-c_1 &= \E_{\P} \left[\frac{( \psi^{1/2}  \proj_{W^{\perp}}(G) +\left(\partial_1 F_{\kappa}(c_1, c_2) -  
	c_1 c_2^{-1} \partial_2 F_{\kappa}(c_1, c_2)\right) W)WX^{1/2}}
		{c_2^{-1} \partial_2 F_{\kappa}(c_1, c_2) X^{1/2} + \psi^{1/2}  s X^{-1/2}}\right]  \\
c_1^2 + c_2^2 &= 
	 \E_{\P} \left[\frac{(\psi^{1/2}  \proj_{W^{\perp}}(G) + \left(\partial_1 F_{\kappa}(c_1, c_2) -  c_1 c_2^{-1} \partial_2 F_{\kappa}(c_1, c_2)\right) W)^2 X}
	{(c_2^{-1} \partial_2 F_{\kappa}(c_1, c_2) X^{1/2} + \psi^{1/2}  s X^{-1/2})^2}\right] \\
1 &= \E_{\P}
	\left[\frac{\left(\psi^{1/2}  \proj_{W^{\perp}}(G) + \left(\partial_1 F_{\kappa}(c_1, c_2) -  c_1 c_2^{-1} \partial_2 F_{\kappa}(c_1, c_2)\right) W\right)^2}
	{(c_2^{-1} \partial_2 F_{\kappa}(c_1, c_2) X^{1/2} + \psi^{1/2}  s X^{-1/2})^2}\right]
\end{split}
\end{equation}
Recall that the minimum $h\opt$ is unique. Thus the value $c_1 = \langle h, X^{1/2} W\rangle$, $c_2 = \normbig{\proj_{W^{\perp}}(X^{1/2}h)}$ 
and hence the value $s$ that satisfy the KKT condition Eq.~\eqref{eqn:KKT-opt-general} is unique. 
Since the KKT condition, i.e., Eq.~\eqref{eqn:KKT-opt-general} is equivalent to Eq.~\eqref{eqn:three-equations}, this 
implies the existence and uniqueness of $(c_1, c_2, s)$ that satisfy the Eq.~\eqref{eqn:three-equations}. Moreover, 
we have that solution $(c_1, c_2)$ satisfies 
\begin{equation*}
|c_1| = |\langle h, X^{1/2} W\rangle| \le \normbig{X^{1/2}h} \le x_{\max}^{1/2}
~~\text{and}~~
c_2 = \normbig{\proj_{W^{\perp}}(X^{1/2}h)} \le \normbig{X^{1/2}h} \le x_{\max}^{1/2}.
\end{equation*}
Now, by taking inner products with $X^{1/2} h$ on both sides of the first equation of Eq.~\eqref{eqn:KKT-opt-general-equiv}, 
we get 
\begin{equation}
\label{eqn:KKT-opt-general-cor-one}
\langle \proj_{W^{\perp}}(G) , X^{1/2} h\rangle + \psi^{-1/2} \left(c_1 \partial_1 F_{\kappa}(c_1, c_2)
	+ c_2 F_{\kappa}(c_1, c_2) \right) + s = 0,
\end{equation}
since
$
\langle X^{1/2}h, (X^{1/2} h - c_1 W) \rangle = \normbig{\proj_{W^{\perp}}(X^{1/2}h)}^2$ and $
	\langle X^{-1/2}h, X^{1/2}h \rangle = \norm{h}^2 = 1$. Now
Eq.~\eqref{eqn:KKT-opt-general-cor-one} leads to the following characterization of $\asL_{\psi, \kappa}\opt$: 
\begin{equation}
\asL_{\psi, \kappa}\opt 
	= \psi^{-1/2}\left(F_{\kappa}(c_1, c_2) - c_1 \partial_1 F_{\kappa}(c_1, c_2)
	- c_2 \partial_2 F_{\kappa}(c_1, c_2) \right) - s,
\end{equation}
where $(c_1, c_2, s)$ on the right-hand side is the unique solution of Eq.~\eqref{eqn:three-equations}.

\subsection{Consequences for $\P=\Q_n$ and $\P=\Q_{\infty}$}
\label{sec:Corollaries}

The technical lemmas in  Section \ref{sec:TechnicalLemmas} can be directly applied to   $\P=\Q_n$ and $\P=\Q_{\infty}$,
yielding some important consequences. Here $\psi^{\low}(\kappa)$ is defined as in the statement of Proposition \ref{proposition:system-of-Eq-T}, see 
Eq.~\eqref{eq:PsilowDef}.
\begin{corollary}\label{coro:Psilow}
If $\psi\le\psi^{\low}(\kappa)$, then, almost surely, 
	\begin{equation}
		\asL\opt_{\psi, \kappa, \Q_{\infty}} > 0~~\text{and}~~
			\liminf_{n} \asL\opt_{\psi, \kappa, \Q_n} > 0.
	\end{equation}
\end{corollary}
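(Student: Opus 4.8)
The plan is to deduce the statement from Lemma~\ref{lemma:technical-2}, applied first with $\P=\Q_{\infty}$ and then with $\P=\Q_n$ for all large $n$ along almost every realization; the key structural point is that the slack constant $\eta$ of Lemma~\ref{lemma:technical-2} depends only on $x_{\max}$ and $F_\kappa$, so the lower bound it produces is uniform in $\P$ and survives the passage $\Q_n\to\Q_\infty$. First I would check that $\Q_\infty$ and all the $\Q_n$ are admissible inputs. Each $\Q_n$ is supported in $\reals\times[c,C]\times\reals$ by Assumption~\ref{assumption:Lambdas}, so its $W_2$-limit $\Q_\infty=\normal(0,1)\otimes\mu$ is supported there too; hence we may take $x_{\min}=c$, $x_{\max}=C$ uniformly, and $\eta$ and $\psi^{-1/2}x_{\max}^{1/2}\eta$ are the same for all these measures. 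Also $\E_{\Q_\infty}[W^2]=\int w^2\,\mu(\de\lambda,\de w)=1$ by Assumption~\ref{assumption:converge}, and $\E_{\Q_n}[W^2]=\tfrac1p\sum_{i\le p}\bar w_i^2=\rho_n^{-2}\langle\btheta_{*,n},\bSigma_n\btheta_{*,n}\rangle=1$ for every $n$, so the normalization hypothesis holds.

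Next I would record the continuity facts needed to transfer conditions {\sf A1}, {\sf A2}, {\sf A3} from $\Q_\infty$ to $\Q_n$. Since $\Q_n\stackrel{W_2}{\Longrightarrow}\Q_\infty$ almost surely (Eq.~\eqref{eqn:Q-n-converge-to-Q-infty}) and each of the maps $g^2$, $gw$, $w^2$, $x^{-1}w^2$, $x^{-2}w^2$ is continuous on $\reals\times[c,C]\times\reals$ with at most quadratic growth, the corresponding expectations converge a.s.; combined with continuity of $F_\kappa,\partial_1F_\kappa,\partial_2F_\kappa$ (Lemma~\ref{lemma:F-convex-increasing}$(c)$) and of $t\mapsto t^{-1/2}$ near the positive number $\E_\mu[X^{-1}W^2]$, this yields $\zeta(\Q_n)\to\zeta(\Q_\infty)=\zeta$, $\norm{\proj_{W^\perp}(G)}_{\Q_n}\to\norm{\proj_{W^\perp}(G)}_{\Q_\infty}$, and convergence of the norm entering {\sf A2}/{\sf A3}. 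For $\Q_\infty$ the limiting values are explicit: $\langle G,W\rangle_{\Q_\infty}=\E[G]\,\E_\mu[W]=0$, so $\proj_{W^\perp}(G)=G$ and $\norm{\proj_{W^\perp}(G)}_{\Q_\infty}=1$; moreover $(1-\zeta^2X^{-1})W$ is orthogonal to $W$ and independent of $G$ under $\Q_\infty$, so $\norm{\partial_1F_\kappa(\pm\zeta,0)(1-\zeta^2X^{-1})W+\psi^{1/2}\proj_{W^\perp}(G)}_{\Q_\infty}=(\partial_1F_\kappa(\pm\zeta,0)^2\omega^2+\psi)^{1/2}$, with $\zeta$ and $\omega$ exactly as defined just before Proposition~\ref{proposition:system-of-Eq-T}.

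The core is a case analysis on which term attains $\psi^{\low}(\kappa)=\max\{\psi\opt(0),\psi_+(\kappa),\psi_-(\kappa)\}$. Fix $\psi\in(0,\infty)$ with $\psi\le\psi^{\low}(\kappa)$. If $\psi\le\psi\opt(0)$ then $\psi^{1/2}\norm{\proj_{W^\perp}(G)}_{\Q_\infty}-\eta=\psi^{1/2}-\eta\le\psi\opt(0)^{1/2}$, which is {\sf A1} for $\Q_\infty$; and since $\psi^{1/2}\norm{\proj_{W^\perp}(G)}_{\Q_n}-\eta\to\psi^{1/2}-\eta<\psi\opt(0)^{1/2}$, condition {\sf A1} holds for $\Q_n$ for all $n$ large. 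If $\psi\le\psi_+(\kappa)$, then $\psi>0$ forces $\psi_+(\kappa)>0$, so $\partial_1F_\kappa(\zeta,0)\le 0$ and $\psi_+(\kappa)=\partial_2F_\kappa(\zeta,0)^2-\omega^2\partial_1F_\kappa(\zeta,0)^2$; using $\partial_2F_\kappa\ge 0$ (Lemma~\ref{lemma:F-convex-increasing}$(b),(c)$), this gives $\partial_1F_\kappa(\zeta,0)\le 0\le\eta$ and $\partial_2F_\kappa(\zeta,0)\ge(\omega^2\partial_1F_\kappa(\zeta,0)^2+\psi)^{1/2}$, i.e.\ {\sf A2} for $\Q_\infty$; by the convergences of the previous paragraph and the fixed slack $\eta>0$, {\sf A2} holds for $\Q_n$ for all $n$ large. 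The case $\psi\le\psi_-(\kappa)$ is identical with {\sf A3}. In each case Lemma~\ref{lemma:technical-2} gives $\asL\opt_{\psi,\kappa,\Q_\infty}\ge\psi^{-1/2}C^{1/2}\eta>0$ and, a.s., $\asL\opt_{\psi,\kappa,\Q_n}\ge\psi^{-1/2}C^{1/2}\eta$ for all large $n$, hence $\liminf_n\asL\opt_{\psi,\kappa,\Q_n}>0$.

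The main obstacle is less conceptual than bookkeeping: one must verify that $W_2$-convergence really does carry over to the nonlinear functionals $\zeta(\cdot)$ and the {\sf A2}/{\sf A3}-norms — which is exactly where boundedness of $X^{-1}$ on $[c,C]$ and continuity of the derivatives of $F_\kappa$ enter — and one must notice that the fixed slack $\eta$ in Lemma~\ref{lemma:technical-2} is precisely what keeps the non-strict inequalities {\sf A1}--{\sf A3} valid in the limit at the boundary $\psi=\psi^{\low}(\kappa)$; without that slack the corollary would fail at equality.
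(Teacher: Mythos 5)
Your proof is correct and follows essentially the same route as the paper: identify which of the three conditions {\sf A1}--{\sf A3} of Lemma~\ref{lemma:technical-2} holds for $\Q_{\infty}$ depending on which of $\psi\opt(0),\psi_+(\kappa),\psi_-(\kappa)$ dominates, then use $W_2(\Q_n,\Q_\infty)\to0$ together with the fixed positive slack $\eta$ to propagate the condition to $\Q_n$ for all large $n$, and conclude from the lemma. You spell out the case analysis and the verification of the $\Q_\infty$-norms more explicitly than the paper, but the argument is the same.
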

\begin{proof}
Define the quantities 
\begin{equation}
\zeta_n =  \zeta(\Q_{n}) = \normbig{X^{-1/2}W}_{\Q_{n}}^{-1}
	~~\text{and}~~
\zeta_{\infty} = \zeta(\Q_{\infty}) = \normbig{X^{-1/2}W}_{\Q_{\infty}}^{-1}.\label{eq:ZetaDef-App}
\end{equation}
Then, by definition of $\psi^{\low}(\kappa)$, one of the following conditions  hold:
\begin{enumerate}
\item $\psi\opt(0)^{1/2} \ge \psi^{1/2} = \psi^{1/2} \cdot \norm{\proj_{W^{\perp}}(G)}_{\Q_{\infty}}$ 
\item $\partial_1 F_{\kappa}(\zeta_{\infty}, 0) \le 0$ and 
	$\partial_2 F_{\kappa}(\zeta_{\infty} , 0) \ge 
				\normbig{\partial_1 F_{\kappa}(\zeta_{\infty} , 0)(1-\zeta_{\infty}^2 X^{-1}) W + \psi^{1/2}\proj_{W^{\perp}}(G)}_{\Q_{\infty}}$.
\item $\partial_1 F_{\kappa}(-\zeta_{\infty}, 0) \ge 0$ and 
	$\partial_2 F_{\kappa}(-\zeta_{\infty}, 0) \ge 
				\normbig{\partial_1 F_{\kappa}(-\zeta_{\infty}, 0)(1-\zeta_{\infty}^2 X^{-1}) W + \psi^{1/2}\proj_{W^{\perp}}(G)}_{\Q_{\infty}}$.
\end{enumerate}
Therefore, the assumptions of Lemma~\ref{lemma:technical-2} are satisfied for $\P = \Q_{\infty}$. 
Further recall that, by Eq.~\eqref{eqn:Q-n-converge-to-Q-infty},
\begin{equation}
W_2 \left(\Q_n, \Q_{\infty}\right) \to 0,
\end{equation}
and therefore
\begin{equation}
\label{eqn:convergence-W-2-cor}
\begin{split}
\norm{\proj_{W^{\perp}}(G)}_{\Q_n} &\to \norm{\proj_{W^{\perp}}(G)}_{\Q_{\infty}}  \, ,\\
\zeta_n = \zeta(\Q_n) &\to \zeta(\Q_{\infty}) = \zeta_{\infty} \, ,\\
\normbig{\partial_1 F_{\kappa}(\zeta_n , 0)(1-\zeta_n^2 X^{-1}) W + \psi^{1/2}\proj_{W^{\perp}}(G)}_{\Q_{n}}
&\to 
\normbig{\partial_1 F_{\kappa}(\zeta_{\infty} , 0)(1-\zeta_{\infty}^2 X^{-1}) W + \psi^{1/2}\proj_{W^{\perp}}(G)}_{\Q_{\infty}}\, .
\end{split}
\end{equation}
Therefore, the three conditions stated in Lemma~\ref{lemma:technical-2} are also satisfied when 
$\P = \Q_n$ for sufficiently large $n$. The resut follows by applying the Lemma~\ref{lemma:technical-2}.
\end{proof}

\begin{corollary}\label{coro:PsiLarge}
If $\psi>\psi^{\low}(\kappa)$, then, almost surely:
\begin{enumerate}
\item[$(a)$] For both $\P = \Q_{\infty}$ and $\P = \Q_n$ (and $n$ sufficiently large), the system of equations \eqref{eqn:three-equations}
	has unique solutions $(c_{1, \psi, \kappa}(\P), c_{2, \psi, \kappa}(\P), s_{\psi, \kappa}(\P)) \in \R \times \R_{>0} \times \R_{>0}$.
\item[$(b)$] For $\P = \Q_{\infty}$,  the system of equations \eqref{eqn:three-equations} is equivalent to the system 
\eqref{eqn:system-of-equations-c-1-c-2-s-with-no-G}, and therefore we can identify
	\begin{equation*}
	c_{1, \psi, \kappa}(\Q_{\infty}) = c_1(\psi, \kappa),~ 
	c_{2, \psi, \kappa}(\Q_{\infty}) = c_2(\psi, \kappa),~
	s_{\psi, \kappa}(\Q_{\infty}) = s(\psi, \kappa)\, ,
	\end{equation*}
	where $(c_1(\psi, \kappa), c_2(\psi, \kappa), s(\psi, \kappa))$ is the unique solution of Eq.~\eqref{eqn:system-of-equations-c-1-c-2-s-with-no-G} in $\R \times \R_{>0} \times \R_{>0}$.
\item[$(c)$] For all $n$ sufficiently large, the minimizer  $\hat{\btheta}_{n, \psi, \kappa}^{(2)}$ 
of the problem  \eqref{eqn:xi-2-expansion} satisfies
	\begin{equation*}
		\sqrt{p}\hat{\theta}_{n, \psi, \kappa; i}^{(2)} = 
			-\frac{\psi^{1/2}(\proj_{\bw^{\perp}}(\bg))_i + \left(\partial_1 F_{\kappa}
				(c_{1, n}, c_{2, n}) -  c_{1, n} c_{2, n}^{-1} \partial_2 F_{\kappa}(c_{1, n}, c_{2, n})\right) w_i}
			{ c_{2, n}^{-1} \partial_2 F_{\kappa}(c_{1, n}, c_{2, n}) \lambda_i^{1/2} + \psi^{1/2} s_n \lambda_i^{-1/2}},
	\end{equation*}
	where $\hat{\theta}_{n, \psi, \kappa; i}^{(2)}$ on the LHS denotes the $i$-th coordinate of $\hat{\btheta}_{n, \psi, \kappa}^{(2)}$, 
	and $(c_{1, n}, c_{2, n}, s_n)$ on the RHS denotes $(c_{1, \psi, \kappa}(\Q_n), c_{2, \psi, \kappa}(\Q_n), s_{\psi, \kappa}(\Q_n))$. 		
	Moreover, $\hat{\btheta}_{n, \psi, \kappa}^{(2)}$ satisfies 
	\begin{equation*}
		\langle \hat{\btheta}_{n, \psi, \kappa}^{(2)}, \bLambda_n^{1/2}\bw\rangle = c_{1, \psi, \kappa}(\Q_n)
			~\text{and}~
		\normbig{\proj_{W^{\perp}} \bLambda_n^{1/2} \hat{\btheta}_{n, \psi, \kappa}^{(2)}} = c_{2, \psi, \kappa}(\Q_n).
	\end{equation*}
\item[$(d)$] The following representation holds  for both $\P = \Q_{\infty}$ and $\P = \Q_n$  (and $n$ sufficiently large)
	\begin{equation}
	\begin{split}
		&\asL_{\psi, \kappa, \P}\opt =  
			\psi^{-1/2} \Big(F_{\kappa}(c_{1, \psi, \kappa}(\P), c_{2, \psi, \kappa}(\P)) - c_{1, \psi, \kappa}(\P)  \cdot 
				\partial_1 F_{\kappa}(c_{1, \psi, \kappa}(\P), c_{2, \psi, \kappa}(\P))-  \\
		&~~~~~~~~~~~~~~~~~~~~~~~~~~~~~~~~~~~
			c_{2, \psi, \kappa}(\P) \cdot\partial_2 F_{\kappa}(c_{1, \psi, \kappa}(\P), c_{2, \psi, \kappa}(\P))\Big)-s_{\psi, \kappa}(\P)\, .
	\end{split}
	\end{equation}
\item[$(e)$] The following bounds hold for both $\P = \Q_{\infty}$ and $\P = \Q_n$ (and $n$ sufficiently large)
\begin{align}
	\label{eqn:c-1-c-2-bound}
	|c_{1, \psi, \kappa}(\P)| &\le x_{\max}^{1/2}, ~0 \le c_{2, \psi, \kappa}(\P)  \le x_{\max}^{1/2},  ~0 \le s_{\psi, \kappa}(\P) \le C.
	\end{align}
Above $C$ is a constant that depends only on $\Q_\infty, \kappa, x_{\max}, \psi$ and not on $n$.
\end{enumerate}
\end{corollary}
\begin{proof}
By definition of $\psi^{\low}(\kappa)$, all the following conditions are satisfied (with $\zeta_{\infty}$ given by Eq.~\eqref{eq:ZetaDef-App})
\begin{enumerate}
\item $\psi\opt(0)^{1/2} < \psi^{1/2} = \psi^{1/2} \cdot \norm{\proj_{W^{\perp}}(G)}_{\Q_{\infty}}$ 
\item Either $\partial_1 F_{\kappa}(\zeta_{\infty}, 0) > 0$ or
	$\partial_2 F_{\kappa}(\zeta_{\infty} , 0) <
				\normbig{\partial_1 F_{\kappa}(\zeta_{\infty} , 0)(1-\zeta_{\infty}^2 X^{-1}) W + \psi^{1/2}\proj_{W^{\perp}}(G)}_{\Q_{\infty}}$.
\item Either $\partial_1 F_{\kappa}(-\zeta_{\infty}, 0) < 0$ or
	$\partial_2 F_{\kappa}(-\zeta_{\infty}, 0) < 
				\normbig{\partial_1 F_{\kappa}(-\zeta_{\infty}, 0)(1-\zeta_{\infty}^2 X^{-1}) W + \psi^{1/2}\proj_{W^{\perp}}(G)}_{\Q_{\infty}}$.
\end{enumerate}
Hence, the assumptions of Lemma~\ref{lemma:technical-3} are satisfied for $\P = \Q_\infty$. 
Since, by Eq.~\eqref{eqn:Q-n-converge-to-Q-infty}, $W_2 (\Q_n, \Q_{\infty}) \to 0$, 
the assumptions of Lemma~\ref{lemma:technical-3} are also satisfied for $\P = \Q_n$ for all sufficiently large  $n$.

Then the claims $(a)$-$(d)$ immediately follow by applying Lemma~\ref{lemma:technical-3}. Also, Lemma~\ref{lemma:technical-3}
implies the first two bounds of Eq.~\eqref{eqn:c-1-c-2-bound}, i.e., $|c_{1, \psi, \kappa}(\P)| \le x_{\max}^{1/2}$
and $c_{2, \psi, \kappa}(\P)  \le x_{\max}^{1/2}$.

It remains to prove the last part of Eq.~\eqref{eqn:c-1-c-2-bound}: for some constant $C$, $s_{\psi, \kappa}(\P) \le C$ is satisfied 
 for $\P = \Q_\infty$ and $\P = \Q_n$ for all sufficiently large  $n$. To do so, 
let us define the functions $V(c_1, c_2, s)$ and $V^{\upp}(c_1, c_2, s)$ by
\begin{equation*}
\begin{split}
V(c_1, c_2, s) &\defeq \frac{\left[\psi^{1/2}\proj_{W^{\perp}}(G) +
		\Big(\partial_1 F_{\kappa}(c_1, c_2) -  c_1 c_2^{-1} \partial_2 F_{\kappa}(c_1, c_2)\Big) W\right]^2}
			{(c_2^{-1} \partial_2 F_{\kappa}(c_1, c_2)
                          X^{1/2} + \psi^{1/2} s X^{-1/2})^2} -1\, ,\\
V^{\upp}(c_1, c_2, s) &\defeq \psi^{-1} s^{-2} X\left[\psi^{1/2} \proj_{W^{\perp}}(G) +  \Big(\partial_1 F_{\kappa}(c_1, c_2) -  
		c_1 c_2^{-1} \partial_2 F_{\kappa}(c_1, c_2)\Big) W\right]^2 - 1.
\end{split}
\end{equation*}
Recall that $(c_1, c_2, s) = (c_{1, \psi, \kappa}(\P), c_{2, \psi, \kappa}(\P), s_{\psi, \kappa}(\P))$ satisfies the system of 
equations \eqref{eqn:system-of-equations-c-1-c-2-s-with-no-G} for either $\P = \Q_{\infty}$ or $\P = \Q_n$ (and $n$
is sufficiently large), whence
\begin{equation}
\E_{\P} \left[V(c_1(\P), c_2(\P), s(\P))\right] = 0\, .
\end{equation}
Lemma~\ref{lemma:F-convex-increasing} implies $\partial_2 F_{\kappa}(c_1, c_2) \ge 0$ for all $(c_1, c_2) \in \R\times \R_{\ge 0}$. 
Thus $V(c_1, c_2, s) \le V^{\upp}(c_1, c_2, s)$ for all $(c_1, c_2, s) \in \R\times \R_{\ge 0} \times \R_{\ge 0}$. Hence, 
for either $\P = \Q_{\infty}$ or $\P = \Q_n$ (and $n$ is sufficiently large)
\begin{equation*}
\E_{\P}  \left[V^{\upp}(c_1(\P), c_2(\P), s(\P))\right] \ge 0, 
\end{equation*}
which by an algebraic manipulation is equivalent to (writing for simplicity $c_1=c_1(\P)$, $c_2=c_2(\P)$, $s=s(\P)$)
\begin{equation}
\label{eqn:V3-equiv}
	s^2 \le \E_{\P} \left\{X\left[\proj_{W^{\perp}}(G) + \psi^{-1/2}  \left(\partial_1 F_{\kappa}(c_1, c_2) -  
		c_1 \cdot c_2^{-1}\partial_2 F_{\kappa}(c_1, c_2)\right) W\right]^2\right\}\, .
\end{equation}
According to Lemma~\ref{lemma:continuity-partial-f}, we can set $\overline{C} = \max_{|c_1| \le x_{\max}^{1/2}, 
c_2 \in [0, x_{\max}^{1/2}]} c_2^{-1} |\partial_2 F_{\kappa}(c_1, c_2)| +|\partial_1 F_\kappa(c_1, c_2)| < \infty$.
As we have $X \le x_{\max}$ by assumption, and $|c_1(\P)| \le x_{\max}^{1/2}$ and $|c_2(\P)| \le x_{\max}^{1/2}$ 
as previously shown, we obtain
\begin{equation*}
\begin{split}
s^2 &\le  x_{\max}^2 \E_{\P} \left[\left(\left|\proj_{W^{\perp}}(G)\right| + \psi^{-1/2}  \overline{C} |W|\right)^2\right] 
		\le 2x_{\max}^2 \left[\E_{\P}[\proj_{W^{\perp}}(G)^2] + \psi^{-1}\overline{C}^2\right].
\end{split}
\end{equation*}
where we use the fact that $\E_{\P}[W^2] = 1$. As $W_2(\Q_n,\Q_{\infty})\to 0$
 (Eq.~\eqref{eqn:Q-n-converge-to-Q-infty}), we obtain the limit
\begin{equation}
\label{eqn:proj-W-g-converge}
\lim_{n\to \infty} \E_{\Q_n}[( \proj_{W^{\perp}}(G))^2] = \E_{\Q_{\infty}} [(\proj_{W^{\perp}}(G))^2] = 1. 
\end{equation}	
which implies $s_{\psi, \kappa}(\P) \le 2x_{\max}^2 (2+\psi^{-1}\overline{C}^2)$
for $\P = \Q_\infty$ and for $\P = \Q_n$ for all sufficiently large $n$.
\end{proof}

\subsection{Proof of Proposition \ref{proposition:system-of-Eq-T}}
\label{sec:ProofEqT}
\indent\indent
Point $(a)$ follows immediately from Corollary
\ref{coro:PsiLarge}.$(a)$.  Indeed, 
the system of equations \eqref{eqn:three-equations}
coincides with the system \eqref{eqn:system-of-equations-c-1-c-2-s-with-no-G} for $\P = \Q_{\infty}$.

By Corollary \ref{coro:PsiLarge}.$(d)$, we also have for $\psi > \psi^{\low}(\kappa)$,
\begin{equation}
\label{eqn:T-to-asl}
	T(\psi, \kappa) = \asL\opt_{\psi, \kappa, \Q_{\infty}}\, .
\end{equation}

First of all, we claim that $(\psi, \kappa) \to T(\psi, \kappa)$ is continuous and strictly increasing with respect to $\kappa$, and strictly decreasing with respect to $\psi$.

In order to prove this claim, recall that, by Eq.~\eqref{eqn:def-L-star},
\begin{align}
\asL_{\psi, \kappa, \P}\opt &= \min\Big\{ \asL_{\psi, \kappa, \P}(h) \mid \norm{h}_{\P} \le 1\Big\}\, ,\\
\asL_{\psi, \kappa, \P}(h) &=  \psi^{-1/2} \cdot F_{\kappa}\left(\<h, X^{1/2}W \>_{\P},\normbig{\proj_{W^{\perp}}(X^{1/2}h)}_{\P}\right)+
	\< h, X^{1/2} \proj_{W^{\perp}}(G) \>_{\P}.
\end{align}
Notice that: $(i)$~for any $c_1,c_2\in\reals$, $\kappa \to F_{\kappa}(c_1,c_2)$ is continuous and strictly increasing;
$(ii)$~as a consequence, for any fixed $h$, $\asL_{\psi, \kappa, \P}(h)$ is strictly increasing with respect to $\kappa$ and decreasing with respect to $\psi$;
$(iii)$~the minimum $\asL\opt_{\psi, \kappa, \Q_{\infty}}$ in the above optimization problem
is achieved  by some $h_{\psi, \kappa}\opt \in \cL^2(\Q_{\infty})$ 
with $\normsmall{h_{\psi, \kappa}\opt}_{\Q_{\infty}} \le 1$ (see Lemma \ref{lemma:technical-1}). 
The claim that $(\psi, \kappa) \to T(\psi, \kappa)$ is continuous and strictly increasing with respect to $\kappa$ then follows by a standard argument.
This proves the continuity of $T$ in point $(b.i)$ and the monotonicity properties in points $(b.ii)$ and $(b.iii)$.

 Next, we claim that 
\begin{equation*}
		\lim_{\psi \to \infty} T(\psi, \kappa) = \lim_{\psi \to \infty} \asL\opt_{\psi, \kappa, \Q_{\infty}} < 0\, .
\end{equation*}
Indeed, let $\bar{h} = X^{-1/2}G/\normsmall{X^{-1/2}G}_{\Q_{\infty}}$. Then $\norm{\bar{h}}_{\Q_{\infty}} \le 1$ and
$\langle \bar{h}, X^{1/2} \proj_{W^{\perp}}(G)\rangle_{\Q_{\infty}} = 1/\normsmall{X^{-1/2}G}_{\Q_\infty} > 0$.
By definition, $\asL\opt_{\psi, \kappa, \Q_{\infty}} \le L_{\psi, \kappa, \Q_{\infty}}(\bar{h})$. This implies
	\begin{equation*}
		\asL\opt_{\psi, \kappa, \Q_{\infty}}  \le \psi^{-1/2} \cdot \max_{|c_1| \le x_{\max}^{1/2}, c_2 \in [0, x_{\max}^{1/2}]}
			F_{\kappa}(c_1, c_2) - 1/\normsmall{X^{-1/2}G}_{\Q_\infty}.
	\end{equation*}
	Now the desired result follows by taking $\psi \to \infty$. This proves the first bound in point $(b.ii)$. The second bound in 
	point $(b.ii)$ follows because
	\begin{equation*}
		\lim_{\psi \searrow \psi^{\low}(\kappa)} T(\psi, \kappa) \stackrel{(i)}{=} 
			\lim_{\psi \searrow \psi^{\low}(\kappa)} \asL\opt_{\psi, \kappa, \Q_{\infty}}
			\ge \asL\opt_{\psi^{\low}(\kappa), \kappa, \Q_{\infty}} \stackrel{(ii)}{>} 0.
	\end{equation*}
	where $(i)$ is due to Eq~\eqref{eqn:T-to-asl} and (ii) holds because of Corollary \ref{coro:Psilow}.
	

 Third, we show that 
	\begin{equation*}
		\lim_{\kappa \to \infty} T(\psi, \kappa) = \lim_{\kappa\to \infty} \asL\opt_{\psi, \kappa, \Q_{\infty}} = \infty.
	\end{equation*}
	This is due to the following bound on $\asL\opt_{\psi, \kappa, \Q_{\infty}}$: 
	\begin{equation*}
	\asL\opt_{\psi, \kappa, \Q_{\infty}} \ge \psi^{-1/2} \cdot \min_{|c_1| \le x_{\min}^{1/2}, c_2 \in [0, x_{\min}^{1/2}]}
		F_{\kappa}(c_1, c_2) - x_{\max}^{1/2}\, ,
	\end{equation*}
	and the fact that $\lim_{\kappa\to \infty} \min_{|c_1| \le x_{\max}^{1/2}, c_2 \in [0, x_{\max}^{1/2}]}
		F_{\kappa}(c_1, c_2) = \infty$. This concludes the proof of point $(b.iii)$.

Last, we show that $c_1(\cdot, \cdot), c_2(\cdot, \cdot), s(\cdot, \cdot)$ are continuous function on the 
	domain $\left\{(\psi, \kappa): \psi > \psi^{\low}(\kappa)\right\}$. Pick any point $(\psi_0, \kappa_0)$ such 
	that $\psi_0 > \psi^{\low}(\kappa_0)$. Let $\{\psi_l\}_{l\in \N}$ and $\{\kappa_l\}_{l \in \N}$ be two 
	sequences such that $\psi_l \to \psi_0$ and $\kappa_l \to \kappa_0$. It suffices to show that 
	\begin{equation}
	\label{eqn:c-1-c-2-s-convergence}
		\lim_{l\to \infty} (c_1(\psi_l, \kappa_l), c_2(\psi_l, \kappa_l), s(\psi_l, \kappa_l))
			= (c_1(\psi_0, \kappa_0), c_2(\psi_0, \kappa_0), s(\psi_0, \kappa_0)).
	\end{equation}
	Corollary \ref{coro:PsiLarge} implies that for some constant $C$ independent of $n$, the following holds for all $l = 0$ 
	and for all sufficiently large $l \in \N$:  
	\begin{equation}
	\begin{split}
	c_1(\psi_l, \kappa_l) &= c_{1, \psi_l, \kappa_l}(\Q_{\infty}) \in [-x_{\max}^{1/2}, x_{\max}^{1/2}], \\~
	c_2(\psi_l, \kappa_l) &= c_{2, \psi_l, \kappa_l}(\Q_{\infty}) \in [0, x_{\max}^{1/2}], \\~
	s(\psi_l, \kappa_l) &= s_{\psi_l, \kappa_l}(\Q_{\infty})~~ \in [0,C].
	\end{split}
	\end{equation}
	Write  $\mathcal{S}\defeq [-M, M]  \times [0,  M] \times [0, M]$ for $M = \max\{C, x_{\max}^{1/2}\}$. Then 
	$\left(c_1(\psi_l, \kappa_l), c_2(\psi_l, \kappa_l), s(\psi_l, \kappa_l)\right) \in \mathcal{S}$ for all large enough $l$. Below 
	we show that any limit point of 
	$\{(c_1(\psi_l, \kappa_l), c_2(\psi_l, \kappa_l), s(\psi_l, \kappa_l))\}_{l\in \N}$ must be 
	$(c_1(\psi_0, \kappa_0), c_2(\psi_0, \kappa_0), s(\psi_0, \kappa_0))$. 
	To do so, take any limit point of $(c_1(\psi_l, \kappa_l), c_2(\psi_l, \kappa_l), s(\psi_l, \kappa_l))$, and denote it by
	$(\tilde{c}_1\opt, \tilde{c}_2\opt, \tilde{s}\opt)$. It is clear that $(\tilde{c}_1\opt, \tilde{c}_2\opt, \tilde{s}\opt) \in \mathcal{S}$, 
	and moreover, $(\tilde{c}_1\opt, \tilde{c}_2\opt, \tilde{s}\opt)$ must satisfy the system of 
	equations \eqref{eqn:three-equations} for $\psi = \psi_0, \kappa = \kappa_0, \P = \Q_{\infty}$. 
	The next lemma shows that both $\tilde{c}_2\opt$ and $\tilde{s}^*$ must be non-zero. 
	\begin{lemma}
	\label{lemma:nonzero-c-2-t}
	Assume $\psi > \psi\opt(0)$. Suppose that $(c_1, c_2, s) \in \R \times \R_{\ge 0} \times \R_{\ge 0}$ satisfies the system of equations 
	\eqref{eqn:three-equations} (with $\P=\Q_{\infty}$)\footnote{Note that the R.H.S. of Eq.~\eqref{eqn:three-equations} 
	can be properly defined for all $(c_1, c_2, s) \in \R \times \R_{\ge 0} \times \R_{\ge 0}$; see Lemma~\ref{lemma:continuity-partial-f}).}. 
	Then we must have that $c_2 \neq 0$ and $s \neq 0$.
	\end{lemma}
	We therefore know that $(\tilde{c}_1\opt, \tilde{c}_2\opt, \tilde{s}\opt)$ satisfies both $\tilde{c}_2\opt > 0$ and $\tilde{s}\opt> 0$ 
	and the system of equations 
      \eqref{eqn:three-equations}  corresponding to $\psi = \psi_0, \kappa = \kappa_0, \P = \Q_{\infty}$. Now since that solution 
	is known to be unique, by Corollary \ref{coro:PsiLarge}.$(a)$,
 we conclude  that $(\tilde{c}_1\opt, \tilde{c}_2\opt, \tilde{s}\opt) = (c_1(\psi_0, \kappa_0), c_2(\psi_0, \kappa_0), s(\psi_0, \kappa_0))$.
	This proves the convergence statement \eqref{eqn:c-1-c-2-s-convergence}, and concludes the proof of point $(b.i)$.

\subsubsection{Proof of Lemma \ref{lemma:nonzero-c-2-t}}
\label{sec:lemma-nonzero-c-2-t}
First assume by contradiction that $c_2 = 0$. Write $G(c_1, 0) = \lim_{c_2 \to 0^+} c_2^{-1} \partial_2 F_{\kappa}(c_1, c_2)$. 
Set  $h = h(G, X, W)$ to be 
\begin{equation}
\label{eqn:real-form-of-h}
h = -\frac{\psi^{1/2}\proj_{W^{\perp}}(G) + \left(\partial_1 F_{\kappa}(c_1, c_2) -  c_1 G(c_1, 0)\right) W}
	{ G(c_1, 0) X^{1/2} + \psi^{1/2} s X^{-1/2}}.
\end{equation}
The condition that  $(c_1, 0, s)$ is a solution of  the system of equations \eqref{eqn:three-equations} 
is then equivalent to 
\begin{equation}
	\E_{\Q_{\infty}}[hWX^{1/2}] = -c_1,~~\E_{\Q_{\infty}}[h^2 X] = c_1^2,~~\E_{\Q_{\infty}}[h^2] = 1.
\end{equation}
As $\E_{\Q_{\infty}}[W^2] = 1$, we obtain that $\E_{\Q_\infty} (hX^{1/2} - c_1W)^2 =0$. This shows that 
$h =h(G, X, W) = c_1 X^{-1/2}W$ holds almost surely.  A comparison of this form with Eq.~\eqref{eqn:real-form-of-h} 
(noticing that $G \sim \normal(0, 1)$ under $\Q_\infty$) yields contradiction. As a result, we have shown 
that $c_2 \neq 0$ for any solution of  the system of equations \eqref{eqn:three-equations}.

Next assume by contraction that $s = 0$. Say that $(c_1, c_2, 0)$ is a solution of 
Eq.~\eqref{eqn:three-equations} (with $\P=\Q_{\infty}$) for some $c_1$ and some $c_2 \neq 0$. Denote $h = h(G, X, W)$ to be 
the function 
\begin{equation}
\label{eqn:def-h-s=0}
	h = \frac{c_2 \proj_{W^{\perp}}(G) + \psi^{1/2}(c_2 \partial_1 F_{\kappa}(c_1, c_2)
			- c_1 \partial_2 F_{\kappa}(c_1, c_2))W }{\psi^{1/2}\partial_2 F_{\kappa}(c_1, c_2) X^{1/2} }.
\end{equation}
The condition that  $(c_1, c_2, 0)$ is a solution of  the system of equations \eqref{eqn:three-equations} 
is then equivalent to 
\begin{equation}
\label{eqn:equiv-t-c-2=0}
	\E_{\Q_{\infty}}[hWX^{1/2}] = -c_1,~~\E_{\Q_{\infty}}[h^2 X] = c_1^2 + c_2^2,~~\E_{\Q_{\infty}}[h^2] = 1.
\end{equation}
Since $\E_{\Q_{\infty}}[W^2] = 1$ and $\E_{\Q_{\infty}} [\proj_{W^{\perp}}(G) W] = 0$, the first of these equations is equivalent to 
\begin{equation*}
	\psi^{1/2} c_2 \partial_1 F_{\kappa}(c_1, c_2) = 0,
\end{equation*}
and since $c_2 \neq 0$, we conclude that 
\begin{equation}
\label{eqn:equiv-t=0-one}
	 \partial_1 F_{\kappa}(c_1, c_2) = 0.
\end{equation}
Using Eq.~\eqref{eqn:equiv-t=0-one}, and the fact that $W$ is independent of $G$ under $\Q_{\infty}$, the second equation of \eqref{eqn:equiv-t-c-2=0} 
is equivalent to 
\begin{equation*}
	c_2^2 ((\psi^{1/2}\partial_2 F_{\kappa}(c_1, c_2))^2 - 1)  = 0.
\end{equation*}
Again, since $c_2 \neq 0$ and $\partial_2 F_{\kappa}(c_1, c_2) > 0$ by Lemma~\ref{lemma:F-convex-increasing}, 
we conclude that 
\begin{equation}
	\psi^{1/2}\partial_2 F_{\kappa}(c_1, c_2) = 1.
\end{equation}
Now Lemma~\ref{lemma:alpha-upper-bound} implies that 
\begin{equation}
(\psi/\psi\opt(0))^{1/2}
	= \psi^{1/2} \cdot \min_{c\in \R} F_0(c, 1) \ge 1,
\end{equation}
which contradicts our assumed condition on $\psi$. This shows $s \neq 0$.

\newcommand{\asto}{\stackrel{{\rm a.s.}}{\to}}

\section{Analysis of the Gordon's optimization problem: Proof of Proposition~\ref{proposition:xi-n-2-xi-infty}}
This section builds upon the notation and results in Section~\ref{sec:all-property-optimization}. As a 
kind suggestion, the reader needs to go over all the main results in 
Section~\ref{sec:all-property-optimization} before reading the rest of the section.  

Notice that $\xi_{n, \psi, \kappa}^{(2)} = \asL\opt_{\psi, \kappa, \Q_n}$.  Therefore, 
point $(a)$ follows by  Corollary \ref{coro:Psilow} and we can assume hereafter $\psi>\psi^\low(\kappa)$.

We claim that the following holds for any $\psi>\psi^\low(\kappa)$:
        \begin{equation}
        \label{eqn:converge-c-1-c-2-s}
        \lim_{n \to \infty, p/n\to \psi}
        	(c_{1, \psi, \kappa}(\Q_n), c_{2, \psi, \kappa}(\Q_n), s_{\psi, \kappa}(\Q_n)) 
		= (c_{1, \psi, \kappa}(\Q_{\infty}), c_{2, \psi, \kappa}(\Q_{\infty}), s_{\psi, \kappa}(\Q_{\infty})).
        \end{equation}
Before proving this claim, let us show that it implies point $(b)$:
        \begin{itemize}
        \item Equation \eqref{eqn:goal-finite-infinite} follows from the Corollary \ref{coro:PsiLarge}.$(d)$ and Eq.~\eqref{eqn:converge-c-1-c-2-s}.
        \item The first limit in Eq.~\eqref{eqn:lim-btheta_n_2_c1c2} follows from
        \begin{equation*}
        		\lim_{n \to \infty} \langle \xi_{n, \psi, \kappa}^{(2)}, \bLambda^{1/2}\bw \rangle
			\stackrel{(a)}{=} \lim_{n\to \infty} c_{1, \psi, \kappa}(\Q_n)
				\stackrel{(b)}{=} c_{1, \psi, \kappa}(\Q_{\infty})  
			= c_1(\psi, \kappa)\, ,
	\end{equation*}
        where $(a)$ is a consequence of Corollary \ref{coro:PsiLarge}.$(c)$ and $(b)$ follows from Eq.~\eqref{eqn:converge-c-1-c-2-s}.
	\item The second limit in Eq.~\eqref{eqn:lim-btheta_n_2_c1c2} follows from the same argument
	\begin{equation*}
		\lim_{n \to \infty} \normbig{\hat{\btheta}_{n, \psi, \kappa}^{(2)}}_{\bLambda_n}
			= \left(c_{1, \psi, \kappa}(\Q_\infty)^2 + c_{2, \psi, \kappa}(\Q_\infty)^2\right)^{1/2}
			= \left(c_1(\psi, \kappa)^2 + c_2(\psi, \kappa)^2\right)^{1/2}.
        \end{equation*}
        \item Equation \eqref{eqn:W-2-converge} can be derived from Corollary \ref{coro:PsiLarge}.$(c)$ and Eq.~\eqref{eqn:converge-c-1-c-2-s}. 
        	Let us define 
	\begin{equation*}
	\begin{split}
	H_{n, \psi, \kappa} (G, X, W) &= -\frac{\psi^{1/2}(\proj_{\bW^{\perp}, \Q_n}(G)) + \left(\partial_1 F_{\kappa}
				(c_{1, n}, c_{2, n}) -  c_{1, n} c_{2, n}^{-1} \partial_2 F_{\kappa}(c_{1, n}, c_{2, n})\right) W}
			{ c_{2, n}^{-1} \partial_2 F_{\kappa}(c_{1, n}, c_{2, n}) X^{1/2} + \psi^{1/2} s_n X^{-1/2}}, \\
	H_{\infty, \psi, \kappa} (G, X, W) &= -\frac{\psi^{1/2}G + \left(\partial_1 F_{\kappa}
				(c_{1, \infty}, c_{2, \infty}) -  c_{1, \infty} c_{2, \infty}^{-1} \partial_2 F_{\kappa}(c_{1, \infty}, c_{2, \infty})\right) W}
			{ c_{2, \infty}^{-1} \partial_2 F_{\kappa}(c_{1, \infty}, c_{2, \infty}) X^{1/2} + \psi^{1/2} s_\infty X^{-1/2}},	
	\end{split}
	\end{equation*}
	where $(c_{1, n}, c_{2, n}, s_n) = (c_{1, \psi, \kappa}(\Q_n), c_{2, \psi, \kappa}(\Q_n), s_{\psi, \kappa}(\Q_n))$ and 
	$(c_{1, \infty}, c_{2, \infty}, s_\infty) = (c_{1, \psi, \kappa}(\Q_\infty), c_{2, \psi, \kappa}(\Q_\infty), s_{\psi, \kappa}(\Q_\infty))$. 
	Corollary \ref{coro:PsiLarge}.$(c)$ shows that the empirical distribution induced by 
	$\{(\lambda_i, \bar{w}_i, \sqrt{p}\hat{\btheta}_{n, \psi, \kappa; i}^{(2)})\}_{i \in [p]}$
	is just the same as the distribution of $(X_n, W_n, H_{n, \psi, \kappa} (G_n, X_n, W_n))$ where 
	$(G_n, X_n, W_n) \sim \Q_n$\footnote{When talking about the distribution of $(X_n, W_n, H_{n, \psi, \kappa} (G_n, X_n, W_n))$, the 
	numbers 
	$(c_{1, n}, c_{2, n}, c_{3, n})$ in the definition of $H_{n, \psi, \kappa} (G_n, X_n, W_n)$ are viewed as deterministic, 
	and only $(G_n, X_n, W_n) \sim \Q_n$ is random.}. Moreover, 
	the definition of $\mathcal{L}_{\psi, \kappa}$ (Definition~\ref{def:KappaE}) says that $\mathcal{L}_{\psi, \kappa}$ is the 
	same as the distribution of $(G_\infty, X_\infty, H_{\infty, \psi, \kappa} (G_\infty, X_\infty, W_\infty))$ where 
	$(G_\infty, X_\infty, W_\infty) \sim \Q_\infty$. 
	Thus our target Equation \eqref{eqn:W-2-converge} is equivalent to 
	\begin{equation*}
		W_2 \left((X_n, W_n, H_{n, \psi, \kappa}(G_n, X_n, W_n)), (X_\infty, W_\infty, H_{\infty, \psi, \kappa}(G_\infty, X_\infty, W_\infty))\right) 
			\to 0.
	\end{equation*}
	By triangle inequality, it suffices to prove the convergence 
	\begin{enumerate}[(a)]
	\item $W_2
		\left((X_n, W_n, H_{\infty, \psi, \kappa}(G_n, X_n, W_n)), (X_\infty, W_\infty, H_{\infty, \psi, \kappa}(G_\infty, X_\infty, W_\infty))\right) \to 0$.
	\item $W_2 \left((X_n, W_n, H_{n, \psi, \kappa}(G_n, X_n, W_n)), (X_n, W_n, H_{\infty, \psi, \kappa}(G_n, X_n, W_n))\right) \to 0$.
	\end{enumerate}
	To prove point $(a)$, we note that 
	$W_2 \left(f(G_n, X_n, W_n), f(G_\infty, X_\infty, W_\infty)\right) \to 0$ for any continuous function $f: \R^3 \to \R^3$ satisfying 
	$\sup_{(G, X, W)\in\R^3} \norm{f(G, X, W)}/{\norm{(G, W, X)}_2} < \infty$. In particular, the continuous function $H_{\infty, \psi, \kappa}(G, X, W)$ 
	satisfies  $\sup_{(G, X, W)\in \R^3} H_{\infty, \psi, \kappa}(G, X, W)/{\norm{(G, W, X)}_2} < \infty$. This is easy to show: 
	the crucial part is to notice that the denominator of $H_\infty(G, X, W)$ satisfies 
	\begin{equation*}
	c_{2, \infty}^{-1} \partial_2 F_{\kappa}(c_{1, \infty}, c_{2, \infty}) X^{1/2} + \psi^{1/2} s_\infty X^{-1/2} 
	 \ge \psi^{1/2} s_\infty X^{-1/2}  \ge  c > 0.
	\end{equation*}
	Above we've used the fact that $s_\infty > 0$ and $X < C$ almost surely. This proves point $(a)$. \\
	To show point $(b)$, it suffices to prove $W_2(H_{n, \psi, \kappa}(G_n, X_n, W_n), H_{\infty, \psi, \kappa}(G_n, X_n, W_n)) \to 0$.
	In fact, we prove a strengthened result:  
	\begin{equation}
	\label{eqn:strengthen-b}
	\E_{(G, X, W) \sim \Q_n} \left[(H_{n, \psi, \kappa}(G, X, W) - H_{\infty, \psi, \kappa}(G, X, W))^2\right] \asto 0.
	\end{equation}
	We can view both $H_{n, \psi, \kappa}(G, X, W)$ and $H_{\infty, \psi, \kappa}(G, X, W)$ as linear functions of 
	$G, W, \langle G, W\rangle_{\Q_n} G$, i.e., 
	\begin{equation*}
	\begin{split}
	H_{n, \psi, \kappa}(G, X, W) &= a_1(X; c_{1, n}, c_{2, n}, s_n) G + a_2(X; c_{1, n}, c_{2, n}, s_n) W + 
		a_3(X; c_{1, n}, c_{2, n}, s_n) \langle G, W\rangle_{\Q_n} G. \\
	H_{\infty, \psi, \kappa}(G, X, W) &= a_1(X; c_{1, \infty}, c_{2, \infty}, s_\infty) G + a_2(X; c_{1, \infty}, c_{2, \infty}, s_\infty) W.
	\end{split}
	\end{equation*}	
	Recall that $0<c < X < C < \infty$ by our assumption. Define for $j \in \{1, 2, 3\}$,  
	\begin{equation*}
	\eps_{j, n} = 
		\begin{cases}
		\sup_{X \in [c, C]} \left|a_j(X; c_{1, n}, c_{2, n}, s_n) - a_j(X; c_{1, \infty}, c_{2, \infty}, s_\infty)\right| &\text{if $j = 1, 2$}. \\
		\sup_{X \in [c, C]} \left|a_j(X; c_{1, n}, c_{2, n}, s_n) \right|~~&\text{if $j = 3$}.
		\end{cases}
	\end{equation*} 
	It is easy to show that (the reader can check the details by himself)
	\begin{itemize}
	\item For some $M < \infty$, $|\eps_{j, n}| \le M$ for all $n \in \N$. This is mostly due to Corollary \ref{coro:PsiLarge}.
		$(e)$---we know for some $C < \infty$, $|c_{1, n}| \le C$, $|c_{2, n}| \le C$,  $|c_{1, \infty}| \le C$, $|c_{2, \infty}| \le C$
		holds for all $n \in \N$.
	\item Almost surely, $\lim_{n \to \infty}|\eps_{j, n}| \to 0$ for $j \in \{1, 2\}$. This is mostly due to the 
		convergence in Eq.~\eqref{eqn:converge-c-1-c-2-s}--- we know that 
		$(c_{1, n}, c_{2, n}, s_n) \to (c_{1, \infty}, c_{2, \infty}, s_{\infty})$. Also the denominator of $H_n(G, X, W)$ is 
		uniformly lower bounded for all sufficiently large $n$. 
	\end{itemize}
	
	Now, the desired Eq.~\eqref{eqn:strengthen-b} follows since by Cauchy Schwartz inequality
	\begin{equation*}
	\begin{split}
		&\E_{(G, X, W) \sim \Q_n} \left[(H_{n, \psi, \kappa}(G, X, W) - H_{\infty, \psi, \kappa}(G, X, W))^2\right]  \\
		&\le 3 \left(\eps_{1, n}^2\E_{Q_n}[G^2] + \eps_{2, n}^2\E_{Q_n}[W^2] + 
				\eps_{3, n}^2|\langle G, W_n\rangle_{\Q_n}|^2 \E_{\Q_n}[G^2]\right) \asto 0,
	\end{split}
	\end{equation*}
	where in the last step, we used the fact that $\E_{\Q_n}[G^2] \to \E_{\Q_\infty}[G^2]  =1$, $\E_{Q_n}[W^2] = 1$
	and  $|\langle G, W_n\rangle_{\Q_n}| \to |\langle G, W_n\rangle_{\Q_\infty}| = 0$ due to the convergence 
	$\Q_n \stackrel{W_2}{\rightarrow} \Q_\infty$.
        \end{itemize}

We are now left with the task of proving  Eq.~\eqref{eqn:converge-c-1-c-2-s}.
We fix $\psi, \kappa$ in the rest of the proof. For notational convenience, we drop $\psi, \kappa$ from the arguments in 
what follows. Define the functions
        \begin{equation}
        \begin{split}
        V_{1,n}(c_1, c_2, s) &= \frac{(c_2 \psi^{1/2}\proj_{W^{\perp},\Q_n}(G) + \left(c_2  \partial_1 F_{\kappa}(c_1, c_2) -  
        			c_1 \partial_2 F_{\kappa}(c_1, c_2)\right) W)WX^{1/2}}
        				{\partial_2 F_{\kappa}(c_1, c_2) X^{1/2} + c_2 \psi^{1/2}  s X^{-1/2}} + c_1\, , \\
        V_{2,n}(c_1, c_2, s) &= \frac{(c_2 \psi^{1/2}  \proj_{W^{\perp}.\Q_n}(G) +  \left(c_2  \partial_1 F_{\kappa}(c_1, c_2) -  
        			c_1 \partial_2 F_{\kappa}(c_1, c_2)\right) W)^2 X}
        				{(\partial_2 F_{\kappa}(c_1, c_2) X^{1/2} + c_2 \psi^{1/2}  s X^{-1/2})^2} - (c_1^2 + c_2^2)\, ,\\
        V_{3,n}(c_1, c_2, s) &= \frac{(c_2\psi^{1/2}  \proj_{W^{\perp},\Q_n} (G) +  \left(c_2  \partial_1 F_{\kappa}(c_1, c_2) -  
        			c_1 \partial_2 F_{\kappa}(c_1, c_2)\right) W)^2}
        				{( \partial_2 F_{\kappa}(c_1, c_2) X^{1/2} + c_2 \psi^{1/2} s X^{-1/2})^2} - 1.
        \end{split}	
        \end{equation}
Notice that these functions depend on $n$ because $\proj_{W^{\perp},\Q_n}$ does.
        We introduce the shorthands 
        \begin{equation*}
        (c_{1, n}, c_{2, n}, s_n) = (c_1(\Q_n), c_2(\Q_n), s(\Q_n))
        		~~\text{and}~~
	(c_{1, \infty}, c_{2, \infty}, s_{\infty}) = (c_1(\Q_{\infty}), c_2(\Q_{\infty}), s(\Q_{\infty}))
        \end{equation*}
        For sufficiently large $n$, $\left(c_{1, n}, c_{2, n}, s_{n}\right)$ is the solution of the system of equations
        \begin{equation}
        \E_{\Q_n} [V_{1,n}(c_1, c_2, s)] = 0, ~\E_{\Q_n} [V_{2,n}(c_1, c_2, s)] = 0, ~\E_{\Q_n} [V_{3,n}(c_1, c_2, s)] = 0.
        \end{equation}
      Corollary \ref{coro:PsiLarge} implies the existence of $M > 0$ such that  
	\begin{equation}
	\begin{split}
	\limsup_{n\to \infty} |c_{1, n}| < M,~~0 \le \liminf_{n\to \infty} c_{2, n} \le  \limsup_{n\to \infty} c_{2, n} < M,~~ \limsup_{n\to \infty} s_n < M.
	\end{split}
	\end{equation}
	Define the compact set $\setsq \defeq [-M, M]  \times [0,  M] \times [0, M]$. The next lemma
	establishes for each $i = 1, 2, 3$ the uniform convergene result of $\E_{\Q_n} [V_{i,n}(c_1, c_2, s)]$ to $\E_{\Q_{\infty}}[V_{i,\infty}(c_1, c_2, s)]$
	on the compact set $\setsq$. To avoid interrupting the flow, we defer its proof to the next subsection.
  \begin{lemma}
        \label{lemma:uniform-convergence-c-1-c-2-t}
         For $i = 1, 2, 3$, we have almost surely
        \begin{equation}
        \limsup_{n\to \infty} \sup_{(c_1, c_2, s) \in \setsq} 
        	\left|\E_{\Q_n} [V_{i,n}(c_1, c_2, s)] - \E_{\Q_{\infty}}[V_{i,\infty}(c_1, c_2, s)]\right| = 0
        \end{equation}
        \end{lemma}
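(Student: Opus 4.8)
The plan is to show that each $\E_{\Q_n}[V_{i,n}(c_1,c_2,s)]$ converges to $\E_{\Q_\infty}[V_{i,\infty}(c_1,c_2,s)]$ uniformly on the compact box $\setsq=[-M,M]\times[m,M]\times[0,M]$ by combining two ingredients: $(i)$ uniform equicontinuity in $(c_1,c_2,s)$ of the family of functions $(c_1,c_2,s)\mapsto \E_{\Q_n}[V_{i,n}(c_1,c_2,s)]$, together with pointwise convergence, which upgrades to uniform convergence on a compact set by a standard Arzel\`a--Ascoli / $\varepsilon$-net argument; and $(ii)$ the fact that $\Q_n\stackrel{W_2}{\Longrightarrow}\Q_\infty$ (Eq.~\eqref{eqn:Q-n-converge-to-Q-infty}), which implies convergence of expectations of any continuous function with at most quadratic growth.

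First I would establish uniform lower bounds on the denominators. On $\setsq$ the parameter $c_2\ge m>0$, and by Assumption~\ref{assumption:Lambdas} (carried over to $\Q_n$ and $\Q_\infty$) we have $X\in[x_{\min},x_{\max}]$ with $0<x_{\min}\le x_{\max}<\infty$; moreover Lemma~\ref{lemma:F-convex-increasing}.$(c)$ gives $F_\kappa$ continuously differentiable, and $F_\kappa(c_1,c_2)>0$ on the relevant compact range, so $\partial_2 F_\kappa(c_1,c_2)$ is bounded and --- using $\partial_2 F_\kappa(c_1,c_2)>0$ for $c_2>0$, again from Lemma~\ref{lemma:F-convex-increasing} --- bounded below by a strictly positive constant uniformly on $\{|c_1|\le M\}\times[m,M]$. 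Hence the denominators $\partial_2 F_\kappa(c_1,c_2)X^{1/2}+c_2\psi^{1/2}sX^{-1/2}$ are bounded below by some $c_0>0$ uniformly over $\setsq$, over $n$, and over both $\Q_n$ and $\Q_\infty$. Consequently each $V_{i,n}(c_1,c_2,s)$, viewed as a function of $(g,x,w)$, is a smooth function of $(c_1,c_2,s)$ with derivatives bounded by a polynomial in $|g|,|w|$ (of degree at most $2$), uniformly on $\setsq$; the only subtlety is the term $\proj_{W^\perp,\Q_n}(G)=G-\langle G,W\rangle_{\Q_n}W$, whose coefficient $\langle G,W\rangle_{\Q_n}$ is a random scalar converging a.s.\ to $\langle G,W\rangle_{\Q_\infty}=0$ (since $G\perp W$ under $\Q_\infty$ and $\Q_n\stackrel{W_2}{\to}\Q_\infty$), so $\proj_{W^\perp,\Q_n}(G)$ and $\proj_{W^\perp,\Q_\infty}(G)=G$ agree up to an $o(1)$-times-$W$ perturbation.

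Second, with these bounds in hand, I would argue: for each fixed $(c_1,c_2,s)\in\setsq$, pointwise convergence $\E_{\Q_n}[V_{i,n}(c_1,c_2,s)]\to\E_{\Q_\infty}[V_{i,\infty}(c_1,c_2,s)]$ follows because $V_{i,n}$ differs from the continuous-at-most-quadratic-growth function $V_{i,\infty}$ only through the replacement of $\langle G,W\rangle_{\Q_\infty}=0$ by $\langle G,W\rangle_{\Q_n}\asto 0$ inside the projection, and this perturbation contributes a term bounded by $|\langle G,W\rangle_{\Q_n}|$ times a quadratic in $|G|,|W|$, whose $\Q_n$-expectation tends to $0$ since $\E_{\Q_n}[G^2]\to 1$, $\E_{\Q_n}[W^2]=1$, $\E_{\Q_n}[X^{\pm 1}]$ bounded; while $\E_{\Q_n}[V_{i,\infty}(c_1,c_2,s)]\to\E_{\Q_\infty}[V_{i,\infty}(c_1,c_2,s)]$ by $W_2$-convergence applied to the fixed continuous quadratic-growth integrand. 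Third, the uniform Lipschitz bound in $(c_1,c_2,s)$ (derivatives dominated by a quadratic in $|G|,|W|$ whose $\Q_n$-expectation is bounded uniformly in $n$ a.s.) makes the family $\{(c_1,c_2,s)\mapsto\E_{\Q_n}[V_{i,n}]\}_n$ uniformly equicontinuous on $\setsq$; combined with pointwise convergence on a countable dense subset this yields the claimed uniform convergence. I expect the main obstacle to be the careful bookkeeping of the $\proj_{W^\perp,\Q_n}$ term --- establishing that the discrepancy between $\proj_{W^\perp,\Q_n}(G)$ and $\proj_{W^\perp,\Q_\infty}(G)$, after multiplication by the (bounded) rational prefactors and integration against $\Q_n$, is genuinely $o(1)$ uniformly on $\setsq$ --- together with verifying the uniform positivity of the denominators; once those are nailed down the rest is the routine equicontinuity-plus-pointwise-convergence argument.
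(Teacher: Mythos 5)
Your proposal is correct and follows essentially the same route as the paper: establish a uniform positive lower bound on the denominators, isolate the $\proj_{W^\perp,\Q_n}(G) = G - \langle G,W\rangle_{\Q_n}W$ perturbation as an $o(1)$ correction since $\langle G,W\rangle_{\Q_n}\to 0$, use $W_2$-convergence for the expectation of the fixed quadratic-growth integrand, and then upgrade pointwise convergence to uniform convergence on the compact set via equicontinuity (Arzelà--Ascoli). The only cosmetic difference is that you argue equicontinuity through derivative bounds in $(c_1,c_2,s)$ while the paper factors each $V^{\up}_{i,n}$ as $\sum_j f_{j,i}(c_1,c_2,s)\,g_{j,i}(\proj_{W^\perp}G,X,W)$ and uses uniform continuity of the $f_{j,i}$; these are interchangeable.
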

        Now, we are ready to show the desired convergence result in Eq.~\eqref{eqn:converge-c-1-c-2-s}. We prove that any limit point of 
        $(c_{1, n}, c_{2, n}, s_n)$ must be $(c_{1, \infty}, c_{2, \infty}, s_{\infty})$. To 
        do this, first take any limit point of $(c_{1, n}, c_{2, n}, s_n)$,  and denote it to be $(\tilde{c}_1\opt, \tilde{c}_2\opt, \tilde{s}\opt)$. 
        Since by definition we have for $i=1,2,3$, 
        \begin{equation*}
        \E_{\Q_n}[V_{i,n}(c_{1, n}, c_{2, n}, s_n)] = 0,
        \end{equation*}
        the triangle inequality immediately implies that, for $i=1,2,3$,
        \begin{equation}
        \begin{split}
        |\E_{\Q_{\infty}} [V_{i,\infty}(\tilde{c}_1\opt, \tilde{c}_2\opt, \tilde{s}\opt)]| 
        		&\le \left|\E_{\Q_{\infty}}[V_{i,\infty}(\tilde{c}_1\opt, \tilde{c}_2\opt, \tilde{s}\opt) - \E_{\Q_{\infty}}[V_{i,\infty}(c_{1, n}, c_{2, n}, s_n)]\right| \\
        		&~~~~~~~~~
			+ | \E_{\Q_{\infty}}[V_{i,\infty}(c_{1, n}, c_{2, n}, s_n) -  \E_{\Q_{n}}[V_{i,n}(c_{1, n}, c_{2, n}, s_n)|
	\end{split}
        \end{equation}
        for all $n \in \N$. Now, by definition of $\setsq$, $(c_{1, n}, c_{2, n}, s_n) \in \setsq$ for large enough $n$. Hence, 
        \begin{equation}
        \begin{split}
        |\E_{\Q_{\infty}} [V_{i,\infty}(\tilde{c}_1\opt, \tilde{c}_2\opt, \tilde{s}\opt)]| 
        		&\le \limsup_{n}\left|\E_{\Q_{\infty}}[V_{i,\infty}(\tilde{c}_1\opt, \tilde{c}_2\opt, \tilde{s}\opt) - \E_{\Q_{\infty}}[V_{i,\infty}(c_{1, n}, c_{2, n}, s_n)]\right| \\
        		&~~~~
			+\limsup_{n}\sup_{(c_1, c_2, s) \in \setsq} | \E_{\Q_{\infty}}[V_{i,\infty}(c_{1}, c_{2}, s) -  \E_{\Q_{n}}[V_{i,n}(c_{1}, c_{2}, s)| = 0,
	\end{split}
        \end{equation}
        where the last identity uses the fact that the mapping $(c_1, c_2, s) \to \E_{\Q_{\infty}}[V_{i,\infty}(c_1, c_2, s)]$ is continuous on 
        $\setsq$, and the uniform convergence result by Lemma~\ref{lemma:uniform-convergence-c-1-c-2-t}. This shows that any 
        limit point $(\tilde{c}_1\opt, \tilde{c}_2\opt, \tilde{s}\opt) \in \setsq$ must satisfy the system of equations below
        \begin{equation}
        \label{eqn:infty-V-i-sytem-of-equations}
         \E_{\Q_{\infty}} [V_{1,\infty}(c_1, c_2, s)] = 0, ~\E_{\Q_{\infty}} [V_{2,\infty}(c_1, c_2, s)] = 0, ~\E_{\Q_{\infty}} [V_{3,\infty}(c_1, c_2, s)] = 0.
        \end{equation}
        Now we recall Lemma~\ref{lemma:nonzero-c-2-t}. 
	By Lemma~\ref{lemma:nonzero-c-2-t}, we know $(\tilde{c}_1\opt, \tilde{c}_2\opt, \tilde{s}\opt)$ must satisfy
	both $\tilde{c}_2\opt > 0$ and $\tilde{s}\opt> 0$ and the system of equations \eqref{eqn:infty-V-i-sytem-of-equations}. 
	Now since the solution of the system of equations \eqref{eqn:infty-V-i-sytem-of-equations} is unique in $\R \times \R_{>0} \times \R_{>0}$ (by Corollary \ref{coro:PsiLarge}), 
	this shows that $(\tilde{c}_1\opt, \tilde{c}_2\opt, \tilde{s}\opt) = (c_{1, \infty}, c_{2, \infty}, s_{\infty})$. Thus, we proved that
	any limit point of $\{(c_{1, n}, c_{2, n}, s_n)\}_{n\in \N}$ must be $(c_{1, \infty}, c_{2, \infty}, s_{\infty})$. This implies 
	the desired convergence \eqref{eqn:converge-c-1-c-2-s}.

\subsection{Proof of Lemma~\ref{lemma:uniform-convergence-c-1-c-2-t}}
\label{sec:lemma-uniform-convergence-c-1-c-2-t}

By Eq.~\eqref{eqn:Q-n-converge-to-Q-infty}, we know that  $\Q_n \stackrel{W_2}{\Longrightarrow} \Q_{\infty}$. 
Therefore, it is sufficient to prove that $\Q_n \stackrel{W_2}{\Longrightarrow} \Q_{\infty}$ implies
\begin{equation*}
    	\limsup_{n\to \infty} \sup_{(c_1, c_2, s) \in \setsq} 
        		\left|\E_{\Q_n} [V_{i,n}(c_1, c_2, s)] - \E_{\Q_{\infty}}[V_{i,\infty}(c_1, c_2, s)]\right| = 0.
\end{equation*}
Let us denote for $i \in \{1, 2, 3\}$
\begin{equation*}
\bar{V}_{i,n}(c_1, c_2, s) = \E_{\Q_n} [V_{i,n}(c_1, c_2, s)] ~~\text{and}~~
\bar{V}_{i,\infty}(c_1, c_2, s) = \E_{\Q_\infty} [V_{i,\infty}(c_1, c_2, s)].
\end{equation*}
Note that $\bar{V}_{i, n}$ and $\bar{V}_{i,\infty}$ are continuous.
By Arzel\`{a}-Ascoli and Dini's theorem, it suffices to show that 
\begin{enumerate}[(a)]
\item For each $i\in \{1, 2, 3\}$, we have for any fixed $(c_1, c_2, s)\in \setsq$
	\begin{equation*}
	\lim_{n\to \infty} \left|\bar{V}_{i,n}(c_1, c_2, s)-\bar{V}_{i,\infty}(c_1, c_2, s)\right| = 0.
	\end{equation*}
\item For $i = 1, 2$, the functions $\{\bar{V}_{n, 1}(c_1, c_2, s)\}_{n \in \N}$ is equicontinuous on $\setsq$, i.e.,
	for any $\eps > 0$, there exists some $\delta > 0$, such that for any 
	$(c_1, c_2, s), (c_1^\prime, c_2^\prime, s^\prime) \in \setsq$ satisfying 
	$\ltwo{(c_1, c_2, s) - (c_1^\prime, c_2^\prime, s^\prime)} < \delta$,
	\begin{equation*}
		\sup_{n\in \N} \left|\bar{V}_{i,n}(c_1, c_2, s)-\bar{V}_{i,n}(c_1^\prime, c_2^\prime, s^\prime)\right| \le \eps. 
	\end{equation*}
\item For $i = 3$, the functions $\{\bar{V}_{n, 3}(c_1, c_2, s)\}_{n \in \N}$ is monotonically decreasing with 
	the parameter $s$, and moreover, for any given $s$, it is also equicontinuous w.r.t $(c_1, c_2)$ in the sense that
	for any $\eps > 0$, there exists some $\delta > 0$, such that for any 
	$(c_1, c_2, s), (c_1^\prime, c_2^\prime, s) \in \setsq$ satisfying 
	$\ltwo{(c_1, c_2) - (c_1^\prime, c_2^\prime)} < \delta$,
	\begin{equation*}
		\sup_{n\in \N} \left|\bar{V}_{3,n}(c_1, c_2, s)-\bar{V}_{3,n}(c_1^\prime, c_2^\prime, s)\right| \le \eps. 
	\end{equation*}
\end{enumerate}
Below we prove the above points. We start with the notation.

\newcommand{\up}{\uparrow}
\newcommand{\down}{\downarrow}
\newcommand{\lowz}{{\rm low}}
\newcommand{\upz}{{\rm up}}
\newcommand{\errorr}{\mathcal{E}}
\paragraph{Notation}
For notational simplicity, we introduce for $* \in \{n, \infty\}$
\begin{equation}
\label{eqn:V-up-down}
        \begin{split}
        V_{1,*}^{\up}(c_1, c_2, s) &= (\psi^{1/2}\proj_{W^{\perp},\Q_*}(G) + \left(\partial_1 F_{\kappa}(c_1, c_2) -  
        			c_1 c_2^{-1} \partial_2 F_{\kappa}(c_1, c_2)\right) W)W\, , \\ 
	V_{2,*}^{\up}(c_1, c_2, s) &= (\psi^{1/2}  \proj_{W^{\perp}, \Q_*}(G) +  \left(\partial_1 F_{\kappa}(c_1, c_2) -  
        			c_1 c_2^{-1}\partial_2 F_{\kappa}(c_1, c_2)\right) W)^2 \, , \\
	V_{3,*}^{\up}(c_1, c_2, s) &= (\psi^{1/2}  \proj_{W^{\perp}, \Q_*}(G) +  \left(\partial_1 F_{\kappa}(c_1, c_2) -  
        			c_1 c_2^{-1} \partial_2 F_{\kappa}(c_1, c_2)\right) W)^2 X^{-1}\, , \\
	V_{1}^{\down}(c_1, c_2, s) & = c_2^{-1}\partial_2 F_{\kappa}(c_1, c_2)  + \psi^{1/2}  s X^{-1}\, , \\
	V_2^{\down}(c_1, c_2, s) &= (c_2^{-1} \partial_2 F_{\kappa}(c_1, c_2)  + \psi^{1/2}  s X^{-1})^2\, , \\
        V_3^{\down}(c_1, c_2, s) &= (c_2^{-1} \partial_2 F_{\kappa}(c_1, c_2)  + \psi^{1/2} s X^{-1})^2\, ,
        \end{split}	
\end{equation}
so that this definition allows us to easily express (for $* \in \{n, \infty\}$)
\begin{equation}
\label{eqn:connection-V-n-V-up-V-down}
\begin{aligned}
	V_{1,*}(c_1, c_2, s) &= \frac{V_{*,1}^{\up}(c_1, c_2, s)}{V_1^{\down}(c_1, c_2, s)} + c_1\, \\ 
	V_{2, *}(c_1, c_2, s) &= \frac{V_{*,2}^{\up}(c_1, c_2, s)}{V_2^{\down}(c_1, c_2, s)} - (c_1^2 + c_2^2)\, \\ 
	V_{3,*}(c_1, c_2, s) &= \frac{V_{*,3}^{\up}(c_1, c_2, s)}{V_3^{\down}(c_1, c_2, s)} - 1 
\end{aligned}
\end{equation}

\paragraph{Proof of Point $(a)$} 
 Recall that $\Q_n \stackrel{W_2}{\Longrightarrow} \Q_{\infty}$ implies 
\begin{equation} 
\label{eqn:W-2-imply-key-convergence}
\lim_{n\to \infty}\left|\E_{\Q_n}[f(G, X, W)] - \E_{\Q_{\infty}}[f(G, X, W)]\right| = 0\, ,
\end{equation}
for any continuous function $f: \R^3 \to \R$ satisfying 
$\sup_{(G, X, W)\in\R^3} \frac{f(G, X, W)}{(G^2 + W^2)/X} < \infty$.

We will use this fact to prove point $(a)$. Our starting point is the following decomposition:  
\begin{equation}
\begin{split}
	\bar{V}_{i,n}(c_1, c_2, s) &- \bar{V}_{i,\infty}(c_1, c_2, s)
		= \E_{\Q_n}\left[\frac{V_{i,n}^{\up}(c_1, c_2, s)}{V_i^{\down}(c_1, c_2, s)}\right] - 
			\E_{\Q_\infty}\left[\frac{V_{i,\infty}^{\up}(c_1, c_2, s)}{V_i^{\down}(c_1, c_2, s)}\right] \\
		&=  \underbrace{\left(\E_{\Q_n}\left[\frac{V_{i,\infty}^{\up}(c_1, c_2, s)}{V_i^{\down}(c_1, c_2, s)}\right] - 
				\E_{\Q_\infty}\left[\frac{V_{i,\infty}^{\up}(c_1, c_2, s)}{V_i^{\down}(c_1, c_2, s)}\right]\right)}_{\error_{n, 1, i}}
		+ \underbrace{\E_{\Q_n}\left[\frac{(V_{i,n}^{\up}(c_1, c_2, s) - V_{i,\infty}^{\up}(c_1, c_2, s))}{V_i^{\down}(c_1, c_2, s)}\right]}_{\error_{n, 2, i}} 
			.
\end{split}
\end{equation}
To prove point $(a)$, it suffices to prove for any $(c_1, c_2, s) \in \setsq$: 
\begin{equation}
\label{eqn:desired-goal-of-a-uniform-convergence}
\lim_{n\to \infty}\error_{n, 1, i}(c_1, c_2, s) = 0
~~\text{and}~~
\lim_{n\to \infty}\error_{n, 2, i}(c_1, c_2, s) = 0.
\end{equation}

We begin with the first limit in Eq.~\eqref{eqn:desired-goal-of-a-uniform-convergence}.
The idea is to apply the convergence statement in Eq~\eqref{eqn:W-2-imply-key-convergence}. We claim that 
\begin{equation}
{\rm ess}\sup_{(G, X, W)\in\R^3} \frac{V_{i,\infty}^{\up}(c_1, c_2, s)}{V_i^{\down}(c_1, c_2, s) (\ltwo{(G, W)}^2/X)} < \infty
\end{equation}
(where the essential $\sup$ holds both under  $\Q_n$ and under $\Q_{\infty}$.)
This follows by the below two observations. 
\begin{itemize}
\item There exists $c_{\lowz} = c_{\lowz}(\setsq, \psi, \kappa) > 0$ such that for any $i \in \{1, 2, 3\}$,
	\begin{equation}
	\label{eqn:V-i-down-lower-bound}
		\min_{(c_1, c_2, s) \in \setsq}V_{i}^{\down}(c_1, c_2, s) > c_{\lowz}.
	\end{equation}
	The reason is: $(i)$ for some constant $c^\prime > 0$, $c_2^{-1}\partial_2 F_{\kappa}(c_1, c_2) > c^\prime$ holds
	 for all $(c_1, c_2, s) \in \setsq$
	(Lemma~\ref{lemma:continuity-partial-f} and Lemma~\ref{lemma:F-convex-increasing})
	and $(ii)$ $V_i^{\down}(c_1,c_2, s) > (c_2^{-1}\partial_2 F_{\kappa}(c_1, c_2))^{j_i}$ 
	where $j_1 = 1$, $j_2 = j_3 = 2$.
	\item There exists  $C_{\upz} = C_{\upz}(\setsq, \psi, \kappa) > 0$ such that for any $i \in \{1, 2, 3\}$,
	\begin{equation}
	\label{eqn:uniform-bound-V-infty-up}
		\sup_{(c_1, c_2, s) \in \setsq}\sup_{(G, X, W)\in \R^3} \frac{V_{i,\infty}^{\up}(c_1, c_2, s)}{\ltwo{(G, W)}^2/X} \le C_{\upz} .
	\end{equation}
	For convenience of the reader, we write explicitly $V_{i,\infty}^{\up}(c_1, c_2, s)$: 
	\begin{equation}
	\begin{split}
	\label{eqn:V-infty-up}
		V_{1,\infty}^{\up}(c_1, c_2, s) &= (\psi^{1/2}G + \left(\partial_1 F_{\kappa}(c_1, c_2) -  
        			c_1 c_2^{-1} \partial_2 F_{\kappa}(c_1, c_2)\right) W)W \, ,\\ 
		V_{2, \infty}^{\up}(c_1, c_2, s) &= (\psi^{1/2}G+  \left(\partial_1 F_{\kappa}(c_1, c_2) -  
        			c_1 c_2^{-1} \partial_2 F_{\kappa}(c_1, c_2)\right) W)^2  \, ,\\
		V_{3, \infty}^{\up}(c_1, c_2, s) &= (\psi^{1/2} G+  \left( \partial_1 F_{\kappa}(c_1, c_2) -  
        			c_1 c_2^{-1} \partial_2 F_{\kappa}(c_1, c_2)\right) W)^2 X^{-1}\, .
	\end{split}
	\end{equation}
         Equation~\eqref{eqn:uniform-bound-V-infty-up} holds because $(i)$ $\partial_1 F_{\kappa}(c_1, c_2)$ 
	and $c_2^{-1}\partial_2 F_{\kappa}(c_1, c_2)$ are well-defined continuous functions of
	$(c_1, c_2)$ on $\R \times \R_{\ge 0}$ (by Lemma~\ref{lemma:continuity-partial-f} and 
	Lemma~\ref{lemma:F-convex-increasing}) and thus are uniformly bounded on any compact set
	$(ii)$ by Assumption~\ref{assumption:Lambdas}, there exists a constant $C < \infty$ such that, almost surely, $X < C$.

\end{itemize}
Next we establish the second limit in Eq.~\eqref{eqn:desired-goal-of-a-uniform-convergence}.
In light of Eq.~\eqref{eqn:V-i-down-lower-bound}, it suffices to show for $(c_1, c_2, s) \in \setsq$, 
	\begin{equation}
	\label{eqn:E-converge-V-n-V-infty-up}
		\lim_{n \to \infty} \E_{\Q_n} \left[|V_{i,n}^{\up}(c_1, c_2, s) - V^{\up}_{i,\infty}(c_1, c_2, s)|\right] = 0.
	\end{equation}
	Compute the difference between $V_{i,n}^{\up}(c_1, c_2, s)$ and $V^{\up}_{i,\infty}(c_1, c_2, s)$. 
	Write $a_n = \langle G, W\rangle_{\Q_n}$. We reach 
	\begin{equation}
	\label{eqn:explicit-form-Delta}
	\begin{split}
	V_{1, \infty}^{\up}(c_1, c_2, s) - V^{\up}_{1, n}(c_1, c_2, s) &
		= a_n \psi^{1/2} W^2 \\
	V_{2, \infty}^{\up}(c_1, c_2, s) - V^{\up}_{2, n}(c_1, c_2, s) &= - \psi a_n^2 W^2
		- 2\psi^{1/2}a_n f(c_1, c_2, \kappa)W^2  \\
	V_{3, \infty}^{\up}(c_1, c_2, s) - V^{\up}_{3, n}(c_1, c_2, s) &=  - \psi a_n^2 W^2 X^{-1}
		- 2\psi^{1/2}a_n f(c_1, c_2, \kappa)W^2 X^{-1}.
	\end{split}
	\end{equation}	
	where $f$ is a continuous function depending only on $c_1, c_2, \kappa$ and independent of $G, W, X$: 
	\begin{equation*}
		f(c_1, c_2, \kappa) = \partial_1 F_\kappa(c_1, c_2) - c_1 c_2^{-1} \partial_2 F_\kappa(c_1, c_2). 
	\end{equation*}
	Now that Eq~\eqref{eqn:E-converge-V-n-V-infty-up} follows since $(i)$ we have the convergence 
	$a_n \to \E_{Q_\infty} [GW] = 0$ (since $\Q_n \stackrel{W_2}{\Longrightarrow} \Q_{\infty}$), 
	$(ii)$ the function $f$ is uniformly bounded on $\setsq$ (by Lemma~\ref{lemma:continuity-partial-f} and 
	Lemma~\ref{lemma:F-convex-increasing})
	and $(iii)$ each individual term involving $(G, W, X)$ on the RHS of 
	Eq~\eqref{eqn:explicit-form-Delta} i.e., $W^2$, $W^2/X$ satisfies the property that 
 	$\E_{\Q_n}[|W^2|]$ and 
	$\E_{\Q_n}[|W^2/X|]$ are uniformly bounded over $n \in \N$.

\paragraph{Proof of Point $(b)$ and Point $(c)$} Let $i \in \{1, 2, 3\}$.
Pick any $(c_1, c_2, s), (c_1^\prime, c_2^\prime, s^\prime) \in \setsq$.
\begin{equation}
\label{eqn:V-n-diff}
\begin{split}
&\bar{V}_{i,n}(c_1, c_2, s) - \bar{V}_{i,n}(c_1^\prime, c_2^\prime, s^\prime)
		= \E_{\Q_n}\left[\frac{V_{i,n}^{\up}(c_1, c_2, s)}{V_i^{\down}(c_1, c_2, s)}\right] - 
			\E_{\Q_n}\left[\frac{V_{i,n}^{\up}(c_1^\prime, c_2^\prime, s^\prime)}{V_i^{\down}(c_1^\prime, c_2^\prime, s^\prime)}\right] \\
		&=  \E_{\Q_n}\left[\frac{V_{i,n}^{\up}(c_1, c_2, s) - V_{i, n}^{\up} (c_1', c_2', s')}{V_i^{\down}(c_1, c_2, s)}\right] 
			+ \E_{\Q_n} \left[V_{i, n}^{\up} (c_1', c_2', s')\left(\frac{1}{V_i^{\down}(c_1, c_2, s)}- \frac{1}{V_i^{\down}(c_1', c_2', s')}\right)\right]
\end{split}
\end{equation}
The key to the proof is to establish the following statements. In fact, Point (b) follows from Eq.~\eqref{eqn:V-i-down-lower-bound}, 
Claim~\ref{claim:one} and Claim \ref{claim:two}. Point (c) follows from Eq.~\eqref{eqn:V-i-down-lower-bound}, Claim~\ref{claim:one}
and Claim~\ref{claim:three}. 

\begin{claim}
\label{claim:one}
Let $i \in \{1, 2, 3\}$. For any $\eps > 0$, there exists 
$\delta > 0$ such that when
$\ltwo{(c_1, c_2, s) - (c_1^\prime, c_2^\prime, s^\prime)} < \delta$,
\begin{equation}
\label{eqn:equiv-goal-of-equicontinuity-one}
\begin{split}
	\sup_{n\in \N} \E_{\Q_n}\left[\left|V_{i,n}^{\up}(c_1, c_2, s)
			- V_{i,n}^{\up}(c_1^\prime, c_2^\prime, s^\prime) \right|\right] &\le \eps 
\end{split}
\end{equation}
\end{claim}

\begin{claim}
\label{claim:two}
Let $i \in \{1, 2\}$. For any $\eps > 0$, there exists 
$\delta > 0$ such that when
$\ltwo{(c_1, c_2, s) - (c_1^\prime, c_2^\prime, s^\prime)} < \delta$,
\begin{equation}
\label{eqn:equiv-goal-of-equicontinuity-two}
	\sup_{n\in \N} \E_{\Q_n}\left[\left|V_{i,n}^{\up}(c_1', c_2', s') 
		\left(\frac{1}{V_i^{\down}(c_1, c_2, s)}- \frac{1}{V_i^{\down}(c_1', c_2', s')}\right)\right|\right] \le \eps	 
\end{equation}
\end{claim}

\begin{claim}
\label{claim:three}
Let $i = 3$ and $s = s'$. For any $\eps > 0$, there exists 
$\delta > 0$ such that when
$\ltwo{(c_1, c_2) - (c_1^\prime, c_2^\prime)} < \delta$,
\begin{equation}
\label{eqn:equiv-goal-of-equicontinuity-three}
	\sup_{n\in \N} \E_{\Q_n}\left[\left|V_{3,n}^{\up}(c_1', c_2', s) 
		\left(\frac{1}{V_3^{\down}(c_1, c_2, s)}- \frac{1}{V_3^{\down}(c_1', c_2', s)}\right)\right|\right] \le \eps	 
\end{equation}
\end{claim}

Below we give the deferred proof of Claim~\ref{claim:one} to Claim~\ref{claim:three}. 

We start by proving Claim~\ref{claim:one}.  
An inspection of $V_{i,n}^{\up}(c_1, c_2, s)$ (recall Eq.~\eqref{eqn:V-up-down}) shows that $V_{i,n}^{\up}(c_1, c_2, s)$ 
takes the form of 
\begin{equation}
\label{eqn:representation-of-V-n-i-f-g}
	V_{i,n}^{\up}(c_1, c_2, s) = \sum_{j \le J} f_{j, i}(c_1, c_2, s, \psi, \kappa) \cdot g_{j, i}\left(\proj_{W^{\perp}, \Q_n}(G), W\right) h_i(X)
\end{equation}
where each $f_{j, i}$ is a continuous function of $(c_1, c_2, s, \psi, \kappa)$ and each $g_{j, i}$ is a quadratic polynomial of 
$(\proj_{W^{\perp}, \Q_n}(G), W)$ independent of $c_1, c_2, s, \psi, \kappa$ and $h_i(X) = 1$ for $i = 1, 2$ and $h_i(X) = X^{-1}$
for $i = 3$. Consequently, we obtain 
\begin{equation}
\label{eqn:cs-inequality-V-n-i}
	\begin{split}
		&\sup_{n \in \N} \E_{\Q_n} \left|V_{i,n}^{\up}(c_1, c_2, s) - V_{i,n}^{\up}(c_1^\prime, c_2^\prime, s^\prime)\right| \\
		&\le  \sum_{j \le J} \left|f_{j, i}(c_1, c_2, s, \psi, \kappa) - f_{j, i}(c_1^\prime, c_2^\prime, s^\prime, \psi, \kappa)\right|
			 \cdot \sup_{n \in \N} \E_{\Q_n}\left[\left|g_{j, i}\left(\proj_{W^{\perp}, \Q_n}(G), W\right) h_i(X)\right|\right]
	\end{split}
\end{equation}
The function $f_{j, i}(c_1, c_2, s, \psi, \kappa)$ is continuous and hence equicontinuous on any compact set. Note 
\begin{equation}
	\label{eqn:exp-g-C-up}
		\sup_{n \in \N}  \E_{\Q_n} \left[\left|g_{j, i}(\proj_{W^{\perp}, \Q_n}(G), W)h_i(X)\right|\right] \le C_{\upz}< \infty. 
\end{equation}
This is true since (i) $g_{j, i}$ is quadratic so that  $\left|g_{j, i}(\proj_{W^{\perp}, \Q_n}(G), W)\right| \le C_{j,i}' (G^2 + W^2)$ for some constant 
$C_{j, i}' > 0$, and (ii) $\E_{\Q_n}[|(G^2 + W^2)|]$ and $\E_{\Q_n}[|(G^2 + W^2)/X|]$ are  uniformly bounded over $n \in \N$.

Next we prove Claim~\ref{claim:two}. Note first there exists a constant $C_{\upz} < \infty$ such that when $i = 1, 2$: 
\begin{equation}
\label{eqn:uniform-bound-V-n-i}
	\sup_{n \in \N} \sup_{(c_1, c_2, s) \in \setsq}  \left|V_{i,n}^{\up}(c_1, c_2, s)\right| \le C_{\upz} (G^2 + W^2)< \infty. 
\end{equation}
This follows from the expression in equation~\eqref{eqn:representation-of-V-n-i-f-g}. Next, we show the key estimate for $i = 1, 2$
\begin{equation}
\label{eqn:uniform-bound-V-n-i-annoying}
	\left|\frac{1}{V_i^{\down}(c_1, c_2, s)}- \frac{1}{V_i^{\down}(c_1', c_2', s')}\right| \le \Delta((c_1, c_2, s), (c_1', c_2', s')) \cdot (1+X^{-1}).
\end{equation}
for some function $\Delta((c_1, c_2, s), (c_1', c_2', s'))$ which satisfies 
\begin{equation*}
	\lim_{\delta \to 0} \sup_{\ltwo{(c_1, c_2, s) - (c_1^\prime, c_2^\prime, s^\prime)} < \delta}\Delta((c_1, c_2, s), (c_1', c_2', s')) \to 0
\end{equation*}
To see this, the key is that  $V_1^{\down}(c_1, c_2, s) \ge c_{\lowz} > 0$ holds uniformly over $(c_1, c_2, s) \in \setsq$ 
(Eq.~\eqref{eqn:V-i-down-lower-bound}). A simple calculation shows that the result holds for $i = 1$. For $i = 2$, we reduce the 
case to $i = 1$ by leveraging the fact that $V_2^{\down}(c_1, c_2, s) = (V_1^{\down}(c_1, c_2, s))^2$. As 
$\E_{\Q_n}[|(G^2 + W^2)|]$ and $\E_{\Q_n}[|(G^2 + W^2)/X|]$ are  uniformly bounded over $n \in \N$, Claim~\ref{claim:two} follows 
from the estimate in Eq.~\eqref{eqn:uniform-bound-V-n-i} and Eq.~\eqref{eqn:uniform-bound-V-n-i-annoying}. 

Finally, we prove Claim~\ref{claim:three}. Note first there exists a constant $C_{\upz} < \infty$ such that when $i = 3$: 
\begin{equation}
\label{eqn:uniform-bound-V-n-i-hard}
	\sup_{n \in \N} \sup_{(c_1, c_2, s) \in \setsq}  \left|V_{3,n}^{\up}(c_1, c_2, s)\right| \le C_{\upz} (G^2 + W^2)/X< \infty. 
\end{equation}
This follows from the expression in equation~\eqref{eqn:representation-of-V-n-i-f-g}. Next, 
we can show when $i = 3$ and $s = s'$
\begin{equation}
\label{eqn:uniform-bound-V-n-i-annoying-hard}
	\left|\frac{1}{V_3^{\down}(c_1, c_2, s)}- \frac{1}{V_3^{\down}(c_1', c_2', s)}\right| \le \Delta((c_1, c_2, s), (c_1', c_2', s)).
\end{equation}
for some function $\Delta((c_1, c_2, s), (c_1', c_2', s))$ satisfying
\begin{equation*}
	\lim_{\delta \to 0} \sup_{\ltwo{(c_1, c_2) - (c_1^\prime, c_2^\prime)} < \delta, |s| \le M}\Delta((c_1, c_2, s), (c_1', c_2', s)) \to 0
\end{equation*}
To see this, we leverage the fact that (i) $V_3^{\down}(c_1, c_2, s) = (V_1^{\down}(c_1, c_2, s))^2$ and 
(ii) $V_1^{\down}(c_1, c_2, s)$ is uniformly lower bounded over $(c_1, c_2, s) \in \setsq$ 
(Eq.~\eqref{eqn:V-i-down-lower-bound}), and (iii) $\lim_{\delta \to 0} \sup_{\ltwo{(c_1, c_2) - (c_1^\prime, c_2^\prime)} < \delta, |s| \le M}
|V_1^{\down}(c_1, c_2, s) - V_1^{\down}(c_1', c_2', s)| = 0$ holds by a direct evaluation.
As $\E_{\Q_n}[|(G^2 + W^2)/X|]$ is uniformly bounded over $n \in \N$, Claim~\ref{claim:three} now
follows from Eq.~\eqref{eqn:uniform-bound-V-n-i-hard} and Eq.~\eqref{eqn:uniform-bound-V-n-i-annoying-hard}.

\section{Reduction to the Gordon's optimization problem}

\subsection{Proof of Lemma~\ref{lemma:xi-0-xi-1}}
\label{sec:proof-lemma-xi-0-xi-1}

Recall that $\bX$ has i.i.d. rows $\bx_i\sim \normal(\bzero,\bLambda)$, with $\bLambda$ a diagonal matrix. 
We rewrite $\bX = \bar{\bX}\bLambda^{1/2}$, where $(\bar{X}_{ij})_{i\le n, j\le p}\sim\normal(0,1)$. 
Therefore, Eq.~\eqref{eq:xiDef} yields
\begin{equation}
\xi^{(0)}_{n, \psi, \kappa}(\bTheta_p)= \min_{\ltwo{\btheta} \le 1, \btheta \in \bTheta_p}~
		\max_{\ltwo{\blambda} \le 1, \by \odot \blambda \ge 0} \frac{1}{\sqrt{p}}\blambda^{\sT} (\kappa \by - \bar{\bX}\bLambda^{1/2} \btheta)\, .\label{eq:xi-0-proof}
\end{equation}
We need to be cautious when we apply Theorem~\ref{thm:Gordon-improve} to $\xi^{(0)}_{n, \psi, \kappa}$:
$\by$ is not independent of the Gaussian random matrix $\bar{\bX}$ . To circumvent this technical 
difficulty, recall that $\bw=\bar{\btheta}_{*}/\|\bar{\btheta}_*\|_2$ where $\bar{\btheta}_* = \bLambda^{1/2}\btheta_*$($\bw$ can be chosen arbitrarily if $\bar{\btheta}_*=0$).
We decompose $\bar{\bX}$ into orthogonal components as follows:
\begin{equation}
\bar{\bX} = \bu  \bw^{\sT} + \bX \proj_{\bw^{\perp}}~~\text{where $\bu = \bX\bw \sim \normal(0, I_n)$}.
\end{equation}
(recall the unit vector $\bw$ that parallels $\bar{\btheta}_{*}$) Since $\bar{\bX}$ is isotropic  Gaussian, 
$\bar{\bX}\proj_{\bw^{\perp}}$ is independent of $(\bu, \by)$. Substituting in Eq.~\eqref{eq:xi-0-proof}, we get
\begin{equation}
\xi_{n, \psi, \kappa}^{(0)}(\bTheta_p) 
		= \min_{\ltwo{\btheta} \le 1, , \btheta \in \bTheta_p}~
		\max_{\ltwo{\blambda} \le 1, \by \odot \blambda \ge 0} \frac{1}{\sqrt{p}}\blambda^{\sT} (\kappa \by - 
			\langle \bLambda^{1/2}\bw, \btheta \rangle \bu 
			-\bar{\bX}\proj_{\bw^{\perp}} \bLambda^{1/2} \btheta).
\end{equation}
Consider, to be definite, the case $\bTheta_p = \Ball^p(1)$ (corresponding to $\xi_{n, \psi, \kappa}^{(0)}= \xi_{n, \psi, \kappa}^{(0)} (\Ball^p(1))$). 
By conditioning on $(\bu, \by)$, we can apply Theorem~\ref{thm:Gordon-improve} to get for
any $t\in \R$: 
\begin{equation*}
\P(\xi_{n, \psi, \kappa}^{(0)}  \le t \mid \bu, \by) \le 2 \, \P(\xi_{n, \psi, \kappa}^{(1)} \le t \mid \bu, \by) \;\;\;\; \text{and}\;\;\;\;
\P(\xi_{n, \psi, \kappa}^{(0)}  \ge t \mid \bu, \by) \le 2 \, \P(\xi_{n, \psi, \kappa}^{(1)}  \ge t \mid \bu, \by)\, .
\end{equation*}
Taking expectation over $\bu, \by$ on both sides of the equation gives for any $t\in \R$, 
\begin{equation}
\label{eqn:xi-0-xi-1-equiv}
\P(\xi_{n, \psi, \kappa}^{(0)} \le t ) \le 2 \, \P(\xi_{n, \psi, \kappa}^{(1)}  \le t ) \;\;\;\; \text{and}\;\;\;\; 
\P(\xi_{n, \psi, \kappa}^{(0)}  \ge t ) \le 2 \, \P(\xi_{n, \psi, \kappa}^{(1)}  \ge t )\, .
\end{equation}
The claim for $\xi^{(0)}_{n,\psi,\kappa}(\bTheta_p)$, $\xi^{(1)}_{n,\psi,\kappa}(\bTheta_p)$ follows by the same argument.

\newcommand{\lowzz}{{\rm low}}
\subsection{Proof of Lemma~\ref{lemma:xi-1-xi-2}}
\label{sec:lemma:xi-1-xi-2}

Let us introduce the notation 
\begin{equation*}
\begin{split}
f_{n, \psi, \kappa}^{(1)} (\btheta) &= \frac{1}{\sqrt{p}}\ltwobigg{\left(\kappa \one - \langle \bLambda^{1/2} \bw, \btheta \rangle (\by \odot \bu) 
		- \ltwobig{\proj_{\bw^{\perp}} \bLambda^{1/2} \btheta} (\by \odot \bh)\right)_+}  
		+ \frac{1}{\sqrt{p}} \bg^{\sT}\proj_{\bw^{\perp}}\bLambda^{1/2} \btheta, \\
f_{n, \psi, \kappa}^{(2)}(\btheta) &= 
		\psi^{-1/2} \cdot F_{\kappa} \left(\langle \bLambda^{1/2} \bw, \btheta \rangle, \ltwobig{\proj_{\bw^{\perp}} \bLambda^{1/2} \btheta} \right) +
			 \frac{1}{\sqrt{p}} \bg^{\sT} \proj_{\bw^{\perp}}\bLambda^{1/2} \btheta.
\end{split}
\end{equation*}
We claim that
\begin{equation}
\label{eqn:uniform-convergence-f-1-f-2}
\sup_{\btheta: \ltwo{\btheta} \le 1} \left|f_{n, \psi, \kappa}^{(1)} (\btheta)  - f_{n, \psi, \kappa}^{(2)} (\btheta) \right| \pto 0.
\end{equation}
Let us show that this claim implies the statement of Lemma~\ref{lemma:xi-1-xi-2}. By 
definition, Eq.~\eqref{eqn:def-xi-1-xi-1-bar}, Eq.~\eqref{eqn:def-xi-1-bar} and Eq.~\eqref{eqn:def-xi-2} 
gives for any compact $\bTheta_p$, 
\begin{equation}
\label{eqn:new-represent-xi-1-xi-2-f-Theta-p}
\xi_{n, \psi, \kappa}^{(1)}(\bTheta_p) = \left(\bar{\xi}_{n, \psi, \kappa}^{(1)}(\bTheta_p)\right)_+ = 
 	\min_{\btheta \in \bTheta_p} \left(f_{n, \psi, \kappa}^{(1)} (\btheta)\right)_+ 
~~\text{and}~~
\xi_{n, \psi, \kappa}^{(2)}(\bTheta_p) = 
 	\min_{\btheta \in \bTheta_p} \left(f_{n, \psi, \kappa}^{(2)} (\btheta)\right)_+. 
\end{equation}
Note that the mapping $x \to (x)_+$ is Lipschitz. Thereby, we have when $\bTheta_p \subseteq \{\btheta: \ltwo{\btheta} \le 1\}$,  
\begin{equation}
\left|\xi_{n, \psi, \kappa}^{(1)}(\bTheta_p) - \xi_{n, \psi, \kappa}^{(2)}(\bTheta_p)\right|
	\le \sup_{\btheta: \ltwo{\btheta} \le 1} 
	\left|\left(f_{n, \psi, \kappa}^{(1)} (\btheta)\right)_+  - \left(f_{n, \psi, \kappa}^{(2)} (\btheta)\right)_+ \right| 
	\le \sup_{\btheta: \ltwo{\btheta} \le 1} 
	\left|f_{n, \psi, \kappa}^{(1)} (\btheta)  - f_{n, \psi, \kappa}^{(2)} (\btheta) \right|.
\end{equation}
With the uniform convergence result at Eq.~\eqref{eqn:uniform-convergence-f-1-f-2}, this immediately implies the 
desired Lemma~\ref{lemma:xi-1-xi-2}.  

In the rest of the proof, we prove the claim \eqref{eqn:uniform-convergence-f-1-f-2}. 
We introduce the two functions 
\begin{equation}
\label{eqn:introduce-aux-g}
g_{n, \psi, \kappa}^{(1)}(\nu,q)=\frac{1}{\sqrt{p}}\ltwo{\left(\kappa \one - \nu (\by \odot \bu) 
		- q (\by \odot \bh)\right)_+}  
~~\text{and}~~
g_{\psi, \kappa}^{(2)}(\nu,q)= \psi^{-1/2} \cdot F_{\kappa}(\nu, q),
\end{equation}
and we denote for each $\btheta\in \R^p$, 
\begin{equation*}
 \nu(\btheta)=\langle \bLambda^{1/2} \bw, \btheta \rangle
 ~~\text{and}~~
  q(\btheta)=\ltwobig{\proj_{\bw^{\perp}} \bLambda^{1/2} \btheta}.
\end{equation*}
By definition, we have that for all $\btheta \in \R^p$, 
\begin{equation}
\label{eqn:uniform-convergence-from-f-to-g}
\left|f_{n, \psi, \kappa}^{(1)} (\btheta)  - f_{n, \psi, \kappa}^{(2)} (\btheta) \right| 
= 
	\left|g_{n, \psi, \kappa}^{(1)}\left(\nu(\btheta),q(\btheta)\right) - g_{\psi, \kappa}^{(2)}\left(\nu(\btheta),q(\btheta)\right)\right|
\end{equation}
Note $\ltwo{w}\le 1$ and  $\opnorm{\bLambda} \le C$ by Assumption~\ref{assumption:Lambdas}. 
Thus, for all $\btheta \in \{\btheta: \ltwo{\btheta} \le 1\}$, 
\begin{equation}
\label{eqn:nu-q-bound}
|\nu(\btheta)| \le \ltwobig{\bLambda^{1/2}\bw} \ltwo{\btheta} \le C^{1/2}
~~\text{and}~~
q(\btheta) \le \ltwobig{\bLambda^{1/2}\btheta} \le C^{1/2}.
\end{equation}
Therefore, Eq.~\eqref{eqn:uniform-convergence-from-f-to-g} and Eq.~\eqref{eqn:nu-q-bound} immediately 
implies that
\begin{equation}
\label{eqn:pass-f-to-g}
\sup_{\btheta: \ltwo{\btheta} \le 1} 
	\left|f_{n, \psi, \kappa}^{(1)} (\btheta)  - f_{n, \psi, \kappa}^{(2)} (\btheta) \right| 
\le \sup_{|\nu| \le C^{1/2}, q \le C^{1/2}}
	\left|g_{n, \psi, \kappa}^{(1)}\left(\nu, q\right) - g_{\psi, \kappa}^{(2)}\left(\nu, q\right)\right|
\end{equation}
By Eq.~\eqref{eqn:pass-f-to-g}, we can establish the desired uniform convergence 
result \eqref{eqn:uniform-convergence-f-1-f-2}, by proving
\begin{equation}
\label{eqn:goal-uniform-converge-g}
\sup_{|\nu| \le C^{1/2}, q \le C^{1/2}}
	\left|g_{n, \psi, \kappa}^{(1)}\left(\nu, q\right) - g_{\psi, \kappa}^{(2)}\left(\nu, q\right)\right| \pto 0. 
\end{equation}
The proof of Eq.~\eqref{eqn:goal-uniform-converge-g} is based on standard uniform convergence argument 
from empirical process theory. To start with, let us introduce the i.i.d random variables $\{Z_i(\nu, q)\}_{i=1}^n$ by
\begin{equation}
\label{eqn:def-Z}
Z_i (\nu, q) = (\kappa-\nu y_i u_i - q y_i h_i)_+.
\end{equation}
so we can have by definition, 
\begin{equation*}
\left(g_{n, \psi, \kappa}^{(1)}\left(\nu, q\right)\right)^2 = \frac{1}{p}\sum_{i=1}^n Z_i(\nu, q)^2 = \psi^{-1} 
	\cdot \frac{1}{n}\sum_{i=1}^n Z_i(\nu, q)^2.
\end{equation*}
Now, it is natural to introduce the quantity $\bar{g}_{n, \psi, \kappa}^{(1)}\left(\nu, q\right)$ such that
\begin{equation}
\left(\bar{g}_{n, \psi, \kappa}^{(1)}\left(\nu, q\right)\right)^2 = \psi^{-1} \cdot \E\left[Z(\nu, q)^2\right]
	= \psi^{-1} \cdot \E \left[(\kappa-\nu y u - q y h)_+^2\right],
\end{equation}
where $Z(\nu, q) = (\kappa-\nu y u - q y h)_+$ and the expectation on the RHS is taken w.r.t the 
random variables $(y, u, h)$, whose joint distribution is specified by  
\begin{equation}
\label{eqn:def-yuh}
(h, u) \perp y,~~h, u \sim \normal(0, 1),~~
\P(y = 1\mid u) = f(\|\btheta_{*,n}\|_{\bSigma} \,\cdot u)
\end{equation}
Recall the definition of $g_{\psi, \kappa}^{(2)}\left(\nu, q\right)$ in Eq.~\eqref{eqn:introduce-aux-g}. We 
know that 
\begin{equation}
\left(g_{\psi, \kappa}^{(2)}\left(\nu, q\right)\right)^2 = \psi^{-1} \cdot F_{\kappa}^2(\nu, q)
	= \psi^{-1} \cdot \E [ (\kappa-\nu Y U - q Y H)^2_+]
\end{equation}
where the expectation on the RHS is taken w.r.t the random variables $(Y, U, H)$, whose joint distribution is specified by  
\begin{equation}
\label{eqn:def-YUH}
(H, U) \perp Y,~~H, U \sim \normal(0, 1),~~\P(Y = 1\mid U) = f(\rho \cdot U).
\end{equation}
We can prove the desired Eq.~\eqref{eqn:goal-uniform-converge-g} by showing that 
\begin{itemize}
\item The uniform convergence from $g_{n, \psi, \kappa}^{(1)}(\nu, q)$ to 
	$ \bar{g}_{n, \psi, \kappa}^{(1)}\left(\nu, q\right)$:
	\begin{equation}
	\label{eqn:conv-g1}
\sup_{|\nu| \le C^{1/2}, 0 \le q \le C^{1/2}}
	\left|g_{n, \psi, \kappa}^{(1)}\left(\nu, q\right) - \bar{g}_{n, \psi, \kappa}^{(1)}\left(\nu, q\right)\right| \asto 0
	\end{equation}
\item The uniform convergence from $\bar{g}_{n, \psi, \kappa}^{(1)}(\nu, q)$ to $
	\bar{g}_{n, \psi, \kappa}^{(2)}\left(\nu, q\right)$:
	\begin{equation}
	\label{eqn:conv-g1bar}
\sup_{|\nu| \le C^{1/2}, 0 \le q \le C^{1/2}}
	\left|\bar{g}_{n, \psi, \kappa}^{(1)}\left(\nu, q\right) - g_{\psi, \kappa}^{(2)}\left(\nu, q\right)\right| \asto 0		
	\end{equation}
\end{itemize}
\newcommand{\setzz}{\mathcal{S}} In the rest of the proof, for notational simplicity, we introduce the compact set 
	$\setzz_C \subseteq \R^2$
\begin{equation*}
\setzz_C = \left\{(\nu, q): |\nu| \le C^{1/2}, 0 \le q \le C^{1/2}\right\}.
\end{equation*}
Now, we establish the below three important facts. 
\begin{enumerate}[(a)]
\item There exists some constant $c_0 > 0$ independent of $n, \nu, q$, such that for all $t > 0$, and $(\nu, q) \in \setzz_C$
\begin{equation}
\label{eqn:concentration-g1}
\P \left( \left|\frac{1}{n}\sum_{i=1}^n Z_i(\nu, q)^2- \E [Z(\nu, q)^2] \right| > t \right) 
	\le 2\exp \left(-c_0 n\min\{t, t^2\}\right),
\end{equation}
where $Z(\nu, q) = (\kappa-\nu y u - q y h)_+$ and $y,u,h$ is distributed according to Eq.~\eqref{eqn:def-yuh}. Indeed, 
it is not hard to show for some $M < \infty$ independent of $\nu, q, n$, we have for all $(\nu, q) \in \setzz_C$, the 
i.i.d random variables $\{Z_{i}(\nu,q) \}_{1\leq i \leq n}$ are subgaussian with parameter at most $M$, and thereby 
$\{Z_{i}(\nu,q)^2 \}_{1\leq i \leq n}$ are subexponential with parameter at most $M^2$. Thus, the 
desired concentration result at Eq.~\eqref{eqn:concentration-g1} follows by the standard Bernstein inequality 
\cite[Thm 2.8]{Vershynin18}. 

\item There exists some numerical constant $c_1 > 0$ so that with probability at least $1-\exp(-c_1 n)$, 
	the mapping $(\nu,q) \mapsto g_{n,\psi,\kappa}^{(1)}(\nu,q)$ is $3\psi^{-1/2}$-Lipschitz continuous. In fact, 
	for any pairs $(\nu, q)$ and $(\nu^\prime, q^\prime)$, by triangle inequality (and recall the definition of 
	$g_{n,\psi,\kappa}^{(1)}$ at Eq.~\eqref{eqn:introduce-aux-g})
	\begin{equation*}
	\begin{split}
		\left| g_{n,\psi,\kappa}^{(1)}(\nu,q) -g_{n,\psi,\kappa}^{(1)}(\nu^\prime,q^\prime) \right| 
			&\leq 
		\frac{1}{\sqrt{p}} \left(|\nu - \nu^\prime|\ltwo{(\by\odot \bu)_+}+|q-q^\prime|\ltwo{(\by\odot \bh)_{+}}\right)  \\
			&\le 
		\frac{1}{\sqrt{p}} \left(\ltwo{\bu} + \ltwo{\bh}\right)  \ltwo{(\nu-\nu^\prime, q-q^\prime)}.
	\end{split}
	\end{equation*}
	Now, we recall $\bu \sim \normal(0, I_n)$ and $\bh \sim \normal(0, I_n)$, and thus with high probability 
	$\ltwo{\bu} + \ltwo{\bh} \le 3\sqrt{n}$.

\item The function $(\nu,q) \mapsto \bar{g}_{n,\psi,\kappa}^{(1)}(\nu,q)$ is $3\psi^{-1/2}$-Lipschitz continuous. 
	Recall $Z(\nu, q) = (\kappa-\nu y u - q y h)_+$. Define $\Delta(\nu, q) = -\nu yu - q yh$. By 
	the elementary inequality $(a+b)_+ \le a_+ + b_+$, we have 
	\begin{equation}
		Z(\nu, q)_+ \le Z(\nu^\prime, q^\prime)_+ + \Delta(\nu - \nu^\prime, q - q^\prime)_+. 
	\end{equation}
	Therefore, Minkowski's inequality implies that for any pairs $(\nu, q)$ and $(\nu^\prime, q^\prime)$, 
	\begin{equation}
	\label{eqn:minkovski-to-Z}
		\left(\E [Z(\nu, q)_+]^2\right)^{1/2} \le \left(\E [Z(\nu^\prime, q^\prime)_+^2]\right)^{1/2}
			+ \E [\left(\Delta(\nu - \nu^\prime, q - q^\prime)_+^2]\right)^{1/2},
	\end{equation}
	where we are taking expectation over $y,u,h$ whose distribution is specified by Eq.~\eqref{eqn:def-yuh}. 
	Now that 
	\begin{equation}
	\E [\Delta(\nu, q)_+^2] \le \E \left|\Delta(\nu, q)\right|^2] \le 2\left(\nu^2 \E[|yu|^2] + q^2\E[|yh|^2]\right)
		= 2(\nu^2 + q^2).
	\end{equation}
	Thereby, Eq.~\eqref{eqn:minkovski-to-Z} implies for any pairs $(\nu, q)$ and $(\nu^\prime, q^\prime)$, 
	\begin{equation}
		\left|\left(\E [Z(\nu, q)_+]^2\right)^{1/2} - \left(\E [Z(\nu^\prime, q^\prime)_+]^2\right)^{1/2} \right|
			\le 2\ltwo{(\nu-\nu^\prime, q-q^\prime)}.
	\end{equation}
	This proves the result as $\bar{g}_{n, \psi, \kappa}^{(1)}\left(\nu, q\right) = \psi^{-1/2} \cdot \left(\E [Z(\nu, q)_+]^2\right)^{1/2}$.
\end{enumerate}

\paragraph{Proof of \eqref{eqn:conv-g1}} 

Now, we can prove Eq.~\eqref{eqn:conv-g1} via standard covering argument. Take the minimal $\eps_n = n^{-1/4}$ covering 
of the set $\setzz_C$, and we denote them by $\{(\nu_i, q_i)\}_{i \in T_n}$. Standard volume argument shows that 
$T_n \le 2C^2 \eps_n^{-2}$. Denote the event $\event_n$ to be 
\begin{equation}
\label{eqn:def-targetevent}
\event_n = 
	\left\{\max_{i \in [T_n]} \left| g_{n,\psi,\kappa}^{(1)}(\nu_i, q_i)- \bar{g}_{n,\psi,\kappa}^{(1)}(\nu_i, q_i) \right| \le \eps_n \right\}
		\cap 
	\left\{(\nu,q) \mapsto g_{n,\psi,\kappa}^{(1)}(\nu,q)~\text{is $3\psi^{-1/2}$-Lipschitz}\right\}.
\end{equation}
It is clear that on the event $\event_n$, we have 
\begin{equation}
\label{eqn:g-n-bar-g-n-sup-bound}
\sup_{(\nu, q)\in \setzz_C} \left| g_{n,\psi,\kappa}^{(1)}(\nu, q)- \bar{g}_{n,\psi,\kappa}^{(1)}(\nu, q)  \right|
	\le (3\psi^{-1/2} + 1) \eps_n. 
\end{equation}
Point $(a), (b)$ and the union bound above shows that $\P(\event_n) \ge 1 - (T_n + 1) \exp(-c_2 n\eps_n^2)$ for some numerical 
constant $c_2 > 0$. By Borel-Cantelli lemma, (almost surely) there exists some $N \in \N$ such that $\event_n$ 
happens for all $n > N$. Thus, for some $N \in \N$, we have Eq.~\eqref{eqn:g-n-bar-g-n-sup-bound} holds for all $n > N$, 
and since $\eps_n \to 0$, this shows the desired almost sure convergence: 
\begin{equation*}
\lim_{n\to \infty}\sup_{(\nu, q)\in \setzz_C} \left| g_{n,\psi,\kappa}^{(1)}(\nu, q)- \bar{g}_{n,\psi,\kappa}^{(1)}(\nu, q)  \right| = 0.
\end{equation*} 

\paragraph{Proof of \eqref{eqn:conv-g1bar}} Note that $\|\btheta_{*,n}\|_{\bSigma} \to \rho$. By dominated 
convergence theorem, we have for $(\nu, q) \in \setzz$
	\begin{equation*}
		\lim_{n \to \infty}
			\left|\bar{g}_{n, \psi, \kappa}^{(1)}\left(\nu, q\right) - g_{\psi, \kappa}^{(2)}\left(\nu, q\right)\right|  = 0.
	\end{equation*}
Point $(c)$ above implies that the class of functions $\Big\{\bar{g}_{n, \psi, \kappa}^{(1)}\left(\nu, q\right)\Big\}_{n\in \N}$ is 
equicontinuous. Thus, Arzel\`{a}-Ascoli theorem implies the desired uniform convergence:
\begin{equation}
\lim_{n \to \infty} \sup_{(\nu, q) \in \setzz}
			\left|\bar{g}_{n, \psi, \kappa}^{(1)}\left(\nu, q\right) - g_{\psi, \kappa}^{(2)}\left(\nu, q\right)\right|  = 0.
\end{equation}

\newcommand{\ball}{\mathcal{B}}
\newcommand{\gap}{{\rm gap}}

\section{Asymptotics of the prediction error: Proof of Proposition~\ref{proposition:nu-n-nu-infty}}
\label{sec:proof-proposition:nu-n-nu-infty}

We introduce the notation: 
\begin{equation}
\label{eqn:def-kappa-n-nu-n-nu-opt}
\hat{\kappa}_n = \min_{i \in [n]} y_i \langle \hat{\btheta}_n^{\sMM}, \bx_i\rangle,\;\;\;\;
\hat{\nu}_n = \frac{\langle \hat{\btheta}_n^{\sMM}, \btheta_{*, n}\rangle_{\bSigma_n}}
	{\normbig{\hat{\btheta}_n^{\sMM}}_{\bSigma_n}\norm{\btheta_{*, n}}_{\bSigma_n}}, \;\;\;\;
\end{equation}
We define auxiliary functions 
$\xi_{n, \alpha, \kappa}^{(i)}(\nu)$, $\bar{\xi}_{n, \alpha, \kappa}^{(i)}(\nu_1, \nu_2)$ as follows:
\begin{itemize}
\item We set for $i \in \{0, 1, 2\}$, $\nu \in [-1, 1]$, 
	\begin{equation}
		\xi_{n, \psi, \kappa}^{(i)}(\nu) = \xi_{n, \psi, \kappa}^{(i)}(\bTheta_p(\nu)),
	\end{equation}
	where 
	\begin{equation}
	\bTheta_p(\nu) 
		= \left\{\btheta \in \R^p: \ltwo{\btheta} \le 1,
			~\frac{\langle \btheta_{*, n}, \btheta\rangle_{\bSigma_n}}{\norm{\btheta_{*, n}}_{\bSigma_n}\norm{\btheta}_{\bSigma_n}}
			=\nu \right\}.
	\end{equation}
	So in particular, we have 
	\begin{equation}
	\begin{split}
	\label{eqn:def-xi-nu-illustration}
	\xi_{n, \psi, \kappa}(\nu) &=  \min_{\ltwo{\btheta} \le 1,  
		\langle \btheta, \btheta_{*, n}\rangle_{\bSigma_n}/(\norm{\btheta_{*, n}}_{\bSigma_n}\norm{\btheta}_{\bSigma_n}) = \nu} 
			\frac{1}{\sqrt{p}}\ltwo{(\kappa \one - \by \odot \bX\btheta)_+} \, ,\\
	 \xi_{n, \psi, \kappa}^{(2)}(\nu) &=  \min_{\ltwo{\btheta} \le 1,  \langle \btheta, \bLambda^{1/2} \bw \rangle/\norm{\btheta}_{\bLambda} = \nu} 
		\psi^{-1/2} \cdot F_{\kappa} \left(\langle \bLambda^{1/2}\bw, \btheta\rangle, \ltwobig{\proj_{\bw^{\perp}} \bLambda^{1/2} \btheta} \right) +
			 \frac{1}{\sqrt{p}} \bg^{\sT} \proj_{\bw^{\perp}}\bLambda^{1/2} \btheta \, .
	\end{split}
	\end{equation}
	Note by definition   
	\begin{equation}
	\label{eqn:def-xi-nu-global-illustration}
	\begin{split}
	\xi_{n, \psi, \kappa} &= \min_{\nu\in [-1, 1]} \xi_{n, \psi, \kappa}(\nu)
		= \min_{\ltwo{\btheta} \le 1}~ \frac{1}{\sqrt{p}}\ltwo{(\kappa \one - \by \odot \bX\btheta)_+} \\
	 \xi_{n, \psi, \kappa}^{(2)} &=  \min_{\nu\in [-1, 1]} \xi_{n, \psi, \kappa}^{(2)}(\nu) = 
	 	\min_{\ltwo{\btheta} \le 1} ~
		\psi^{-1/2} \cdot F_{\kappa} \left(\langle \bLambda^{1/2}\bw, \btheta\rangle, \ltwobig{\proj_{\bw^{\perp}} \bLambda^{1/2} \btheta} \right) +
			 \frac{1}{\sqrt{p}} \bg^{\sT} \proj_{\bw^{\perp}}\bLambda^{1/2} \btheta
	\end{split}
	\end{equation}
\item We set for any $i \in \{0, 1, 2\}$, $\nu_1, \nu_2 \in [-1, 1]$, 
	\begin{equation}
	\label{def:bar-xi-nu-1-nu-2}
		\bar{\xi}_{n, \alpha, \kappa}^{(i)}(\nu_1, \nu_2)
			= \min \left\{\min_{\nu \le \nu_1} \xi^{(i)}_{n, \psi, \kappa}(\nu), \min_{\nu \ge \nu_2} \xi^{(i)}_{n, \psi, \kappa}(\nu)\right\}
	\end{equation}
\end{itemize}
The above definitions  imply
\begin{equation}
\label{eqn:nu-kappa-formula}
\hat{\kappa}_n = \sup_{\kappa} \left\{\xi_{n, \psi, \kappa} = 0\right\},\;\;\;\;\;
\hat{\nu}_{n} \in \left\{\nu: \xi_{n, \psi, \hat{\kappa}_n}(\nu) = 0\right\}. 
\end{equation}
Now, we prove our desired goal of the proposition, i.e.,
\begin{equation}
\label{eqn:goal-of-proposition-generalization}
\hat{\nu}_n \pto \nu\opt(\psi). 
\end{equation}
For any $\eps > 0$, denote $\nu_+\opt(\psi;\eps) = \nu\opt(\psi) + \eps$ and $\nu_-\opt(\psi;\eps) = \nu\opt(\psi) - \eps$. 
By Eq.~\eqref{eqn:nu-kappa-formula}, it suffices to show that
\begin{equation}
\label{eqn:claim-prop-generalization}
	\lim_{n\to \infty, p/n\to \psi}	\P\left(\bar{\xi}_{n, \psi, \hat{\kappa}_n}
		(\nu_{-}\opt(\psi; \eps), \nu_+\opt(\psi; \eps)) > 0\right) = 1~~\text{for any $\eps > 0$}.
\end{equation}
The rest of the proof will establish Eq.~\eqref{eqn:claim-prop-generalization}. 
Define for any $\nu_0, \nu_1 \in [-1, 1]$,
\begin{equation}
\label{eqn:def-gap-function}
\gap(\nu_0, \nu_1) = \min_{\substack{
	c_{1, 0}/\sqrt{c_{1, 0}^2 + c_{2, 0}^2} = \nu_0,~ 
	c_{1, 1}/\sqrt{c_{1, 1}^2 + c_{2, 1}^2} = \nu_1, \\
	\max\left\{c_{1, 0}^2 + c_{2, 0}^2,~c_{1, 1}^2 + c_{2, 1}^2\right\} \le x_{\max}\\
	{c_{1, 1/2} = (c_{1, 0}+c_{1, 1})/2,~ c_{2, 1/2} = (c_{2, 0}+c_{2, 1})/2}}} 
	\left\{ \half \Big(F_{\kappa}(c_{1, 0}, c_{2, 0}) + 
		F_{\kappa}(c_{1, 1}, c_{2, 1}) \Big) - F_{\kappa}(c_{1, 1/2}, c_{2, 1/2}) \right\}.
\end{equation}
By Lemma~\ref{lemma:F-convex-increasing}, $F_{\kappa}$ is strictly convex and continuous, and thus 
we know that $(i)$ $(\nu_0, \nu_1) \to \gap(\nu_0, \nu_1)$ is lower-semicontinuous and 
$(ii)$ $\gap(\nu_0, \nu_1) > 0$ when $\nu_0 \neq \nu_1$. The crucial observation is 
Lemma~\ref{lemma:xi-F-kappa-nu} below, whose proof we defer into Section~\ref{sec:proof-lemma-xi-F-kappa-beta}.

\begin{lemma}
\label{lemma:xi-F-kappa-nu}
For any $\nu_0, \nu_1 \in [-1, 1]$, we have  
\begin{equation}
\label{eqn:xi-2-F-kappa-lower-bound-nu}
\half \Big(\xi^{(2)}_{n, \psi, \kappa}(\nu_0) + \xi^{(2)}_{n, \psi, \kappa}(\nu_1)\Big) - \xi^{(2)}_{n, \psi, \kappa}
	\ge \psi^{-1/2} \cdot \gap(\nu_0, \nu_1).
\end{equation}
\end{lemma}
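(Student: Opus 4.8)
The plan is to run the same strict-convexity computation that underlies Lemma~\ref{lemma:technical-1}, but now keeping track of the convexity gap of $F_\kappa$ quantitatively. Fix $\nu_0,\nu_1\in[-1,1]$. If either $\bTheta_p(\nu_0)$ or $\bTheta_p(\nu_1)$ is empty, then the corresponding $\xi^{(2)}_{n,\psi,\kappa}(\nu_j)=+\infty$ and \eqref{eqn:xi-2-F-kappa-lower-bound-nu} is vacuous, so assume both are nonempty. Given $\eps>0$, choose $\btheta_0\in\bTheta_p(\nu_0)$, $\btheta_1\in\bTheta_p(\nu_1)$ that are $\eps$-optimal for the two constrained problems in \eqref{eqn:def-xi-nu-illustration} (working with $\eps$-minimizers avoids having to argue that the constrained minima are attained), and set $\btheta_{1/2}=\tfrac12(\btheta_0+\btheta_1)$. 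Since the Euclidean ball is convex, $\ltwo{\btheta_{1/2}}\le1$, so $\btheta_{1/2}$ is feasible for the \emph{unconstrained} problem defining $\xi^{(2)}_{n,\psi,\kappa}$ in \eqref{eqn:def-xi-nu-global-illustration}; the whole argument then consists of bounding the value of that problem at $\btheta_{1/2}$.

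Introduce $c_{1,j}=\langle\bLambda^{1/2}\bw,\btheta_j\rangle$ and $c_{2,j}=\ltwobig{\proj_{\bw^{\perp}}\bLambda^{1/2}\btheta_j}$ for $j\in\{0,1\}$, together with $c_{1,1/2}=\tfrac12(c_{1,0}+c_{1,1})$ and $c_{2,1/2}=\tfrac12(c_{2,0}+c_{2,1})$. Because $\bLambda^{1/2}$ is symmetric, the orthogonal decomposition of $\bLambda^{1/2}\btheta_j$ along $\bw$ gives $\norm{\btheta_j}_{\bLambda}^2=c_{1,j}^2+c_{2,j}^2$, so the defining constraint $\btheta_j\in\bTheta_p(\nu_j)$ reads exactly $c_{1,j}/\sqrt{c_{1,j}^2+c_{2,j}^2}=\nu_j$; moreover $c_{1,j}^2+c_{2,j}^2=\norm{\btheta_j}_{\bLambda}^2\le\lambda_{\max}(\bLambda)\,\ltwo{\btheta_j}^2\le x_{\max}$ by Assumption~\ref{assumption:Lambdas}. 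Hence $(c_{1,0},c_{2,0},c_{1,1},c_{2,1})$ is admissible in the minimization defining $\gap(\nu_0,\nu_1)$ in \eqref{eqn:def-gap-function}, so that
\begin{equation*}
\tfrac12\big(F_\kappa(c_{1,0},c_{2,0})+F_\kappa(c_{1,1},c_{2,1})\big)-F_\kappa(c_{1,1/2},c_{2,1/2})\ \ge\ \gap(\nu_0,\nu_1).
\end{equation*}

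It remains to assemble the pieces. The linear term $p^{-1/2}\bg^{\sT}\proj_{\bw^{\perp}}\bLambda^{1/2}\btheta$ is affine in $\btheta$, so its value at $\btheta_{1/2}$ equals the average of its values at $\btheta_0$ and $\btheta_1$. For the $F_\kappa$ term, $\langle\bLambda^{1/2}\bw,\btheta_{1/2}\rangle=c_{1,1/2}$, while the triangle inequality gives $\ltwobig{\proj_{\bw^{\perp}}\bLambda^{1/2}\btheta_{1/2}}\le c_{2,1/2}$, and since $F_\kappa$ is nondecreasing in its second argument (Lemma~\ref{lemma:F-convex-increasing}$(b)$) we get $F_\kappa\big(c_{1,1/2},\ltwobig{\proj_{\bw^{\perp}}\bLambda^{1/2}\btheta_{1/2}}\big)\le F_\kappa(c_{1,1/2},c_{2,1/2})$. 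Combining this with the $\gap$ bound and the $\eps$-optimality of $\btheta_0,\btheta_1$ yields
\begin{equation*}
\xi^{(2)}_{n,\psi,\kappa}\ \le\ \tfrac12\big(\xi^{(2)}_{n,\psi,\kappa}(\nu_0)+\xi^{(2)}_{n,\psi,\kappa}(\nu_1)\big)+\eps-\psi^{-1/2}\gap(\nu_0,\nu_1),
\end{equation*}
and letting $\eps\downarrow0$ gives \eqref{eqn:xi-2-F-kappa-lower-bound-nu}. No step here is a genuine obstacle: the proof is just the quantitative version of the convexity inequality already established for $\asL_{\psi,\kappa,\P}$, and the only care needed is the bookkeeping for empty constraint sets and the use of near-minimizers in place of exact ones.
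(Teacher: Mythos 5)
Your proof is correct and takes essentially the same approach as the paper's: evaluate the unconstrained objective at the midpoint of (near-)minimizers of the two constrained problems, bound the $F_\kappa$ term via the triangle inequality together with monotonicity in the second argument, and then invoke the definition of $\gap(\nu_0,\nu_1)$ after noting the feasibility bound $c_{1,j}^2+c_{2,j}^2=\|\btheta_j\|_{\bLambda}^2\le x_{\max}$. Your use of $\eps$-optimizers and the explicit handling of empty constraint sets are minor tidying steps the paper simply omits by assuming attainment.
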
 
\noindent\noindent
Now we are ready to prove Eq.~\eqref{eqn:claim-prop-generalization}. Recall our notation that 
$\hat{\btheta}_{n, \psi, \kappa}^{(2)} \in \R^{p}$ is the optimal solution of the optimization problem defining 
$\xi_{n, \psi, \kappa}^{(2)}$ (see Eq.~\eqref{eqn:def-xi-nu-global-illustration}). Let us denote 
$\hat{\nu}_{n, \psi, \kappa}^{(2)}$ to be 
\begin{equation*}
\hat{\nu}_{n, \psi, \kappa}^{(2)} = \langle \xi_{n, \psi, \kappa}^{(2)}, \bLambda^{1/2} \bw \rangle/\normbig{\xi_{n, \psi, \kappa}^{(2)}}_{\bLambda}.
\end{equation*}
By definition, it is clear for all $n, \psi, \kappa$, 
\begin{equation}
\label{eqn:def-stupid-global-min}
\xi_{n, \psi, \kappa}^{(2)} = \xi_{n, \psi, \kappa}^{(2)} \left(\hat{\nu}_{n, \psi, \kappa}^{(2)}\right).  
\end{equation}
Now, we use Lemma~\ref{lemma:xi-F-kappa-nu}. Plugging $\nu_0 = \hat{\nu}_{n, \psi, \kappa}^{(2)}$, 
$\kappa = \kappa\opt(\psi)$ into Eq.~\eqref{eqn:xi-2-F-kappa-lower-bound-nu}, and using
Eq.~\eqref{eqn:def-stupid-global-min}, we get 
\begin{equation}
\label{eqn:special-case-xi-gap-nu}
\xi_{n, \psi, \kappa\opt(\psi)}^{(2)}(\nu_1) - \xi_{n, \psi, \kappa\opt(\psi)}^{(2)} 
	\ge 2\psi^{-1/2} \cdot \gap(\hat{\nu}_{n, \psi, \kappa\opt(\psi)}^{(2)}, \nu_1).
\end{equation}
holds for all $\nu_1 \in [-1, 1]$. Now, we define for any $\nu_0, \nu_1, \nu_2 \in [-1, 1]$, $\kappa > 0$
\begin{equation}
\label{def:bar-gap-nu-0-nu-1-nu-2}
\overline{\gap}(\nu_0, \nu_1, \nu_2) = \min \left\{\min_{\nu \le \nu_1} \gap(\nu_0, \nu),~
	\min_{\nu \ge \nu_2} \gap_{\kappa}(\nu_0, \nu)\right\}. 
\end{equation}
Note that $(i)$ $(\nu_0, \nu_1, \nu_2) \to \overline{\gap}_{\kappa}(\nu_0, \nu_1, \nu_2)$ is lower-semicontinuous 
(because $\gap_{\kappa}(\cdot, \cdot)$ is lower semicontinuous), and $(ii)$ for any $\nu_1 < \nu_0 < \nu_2$
$\overline{\gap}_{\kappa}(\nu_0, \nu_1, \nu_2) > 0$ (because $\gap(\kappa)(\nu_0, \nu) > 0$ for any $\nu \neq \nu_0$). 
Now, by Eq.~\eqref{eqn:special-case-xi-gap-nu}, Eq.~\eqref{def:bar-xi-nu-1-nu-2} and Eq.~\eqref{def:bar-gap-nu-0-nu-1-nu-2}, 
we know for all $\nu_1, \nu_2 \in [-1, 1]$, 
\begin{equation}
\label{eqn:special-case-xi-gap-nu-1-nu-2}
\bar{\xi}_{n, \psi, \kappa\opt(\psi)}^{(2)}(\nu_1, \nu_2) - \xi_{n, \psi, \kappa\opt(\psi)}^{(2)} 
	\ge 2\psi^{-1/2} \cdot \overline{\gap}(\hat{\nu}_{n, \psi, \kappa\opt(\psi)}^{(2)}, \nu_1, \nu_2).
\end{equation}
Recall Proposition~\ref{proposition:xi-n-2-xi-infty}. We have almost surely (recall that $c_i\opt(\psi) = c_i(\psi, \kappa\opt(\psi))$ for $i \in \{1, 2\}$), 
\begin{equation}
\label{eqn:almost-sure-converge-xi-2-nu-2}
\lim_{n\to \infty, p/n\to \psi} \xi^{(2)}_{n, \psi, \kappa\opt(\psi)} = T(\psi, \kappa\opt(\psi)) = 0
~~\text{and}~~
\lim_{n\to \infty, p/n\to \psi} \hat{\nu}_{n, \psi, \kappa\opt(\psi)}^{(2)} = 
	\frac{c_1\opt(\psi)}{\sqrt{(c_1\opt(\psi))^2 +(c_2\opt(\psi))^2}} = \nu\opt(\psi).
\end{equation}
Therefore, for any fixed $\nu_1, \nu_2$, 
by Eq.~\eqref{eqn:special-case-xi-gap-nu-1-nu-2} and Eq.~\eqref{eqn:almost-sure-converge-xi-2-nu-2}, 
we have almost surely
\begin{equation}
\label{eqn:xi-2-nu-1-nu-2-eps-out}
\liminf_{n\to \infty, p/n\to \psi} \bar{\xi}_{n, \psi, \kappa\opt(\psi)}^{(2)}(\nu_1, \nu_2) 
	\ge 2\psi^{-1/2} \cdot \overline{\gap}(\nu\opt(\psi), \nu_1, \nu_2).
\end{equation} 
Now, for any fixed $\eps > 0$, we define $\eta(\psi; \eps) > 0$ by 
\begin{equation}
\eta(\psi; \eps) = 2 \psi^{-1} \cdot \overline{\gap}(\nu\opt(\psi), \nu_-\opt(\psi; \eps), \nu_+\opt(\psi; \eps)).
\end{equation}
then Eq.~\eqref{eqn:xi-2-nu-1-nu-2-eps-out} implies in particular that 
\begin{equation}
\label{eqn:xi-2-pos-eps-out}
\liminf_{n\to \infty, p/n\to \psi} \P\left(\bar{\xi}_{n, \psi, \kappa\opt(\psi)}^{(2)}(\nu_-\opt(\psi; \eps), \nu_+\opt(\psi; \eps)) 
	\ge \eta(\psi; \eps)\right) = 1.
\end{equation}
Notice that $\bar{\xi}_{n, \psi, \kappa\opt(\psi)}^{(2)}(\nu_1, \nu_2) = 
\bar{\xi}_{n, \psi, \kappa\opt(\psi)}^{(2)}(\bTheta_p(\nu_1, \nu_2))$ where the set 
$\bTheta_p(\nu_1, \nu_2)$ is defined by 
\begin{equation}
\bTheta_p(\nu_1, \nu_2) = 
\left\{\btheta:  \ltwo{\btheta} \le 1, 
	\frac{\langle \btheta_{*, n}, \btheta\rangle_{\bSigma_n}}{\norm{\btheta_{*, n}}_{\bSigma_n}\norm{\btheta}_{\bSigma_n}}
			\le \nu_1 \right\}
		\bigcup 
\left\{\btheta:  \ltwo{\btheta} \le 1, 
	\frac{\langle \btheta_{*, n}, \btheta\rangle_{\bSigma_n}}{\norm{\btheta_{*, n}}_{\bSigma_n}\norm{\btheta}_{\bSigma_n}}
			\ge \nu_2 \right\}.
\end{equation} 
Thus, Eq.~\eqref{eqn:xi-2-pos-eps-out}, Lemma~\ref{lemma:xi-0-xi-1} and Lemma~\ref{lemma:xi-1-xi-2} imply that 
\begin{equation}
\label{eqn:xi-n-psi-kappa-opt-psi-pos}
\lim_{n\to \infty} \P\left(\xi_{n, \psi, \kappa\opt(\psi)}(\nu_1, \nu_2) > \eta(\psi; \eps) \right) = 1.
\end{equation}
Finally, we notice that 
\begin{itemize}
\item The function $\kappa \to \xi_{n, \psi, \kappa}(\nu_1, \nu_2)$ is $((\psi)^{-1/2})$-Lipschitz for all $n, \psi, \nu_1, \nu_2$.
	This is due to $(i)$ the mapping $\kappa\to \frac{1}{\sqrt{p}}\ltwo{(\kappa \ones - y \odot X\theta)_+}$ is $((\psi)^{-1/2})$ 
	Lipschitz, and $(ii)$ the variational characterization below: 
	\begin{equation*}
		\xi_{n, \psi, \kappa}(\nu_1, \nu_2) = \min_{\btheta: \norm{\btheta} \le 1, \btheta \in \bTheta_p(\nu_1, \nu_2)}
			\frac{1}{\sqrt{p}}	\ltwo{(\kappa \ones - y \odot X\theta)_+}
	\end{equation*}
\item the convergence  $\hat{\kappa}_n \pto \kappa\opt(\psi)$ which is implied by 
	Eq.~\eqref{eqn:nu-kappa-formula}, Eq.~\eqref{eqn:summary-xi-pos} and Eq.~\eqref{eqn:summary-xi-neg}.
\end{itemize}
Using the above two facts, and the high probability bound at Eq.~\eqref{eqn:xi-n-psi-kappa-opt-psi-pos}, we get
\begin{equation}
\lim_{n\to \infty} \P\left(\xi_{n, \psi, \hat{\kappa}_n}(\nu_1, \nu_2) > \eta(\psi; \eps) \right) = 1.
\end{equation}
This gives the desired claim at Eq.~\eqref{eqn:claim-prop-generalization}, and thus the proposition.

\subsection{Proof of Lemma~\ref{lemma:xi-F-kappa-nu}}
\label{sec:proof-lemma-xi-F-kappa-beta}
Let $\btheta_0 \in \R^p$, $\btheta_1 \in \R^p$ be such that 
$\langle \bLambda^{1/2}\bw, \btheta_0\rangle/\norm{\btheta_0}_{\bLambda}= \nu_0$,
$\langle \bLambda^{1/2}\bw, \btheta_1\rangle/\norm{\btheta_1}_{\bLambda}= \nu_1$ and
\begin{equation}
\label{eqn:xi-F-kappa-one}
\begin{split}
\xi^{(2)}_{n, \psi, \kappa}(\nu_0) = \psi^{-1/2} \cdot F_{\kappa}
	\left(\langle \bLambda^{1/2}\bw, \btheta_0\rangle,~\ltwobig{\proj_{\bw^{\perp}}\bLambda^{1/2} \btheta_0}\right) + \frac{1}{\sqrt{p}}
		\bg^{\sT} \proj_{\bw^{\perp}} \Lambda^{1/2}\btheta_0 \\
\xi^{(2)}_{n, \psi, \kappa}(\nu_1) = \psi^{-1/2} \cdot F_{\kappa}
	\left(\langle \bLambda^{1/2}\bw, \btheta_1\rangle,~\ltwobig{\proj_{\bw^{\perp}}\bLambda^{1/2} \btheta_1}\right) + \frac{1}{\sqrt{p}}
		\bg^{\sT} \proj_{\bw^{\perp}} \Lambda^{1/2}\btheta_1.
\end{split}
\end{equation}
Denote $\btheta_{1/2} = \half(\btheta_0 + \btheta_1)$. By definition of $\xi^{(2)}_{n, \psi, \kappa}$, we know that 
\begin{equation}
\label{eqn:xi-F-kappa-two}
\begin{split}
\xi^{(2)}_{n, \psi, \kappa} &\le  \psi^{-1/2} \cdot F_{\kappa}
	\left(\langle \bLambda^{1/2}\bw, \btheta_{1/2}\rangle, \ltwobig{\proj_{\bw^{\perp}}\bLambda^{1/2} \btheta_{1/2}}\right) + \frac{1}{\sqrt{p}}
		\bg^{\sT} \proj_{\bw^{\perp}} \Lambda^{1/2}\btheta_{1/2} \\
	&\le \psi^{-1/2} \cdot F_{\kappa}
		\left(\langle \bLambda^{1/2}\bw, \btheta_{1/2}\rangle,~\half\left( \ltwobig{\proj_{\bw^{\perp}}\bLambda^{1/2} \btheta_{0}} +  
			\ltwobig{\proj_{\bw^{\perp}}\bLambda^{1/2} \btheta_{1}} \right)\right) + \frac{1}{\sqrt{p}}
		\bg^{\sT} \proj_{\bw^{\perp}} \Lambda^{1/2}\btheta_{1/2}
\end{split}
\end{equation}
where the second line follows since $F_{\kappa}$ is increasing w.r.t its second argument (see Lemma 
\ref{lemma:F-convex-increasing}). Denote
\begin{equation*}
c_{1, 0} = \langle \bLambda^{1/2}\bw, \btheta_0\rangle,~ 
c_{1, 1} = \langle \bLambda^{1/2}\bw, \btheta_1\rangle,~ 
c_{2, 0} = \ltwobig{\proj_{\bw^{\perp}}\bLambda^{1/2} \btheta_0},~
c_{2, 1} = \ltwobig{\proj_{\bw^{\perp}}\bLambda^{1/2} \btheta_1}.
\end{equation*} 
Let  $c_{1, 1/2} = \half(c_{1, 0} + c_{1, 1})$ and $c_{2, 1/2} = \half(c_{2, 0} + c_{2, 1})$.
From Eq.~\eqref{eqn:xi-F-kappa-one} and Eq.~\eqref{eqn:xi-F-kappa-two}, we have 
\begin{equation*}
\half \left(\xi^{(2)}_{n, \psi, \kappa}(\nu_0) + \xi^{(2)}_{n, \psi, \kappa}(\nu_1)\right)- \xi^{(2)}_{n, \psi, \kappa}
\ge \psi^{-1/2} \cdot  \half \Big(F_{\kappa}(c_{1, 0}, c_{2, 0}) + 
		F_{\kappa}(c_{1, 1}, c_{2, 1}) \Big) - F_{\kappa}(c_{1, 1/2}, c_{2, 1/2}).
\end{equation*}
Note that $\nu_0 = c_{1, 0}/\sqrt{c_{1, 0}^2 + c_{2, 0}^2}$ and $\nu_1 = c_{1, 1}/\sqrt{c_{1, 1}^2 + c_{2, 1}^2}$.
This gives the desired Lemma~\ref{lemma:xi-F-kappa-nu}.

\section{Asymptotics of the Coordinate Distribution of $\sqrt{p}\hat{\btheta}_n^{\sMM}$: Proof of Proposition~\ref{proposition:ECD-maxmargin}}
\label{sec:proof-proposition:ECD-maxmargin}

\subsection{Notation}
Recall $\hat{\kappa}_n$ denotes the margin of the maximum-linear classifier, i.e., 
$\hat{\kappa}_n = \min_{i \in [n]} y_i \langle \hat{\btheta}_n^{{\sMM}}, \bx_i\rangle$. 
Equivalently, we have that $\xi_{n, \psi, \hat{\kappa}_n} = 0$ and 
\begin{equation}
\label{eqn:nu-kappa-connection}
\hat{\kappa}_n = \sup_{\kappa} \left\{\xi_{n, \psi, \kappa} = 0\right\}.
\end{equation}
For each $\btheta \in \R^p$ with $\ltwo{\btheta} \le 1$, we denote $\LL_p(\btheta)$ to be the following empirical distribution 
\begin{equation*}
\LL_p(\btheta) = \frac{1}{p} \sum_{i=1}^p \delta_{(\lambda_i, \bar{w}_i, \sqrt{p}\btheta_i)}.
\end{equation*}
For each $\eps > 0$, define $\bTheta_p(\eps)$ to be the Wasserstein ball around the distribution $\LL_{\psi, \kappa^*(\psi)}$: 
\begin{equation*}
\bTheta_p(\eps) = \left\{\btheta\in \R^p: \ltwo{\btheta} \le 1, W_2(\LL_p(\btheta), \LL_{\psi, \kappa^*(\psi)}) \le \eps\right\}.
\end{equation*}
Denote $\bTheta_p^c(\eps)$ to be its complement w.r.t the unit ball $\{\btheta \in \R^p: \ltwo{\btheta} \le 1\}$: 
\begin{equation*}
\bTheta_p^c(\eps) = \left\{\btheta\in \R^p: \ltwo{\btheta} \le 1, W_2(\LL_p(\btheta), \LL_{\psi, \kappa^*(\psi)})  > \eps\right\}.
\end{equation*}
Using the above notation, the goal of the section is to establish 
\begin{equation*}
W_2(\LL_p({\hat{\btheta}_n^{\sMM}}), \LL_{\psi, \kappa\opt(\psi)}) \pto 0.
\end{equation*}
In other words, the goal is to establish for any $\eps > 0$, 
\begin{equation*}
  \lim_{n \to \infty} \P\left(\hat{\btheta}_n^{{\sMM}} \in \bTheta_p(\eps)\right) = 1.
\end{equation*}

\subsection{Main content of the analysis} 
Recall that $\xi_{n, \psi, \hat{\kappa}_n} = 0$. Thus, by definition, $\xi_{n, \psi, \hat{\kappa}_n}(\bTheta_p^c(\eps)) > 0$ 
implies that $\hat{\btheta}_n^{{\sMM}} \in \bTheta_p(\eps)$. Therefore, it suffices to prove for any $\eps > 0$, 
\begin{equation}
\label{eqn:goal-one-asprediction}
    \lim_{n \to \infty} \P\left(\xi_{n,\psi,\hat{\kappa}_n}\left(\bTheta_{p}^c(\eps)\right) > 0\right)=1.
\end{equation}
The proof is based on the following three steps.  
\begin{enumerate}
\item In the first step, we prove that, for any $\eps, \Delta > 0$, 
\begin{equation}
\label{eqn:step-one-wball}
\limsup_{n\to \infty, p/n \to \psi} \left[\P\left(\xi_{n,\psi,\hat{\kappa}_n}\left(\bTheta_{p}^c(\eps)\right) \le \Delta\right) - 2 \P\left(
	\xi^{(2)}_{n,\psi,\kappa\opt(\psi)}\left(\bTheta_{p}^c(\eps)\right) \le \Delta \right)\right] \le 0.
\end{equation}
This is basically saying that $\xi_{n,\psi,\hat{\kappa}_n}\left(\bTheta_{p}^c(\eps)\right)$ is stochastically larger than 
$\xi^{(2)}_{n,\psi,\kappa\opt(\psi)}\left(\bTheta_{p}^c(\eps)\right)$ (up to a factor of $2$ in Eq.~\eqref{eqn:step-one-wball}).
The constant $2$ is from the Gordon's comparison inequality (cf. Lemma~\ref{lemma:xi-0-xi-1}).
\item In the second step, we prove for any $\eps > 0$,  
\begin{equation}
\label{eqn:step-two-wball}
\lim_{n\to \infty, p/n \to \psi}\P\left(\xi^{(2)}_{n,\psi,\kappa\opt(\psi)}\left(\bTheta_{p}^c(\eps)\right) \ge
	\xi^{(2)}_{n,\psi,\kappa\opt(\psi)}\left(\ball_p^c(\eps/2)\right)\right) = 1,
\end{equation}
where (as a reminder, for any $\psi, \kappa > 0$, $\hat{\btheta}_{n, \psi, \kappa}^{(2)}$ denotes the minimum of the 
optimization problem defining $\xi_{n, \psi, \kappa}^{(2)}$ (see Eq.~\eqref{eqn:xi-2-expansion} for detail))
\begin{equation*}
\begin{split}
\ball_p^c(\eps') &\defeq
	\left\{\btheta\in \R^p: \ltwo{\btheta} \le 1, \ltwobig{\btheta-\hat{\btheta}_{n, \psi, \kappa^*(\psi)}^{(2)}} > \eps'\right\}.
\end{split}
\end{equation*}
The intuition why Eq.~\eqref{eqn:step-two-wball} holds is that $\bTheta_{p}^c(\eps) 
\subseteq \ball^c(\eps/2)$ for large $n, p$ when $p/n = \psi$ (we will see this from the proof) .
\item In the third step, we prove for any $\eps > 0$, there exists $\eta(\psi; \eps) > 0$ (independent of $n$), such that 
\begin{equation}
\label{eqn:step-three-wball}
\lim_{n\to \infty, p/n \to \psi} \P \left(\xi^{(2)}_{n,\psi,\kappa\opt(\psi)}\left(\ball_{p}^c(\eps)\right) \ge \eta(\psi; \eps)\right) = 1.
\end{equation}
\end{enumerate}
It is straightforward to see the above facts imply the desired Eq.~\eqref{eqn:goal-one-asprediction}, thus giving 
the desired Proposition~\ref{proposition:ECD-maxmargin}. We prove the above three facts in the three paragraphs below. 

\paragraph{Proof of Step 1 (Eq.~\eqref{eqn:step-one-wball})}
First, Lemma~\ref{lemma:xi-0-xi-1} and Lemma~\ref{lemma:xi-1-xi-2} relate $\xi_{n, \psi, \kappa^*(\psi)}\left(\bTheta_{p}^c(\eps)\right)$ 
and $\xi^{(2)}_{n, \psi, \kappa^*(\psi)}\left(\bTheta_{p}^c(\eps)\right)$---showing that for any $\eps, \Delta > 0$, 
\begin{equation}
\limsup_{n\to \infty, p/n \to \psi} \left[\P\left(\xi_{n,\psi, \kappa^*(\psi)}\left(\bTheta_{p}^c(\eps)\right) \le \Delta\right) - 2 \P\left(
	\xi^{(2)}_{n, \psi, \kappa^*(\psi)}\left(\bTheta_{p}^c(\eps)\right) \le \Delta \right)\right] \le 0.
\end{equation}
Thus, it suffices to prove for any $\eps, \Delta > 0$, 
\begin{equation*}
\limsup_{n\to \infty, p/n \to \psi} \left[\P\left(\xi_{n,\psi, \hat{\kappa}_n}\left(\bTheta_{p}^c(\eps)\right) \le \Delta\right) - 
	\P\left(\xi_{n,\psi, \kappa^*(\psi)}\left(\bTheta_{p}^c(\eps)\right) \le \Delta\right)\right] \le 0.
\end{equation*}
We show a slightly stronger result---we prove the convergence: as $n\to \infty, p/n \to \psi$, 
\begin{equation*}
\xi_{n,\psi,\hat{\kappa}_n}\left(\bTheta_{p}^c(\eps)\right) - \xi_{n,\psi,\kappa\opt(\psi)}\left(\bTheta_{p}^c(\eps)\right) \pto 0
\end{equation*}
To see the above convergence in probability result, we note the below two facts:
\begin{enumerate}[(a)]
	\item we have the convergence $\hat{\kappa}_n \pto \kappa\opt(\psi)$ by comparing
		Eq.~\eqref{eqn:nu-kappa-connection}, Eq.~\eqref{eqn:summary-xi-pos} and Eq.~\eqref{eqn:summary-xi-neg}. 
	\item we have the mapping $\kappa \to \xi_{n, \psi, \kappa}(\bTheta^c_p(\eps))$ is $(\psi)^{-1/2}$-Lipschitz uniformly in 
		$n, \psi, \eps$. Indeed, this is true since we have the variational characterization 
		$\xi_{n, \psi, \kappa}(\bTheta_{p}^c(\eps)) = \min_{\btheta \in \bTheta_p^c(\eps)}
			\frac{1}{\sqrt{p}}	\ltwo{(\kappa \ones - y \odot X\theta)_+}$
		and for each $\btheta$, the mapping $\kappa\to \frac{1}{\sqrt{p}}\ltwo{(\kappa \ones - y \odot X\theta)_+}$ is $(\psi)^{-1/2}$ 
		Lipschitz, again uniformly in $n, \psi, \eps$.
\end{enumerate}

\paragraph{Proof of Step 2 (Eq.~\eqref{eqn:step-two-wball})}
By definition, it suffices to prove for any $\eps > 0$, $\bTheta_{p}^c(\eps) \subseteq \ball_p^c(\eps/2)$
happens eventually as $n \to \infty, p/n\to \psi$. To see this, the two key observations are: 

\begin{enumerate}[(a)]
\item $W_2(\LL_p(\btheta_1), \LL_p(\btheta_2)) \le \ltwo{\btheta_1-\btheta_2}$ for any $\btheta_1, \btheta_2 \in \R^p$.
\item $W_2(\LL_p(\hat{\btheta}_{n, \psi, \kappa^*(\psi)}^{(2)}), \LL_{\psi, \kappa^*(\psi)}) \asto 0$ as $n \to \infty, p/n \to \psi$ 
by Proposition~\ref{proposition:xi-n-2-xi-infty}.(b). (as a kind note: we apply Proposition~\ref{proposition:xi-n-2-xi-infty}.(b) to 
$\kappa = \kappa\opt(\psi)$, and the notation here $\LL_p(\hat{\btheta}_{n, \psi, \kappa^*(\psi)}^{(2)})$ has the same meaning 
as the notation $\LL_{n, \psi, \kappa\opt(\psi)}^{(2)}$ in Proposition~\ref{proposition:xi-n-2-xi-infty})
\end{enumerate}
Now, pick any $\btheta \in \bTheta_p^c(\eps)$ so $W_2(\LL_p(\btheta), \LL_{\psi, \kappa^*(\psi)}) \ge \eps$. Now triangle inequality and 
point $(a)$ above indicate 
\begin{equation}
\label{eqn:two-balls-relation}
\begin{split}
\ltwobig{\btheta - \hat{\btheta}_{n, \psi, \kappa^*(\psi)}^{(2)}}
	&\ge W_2(\LL_p(\btheta), \LL_p(\hat{\btheta}_{n, \psi, \kappa^*(\psi)}^{(2)})) \\
	&\ge W_2(\LL_p(\btheta), \LL_{\psi, \kappa^*(\psi)}) 
		- W_2(\LL_p(\hat{\btheta}_{n, \psi, \kappa^*(\psi)}^{(2)}), \LL_{\psi, \kappa^*(\psi)}) \\  
	&\ge \eps - W_2(\LL_p(\hat{\btheta}_{n, \psi, \kappa^*(\psi)}^{(2)}), \LL_{\psi, \kappa^*(\psi)}). 
\end{split}
\end{equation}
Now Point $(b)$ and Eq.~\eqref{eqn:two-balls-relation} imply that, as  $n \to \infty, p/n \to \psi$, we have for any $\btheta \in \bTheta_p^c(\eps)$,
\begin{equation*}
\ltwobig{\btheta - \hat{\btheta}_{n, \psi, \kappa^*(\psi)}^{(2)}} \ge \eps/2.
\end{equation*}
This means $\bTheta_p^c(\eps) \subseteq \ball_p^c(\eps/2)$ as $n \to \infty, p/n\to \psi$, as desired. 

\paragraph{Proof of Step 3 (Eq.~\eqref{eqn:step-three-wball})}
We define for any $\delta > 0, \kappa > 0$ the gap function: 
(we remind the reader that $x_{\max}, x_{\min} > 0$ are constants independent of $n\in \N$ such that 
	$x_{\max} \ge \lambda_{\max}(\bLambda) \ge \lambda_{\min}(\bLambda) \ge x_{\min}$ where
	$\bLambda = \bLambda_n$ for all $n \in\N$)
\begin{equation}
\label{eqn:gap-function-indpn}
\gap_{\kappa}(\delta) = \min_{\substack{
	\max\left\{c_{1, 0}^2 + c_{2, 0}^2,~c_{1, 1}^2 + c_{2, 1}^2\right\} \le x_{\max}\\
	{c_{1, 1/2} = (c_{1, 0}+c_{1, 1})/2,~ c_3 \le (c_{2, 0}+c_{2, 1})/2} \\
	x_{\min}^{-1} \cdot \left(8 x_{\max}^{1/2} \left(\half\left(c_{2, 0} + c_{2, 1}\right) - c_3\right) + (c_{2, 0} - c_{2, 1})^2
	 +  (c_{1, 0} - c_{1, 1})^2\right) \ge \delta^2}} 
	\left\{ \half \Big(F_{\kappa}(c_{1, 0}, c_{2, 0}) + 
		F_{\kappa}(c_{1, 1}, c_{2, 1}) \Big) - F_{\kappa}(c_{1, 1/2}, c_3) \right\}.
\end{equation}
Our proof of Step 3 is based on the following two facts: 
\begin{enumerate}
\item $\eta(\psi; \eps) = \psi^{-1} \cdot \gap_{\kappa^*(\psi)}(\eps/2) > 0$ and 
\item  Eq.~\eqref{eqn:step-three-wball} holds for $\eta(\psi; \eps) = \psi^{-1} \cdot \gap_{\kappa^*(\psi)}(\eps/2)$. 
\end{enumerate}
Below we give their proofs. 

The proof of fact $(i)$ is simple---$\gap_{\kappa}(\delta) > 0$ for any $\delta > 0$ since $(c_1, c_2) \to F_{\kappa}(c_1, c_2)$ is 
continuous, strictly convex w.r.t $(c_1, c_2)$, and strictly increasing w.r.t $c_2$ for any given $c_1$, thanks to Lemma~\ref{lemma:F-convex-increasing}.
Thus, we have $\eta(\psi; \eps) = \psi^{-1} \cdot \gap_{\kappa}(\eps/2) > 0$. 

The proof of fact $(ii)$ is trickier. Introduce the function $f_{n, \psi, \kappa}$ for all $n, \psi, \kappa$:
\begin{equation*}
f_{n, \psi, \kappa}(\btheta) = \psi^{-1/2} \cdot F_{\kappa} \left(\langle \btheta, \bLambda^{1/2}\bw\rangle, 
	\ltwo{\proj_{\bw^{\perp}} \bLambda^{1/2} \btheta}\right) + \frac{1}{\sqrt{p}} \bg^T \proj_{\bw^\perp} \bLambda^{1/2}\btheta. 
\end{equation*}
The main technical tool for showing fact $(ii)$ is Lemma~\ref{lemma:strictly-convex-ind-n} below. To avoid interruption of the flow, we defer 
its proof into Section~\ref{sec:proof-lemma:strictly-convex-ind-n}.

\begin{lemma}
\label{lemma:strictly-convex-ind-n}
For any $\psi, \kappa > 0$, and any $\btheta_0, \btheta_1$ satisfying $\ltwo{\btheta_0} \le 1, \ltwo{\btheta_1} \le 1$,  
\begin{equation}
\label{eqn:strictly-convex-ind-n}
\half \left(f_{n, \psi, \kappa}(\btheta_0) + f_{n, \psi, \kappa}(\btheta_1)\right) - 
	f_{n, \psi, \kappa}\left(\frac{1}{2}(\btheta_0 + \btheta_1)\right) \ge \psi^{-1/2} \cdot \gap_{\kappa} \left(\ltwo{\btheta_0 - \btheta_1}\right).
\end{equation}
\end{lemma}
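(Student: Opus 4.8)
The plan is to reduce the inequality to a purely deterministic statement about the scalar function $F_\kappa$ together with elementary linear algebra for the quadratic form $\btheta\mapsto\|\bLambda^{1/2}\btheta\|_2^2$. Observe first that $f_{n,\psi,\kappa}$ is the sum of $\psi^{-1/2}F_\kappa\!\big(\langle\btheta,\bLambda^{1/2}\bw\rangle,\,\|\proj_{\bw^{\perp}}\bLambda^{1/2}\btheta\|_2\big)$ and the \emph{linear} functional $\btheta\mapsto\frac{1}{\sqrt{p}}\bg^{\sT}\proj_{\bw^{\perp}}\bLambda^{1/2}\btheta$; since any affine-linear functional has vanishing second difference, the quantity $\half\big(f_{n,\psi,\kappa}(\btheta_0)+f_{n,\psi,\kappa}(\btheta_1)\big)-f_{n,\psi,\kappa}\big(\half(\btheta_0+\btheta_1)\big)$ depends only on the $F_\kappa$ part, and the $\bg$--dependence drops out entirely. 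Thus the statement will hold for every realization of $\bg$ and $\bw$; no probabilistic input is used.

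I would set, for $j\in\{0,1\}$, $c_{1,j}=\langle\btheta_j,\bLambda^{1/2}\bw\rangle$ and $\mathbf{d}_j:=\proj_{\bw^{\perp}}\bLambda^{1/2}\btheta_j$, $c_{2,j}=\|\mathbf{d}_j\|_2$, and likewise $c_{1,1/2},c_{2,1/2}$ for $\btheta_{1/2}:=\half(\btheta_0+\btheta_1)$. Because $\bLambda^{1/2}$ is linear and $\bw$ has unit norm, $c_{1,1/2}=\half(c_{1,0}+c_{1,1})$ and $\mathbf{d}_{1/2}=\half(\mathbf{d}_0+\mathbf{d}_1)$, hence $c_{2,1/2}\le\half(c_{2,0}+c_{2,1})$ by the triangle inequality. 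With this notation the left-hand side of the lemma equals $\psi^{-1/2}\big[\half(F_\kappa(c_{1,0},c_{2,0})+F_\kappa(c_{1,1},c_{2,1}))-F_\kappa(c_{1,1/2},c_{2,1/2})\big]$, and since $F_\kappa(c_{1,1/2},c_{2,1/2})=F_\kappa(c_{1,1/2},c_3)$ with $c_3:=c_{2,1/2}$, it suffices to show that the tuple $(c_{1,0},c_{2,0},c_{1,1},c_{2,1},c_3)$ is admissible in the minimization defining $\gap_\kappa(\delta)$ for $\delta=\|\btheta_0-\btheta_1\|_2$; the bound then follows because $\gap_\kappa(\delta)$ is the infimum over all admissible tuples.

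Checking admissibility is the only place where a computation is needed. The constraint $\max\{c_{1,0}^2+c_{2,0}^2,\,c_{1,1}^2+c_{2,1}^2\}\le x_{\max}$ follows from the Pythagorean identity $c_{1,j}^2+c_{2,j}^2=\|\bLambda^{1/2}\btheta_j\|_2^2=\btheta_j^{\sT}\bLambda\btheta_j\le x_{\max}\|\btheta_j\|_2^2\le x_{\max}$ (using Assumption~\ref{assumption:Lambdas}), which also gives $c_{2,j}\le x_{\max}^{1/2}$, and $c_3\le\half(c_{2,0}+c_{2,1})$ is the triangle inequality noted above. For the remaining constraint,
\[
\|\btheta_0-\btheta_1\|_2^2\ \le\ x_{\min}^{-1}\Big(8x_{\max}^{1/2}\big(\half(c_{2,0}+c_{2,1})-c_3\big)+(c_{2,0}-c_{2,1})^2+(c_{1,0}-c_{1,1})^2\Big),
\]
I would use $\|\btheta_0-\btheta_1\|_2^2\le x_{\min}^{-1}\|\bLambda^{1/2}(\btheta_0-\btheta_1)\|_2^2$ together with the orthogonal decomposition $\bLambda^{1/2}(\btheta_0-\btheta_1)=(c_{1,0}-c_{1,1})\bw+(\mathbf{d}_0-\mathbf{d}_1)$, giving $\|\bLambda^{1/2}(\btheta_0-\btheta_1)\|_2^2=(c_{1,0}-c_{1,1})^2+\|\mathbf{d}_0-\mathbf{d}_1\|_2^2$, and the parallelogram law $\|\mathbf{d}_0-\mathbf{d}_1\|_2^2=2c_{2,0}^2+2c_{2,1}^2-4c_{2,1/2}^2=(c_{2,0}-c_{2,1})^2+(c_{2,0}+c_{2,1}-2c_{2,1/2})(c_{2,0}+c_{2,1}+2c_{2,1/2})$. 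Since $0\le c_{2,1/2}\le\half(c_{2,0}+c_{2,1})$ the first factor is nonnegative, and since $c_{2,0},c_{2,1}\le x_{\max}^{1/2}$ and $2c_{2,1/2}\le c_{2,0}+c_{2,1}\le 2x_{\max}^{1/2}$ the second factor is at most $4x_{\max}^{1/2}$; hence $\|\mathbf{d}_0-\mathbf{d}_1\|_2^2\le(c_{2,0}-c_{2,1})^2+8x_{\max}^{1/2}\big(\half(c_{2,0}+c_{2,1})-c_{2,1/2}\big)$, which is exactly the needed bound with $c_3=c_{2,1/2}$.

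The \textbf{main obstacle} --- such as it is --- is purely the bookkeeping in the last step: arranging the constants $8$, $x_{\max}^{1/2}$, $x_{\min}^{-1}$ in the definition of $\gap_\kappa$ so that admissibility holds exactly. There is no analytic difficulty; convexity, monotonicity and continuity of $F_\kappa$ (Lemma~\ref{lemma:F-convex-increasing}) enter only through the fact that $\gap_\kappa(\delta)$ is a well-defined (finite, attained) infimum over a compact set, and --- for the companion fact used downstream in Proposition~\ref{proposition:ECD-maxmargin} --- through the strict positivity $\gap_\kappa(\delta)>0$ for $\delta>0$, which is immediate from strict convexity plus strict monotonicity of $F_\kappa$.
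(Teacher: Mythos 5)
Your proof is correct and follows essentially the same route as the paper: the linear $\bg$-term cancels in the second difference, the tuple $(c_{1,0},c_{2,0},c_{1,1},c_{2,1},c_3)$ with $c_3=c_{2,1/2}$ is exhibited as admissible for $\gap_{\kappa}(\|\btheta_0-\btheta_1\|_2)$, and the third constraint is verified by reducing to $\half x_{\min}\|\btheta_0-\btheta_1\|_2^2\le\half\|\bLambda^{1/2}(\btheta_0-\btheta_1)\|_2^2$. The only cosmetic difference is in the algebra: you invoke the parallelogram identity $\|\mathbf{d}_0-\mathbf{d}_1\|_2^2=2c_{2,0}^2+2c_{2,1}^2-4c_{2,1/2}^2$ and factor it as $(c_{2,0}-c_{2,1})^2+(c_{2,0}+c_{2,1}-2c_{2,1/2})(c_{2,0}+c_{2,1}+2c_{2,1/2})$, whereas the paper writes $\half(c_{2,0}+c_{2,1})-c_3$ as $\bigl(c_{2,0}c_{2,1}-\langle\mathbf{d}_0,\mathbf{d}_1\rangle\bigr)/(c_{2,0}+c_{2,1}+2c_3)$ and then bounds the denominator; both reduce to the same inequality once the factor $4x_{\max}^{1/2}$ is introduced.
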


Now we show how Lemma~\ref{lemma:strictly-convex-ind-n} implies that Eq.~\eqref{eqn:step-three-wball} holds for 
$\eta(\psi; \eps) = \psi^{-1} \cdot \gap_{\kappa^*(\psi)}(\eps/2)$. To see this, we first prove 
\begin{equation}
\label{eqn:almost-sure-converge-xi-2-nu-2-pre}
\xi^{(2)}_{n,\psi, \kappa^*(\psi)}\left(\ball_p^c(\eps)\right) \ge 
	\xi_{n, \psi, \kappa^*(\psi)}^{(2)} +2\eta(\psi; \eps).
\end{equation}

Pick $\btheta \in \ball_p^c(\eps)$. By substituting $\kappa = \kappa\opt(\psi)$, $\btheta_0 = \hat{\btheta}_{n, \psi, \kappa^*(\psi)}^{(2)}$, 
$\btheta_1 = \btheta$ into Eq.~\eqref{eqn:strictly-convex-ind-n}, we obtain
\begin{equation}
\label{eqn:immediate-from-strict-convex-pre}
\half \left(f_{n, \psi, \kappa^*(\psi)}(\hat{\btheta}_{n, \psi, \kappa^*(\psi)}^{(2)}) + f_{n, \psi, \kappa^*(\psi)}(\btheta)\right) - 
	f_{n, \psi, \kappa^*(\psi)}\left(\hat{\btheta}_{n, \psi, 1/2, *}^{(2)}\right) \ge \psi^{-1/2} \cdot \gap_{\kappa^*(\psi)} \left(\ltwobig{\hat{\btheta}_{n, \psi, \kappa^*(\psi)}^{(2)} - \btheta}\right)
\end{equation}
for the $\hat{\btheta}_{n, \psi, 1/2, *}^{(2)} = \half \left(\hat{\btheta}_{n, \psi, \kappa^*(\psi)}^{(2)} + \btheta\right)$. Notice that 
\begin{itemize}
\item $\gap_{\kappa^*(\psi)} \left(\ltwobig{\hat{\btheta}_{n, \psi, \kappa^*(\psi)}^{(2)} - \btheta}\right) \ge \gap_{\kappa^*(\psi)}(\eps)$ for any $\btheta \in \ball_p^c(\eps)$. 
\item $f_{n, \psi, \kappa^*(\psi)}\left(\hat{\btheta}_{n, \psi, 1/2, *}^{(2)}\right) \ge f_{n, \psi, \kappa^*(\psi)}(\hat{\btheta}_{n, \psi, \kappa^*(\psi)}^{(2)})
	 = \xi_{n, \psi, \kappa^*(\psi)}^{(2)}$ since $\hat{\btheta}_{n, \psi, \kappa^*(\psi)}^{(2)}$ minimizes $f_{n, \psi, \kappa^*(\psi)}(\btheta)$.
\end{itemize}
Thus Eq.~\eqref{eqn:immediate-from-strict-convex-pre} implies for any $\btheta \in \ball_p^c(\eps)$
\begin{equation}
\label{eqn:immediate-from-strict-convex-pre-two}
f_{n, \psi, \kappa^*(\psi)}(\btheta) \ge \xi_{n, \psi, \kappa^*(\psi)}^{(2)} + 2\psi^{-1} \cdot \gap_{\kappa^*(\psi)}(\eps)
	= \xi_{n, \psi, \kappa^*(\psi)}^{(2)} + 2\eta(\psi; \eps).
\end{equation}
Now we use the crucial observation 
\begin{equation*}
\xi^{(2)}_{n,\psi, \kappa^*(\psi)}\left(\ball_p^c(\eps)\right) = \min_{\btheta \in \ball_p^c(\eps)} f_{n, \psi, \kappa^*(\psi)}(\btheta).
\end{equation*} 
Thus, it suffices to prove for any $\btheta \in \ball_p^c(\eps)$, 
\begin{equation}
f_{n, \psi, \kappa^*(\psi)}(\btheta) \ge \xi_{n, \psi, \kappa^*(\psi)}^{(2)} + 2\eta(\psi; \eps).
\end{equation}
We see that Eq.~\eqref{eqn:almost-sure-converge-xi-2-nu-2-pre} follows after taking infimum over $\btheta \in \ball_p^c(\eps)$ 
on both sides of Eq.~\eqref{eqn:immediate-from-strict-convex-pre-two}. 

Following the lower bound in Eq.~\eqref{eqn:almost-sure-converge-xi-2-nu-2-pre}, and using the fact that almost surely (see Proposition~\ref{proposition:xi-n-2-xi-infty})
\begin{equation}
\label{eqn:almost-sure-converge-xi-2-nu-2}
\lim_{n\to \infty, p/n\to \psi} \xi^{(2)}_{n, \psi, \kappa^*(\psi)} = \xi^{(2)}_{n, \psi, \kappa\opt(\psi)} = T(\psi, \kappa\opt(\psi)) = 0,
\end{equation}
one can easily see that Eq.~\eqref{eqn:step-three-wball} holds for $\eta(\psi, \eps)> 0$, i.e.,  
\begin{equation*}
\lim_{n\to \infty, p/n \to \psi} \P \left(\xi^{(2)}_{n,\psi,\kappa\opt(\psi)}\left(\ball_{p}^c(\eps)\right) \ge \eta(\psi; \eps)\right) = 1.
\end{equation*}
This concludes the proof. 

\subsection{Proof of Lemma~\ref{lemma:strictly-convex-ind-n}}
\label{sec:proof-lemma:strictly-convex-ind-n}
Let $\btheta_0 \in \R^p$, $\btheta_1 \in \R^p$ be such that $\ltwo{\btheta_0} \le 1, \ltwo{\btheta_1} \le 1$.
Denote $\btheta_{1/2} = \half \left(\btheta_0 + \btheta_1\right)$. Set
\begin{equation*}
\begin{split}
c_{1, 0} = \langle \bLambda^{1/2}\bw, \btheta_0\rangle,~~~~~~~~~~~~~~~~~~~~~~~~~~~&~ 
c_{1, 1} = \langle \bLambda^{1/2}\bw, \btheta_1\rangle,~\\
c_{2, 0} = \ltwobig{\proj_{\bw^{\perp}}\bLambda^{1/2} \btheta_0},~~~~~~~~~~~~~~~~~~~~~~&~
c_{2, 1} = \ltwobig{\proj_{\bw^{\perp}}\bLambda^{1/2} \btheta_1}, \\
c_{1, 1/2} = \langle \bLambda^{1/2}\bw, \btheta_{1/2}\rangle = \half(c_{1, 0} + c_{2, 0}),~&~
c_3 =  \ltwobig{\proj_{\bw^{\perp}}\bLambda^{1/2} \btheta_{1/2}}.
\end{split}
\end{equation*} 
With these definitions, we have 
\begin{equation*}
\begin{split}
&\half \left(f_{n, \psi, \kappa}(\btheta_0) + f_{n, \psi, \kappa}(\btheta_1)\right) - 
	f_{n, \psi, \kappa}\left(\frac{1}{2}(\btheta_0 + \btheta_1)\right) \\
&= \psi^{-1} \cdot \left(\half F_{\kappa}(c_{1, 0}, c_{2, 0}) + F_{\kappa}(c_{1, 1}, c_{2, 1}) - 
	F_{\kappa}(c_{1, 1/2}, c_{3}) \right)
\end{split}
\end{equation*}
Recall the definition of $\gap$ function at Eq.~\eqref{eqn:gap-function-indpn}. The desired Lemma 
\ref{lemma:strictly-convex-ind-n} follows if we can show: 
\begin{enumerate}[(a)]
\item $c_3 \le \half \left(c_{1, 2} + c_{2, 2}\right)$. 
\item $\max \left\{c_{1, 1}^2 + c_{1, 2}^2\right\} \le x_{\max}$. 
	$c_{2, 1}^2 + c_{2, 2}^2 = \ltwobig{\bLambda^{1/2} \btheta_0}^2 \le x_{\max}$.
\item $x_{\min}^{-1} \cdot \left(8 x_{\max}^{1/2} \left(\half\left(c_{2, 0} + c_{2, 1}\right) - c_3\right) + (c_{2, 0} - c_{2, 1})^2
	 +  (c_{1, 0} - c_{1, 1})^2\right) \ge \ltwo{\btheta_0 - \btheta_1}^2$.
\end{enumerate}
We show the above three elementary bounds as follows (which implies the desired Lemma~\ref{lemma:strictly-convex-ind-n} as discussed): 
\begin{enumerate}
\item Point $(a)$ is true due to triangle inequality. 
\item Point $(b)$ is true because 
	\begin{equation*}
	\begin{split}
		c_{1, 1}^2 + c_{1, 2}^2 &= \ltwobig{\bLambda^{1/2} \btheta_1}^2 \le \opnorm{\bLambda} \ltwo{\btheta_1} \le x_{\max} \\
		c_{2, 1}^2 + c_{2, 2}^2 &= \ltwobig{\bLambda^{1/2} \btheta_2}^2 \le \opnorm{\bLambda} \ltwo{\btheta_2} \le x_{\max}.
	\end{split}
	\end{equation*}
\item Point $(c)$ is a little bit involved. Let us denote 
\begin{equation*}
\begin{split}
\btheta_{0, w} = \langle\bLambda^{1/2}\bw, \btheta_0\rangle,  ~~\btheta_{0, w^\perp} = \proj_{\bw^{\perp}}\bLambda^{1/2} \btheta_0, \\
\btheta_{1, w} = \langle\bLambda^{1/2}\bw, \btheta_1\rangle,  ~~\btheta_{1, w^\perp} = \proj_{\bw^{\perp}}\bLambda^{1/2} \btheta_1.
\end{split}
\end{equation*}
By elementary calculation, we first have for any vector $\bz_0, \bz_1 \in \R^p$, 
\begin{equation*}
\half \left(\ltwo{\bz_0} + \ltwo{\bz_1}\right) - \ltwobig{\half (\bz_0 + \bz_1)} = 
	\frac{\ltwo{\bz_0}\ltwo{\bz_1} - \langle \bz_0, \bz_1\rangle}{\ltwo{\bz_0} + \ltwo{\bz_1} + \ltwobig{(\bz_0 + \bz_1)}}
\end{equation*}
Substitute $\bz_0= \btheta_{0, w^\perp}$ and $\bz_1= \btheta_{1, w^\perp}$ into above, we get that 
\begin{equation*}
\half \left(c_{2, 0} + c_{2, 1}\right) - c_3 = \frac{c_{2, 0} c_{2, 1} - \langle \btheta_{0, w^\perp}, \btheta_{1, w^\perp} \rangle}{c_{2, 0} + c_{2, 1} + 2c_3}
\end{equation*}
Now that $0 \le c_{2, 0}, c_{2, 1}, c_3 \le x_{\max}^{1/2}$ by Point $(a)$ and $(b)$. This proves that 
\begin{equation}
\label{eqn:elem-bound-one}
4 x_{\max}^{1/2}\left( \half \left(c_{2, 0} + c_{2, 1}\right) - c_3\right)  \ge c_{2, 0} c_{2, 1} - \langle \btheta_{0, w^\perp}, \btheta_{1, w^\perp} \rangle.
\end{equation}
Note further the identities: 
\begin{equation}
\label{eqn:elem-bound-two}
\begin{split}
\half (c_{2, 0} - c_{2, 1})^2 &=  \half \ltwo{\btheta_{0, w^\perp}}^2 + \half \ltwo{\btheta_{1, w^\perp}}^2 - c_{2, 0}c_{2, 1} \\
\half (c_{1, 0} - c_{1, 1})^2 &= \half \left(\btheta_{0, w} - \btheta_{1, w}\right)^2. 
\end{split}
\end{equation}
By summing up all the equations in Eq.~\eqref{eqn:elem-bound-one} and Eq.~\eqref{eqn:elem-bound-two}, we get that 
\begin{equation*}
\begin{split}
&4 x_{\max}^{1/2}\left( \half \left(c_{2, 0} + c_{2, 1}\right) - c_3\right) + \half (c_{2, 0} - c_{2, 1})^2
	 + \half (c_{1, 0} - c_{1, 1})^2  \\
&= \half \left(\btheta_{0, w} - \btheta_{1, w}\right)^2 + \half \ltwo{\btheta_{0, w^\perp} - \btheta_{1, w^\perp}}^2
= \half \ltwo{\bLambda^{1/2}(\btheta_0 - \btheta_1)}^2 \ge \half x_{\min} \ltwo{\btheta_0 - \btheta_1}^2. 
\end{split}
\end{equation*}
This proves point $(c)$. 

\end{enumerate}

\section{Data Separability} 
\label{sec:data-separability}
This section proves Lemma~\ref{lemma:proportional-asymptotics-of-Xi}. The proof contains two steps. 

\subsection{A comparison result} 
In light of the Gaussian comparison inequality (Theorem~\ref{thm:Gordon-improve}), we have 
\begin{equation*}
	\P(\Xi_{n, \psi} \le t) \le 2\P(\Xi_{n, \psi}^{(1)} \le t)~~\text{for all $t \in \R$}.
\end{equation*}
Above, $\Xi_{n, \psi}^{(1)}$ is defined by 
(below $\bg \in \normal(0, I_p), \bh \in \normal(0, I_n), \bu \in \normal(0, I_n)$ are independent vectors) 
\begin{equation*}
	\Xi_{n, \psi}^{(1)} = \min_{\btheta: \norm{\btheta}_{\bLambda} = 1}
 	\max_{\ltwo{\blambda} \le 1, \blambda \odot \by \geq 0}~
		\frac{1}{\sqrt{p}}  \left(\blambda^{\sT} (- \langle \bLambda^{1/2} \bw, \btheta \rangle \bu - 
			\ltwobig{\proj_{\bw^{\perp}} \bLambda^{1/2} \btheta}\bh) + \ltwo{\blambda} \bg^{\sT}\proj_{\bw^{\perp}}\bLambda^{1/2} \btheta\right).
\end{equation*}  
Below is a chain of inequalities on $\Xi_{n, \psi}^{(1)}$ that holds almost surely: 
\begin{equation*}
\begin{split}
	\Xi_{n, \psi}^{(1)}  &=
		\min_{\btheta: \norm{\btheta}_{\bLambda} = 1}
		\frac{1}{\sqrt{p}} \left(\ltwo{\left(- \langle \bLambda^{1/2} \bw, \btheta \rangle (\by \odot \bu) - 
			\ltwobig{\proj_{\bw^{\perp}} \bLambda^{1/2} \btheta} (\by \odot  \bh)\right)_{+}}
		+ \bg^{\sT}\proj_{\bw^{\perp}}\bLambda^{1/2} \btheta\right)_+ \\
		&\ge
		\min_{\btheta: \norm{\btheta}_{\bLambda} = 1}
		\frac{1}{\sqrt{p}} \left(\ltwo{\left(- \langle \bLambda^{1/2} \bw, \btheta \rangle (\by \odot \bu) - 
			\ltwobig{\proj_{\bw^{\perp}} \bLambda^{1/2} \btheta} (\by \odot  \bh)\right)_{+}}
		-\ltwo{\proj_{\bw^{\perp}}\bLambda^{1/2} \btheta}\ltwo{\bg}\right)_+ \\
		&\ge 
		\min_{c_1, c_2: c_1^2 + c_2^2 = 1}
			\frac{1}{\sqrt{p}} \left(\ltwo{ \left(-c_1 (\by \odot \bu) -c_2 (\by \odot  \bh)\right)_+} -c_2 \ltwo{\bg}\right)_+.
\end{split} 
\end{equation*}
Motivated by the above chain of inequality, we now introduce $\Xi_{n, \psi}^{(2)}$ to be 
\begin{equation*}
	\Xi_{n, \psi}^{(2)} =\frac{1}{\sqrt{p}} \left(\ltwo{ \left(-c_1 (\by \odot \bu) -
			c_2 (\by \odot  \bh)\right)_+} -c_2 \ltwo{\bg}\right)_+.
\end{equation*} 
Then all the previous results yield that 
\begin{equation*}
	\P(\Xi_{n, \psi} \le t) \le 2 \P(\Xi_{n, \psi}^{(2)} \le t)~~\text{for all $t \in \R$}.
\end{equation*}

\subsection{Typical value is positive} 
Below we show that, assuming $\psi < \psi^*(0)$, then the below convergence in probability holds
\begin{equation*}
	\Xi_{n, \psi}^{(2)} \pto \Xi^*(\psi)
\end{equation*}
for some $\Xi^*(\psi) > 0$. This allows us to conclude the desired Lemma~\ref{lemma:proportional-asymptotics-of-Xi}.

Indeed, using concentration results, (e.g., Lemma~\ref{lemma:xi-1-xi-2}), it is straightforward to see that, 
\begin{equation*}
	\Xi^{(2)} \pto \Xi^*(\psi)~~\text{where}~~\Xi^*(\psi)  =  \min_{(c_1, c_2): c_1^2 + c_2^2 = 1}
			\left(\frac{1}{\sqrt{\psi}} \cdot F_0(c_1, c_2) - c_2\right)_{+}.
\end{equation*}
To show that $\Xi^*(\psi) > 0$, it suffices to prove that
\begin{equation*}
	\frac{1}{\sqrt{\psi}} \cdot F_0(c_1, c_2) - c_2 > 0~~\text{holds for all $(c_1, c_2): c_1^2 + c_2^2 = 1$.}
\end{equation*} 
This follows from the following facts: (i) the mapping $(c_1, c_2) \mapsto F_0(c_1, c_2)$ is homogeneous (ii) 
the strict inequality $\psi < \psi^*(0) = \min_{c} F_0^2(c, 1)$ and (iii) $F_0(c_1, c_2) > 0$ on 
the set $(c_1, c_2): c_1^2 + c_2^2 = 1$.

\section{Proof of Theorem~\ref{thm:benign}}
\label{sec:proof:benign}
This section proves Theorem~\ref{thm:benign}, assuming Theorem~\ref{theorem:main} holds. 
Recall that Theorem~\ref{theorem:main}-$(e)$ shows that, under
Assumptions \ref{assumption:Lambdas}-\ref{assumption:non-degenerate-f}, the limit of 
$\Pred_n(\by,\bX)$ is $\Pred\opt(\mu, \psi)$. 

As a result, all that remains is to prove the following Proposition~\ref{prop:benign}.

\begin{proposition}\label{prop:benign}
    Assume Assumptions \ref{assumption:Lambdas}-\ref{assumption:non-degenerate-f}. Additionally, assume that the link function
$f$ is almost everywhere differentiable, monotonically increasing with $f(0) = 1/2$ and $f^\prime(0)>0$.
\begin{itemize}
\item (Lower bound) There exists a constant $c>0$ depending only on $\rho$, $\lam$ and the link function $f$, where $\rho$ (resp. $\lam$) are defined in \eqref{eq:def:rho} (resp. \eqref{eq:lambda:max}), such that 
	\begin{equation*}
		\error^\star(\mu,\psi)-\Bayes \geq c \cdot \frac{1}{\psi}\,.
	\end{equation*}
	Thus, in the proportional regime, the $\error^\star(\mu,\psi)$ cannot equal the Bayes error. Rather, $\error^\star(\mu,\psi)$ can only achieve near Bayes error when $\psi$ is large enough.
\item (Upper bound) Let $(X, W) \sim \mu$ where $\mu$ is defined in Assumption \ref{assumption:converge}. There exist constants $C > 0$ depending only on $\rho$, $\lam$ and the link function $f$ such that the following holds: for any $\lambda>0$
we have the error bound
\begin{equation*}
\error^\star(\mu,\psi)-\Bayes\leq C\cdot(\mathcal{B}+\mathcal{V})\,
\end{equation*}
where $\mathcal{B}\equiv \mathcal{B}(\la,\mu, \psi)$ and $\mathcal{V} \equiv \mathcal{V}(\la,\mu, \psi)$ are defined by
\begin{equation*}
\begin{split}
    &\mathcal{B}:= \Big(\psi\cdot \E\big[X\ind(X< \lambda)\big]\Big)^2\cdot\E\bigg[\bigg(\frac{W}{X}\bigg)^2\ind(X\geq \la)\bigg]+\E\Big[W^2\ind(X<\lambda)\Big]\, ,\\
    &\mathcal{V}:= \psi\cdot \E[ \ind(X \ge \lambda)]+\frac{1}{\psi}\cdot\frac{\E\big[X^2\ind(X<\lambda)\big]}{\big(\E\big[X\ind(X< \lambda)\big]\big)^2}\, .
\end{split}
\end{equation*}
We take the convention that the final fraction above is defined to be $1$ if $\E\big[X\ind(X< \lambda)\big]=0$.
\end{itemize}
\end{proposition}
Before diving into the proof of Proposition \ref{prop:benign}, we briefly recall a precise form of $\Pred\opt(\mu, \psi)$. Consider the 
following optimization problem indexed by $(\psi, \kappa)$: 
\begin{equation*}
\begin{split}
{\rm OPT}(\psi, \kappa):~~~~	\minimize_{h} ~~~~~ &
		\psi^{-1/2} \cdot F_{\kappa} \left( \langle X^{1/2} h, W \rangle_{\Q}, 
			\normbig{\proj_{W^{\perp}}(X^{1/2} h)}_{\Q} \right)  + \langle X^{1/2} \proj_{W^{\perp}}(G), h\rangle_{\Q} \\
	\subjectto~~~~~ &  \norm{h}_{\Q} \le 1.
\end{split}
\end{equation*}
The minimum value of the optimization is exactly $T(\psi, \kappa)$ for $\psi > \psi^{\low}(\kappa)$ (Appendix~\ref{sec:all-property-optimization}). Let 
\begin{equation*}
	\kappa^*(\psi) = \inf\{\kappa\ge 0: \psi^{\low}(\kappa) < \psi, T(\psi, \kappa) = 0\}.
\end{equation*}
Note that $\kappa^*(\psi)$ is well-defined (Proposition~\ref{proposition:system-of-Eq-T}). Furthermore,
if we denote $h_{\psi, \kappa\opt(\psi)}^*$ to be the minimizer of ${\rm OPT}(\psi, \kappa\opt(\psi))$,
and further introduce the quantities 
\begin{equation*}
	c_1\opt(\psi) =  \langle X^{1/2} h_{\psi, \kappa\opt(\psi)}^*, W \rangle_{\Q}
		~~\text{and}~~
	c_2\opt(\psi) = \normbig{\proj_{W^{\perp}}(X^{1/2} h_{\psi, \kappa\opt(\psi)}^*)}_{\Q}
\end{equation*} 
then Theorem~\ref{theorem:main} gives the explicit formula for the prediction error $\Pred\opt(\mu, \psi)$: 
\begin{equation*}
	\Pred\opt(\mu, \psi) = \Gamma \left(c_2\opt(\psi)/c_1\opt(\psi)\right)~~~~~~\text{where}~
		\Gamma(c) \defeq \P(\sign(c) YG + cZ \le 0).
\end{equation*}
Below we shall be interested to analyze the gap between the above prediction error and the 
Bayes error. Since the Bayes classifier $x\mapsto \one_{\langle \theta^*, x\rangle > 0}$ 
has test error: $\Bayes(\mu, \psi) \equiv \Gamma(0^+)$, this means that the gap is
\begin{equation*}
	{\rm GAP} = \Pred\opt(\mu, \psi) - \Bayes(\mu, \psi) = 
		\Gamma \left(c_2\opt(\psi)/c_1\opt(\psi)\right)- \Gamma(0^+).
\end{equation*}
To understand GAP, we need to understand the behavior of $\Gamma$ near $0^+$. This 
is studied in Lemma~\ref{lem:error:quadratic}.

\begin{lemma}\label{lem:error:quadratic}
Assume $f$ is differentiable a.e., and that $f$ is increasing with $f^\prime(0)>0$. Then 
(i) $\Gamma(-x) \ge 1/2 > \Gamma(x)$ for any $x > 0$
(ii) $\Gamma$ is monotonically increasing on $\reals_{> 0}$ with 
$\Gamma^\prime(0^+)=0$ and $\Gamma^{\prime\prime}(0^+)>0$. 
\end{lemma}
\begin{proof}
Below we evaluate $\Gamma'(x)$ and $\Gamma''(x)$ for $x \in \reals_{> 0}$. 
Let $g(x):=\P(YG\leq -x)$ so that $\Gamma(x) = \E[g(xZ)]$. 
Stein's identity yields $\Gamma^\prime(x)=\E[Z g^\prime(xZ)]=x \E g^{\prime\prime}(xZ)$.
As $\P(Y=1|G) = f(\rho Z)$, $g(x) = \E[(1-f(\rho Z) + f(-\rho Z)) \mathbf{1}(Z \ge x)]$.
Thus, $g^{\prime\prime}(x)=\rho(f^\prime(\rho x)+f^\prime(-\rho x))\phi(x)-xg^\prime(x)$
where $\phi(z) = \exp(-z^2/2)/\sqrt{2\pi}$ is the density of $\normal(0, 1)$. This leads to 
the expression for $x \in \R_{> 0}$
\begin{equation*}
    \Gamma^\prime(x)=\frac{\rho x}{1+x^2}\E[(f^\prime(\rho xZ)+f^\prime(-\rho xZ))\phi(xZ)].
\end{equation*}
Hence (i) $\Gamma'(0^+) = 0$, (ii) $\Gamma'(x) > 0$ for $x > 0$, (iii) $\Gamma(x) \le \Gamma(+\infty) = 1/2$ 
for $x > 0$ and (iv) $\Gamma^{\prime \prime}(0^+) =2\rho f^\prime(0)\phi(0)>0$. Similarly, 
one can show $\Gamma(-x) \ge \Gamma(-\infty) = 1/2$ for $x > 0$. 
\end{proof}

Noticeably, the function $\Gamma$ is completely determined by the size of $\rho$ and the link 
function $f$ (assumed to be fixed). As a consequence of Lemma~\ref{lem:error:quadratic}, 
the gap is completely characterized by 
\begin{equation}
	{{\rm GAP}} = \Pred\opt(\mu, \psi) - \Bayes(\mu, \psi) = 
		\Gamma \left(c_2\opt/c_1\opt\right) - \Gamma(0^+) \in 
			\left(\left(\left|c_2\opt/c_1\opt\right| + \one_{c_1\opt \le 0} \right) \wedge 1\right)^2 
				\cdot [a, b].
\end{equation}
where $a, b$ are constants depending only on $\rho$ and $f$. 
%
%
%
%
This reaches the below conclusion. 
\begin{equation*}
\text{
Understanding the surrogate gap ${\rm GAP}$ amounts to understanding the ratio 
$|c_2\opt/c_1\opt| + \one_{c_1\opt \le 0}$.
}
\end{equation*}
This leads to our strategy: in the subsequent sections, we shall derive upper and lower bounds 
on the GAP by deriving upper and lower bounds 
on the ratio $|c_2\opt/c_1\opt| + \one_{c_1\opt \le 0}$.

 

\subsection{Lower bound in Proposition~\ref{prop:benign}}
\begin{proposition}\label{lem:lower:bound}
There exists $c_{\rho, f} > 0$ depending on $\rho, f$ such that 
\begin{equation*}
	|c_2\opt/ c_1\opt| \ge c_{\rho, f} \cdot \psi^{-1/2}.
\end{equation*} 
\end{proposition}
\begin{proof}
As $T(\psi, \kappa\opt) = 0$, $\psi^{-1/2} F_{\kappa\opt}(c_1\opt, c_2\opt) + \langle X^{1/2} \proj_{W^{\perp}}(G), h\opt \rangle_{\Q} = 0$. By Cauchy Schwartz inequality, 
\begin{equation*}
	F_{\kappa\opt}(c_1\opt, c_2\opt)
			\le \psi^{1/2} \cdot \norm{G}_{\Q} \cdot \normbig{ \proj_{W^{\perp}}(X^{1/2}h\opt)}_{\Q}
			= \psi^{1/2} \cdot c_2\opt.
\end{equation*}
Since $(\kappa,c_1,c_2)\to F_{\kappa}(c_1,c_2)$ is homogenous---the LHS can be lower bounded by 
\begin{equation}\label{eq:lem:apriori:tech:2}
F_{\kappa\opt}(c_1\opt, c_2\opt)
	\geq (|\kappa\opt|^2+|c_1\opt|^2 + |c_2\opt|^2)^{1/2} \cdot \inf_{\substack{x^2+y^2+z^2=1\\ x,z\geq 0}} F_{x}(y,z).
\end{equation}
The result now follows from algebraic manipulation. 

\end{proof}

\subsection{Upper bound in Proposition~\ref{prop:benign}}
The main goal of the section is to establish the following bound: for some
$C_{\rho, f} > 0$ depending only on $\rho, f$
\begin{equation}
\label{eq:prop:partial:two:F:upper:1}
 (\left|c_2\opt/c_1\opt\right| + \one_{c_1\opt \le 0}) \wedge 1 \le C \cdot
	\left(\E_{\Q}\left[ \left(\psi +\las^2 \frac{W^2}{X^2}\right) \mathbf{1}_{X > \lambda} + 
		\left(\frac{\psi X^2}{\las^2} + W^2\right) \mathbf{1}_{X \le \lambda}\right]
		\right)^{1/2}.
\end{equation}
Above, $\las := \psi \cdot \E_{\Q}[\mathbf{1}_{X \le \lambda}]$ is a notation 
shorthand for a constant repetitively appeared in the proof.

\paragraph{A Reduction Argument} 
Let $c_{\rho, f} > 0$ denote a constant---which we shall determine later in the proof---that depends only on $\rho, f$. 
Throughout the proof, we can W.L.O.G. assume that the following holds: 
\begin{equation}
\label{eqn:reduction-assumption}
	\text{Assumption (i)}:
		 \E_{\Q} \left[\psi \mathbf{1}_{X > \lambda} +  \frac{\psi X^2}{ \las^2} \mathbf{1}_{X \le \lambda}\right]
		 	\le c_{\rho, f}
	~~~\text{and}~~~\text{Assumption (ii)}: \las \le c_{\rho, f}. 
\end{equation}
Indeed, since $\E_{\Q}[W^2] = 1$, and $\supp(X) \subseteq [0, M]$, 
violation of either Assumption (i) or (ii) will lead to 
\begin{equation*}
\E_{\Q}\left[ \left(\psi +\las^2 \frac{W^2}{X^2}\right) \mathbf{1}_{X > \lambda} + 
		\left(\frac{\psi X^2}{\las^2} + W^2\right) \mathbf{1}_{X \le \lambda}\right]  \ge c_{\rho, f}'
\end{equation*}
for some constant $c_{\rho, f}' > 0$ depending only on $\rho$, so that we can choose $C_{\rho, f} > 0$ sufficiently 
large such that equation~\eqref{eq:prop:partial:two:F:upper:1} holds. 
 
As a consequence, the argument simply suggests the following reduction: throughout the proof, we can 
W.L.O.G. assume that the Assumptions in equation~\eqref{eqn:reduction-assumption} hold
for some $c_{\rho, f} > 0$ depending only on $\rho$, $f$.


Hereafter, with abuse of notation, we abbreviate the arguments for the functions $F$, 
$\partial_{\kappa}F$, 
$\partial_1 F$, or $\partial_2 F$---which are certainly evaluating at 
the parameters $(\kappa\opt, c_1\opt, c_2\opt)$---unless 
stated otherwise.
\paragraph{Notation\& Convention} 
For simplicity, we omit the dependence of many quantities of $\psi, \mu$ (since $\psi$ 
and $\mu$ are both treated as fixed quantities throughout the proof). For instance, we write 
$\kappa\opt = \kappa\opt(\psi)$, $c_1\opt = c_1\opt(\psi)$, $c_2\opt = c_2\opt(\psi)$. We also use the simplified notation $h\opt = h\opt_{\psi, \kappa\opt(\psi)}$. Additionally, we use the notation $a \lrho b$, or equivalently, 
$b \grho a$, if the inequality $a \le c(\rho, f) \cdot b$ holds for some constant 
$c(\rho, f) > 0$ depending only on $\rho, f$, with the careful notice that the definition 
of $c(\rho, f)$ shall not depend on the specific choice of $c_{\rho, f}$ in the reduction 
Assumption~\eqref{eqn:reduction-assumption}. For instance, when we say 
``$a \lrho b$'', we mean that ``there exists a constant $C(\rho, f)$ depending only on $\rho$, 
such that $a \le C(\rho, f) \cdot b$ holds under the Assumption~\eqref{eqn:reduction-assumption}
regardless of how one specifies $c_{\rho, f}$ for the Assumption~\eqref{eqn:reduction-assumption}''; 
and when we say ``$a \lrho b$ for all sufficiently small $c_{\rho, f}$'', we mean that 
``there exist constants $C(\rho, f), c(\rho, f)$ depending only on $\rho, f$, 
such that, regardless how one specifies the constant $c_{\rho, f} \le c(\rho, f)$ in the 
Assumption~\eqref{eqn:reduction-assumption}, the inequality $a \le C(\rho, f) \cdot b$ just simply holds
under that assumption''. 

\paragraph{Proof Sketch and Organization}
The proof of equation~\eqref{eq:prop:partial:two:F:upper:1} is lengthy. For clarity, we 
decompose the proof into four pieces, which are presented in 
Section~\ref{sec:characterization-of-h-opt}---Section~\ref{sec:final-technical-piece} below.


\subsubsection{Characterization of $h\opt$}
\label{sec:characterization-of-h-opt}
The determination of $h\opt = h\opt_{\psi, \kappa\opt(\psi)}$ 
involves two parts. 
\begin{enumerate}[(I)]
\item Taking first-order variation of the convex optimization ${\rm OPT}(\psi, \kappa\opt(\psi))$ yields
KKT conditions. 
\item The fact that ${\rm OPT}(\psi, \kappa\opt(\psi)) = 0$ yields one more additional identity. 
\end{enumerate}
To implement the first part, we use the results in Appendix~\ref{sec:all-property-optimization}. 
According to the KKT condition (equation~\eqref{eqn:KKT-opt-general}), and 
noticing that $\psi^{\low}(\kappa\opt(\psi)) < \psi$,
there is the existence of a scalar $s\opt > 0$ such that 
\begin{equation}
\label{eqn:KKT-opt-general-bo}
\begin{split}
& X^{1/2}G 
	+ \psi^{-1/2} X^{1/2} \Big( \partial_1F \cdot W
		+ (c_2\opt)^{-1} \cdot \partial_2 F\cdot  \proj_{W^{\perp}}(X^{1/2}h\opt ) \Big) + 
			s\opt h\opt = 0.\\
& \norm{h\opt} = 1.
\end{split}
\end{equation}
Since $\psi^{\low}(\kappa\opt(\psi)) < \psi$, Lemma~\ref{lemma:technical-3} 
yields that $c_2\opt, s\opt > 0$, and
the following \emph{equivalent} system of equations
\begin{equation}
\label{eqn:KKT1-complicated}
		\begin{split}
		c_1\opt &=  \E_Q \left[ \left(\bigg(c_1\opt - c_2\opt \frac{\partial_1 F}{\partial_2 F}\bigg)W 
			- \psi^{1/2} \frac{c_2\opt}{\partial_2 F} G \right) \cdot \frac{WX}{X+\lan}
			\right] \\
		(c_1\opt)^2 + (c_2\opt)^2 &=  \E_{\Q} \left[\left(
			\bigg(c_1\opt - c_2\opt \frac{\partial_1 F}{\partial_2 F}\bigg)W
			+ \psi^{1/2} \frac{c_2\opt}{\partial_2 F} G \right)^2
			\cdot \frac{X^2}{(X+\lan)^2}\right] \\
		1 &= \E_{\Q} \left[\left(
			\bigg(c_1\opt - c_2\opt \frac{\partial_1 F}{\partial_2 F}\bigg)W
			+ \psi^{1/2} \frac{c_2\opt}{\partial_2 F} G \right)^2
			\cdot \frac{X}{(X+\lan)^2}\right]
		\end{split}
\end{equation}
where we introduce the quantity $ \lan:= \psi^{1/2} c_2\opt s\opt/\partial_2 F$.

To implement the second part, we use Proposition~\ref{proposition:system-of-Eq-T}, which allows us 
to conclude that 
\begin{equation}
\label{eqn:additional-equation-bo}
	0 = {\rm OPT}(\psi, \kappa\opt(\psi)) = T(\psi, \kappa\opt(\psi)) 
		= \psi^{-1/2} (F - c_1\opt \partial_1 F - c_2\opt \partial_2 F) - s\opt.
\end{equation}
Since the homogeneity of the mapping $(\kappa, c_1, c_2) \mapsto F_\kappa(c_1, c_2)$, 
we have $F - c_1\opt \partial_1 F - c_2\opt \partial_2 F = \kappa\opt \partial_\kappa F$. 
Hence, this yields the identity $s\opt = \psi^{-1/2} \kappa\opt \partial_\kappa F$. Consequentially, 
we obtain the following identity
\begin{equation}\label{eq:def:la:naught}
    \lan=\frac{c_2\opt s\opt}{\partial_2 F} = 
    	\frac{c_2\opt \kappa\opt}{\partial_2F} \cdot   \partial_{\kappa}F.
\end{equation}
Noticeably, $\lan$ characterizes the ``implicit regularization'' that's enforced in the model. 

\subsubsection{Controls on the implicit regularization term $\lan$}
\begin{proposition}\label{proposition:la:naught:upperbound}
For all small enough $c_{\rho, f} > 0$,  the bound $\lan\lrho \las$ holds. 
\end{proposition}

\begin{proof}
The result is a consequence of the two key lemmas below. By equation~\eqref{eq:def:la:naught}, we have
\begin{equation*}
	 \lan=\frac{\kappa\opt \cdot \partial_{\kappa}F_{\kappa\opt}(c_1\opt, c_2\opt)}
	 	{(c_2\opt)^{-1} \cdot \partial_2F_{\kappa\opt}(c_1\opt, c_2\opt)}
\end{equation*}
The first lemma is deterministic, and characterizes the size of $\partial_2 F_{\kappa}(c_1, c_2)$ for all 
$\kappa \ge 0, c_1 \in \R, c_2 \ge 0$.
\begin{lemma}
\label{lem:F:partial:two:estimate}
There exists $c, C > 0$ depending only on $\rho, f$ such that 
for all $\kappa\ge 0, c_2 \ge 0, c_1 \in \R$,
\begin{equation*}
	c \cdot \frac{c_2}{\kappa \vee |c_1| \vee c_2} \le 
	\partial_2 F_{\kappa}(c_1,c_2) \le C \cdot \frac{c_2}{\kappa \vee |c_1| \vee c_2}.
\end{equation*}
\end{lemma}
\begin{proof}
The result is largely a consequence of the homogeneity of $(\kappa, c_1, c_2) \mapsto F_{\kappa}(c_1, c_2)$,
namely,  $F_{\kappa t} (c_1 t, c_2 t) = t F_{\kappa} (c_1, c_2)$ holds for all $t > 0$. 

Indeed, write $c \defeq \kappa \vee |c_1| \vee c_2$. The homogeneity implies that 
$\partial_2 F_{\kappa t} (c_1 t, c_2 t) = \partial_2 F_{\kappa} (c_1, c_2)$ for $t > 0$. In particular, 
$\partial_2 F_{\kappa}(c_1,c_2) = \partial_2 F_{\kappa/c} (c_1/c, c_2/c)$. This indicates that 
\begin{equation*}
	\partial_2 F_{\kappa}(c_1,c_2)  \cdot \frac{\kappa \vee |\hc_1| \vee \hc_2}{c_2} 
		=\frac{\partial_2 F_{\kappa/c} (c_1/c, c_2/c)}{c_2/c}
		\in S \defeq \left\{\frac{ \partial_2 F_{\kappa}(c_1, c_2)}{c_2} \vert \max\{\kappa, |c_1|, c_2\} = 1\right\}.
\end{equation*}
As a result, to prove Lemma~\ref{lem:F:partial:two:estimate}, it suffices to show that $S \subseteq \R_{\ge 0}$ is 
bounded away from $0$ and $+\infty$. 

To see this, note first 
$\partial_2 F_{\kappa}(c_1, 0) = -\E\big[(\kappa-\hc_1 YG)_{+}Z\big]/\sqrt{\E\big[(\kappa-\hc_1 YG)_{+}^2\big]} = 0$
for all $\kappa \ge 0, c_1 \in \R$.
As a result of Taylor's intermediate theorem, we obtain the identity
\begin{equation*}
	\frac{\partial_2 F_{\kappa}(c_1, c_2)}{c_2} = \int_0^1 \partial_{22} F_{\kappa}(c_1, c_2 t) dt. 
\end{equation*}
The mapping $(\kappa, c_1, c_2) \mapsto \partial_{22} F_{\kappa}(c_1, c_2)$ is well-defined as long as 
$\kappa \vee |c_1| \vee c_2 \neq 0$, and is positive-valued and continuous on its domain. As a consequence,
the set $S$ belongs to $\R_{\ge 0}$, and is bounded, away from $0$. 
\end{proof}

As a consequence of Lemma~\ref{lem:F:partial:two:estimate}, and the fact that 
$0 \le \partial_{\kappa}F_{\wtilde{\kappa}}(\hc_1,\hc_2) \le 1$
(as the mapping $\kappa \mapsto F_{\kappa}(c_1,c_2)$ is monotonically 
increasing and $1$-Lipschitz), we immediately obtain that
\begin{equation}
\label{eqn:lan-deterministic-bound}
    \lan=\frac{\kappa\opt \cdot \partial_{\kappa}F_{\kappa\opt}(c_1\opt, c_2\opt)}
	 	{(c_2\opt)^{-1} \cdot \partial_2F_{\kappa\opt}(c_1\opt, c_2\opt)}
    	\lrho (\kappa\opt \vee |c_1\opt|\vee c_2\opt)^2.
\end{equation}
It remains to upper bound $\kappa\opt \vee |c_1\opt|\vee c_2\opt$. 

\begin{lemma}\label{lem:apriori:est}
For all small enough $c_{\rho, f}$, the bound  
$|\kappa\opt| \vee |c_1\opt|\vee |c_2\opt| \lrho \las^{1/2}$ holds. 
\end{lemma}
\begin{proof}
Multiply $h\opt$ on both sides 
of equation~\eqref{eqn:KKT-opt-general-bo} and take expectation. This yields 
\begin{equation*}
	0 = \E_\Q[h\opt X^{1/2} G]+ \psi^{-1/2} \cdot (c_1\opt \partial_1 F + c_2\opt \partial_2 F) + s\opt
\end{equation*}
which after substituting $s\opt$ from equation~\eqref{eqn:additional-equation-bo}, gives the
following key identity
\begin{equation}
\label{eqn:starting-point-of-lemma-naught}
	F_{\kappa\opt}(c_1\opt, c_2\opt) = \psi^{1/2} \cdot \E_\Q \left[h\opt X^{1/2} G\right].
\end{equation}
Below we analyze LHS and RHS of equation~\eqref{eqn:starting-point-of-lemma-naught}. 
\begin{itemize}
\item LHS: $F_{\kappa\opt}(c_1\opt, c_2\opt) \asymp_\rho \left(|\kappa\opt|^2+|c_1\opt|^2+|c_2\opt|^2\right)^{1/2}$
	 as $(\kappa, c_1, c_2) \mapsto F_{\kappa}(c_1,c_2)$ is positive and homogeneous. 
\item RHS: Note that $\E_Q[|h\opt|^2 X] = (c_1\opt)^2 + (c_2\opt)^2$. Standard divide-and-conquer 
technique yields the bound: 
\begin{equation*}
\begin{split}
	\psi^{1/2} \cdot \E_\Q \left[h\opt X^{1/2} G\right] &= 
		 \psi^{1/2} \cdot\E_\Q \left[h\opt X^{1/2} G \mathbf{1}_{X >  \lambda} \right]
	 	+  \psi^{1/2} \cdot\E_\Q \left[h\opt X^{1/2} G \mathbf{1}_{X \le  \lambda} \right] \\
		& \le \norm{X^{1/2} h\opt}_{\mathcal{L}_2(\Q)} \cdot \left(\E_{\Q} 
			[\psi \cdot \mathbf{1}_{X > \lambda}]\right)^{1/2} 
			+ \norm{h\opt}_{\mathcal{L}_2(\Q)} \cdot \left(\E_{\Q}[ \psi \cdot X \mathbf{1}_{X\le \lambda}]\right)^{1/2} \\
		& \le c_{\rho, f}^{1/2} \cdot (|c_1\opt|^2 + |c_2\opt|^2)^{1/2} + \las^{1/2}.
\end{split}
\end{equation*}
where the first inequality uses Cauchy-Schwartz, and the last inequality uses the reduction hypothesis. 
\end{itemize}
Substituting the above bounds into equation~\eqref{eqn:starting-point-of-lemma-naught} yields the result: 
\begin{equation*}
	\left(|\kappa\opt|^2+|c_1\opt|^2+|c_2\opt|^2\right)^{1/2} 
	\lrho c_{\rho, f}^{1/2} \cdot 
		\left(|c_1\opt|^2+|c_2\opt|^2\right)^{1/2} + \las^{1/2}.
\end{equation*}
As a consequence, 
$(|\kappa\opt| \vee |c_1\opt|\vee |c_2\opt|)^2 \lrho \las$ holds 
for all small enough $c_{\rho, f} > 0$. 
\end{proof}
Now that equation~\eqref{eqn:lan-deterministic-bound} and Lemma~\ref{lem:apriori:est}
yield that $\lan \lrho  \las$ holds for all sufficiently small $c_{\rho, f} > 0$.

\end{proof}

\subsubsection{Order equivalence between the ratio $|c_2\opt/c_1\opt|$ and the derivative 
$\partial_2 F$}
\label{sec:right-order}
\begin{proposition}
\label{proposition:equivalence-ratio-partial-derivative}
For small enough $c_{\rho, f} > 0$,  (i) $c_1\opt > 0$ and (ii) 
$|c_2\opt/c_1\opt|  \asymp_\rho \partial_2 F_{\kappa\opt}(c_1\opt, c_2\opt)$ hold.
\end{proposition}

\begin{proof}
Specializing Lemma~\ref{lem:F:partial:two:estimate} to $c_1= c_1\opt$, $c_2 = c_2\opt$, 
$\kappa = \kappa\opt$, we obtain the equivalence
\begin{equation*}
\partial_2 F_{\kappa\opt}(c_1\opt, c_2\opt) \asymp_\rho 
	\frac{|c_2\opt|}{|c_1\opt| \vee c_2\opt \vee \kappa\opt}.
\end{equation*}
It remains to show $\kappa\opt \vee |c_1\opt| \vee c_2\opt \asymp_\rho |c_1\opt|$ and $c_1\opt > 0$ 
for all small enough $c_{\rho, f}$. The proof contains four steps.

\paragraph{Step I.}
We start by proving $c_1\opt > 0$ holds when $c_{\rho, f}$ is small. 
Suppose $c_1\opt \le 0$. Note (i) $c_1 \to \partial_1 F_{\kappa}(c_1,c_2)$ is increasing 
by convexity of $F_{\kappa}$ and (ii) $(\kappa, c_1, c_2) \mapsto F_\kappa (c_1, c_2)$ is homogeneous, 
we obtain that $\partial_1 F_{\kappa\opt}(c_1\opt,c_2\opt)\leq \partial_1 F_{\kappa\opt}(0,c_2\opt)
=\partial_1F_{\kappa\opt/c_2\opt}(0,1)$. 
Since $\E[YG] = \E[\big(2f(\rho G)-1\big) G] = 
	2\rho\E[f^\prime(\rho\cdot G)]>0$, 
\begin{equation}\label{eq:lem:c1:largest:tech:5}
   \partial_1 F_{\kappa\opt}(c_1\opt,c_2\opt) \leq \sup_{\kappa \ge 0}\partial_1 F_{\kappa}(0,1)=
    	- \E\big[YG\big] \cdot \inf_{\kappa\ge0}\left\{\frac{\E\big[(\kappa-Z)_{+}\big]}{\E\big[(\kappa-Z)_{+}^2\big]^{1/2}}\right\}=:C_0(\rho)<0. 
\end{equation}
As a result, $|\partial_1 F_{\kappa\opt}(c_1\opt, c_2\opt)| \ge |C_0(\rho)|$ if $c_1\opt \le 0$.

Hence, to prove $c_1\opt > 0$, it suffices to show that $|\partial_1 F_{\kappa\opt}(c_1\opt, c_2\opt)|  <  |C_0(\rho)|$
for all small enough $c_{\rho, f}$. 
To see this, rearranging the first equation in~\eqref{eqn:KKT1-complicated} gives the identity: 
\begin{equation}
\label{eqn:rearrange-one}
\partial_1 F = -\left(\E_{\Q} \left[\frac{XW^2}{X+\lan}\right]\right)^{-1}\cdot 
	\frac{c_1\opt \partial_2 F}{c_2\opt} \cdot
		\E_{\Q}\left[\frac{ \lan  W^2}{X+\lan}\right]
\end{equation}
Now we give upper bound on the RHS. Note the results: 
\begin{enumerate}[(i)]
\item Recall $\rho^2 = \E_{\Q}[W^2/X]$. Hence we have the lower bound
\begin{equation}
\label{eqn:bound-G-one}
\E_{\Q} \left[\frac{XW^2}{X+\lan}\right]=1-\lan \cdot 
	\E_{\Q} \left[\frac{W^2}{X+\lan}\right]\ge 1-\lan \cdot \rho^2. 
\end{equation} 
Note $\lan \lrho \las \le c_{\rho, f}$ by Lemma \ref{proposition:la:naught:upperbound} and reduction hypothesis. 
This proves that $\E_{\Q} \left[\frac{XW^2}{X+\lan}\right] \ge 1/2$ 
for all small enough $c_{\rho, f} > 0$. 
\item Recall $(c_2\opt)^{-1} \partial_2 F \lan = \kappa\opt \partial_{\kappa}F$ by equation
\eqref{eq:def:la:naught}, and note $\partial_{\kappa} F \in [0, 1]$ since $\kappa \mapsto F_\kappa$
is $1$-Lipschitz.
This proves that, for all sufficiently small $c_{\rho, f}$, we have the bound 
\begin{equation*}
	 \frac{c_1\opt \partial_2 F}{c_2\opt} \cdot
		\E_{\Q}\left[\frac{ \lan  W^2}{X+\lan}\right]
=|c_1\opt|\kappa\opt \partial_\kappa F \cdot \E_{\Q}\left[\frac{ \lan  W^2}{X+\lan}\right]
\leq |c_1\opt|\kappa\opt \lrho \las \leq c_{\rho, f}.
\end{equation*}
where we have used
$\kappa\opt \vee |c_1\opt| \lrho \las^{1/2}$ due to Lemma~\ref{lem:apriori:est}.
\end{enumerate}
Substitute all above bounds into the RHS of equation~\eqref{eqn:rearrange-one}. This 
proves that, for all small enough $c_{\rho, f}$
\begin{equation*}
|\partial_1 F_{\kappa\opt}(c_1\opt,c_2\opt)| \lrho c_{\rho, f}^{1/2} <  |C_0(\rho)|.
\end{equation*}
Hence, as mentioned before, this implies that $c_1\opt > 0$.

\paragraph{Step II.}
We prove $c_2\opt \lrho c_1\opt$. Recall 
$|\partial_1 F_{\kappa\opt}(c_1\opt,c_2\opt)| \lrho c_{\rho, f}^{1/2}$ from Step I. Using homogeneity of 
$(\kappa, c_1, c_2) \mapsto F_{\kappa}(c_1, c_2)$, we obtain the lower bound: 
\begin{equation}\label{eq:lem:c1:largest:tech:6}
|\partial_1F_{\kappa\opt}(c_1\opt, c_2\opt)| =  \left|\partial_1F_{\kappa\opt/c_2\opt}\big(c_2\opt/c_1\opt,1\big)\right|
	\geq\inf_{\kappa>0}\left|\partial_1F_{\kappa}\Big(c_2\opt/c_1\opt,1\Big)\right|=:f_{\rho}\big(c_2\opt/c_1\opt\big).
\end{equation}
Note that $f_\rho$ is continuous with 
$f_{\rho}(0)=\inf_{\kappa>0}\Big|\partial_1F_{\kappa}(0,1)\Big| = C_0(\rho)$ where $C_0(\rho)$
is defined in equation~\eqref{eq:lem:c1:largest:tech:5}. Hence, by taking $c_{\rho, f}$ small 
enough, we can make $c_2\opt/c_1\opt$ bounded away from $0$, or
$c_2\opt \lrho c_1\opt$.

\paragraph{Step III.}
We prove $\kappa\opt \lrho c_1\opt$. Recall 
$|\partial_1 F_{\kappa\opt}(c_1\opt,c_2\opt)| \lrho c_{\rho, f}^{1/2}$ from Step I. 
Our result in Step II implies that $c_2\opt/c_1\opt \le C_1(\rho)$. 
Using homogeneity of 
$(\kappa, c_1, c_2) \mapsto F_{\kappa}(c_1, c_2)$, we obtain the lower bound: 
\begin{equation}\label{eq:lem:c1:largest:tech:8}
  |\partial_1F_{\kappa\opt}(c_1\opt, c_2\opt)| =
   \left|\partial_1F_{\kappa\opt/c_1\opt}\big(1, c_2\opt/c_1\opt\big)\right|\geq \inf_{c_2\in [0,C_1(\rho)]}
   	\big|\partial_1F_{\kappa\opt/c_1\opt}(1, c_2)\big|=:g_{\rho}(\kappa\opt/c_1\opt).
\end{equation}
As $g_\rho$ is continuous with $\lim_{\kappa \to \infty} g_{\rho}(\kappa)= \E[YG] > 0$, 
$\kappa\opt/c_1\opt$ is bounded away from $\infty$, or equivalently, we have
derived the relation $\kappa\opt \lrho c_1\opt$.

\paragraph{Summary} Arguments in Step I---III complete the proof of 
Proposition~\ref{proposition:equivalence-ratio-partial-derivative}.

\end{proof}


\subsubsection{Control on the derivative $\partial_2 F$}
\label{sec:final-technical-piece}
\begin{proposition}
\label{proposition:partial:two:F:upper}
For small enough $c_{\rho, f} > 0$, the bound below holds 
\begin{equation}\label{eq:prop:partial:two:F:upper}
    \partial_2 F_{\kappa\opt}(c_1\opt, c_2\opt)\lrho \left(
    	\E_{\Q}\left[ \left(\psi +\las^2 \frac{W^2}{X^2}\right) \mathbf{1}_{X > \lambda} + 
		\left(\frac{\psi X^2}{\las^2} + W^2\right) \mathbf{1}_{X \le \lambda}\right]
		\right)^{1/2}
\end{equation}
\end{proposition}

\begin{proof}
 For simplification of arguments, we introduce 
the following notation. 
\begin{equation*}
\begin{split}
	\alpha \equiv \E_{\Q}\left[\frac{W^2X}{X+\lan}\right],~~~~
	\gamma \equiv \E_{\Q}\left[\frac{W^2X^2}{(X+\lan)^2}\right],~~~~
	\nu \equiv  \psi \cdot \E_{\Q} \left[\frac{X^2} {(X+\lan)^2}\right],~~~~
	\Delta \equiv \frac{c_1\opt}{c_2\opt}\cdot \partial_2 F_{\kappa\opt}(c_1\opt, c_2\opt).
\end{split}
\end{equation*}
Let $\Lambda$ denote the quantity on the RHS of equation~\eqref{eq:prop:partial:two:F:upper}.

Below we start the main proof. 
According to the \emph{first} equation of the KKT conditions~\eqref{eqn:KKT1-complicated}, we have 
\begin{equation*}
    c_1\opt- c_2\opt \frac{\partial_1 F}{\partial_2 F}
    	= \alpha^{-1} c_1\opt. 
\end{equation*}
Substitute it into the \emph{second} equation of  the KKT conditions~\eqref{eqn:KKT1-complicated}. 
We derive
\begin{equation*}
	(c_1\opt)^2 + (c_2\opt)^2 = (c_1\opt)^2 \cdot  \E_{\Q}\left[ \frac{X^2}{(X+\lan)^2}
		\left(\alpha^{-1} W
		+ \psi^{1/2} \cdot \frac{1}{\Delta} \cdot G\right)^2\right]. 
\end{equation*}
Note that $(c_2\opt)^2 = (c_1\opt)^2 (\partial_2 F)^2/\Delta^2$.  Substituting it and canceling $(c_1\opt)^2$ 
on both sides yields the identity
\begin{equation}
\label{eqn: good-expression-of-partial-two}
\begin{split}
	(\partial_2 F)^2  &= \E_{\Q}\left[\frac{X^2}{(X+\lan)^2}
		\left(\alpha^{-1}
			\Delta W
		+ \psi^{1/2} \cdot G\right)^2\right] - \Delta^2 
		= \frac{\Delta^2}{\alpha^{2}}  \gamma + \nu - \Delta^2.\\
\end{split}
\end{equation}
Below we will derive several bounds that relate the greek letters 
$\alpha, \gamma, \nu$ to the desired error term $\Lambda$. Define $\Lambda$ the error on the RHS of 
equation~\eqref{eq:prop:partial:two:F:upper}.

\begin{enumerate}
\item Note first: $\Delta \asymp_\rho 1$ since Proposition~\ref{proposition:equivalence-ratio-partial-derivative}. 
\item From the proof of Proposition~\ref{proposition:equivalence-ratio-partial-derivative}: 
	$\alpha \grho 1$ (equation~\eqref{eqn:bound-G-one}) for small enough $c_{\rho, f}$.
\item We show $|\gamma -\alpha^2| \lrho \Lambda$. Note $\E_{\Q}[W^2] = 1$. Lagrange's identity yields 
	\begin{equation*}
		0 \le \gamma -\alpha^2 =\E_{\Q}\left[\frac{X^2 W^2}{(X+\lan)^2}\right]
			\E_{\Q}[W^2] -\left(\E_{\Q} \left[\frac{XW^2}{X+\lan}\right]\right)^2
			=
			\half \cdot \E_{\Q}\left[ \left|\frac{X}{X+\lan} - \frac{X'}{X'+ \lan}\right|^2 W^2 (W')^2\right]
	\end{equation*}
	where $(X', W')$ is an independent copy of $(X, W)$. Note the elementary inequality: 
	\begin{equation*}
		\left|\frac{X}{X+\lan} - \frac{X'}{X'+ \lan}\right| \le \lan^2 |X-X'|^2 \cdot 
			\left(\frac{1}{(X+\lan)^2} + \frac{1}{(X'+\lan)^2}\right)
	\end{equation*}
	Also, recall that $\supp(X) \in [0, M]$. This yields the bound 
	\begin{equation*}
		\gamma -\alpha^2 \le  M^2 \cdot \E_{\Q} \left[\frac{\lan^2 W^2}{(X+\lan)^2}\right]
		\lrho  \E_{\Q} \left[W^2 \mathbf{1}_{X\le \lambda} + 
			\frac{\lan^2 W^2}{X^2} \mathbf{1}_{X > \lambda}\right].
	\end{equation*}
	Note $\lan \lrho \las$ since Lemma~\ref{proposition:la:naught:upperbound}. 
	This proves $|\gamma - \alpha^2| \lrho \Lambda^2$ as desired. 

\item Finally, $\nu \le \Lambda$. The proof is technical; see Lemma~\ref{lemma:technical-bound-on-nu}
below for the proof. 
\end{enumerate}
The above bounds collectively yield Proposition~\ref{proposition:partial:two:F:upper}:  
$
	|\partial_2 F|^2= |(\gamma - \alpha^2) \cdot \frac{\Delta^2}{\alpha^2} + \nu| \lrho \Lambda^2.
$
\end{proof}
\begin{lemma}
\label{lemma:technical-bound-on-nu}
The following bound holds for all sufficiently small $c_{\rho, f} > 0$: 
\begin{equation}
	\E_{\Q} \left[\frac{X^2}{(X+\lan)^2}\right] \le 
		\E_{\Q} \left[\frac{X^2}{\las^2} \mathbf{1}_{X \le \lambda} + \mathbf{1}_{X > \lambda}\right].
\end{equation} 
\end{lemma}

\begin{proof}
The key to the proof is to establish the following bound that holds for all small enough $c_{\rho, f} > 0$: 
\begin{equation}
\label{eqn:key-to-technical-bound-on-nu}
	\E_{\Q} \left[\frac{X^2} {(X+\lan)^2}\right] \lrho
		\E_{\Q} \left[\frac{X^2} {(X+\las)^2}\right]
\end{equation}
Note that, once equation~\eqref{eqn:key-to-technical-bound-on-nu} has been established, 
then we immediately reach the conclusion since: 
\begin{equation*}
	\E_{\Q} \left[\frac{X^2} {(X+\las)^2}\right]
		= \E_{\Q} \left[\frac{X^2} {(X+\las)^2}\mathbf{1}_{X \le \lambda}\right]
			+ \E_{\Q} \left[\frac{X^2} {(X+\las)^2}\mathbf{1}_{X > \lambda}\right]
		\le \E_{\Q} \left[\frac{X^2}{\las^2} \mathbf{1}_{X \le \lambda} + \mathbf{1}_{X > \lambda}\right].
\end{equation*}
Thus it suffices to prove equation~\eqref{eqn:key-to-technical-bound-on-nu}. Below we prove 
this key technical result. 
\begin{enumerate}
\item 
If $\lan \ge \las$, the result is obvious.
\item 
If $\lan \le \las$,  then consider the following function 
\begin{equation*}
h(x):= \E_{\Q}\left[\frac{X^2}{(X+x)^2}\right] / 
	\left(\E_{\Q}\left[\frac{X}{X+x}\right]\right)^2.
\end{equation*} 
Note then $x \mapsto h(x)$ is increasing when $x \ge 0$.
Hence $h(\lan) \le h(\las)$, and thereby
\begin{equation*}
	\E_{\Q}\left[\frac{X^2}{(X+\lan)^2}\right] 
	\le 
	\E_{\Q}\left[\frac{X^2}{(X+\las)^2}\right] 
	\cdot \left(\E_{\Q}\left[\frac{X}{X+\lan}\right] /
		\E_{\Q}\left[\frac{X}{X+\las}\right]\right)^2
\end{equation*}
It remains to prove that for small enough $c_{\rho, f} > 0$:  
\begin{equation}
\label{eqn:technical-inequality-las-lan}
	\E_{\Q}\left[\frac{X}{X+\las}\right]  \grho \frac{1}{\psi} \grho
	\E_{\Q}\left[\frac{X}{X+\lan}\right] . 
\end{equation}
The first inequality in~\eqref{eqn:technical-inequality-las-lan} 
is simple to show. By Cauchy-Schwartz inequality, we obtain 
\begin{equation*}
	\E_{\Q}\left[\frac{X}{X+\las}\right] 
		\ge \E_{\Q}\left[\frac{X}{X+\las} \mathbf{1}_{X\le \lambda}\right] 
		\ge \frac{ \E_{\Q}[X \mathbf{1}_{X\le \lambda}]^2}{\E_{\Q}[X(X+\las) \mathbf{1}_{X\le \lambda}]}
		=  \frac{1}{\psi} \cdot \frac{\las^2/\psi}{ \las^2/\psi + \E_{\Q}[X^2 \mathbf{1}_{X\le \lambda}]}.
\end{equation*}
Note by reduction hypothesis, $\E_{\Q}[X^2 \mathbf{1}_{X\le \lambda}] \lrho \las^2/\psi$. This 
proves the first inequality of~\eqref{eqn:technical-inequality-las-lan}. 

To show the second inequality in~\eqref{eqn:technical-inequality-las-lan}, we start from the identity: 
\begin{equation*}
	\E_{\Q}\left[\frac{X}{X+\lan}\right]=
		\E_{\Q}\left[\frac{X^2}{(X+\lan)^2}\right]+\E_{\Q}\left[\frac{\lan X}{(X+\lan)^2}\right] 
\end{equation*}
Below we upper bound the two pieces on the RHS. 

For the first piece, recall equation~\eqref{eqn: good-expression-of-partial-two}. We obtain 
the expression: 
\begin{equation*}
	\E_{\Q}\left[\frac{X^2}{(X+\lan)^2}\right] = \frac{1}{\psi} \cdot 
	\nu = \frac{1}{\psi} \cdot \left[(\partial_2 F)^2 + \Delta^2 -
		 \frac{\Delta^2}{\alpha^{2}} \gamma\right].
\end{equation*}
Recall the bound: (i) $\alpha \grho 1$, (ii) $|\Delta| \asymp_\rho 1$, (iii) $\gamma \le 1$. Furthermore, 
$0 \le  \partial_2 F \le 1$ since  $\partial_2 F_{\kappa}(c_1,c_2)\leq 
\liminf_{c_2\to\infty} c_2^{-1} F_{\kappa}(c_1,c_2)=(\E[(-Z)_{+}^2])^{1/2} \le 1$ as $F_{\kappa}$ is convex. 
Putting all pieces together, we have
\begin{equation*}
	\E_{\Q}\left[\frac{X^2}{(X+\lan)^2}\right]\lrho \frac{1}{\psi}.
\end{equation*}

For the second piece, using the third equation in the KKT conditions~\eqref{eqn:KKT1-complicated}, 
we obtain 
\begin{equation*}
	1 = (c_1\opt)^2 \cdot \E_{\Q}\left[\frac{1}{\alpha^2} \cdot \frac{W^2 X}{(X+\lan)^2} 
		+ \psi \cdot \frac{1}{\Delta^2} \cdot \frac{X}{(X+\lan)^2}\right].
\end{equation*}
This gives the following bound 
\begin{equation*}
	\E_{\Q}\left[\frac{\lan X}{(X+\lan)^2}\right] \le \frac{1}{\psi} \cdot \frac{\lan \Delta^2} {(c_1\opt)^2}.
\end{equation*}
Recall that $\lan = c_2\opt \kappa\opt \partial_{\kappa} F/\partial_2 F$. Applying 
Lemma~\ref{lem:F:partial:two:estimate} and Proposition~\ref{proposition:equivalence-ratio-partial-derivative}
and using the fact that $\partial_\kappa F \le 1$,
we obtain that $\lan \lrho (c_1\opt)^2 \cdot \partial_\kappa F \le (c_1\opt)^2$.
Since $|\Delta| \lrho 1$, this yields the desired bound  
\begin{equation*}
	\E_{\Q}\left[\frac{\lan X}{(X+\lan)^2}\right] \lrho \frac{1}{\psi}.
\end{equation*}
\end{enumerate}
We have shown equation~\eqref{eqn:key-to-technical-bound-on-nu} holds when
$c_{\rho, f}$ is small enough. This completes the proof of Lemma~\ref{lemma:technical-bound-on-nu}.
\end{proof}

Therefore, the proof of equation \eqref{eq:prop:partial:two:F:upper:1} follows from Propositions \ref{proposition:equivalence-ratio-partial-derivative} and \ref{proposition:partial:two:F:upper}.

\section{Some technical results}

\subsection{Proof of KKT conditions  \eqref{eqn:KKT-opt-general}}
\label{sec:proof-KKT-condition}
In this appendix we prove the KKT conditions \eqref{eqn:KKT-opt-general} for the optimization problem \eqref{eqn:def-L-star}.
The argument is quite standard and we present it mainly for completeness.

We begin by a simple variant of the Hahn-Banach theorem, which deals with the case of two closed convex sets
$\cC_1$, $\cC_2$ of which \emph{only one} is bounded. While this is a minor technical difference from the standard 
setting, we do not know a good reference for this result.
\begin{lemma}
	\label{lemma:Hahn-Banach-Separation}
	Let $\mathcal{C}_1$ and $\mathcal{C}_2$ be closed convex sets in a Hilbert space $\mathcal{H}$. Assume
	$\mathcal{C}_1$ is bounded, and $\mathcal{C}_1 \cap \mathcal{C}_2 = \emptyset$. 
	Then, there exists some $h \in \mathcal{H}$ such that
	\begin{equation}
	\inf_{g_1 \in \mathcal{C}_1} \langle g_1, h \rangle > \sup_{g_2 \in \mathcal{C}_2} \langle g_2, h\rangle. 
	\end{equation}
\end{lemma}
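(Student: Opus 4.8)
The plan is to reduce the statement to the standard Hahn--Banach separation theorem for two disjoint closed convex sets in a Hilbert space, one of which is compact. The only obstacle is that here we assume $\mathcal{C}_1$ is merely \emph{bounded} (and closed), not compact: in an infinite-dimensional Hilbert space a closed bounded convex set need not be compact, so we cannot directly invoke the classical ``compact vs.\ closed'' separation result. The idea is to produce compactness by working with the \emph{difference set} $\mathcal{D} = \mathcal{C}_1 - \mathcal{C}_2 = \{g_1 - g_2 : g_1 \in \mathcal{C}_1, g_2 \in \mathcal{C}_2\}$, which is convex, and to separate the origin from the closure $\overline{\mathcal{D}}$.

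First I would observe that $\mathcal{D}$ is convex (difference of convex sets) and that $0 \notin \mathcal{D}$ precisely because $\mathcal{C}_1 \cap \mathcal{C}_2 = \emptyset$. The key step is to show that $0 \notin \overline{\mathcal{D}}$, i.e.\ that $\mathcal{D}$ is bounded away from $0$: there exists $\delta > 0$ with $\|g_1 - g_2\| \ge \delta$ for all $g_1 \in \mathcal{C}_1$, $g_2 \in \mathcal{C}_2$. Suppose not; then there are sequences $g_1^{(k)} \in \mathcal{C}_1$, $g_2^{(k)} \in \mathcal{C}_2$ with $\|g_1^{(k)} - g_2^{(k)}\| \to 0$. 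Since $\mathcal{C}_1$ is bounded, closed and convex, it is weakly sequentially compact (Banach--Alaoglu together with weak closedness of closed convex sets, i.e.\ Mazur's lemma), so after passing to a subsequence $g_1^{(k)} \rightharpoonup g_1^* \in \mathcal{C}_1$. Then $g_2^{(k)} = g_1^{(k)} - (g_1^{(k)} - g_2^{(k)}) \rightharpoonup g_1^*$ as well (strong-to-zero plus weak convergence), and since $\mathcal{C}_2$ is closed and convex it is weakly closed, hence $g_1^* \in \mathcal{C}_2$. This gives $g_1^* \in \mathcal{C}_1 \cap \mathcal{C}_2$, contradicting disjointness. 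Therefore such a $\delta > 0$ exists, and in particular $0 \notin \overline{\mathcal{D}}$.

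Next, $\overline{\mathcal{D}}$ is a nonempty closed convex set not containing $0$. Let $h$ be the (negative of the) projection of $0$ onto $\overline{\mathcal{D}}$; concretely, let $h_0 = \arg\min_{v \in \overline{\mathcal{D}}} \|v\|$ (which exists and is unique by the Hilbert projection theorem applied to the closed convex set $\overline{\mathcal{D}}$), and set $h = h_0$. By the variational characterization of the projection, $\langle v - h_0, \, 0 - h_0 \rangle \le 0$ for all $v \in \overline{\mathcal{D}}$, i.e.\ $\langle v, h_0 \rangle \ge \|h_0\|^2$ for all $v \in \overline{\mathcal{D}}$, and $\|h_0\|^2 \ge \delta^2 > 0$. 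Unpacking $v = g_1 - g_2$ gives $\langle g_1, h \rangle - \langle g_2, h \rangle \ge \|h\|^2$ for all $g_1 \in \mathcal{C}_1$, $g_2 \in \mathcal{C}_2$, hence
\begin{equation*}
\inf_{g_1 \in \mathcal{C}_1} \langle g_1, h \rangle \;\ge\; \sup_{g_2 \in \mathcal{C}_2} \langle g_2, h \rangle + \|h\|^2 \;>\; \sup_{g_2 \in \mathcal{C}_2} \langle g_2, h \rangle,
\end{equation*}
where the infimum is finite since $\mathcal{C}_1$ is bounded and the supremum is therefore bounded above by the same displayed inequality. This is exactly the claimed strict separation, so $h$ works and the lemma follows. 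The only subtlety worth spelling out in the writeup is the weak sequential compactness argument used to close $\mathcal{D}$ away from $0$; everything else is the standard projection-theorem route to Hahn--Banach.
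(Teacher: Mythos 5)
Your proof is correct, but it takes a genuinely different route from the paper's. The paper first shows the distance $\inf_{g_1\in\mathcal{C}_1,\,g_2\in\mathcal{C}_2}\|g_1-g_2\|$ is \emph{attained} (same weak-compactness device you use) and is therefore strictly positive; it then fattens $\mathcal{C}_1$ to the closed $\delta$-neighborhood $\mathcal{C}_1^\delta$ with $\delta$ equal to half the distance, invokes the geometric Hahn--Banach theorem (using that $\mathcal{C}_1^\delta$ has nonempty interior) to get a non-strict separating functional $h\neq 0$, and finally upgrades to strict separation by observing that passing from $\mathcal{C}_1^\delta$ back to $\mathcal{C}_1$ shifts $\inf\langle \cdot,h\rangle$ up by $\delta\|h\|>0$. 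You instead form the difference set $\mathcal{D}=\mathcal{C}_1-\mathcal{C}_2$, prove $0\notin\overline{\mathcal{D}}$ by the same weak-compactness argument, and then take $h$ to be the metric projection of $0$ onto $\overline{\mathcal{D}}$, using the obtuse-angle characterization of the projection to get the strict gap $\|h\|^2>0$ directly. Your route is more self-contained and arguably cleaner: it never invokes the abstract Hahn--Banach separation theorem, relying only on the Hilbert projection theorem and weak compactness of bounded closed convex sets, whereas the paper's approach would generalize more readily to non-Hilbertian settings where the projection theorem is unavailable. One minor point to state explicitly in a final writeup: you implicitly assume $\mathcal{C}_1,\mathcal{C}_2$ nonempty (else the inequality is vacuous), which is needed for $\overline{\mathcal{D}}\neq\emptyset$ so that the projection is well-defined.
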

\begin{proof}
Define the distance $\dist(\mathcal{C}_1, \mathcal{C}_2)$ by   
\begin{equation}
\label{eqn:distance-function}
	\dist(\mathcal{C}_1, \mathcal{C}_2) = \inf_{g_1 \in \mathcal{C}_1, g_2 \in \mathcal{C}_2} \norm{g_1 - g_2}.
\end{equation}
We claim that $\dist(\mathcal{C}_1, \mathcal{C}_2) = \norm{g_1\opt - g_2\opt}$ for some $g_1\opt \in \mathcal{C}_1$
and $g_2 \in \mathcal{C}_2$. To show this, let $g_{1, n} \in \mathcal{C}_1$ and $g_{2, n} \in \mathcal{C}_1$ be such that 
$\norm{g_{1, n} - g_{2, n}} \to \dist(\mathcal{C}_1, \mathcal{C}_2)$. Since $\cC_1$ is bounded, we know that $\{g_{1, n}\}_{n\in \N}$ is bounded
and therefore  $\{g_{2, n}\}_{n\in \N}$ is also bounded. Hence, for some $M > 0$, 
we have $g_{2, n} \in \mathcal{C}_2 \cap \Ball(M)$ for all $n \in \N$ ($\Ball(r)$ denotes the closed ball with 
radius $r$ in $\mathcal{H}$). Note that:  $(i)$~by Banach-Alaoglu Theorem, since $\mathcal{C}_1$
and $\mathcal{C}_2 \cap \Ball(M)$ are bounded and closed, they are compact they are compact with respect to the weak-$*$ topology; $(ii)$~the mapping 
$h \to \norm{h}$ is lower semicontinuous with respect to the weak-$*$ topology. 
By $(i)$, we can choose a weak limit point $(g_1\opt,g_2\opt)\in\cC_1\times\cC_2$ of  the sequence
$\{(g_{1, n},g_{2,n})\}_{n\in \N}$. By $(ii)$, we have $\dist(\mathcal{C}_1, \mathcal{C}_2)  \ge  \norm{g_1\opt - g_2\opt}$, and therefore 
$\dist(\mathcal{C}_1, \mathcal{C}_2)  = \norm{g_1\opt - g_2\opt} > 0$. 
	
Now, we denote for each $\delta > 0$ the set 
\begin{equation*}
	\mathcal{C}_1^{\delta} = \left\{x: \norm{x- x_1} \le \delta~~\text{for some $x_1\in \mathcal{C}$}\right\}.
\end{equation*}
Then $\mathcal{C}_1^{\delta}$ is bounded, closed and convex. Moreover, if we let $\delta = \dist(\mathcal{C}_1, \mathcal{C}_2)/2$, 
then $\mathcal{C}_1^{\delta} \cap \mathcal{C}_2 = \emptyset$. By the  Hahn-Banach theorem shows 
there exists $h \in \mathcal{H}\setminus\{0\}$ such that 
\begin{equation}
\label{eqn:hahn-banach-old}
	\inf_{g_1 \in \mathcal{C}_1^{\delta}} \langle g_1, h \rangle \ge \sup_{g_2 \in \mathcal{C}_2} \langle g_2, h\rangle. 
\end{equation}
Notice that, since $h \neq 0$, we have 
\begin{equation}
\label{eqn:hahn-banach-C-1-C-1-delta}
	\inf_{g_1\in \mathcal{C}_1} \langle g_1, h\rangle > \inf_{g_1 \in \mathcal{C}_1^{\delta}} \langle g_1, h \rangle.
\end{equation}
The desired result follows by Eq.~\eqref{eqn:hahn-banach-old} and Eq.~\eqref{eqn:hahn-banach-C-1-C-1-delta}.
\end{proof}

\begin{lemma}
A point $h\in\cL^2(\P)$ is a minimizer of the optimization problem \eqref{eqn:def-L-star} if and only if it satisfies the KKT conditions 
\eqref{eqn:KKT-opt-general}. Further, by Lemma \ref{lemma:technical-1}, such a  minimizer is unique.
\end{lemma}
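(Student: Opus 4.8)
The plan is to treat \eqref{eqn:def-L-star} as what it is: minimization of the convex, finite, norm-continuous functional $\asL_{\psi,\kappa,\P}$ over the closed bounded convex set $\{h:\|h\|_{\P}\le 1\}$, and to establish the two implications separately, deducing uniqueness directly from Lemma~\ref{lemma:technical-1}.

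For the \emph{sufficiency} direction ($\text{KKT}\Rightarrow$ minimizer), I would first argue that the vector $v:=X^{1/2}\proj_{W^\perp}(G)+\psi^{-1/2}X^{1/2}\bigl(\partial_1 F_{\kappa}(c_1,c_2)W+\partial_2 F_{\kappa}(c_1,c_2)\proj_{W^\perp}(Z)\bigr)$, with $c_1=\langle h,X^{1/2}W\rangle$, $c_2=\|\proj_{W^\perp}(X^{1/2}h)\|_{\P}$, is a subgradient of $\asL_{\psi,\kappa,\P}$ at $h$: the term $X^{1/2}\proj_{W^\perp}(G)$ is the gradient of the linear part; $F_{\kappa}$ is $C^1$, convex, and nondecreasing in its second argument (Lemma~\ref{lemma:F-convex-increasing}), so by the convex chain rule the contribution of $\psi^{-1/2}F_{\kappa}(\langle\cdot,X^{1/2}W\rangle,\|\proj_{W^\perp}(X^{1/2}\cdot)\|_{\P})$ is $\psi^{-1/2}(\partial_1F_{\kappa}\,X^{1/2}W+\partial_2F_{\kappa}\,\xi)$ for any $\xi$ in the subdifferential $\partial b(h)$ of the seminorm $b(h'):=\|\proj_{W^\perp}(X^{1/2}h')\|_{\P}$, and $X^{1/2}\proj_{W^\perp}(Z)\in\partial b(h)$ in both branches of the definition of $Z$ (Cauchy--Schwarz). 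Hence $v=-sh\in\partial\asL_{\psi,\kappa,\P}(h)$, so for any feasible $h'$ one gets $\asL_{\psi,\kappa,\P}(h')\ge\asL_{\psi,\kappa,\P}(h)-s\langle h,h'-h\rangle$, and complementary slackness ($s>0\Rightarrow\|h\|_{\P}=1$) together with $\langle h,h'\rangle\le\|h\|_{\P}\|h'\|_{\P}\le 1$ makes the last term nonpositive; thus $h$ is a minimizer.

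For the \emph{necessity} direction, let $h\opt$ be the minimizer (unique by Lemma~\ref{lemma:technical-1}). If $h\opt$ is a global minimizer of $\asL_{\psi,\kappa,\P}$ on all of $\cL^2(\P)$, then $0\in\partial\asL_{\psi,\kappa,\P}(h\opt)$ and \eqref{eqn:KKT-opt-general} holds with $s=0$. Otherwise the open convex sublevel set $\{h':\asL_{\psi,\kappa,\P}(h')<\asL_{\psi,\kappa,\P}(h\opt)\}$ is nonempty and, by optimality over the ball, disjoint from the closed bounded convex set $\{h':\|h'\|_{\P}\le1\}$; this also forces $\|h\opt\|_{\P}=1$. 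I would apply the separation Lemma~\ref{lemma:Hahn-Banach-Separation} to these two sets (one bounded), obtain after rescaling an $s>0$ with $-sh\opt$ strictly separating them, and read off that $\asL_{\psi,\kappa,\P}(h')\ge\asL_{\psi,\kappa,\P}(h\opt)$ whenever $\langle h'-h\opt,h\opt\rangle\le0$, i.e.\ (by convexity) the first-order condition $\asL_{\psi,\kappa,\P}'(h\opt;d)\ge0$ for every $d$ in the tangent cone $\{d:\langle d,h\opt\rangle\le0\}$. Since $\asL_{\psi,\kappa,\P}$ is finite and continuous, $\partial\asL_{\psi,\kappa,\P}(h\opt)$ is nonempty, closed, convex and bounded, and $\asL_{\psi,\kappa,\P}'(h\opt;d)=\max_{v\in\partial\asL_{\psi,\kappa,\P}(h\opt)}\langle v,d\rangle$; so I would deduce $-sh\opt\in\partial\asL_{\psi,\kappa,\P}(h\opt)$ for some $s\ge0$ by a second application of Lemma~\ref{lemma:Hahn-Banach-Separation}, this time separating the ray $\{-sh\opt:s\ge0\}$ from the bounded set $\partial\asL_{\psi,\kappa,\P}(h\opt)$ and noting that a separating direction would violate the first-order condition. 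Finally I would compute $\partial\asL_{\psi,\kappa,\P}(h\opt)$ via the Moreau--Rockafellar sum rule and the convex chain rule (valid since all summands are finite and continuous and $\partial_2F_{\kappa}\ge0$): the smooth parts give $X^{1/2}\proj_{W^\perp}(G)+\psi^{-1/2}\partial_1F_{\kappa}(c_1,c_2)X^{1/2}W$ and the seminorm $b$ contributes $\psi^{-1/2}\partial_2F_{\kappa}(c_1,c_2)X^{1/2}\proj_{W^\perp}(\partial b(h\opt))$, where $\partial b(h\opt)=\{\proj_{W^\perp}(X^{1/2}h\opt)/b(h\opt)\}$ if $b(h\opt)>0$ and $\partial b(h\opt)=\{\proj_{W^\perp}(Z'):\|Z'\|_{\P}\le1\}$ if $b(h\opt)=0$. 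Choosing the matching $Z$ turns $-sh\opt\in\partial\asL_{\psi,\kappa,\P}(h\opt)$ into exactly \eqref{eqn:KKT-opt-general}, with $s(\|h\opt\|_{\P}-1)\ge0$ since either $\|h\opt\|_{\P}=1$ or $s=0$. Uniqueness is Lemma~\ref{lemma:technical-1}.

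The main obstacle is the infinite-dimensional variational analysis: correctly invoking Lemma~\ref{lemma:Hahn-Banach-Separation} in the two places where one of the disjoint closed convex sets is unbounded (the sublevel set, and the ray), verifying that $\partial\asL_{\psi,\kappa,\P}(h\opt)$ is bounded and closed so the second separation is legitimate, and pinning down $\partial b$ at points where $\proj_{W^\perp}(X^{1/2}h\opt)=0$ so that the nonsmooth term of the KKT system (the measurable function $Z'$ with $\|Z'\|_{\P}\le1$) comes out correctly; the remaining subdifferential bookkeeping and the sufficiency direction are routine.
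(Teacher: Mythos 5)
Your sufficiency argument, your computation of $\partial\asL_{\psi,\kappa,\P}$ (including the two branches of $\partial b$ at points where the seminorm vanishes), and the separation of the ray $\{-sh\opt:s\ge 0\}$ from the bounded set $\partial\asL_{\psi,\kappa,\P}(h\opt)$ all match the paper's proof. The difference lies in how you reach the first-order condition $\asL'(h\opt;d)\ge 0$ for feasible $d$ in the necessity direction, and there is a genuine gap there.

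You propose to obtain the first-order condition by applying Lemma~\ref{lemma:Hahn-Banach-Separation} to the strict sublevel set $\{h':\asL_{\psi,\kappa,\P}(h')<\asL_{\psi,\kappa,\P}(h\opt)\}$ and the unit ball. But that lemma requires both convex sets to be \emph{closed} and one bounded, whereas the strict sublevel set is open; and its closure is the (non-strict) sublevel set, which meets the ball precisely at $h\opt$, so the hypothesis of disjointness fails after taking closures. You would need a different separation theorem (one that allows an open set), and you would additionally need to justify that the resulting separating functional is a scalar multiple of $h\opt$, which requires using that the Hilbert-space unit ball admits a unique supporting hyperplane at each boundary point. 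None of this is spelled out, and the appeal to Lemma~\ref{lemma:Hahn-Banach-Separation} as stated is invalid. The paper sidesteps the issue entirely: it derives the first-order condition $\sup_{g\in\partial\asL_{\psi,\kappa,\P}(h\opt)}\langle g,\Delta_h\rangle\ge 0$ directly from the convexity expansion \eqref{eqn:necessity} for feasible perturbation directions, with a short perturbation argument (using boundedness of the subdifferential) to pass from strict feasible directions $\langle h\opt,\Delta_h\rangle<0$ to the closed cone $\langle h\opt,\Delta_h\rangle\le 0$, and then invokes Lemma~\ref{lemma:Hahn-Banach-Separation} only once, exactly where you use it the second time. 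Replacing your first separation with that direct argument closes the gap.
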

\begin{proof}
The probability measure $\P$ will be fixed throughout the proof, and hence we will drop it from the subscripts in
$\asL_{\psi,\kappa,\P}(h)$ and $\|h\|_\P$, $\<h_1,h_2\>_{\P}$.

We define $\partial \asL_{\psi, \kappa}(h)$ to be the following subset of $\cL^2(\P)$: 
\begin{equation}
\begin{split}
\partial \asL_{\psi, \kappa}(h) &\defeq \Bigg\{X^{1/2}\proj_{W^{\perp}}(G) + \psi^{-1/2} \cdot X^{1/2}
	\Big(
	 \partial_1 F_{\kappa}
		\left(\langle h, X^{1/2}W\rangle, \normbig{\proj_{W^{\perp}} (X^{1/2}h)}\right)W
			+  \\
	&~~~~~~~~~~~~~~~~~~~~~~
		\partial_2 F_{\kappa}\left(\langle h, X^{1/2}W\rangle, \normbig{\proj_{W^{\perp}} (X^{1/2}h)}\right)
			\proj_{W^{\perp}}(Z)
	\Big): Z \in \set_h\Bigg\} \\
\set_h &= \begin{cases}
\left\{\normbig{\proj_{W^{\perp}} (X^{1/2}h)}^{-1} \cdot \proj_{W^{\perp}} (X^{1/2}h)\right\}
	&~~\text{if $\normbig{\proj_{W^{\perp}}(X^{1/2}h)} \neq 0$} \\
\left\{Z^\prime: \norm{Z^{\prime}} \le 1\right\}
	&~~\text{if $\normbig{\proj_{W^{\perp}}(X^{1/2}h)} = 0$}.
\end{cases}
\end{split}
\end{equation}
A standard calculation yields the following  (see~\cite[Chapter VI]{HiriartJeLeCl13}): 
\begin{itemize}
\item For any $\Delta_h \in \cL^2(\P)$ and any  $t \in \R$, we have 
\begin{equation}
\label{eqn:sufficiency}
\asL_{\psi, \kappa}(h + t\Delta_h) \ge \asL_{\psi, \kappa}(h)
	+ t \cdot \sup_{g \in \partial \asL_{\psi, \kappa}(h)} \langle g, \Delta_h\rangle.
\end{equation}
\item For any $\Delta_h \in \cL^2(\P)$, we have 
\begin{equation}
\label{eqn:necessity}
\asL_{\psi, \kappa}(h + t \Delta_h) \le  \asL_{\psi, \kappa}(h)
	+ t\sup_{g \in \partial \asL_{\psi, \kappa}(h)} \langle g, \Delta_h\rangle + o(t)~~~(t \to 0).
\end{equation}
\end{itemize}
We will next show that the KKT conditions \eqref{eqn:KKT-opt-general}) are sufficient and necessary for $h$ to be optimal. 
\paragraph{Sufficiency}
Suppose the KKT conditions \eqref{eqn:KKT-opt-general}) hold for some primal variable $h \in \cL^2(\P)$ and dual 
variable $s$. This implies $-sh \in \partial L_{\psi, \kappa}(h)$ for some $s \ge 0$. Now we divide our discussion into two cases. 
\begin{itemize}
\item If $\norm{h} < 1$, then $s= 0$ by the KKT conditions \eqref{eqn:KKT-opt-general}). Thus,
	$0 \in \partial \asL_{\psi, \kappa}(h)$. Hence, Eq.~\eqref{eqn:sufficiency} immediately implies that $h$
	is a minimizer of the optimization problem.
\item If $\norm{h} = 1$, then for some $s \ge 0$, we have $-sh \in \partial \asL_{\psi, \kappa}(h)$.
 	Now, since any feasible direction $\Delta_h$ (i.e., $h + t\Delta_h \in \{h: \norm{h} \le 1\}$ 
	for some $t \ge 0$) must satisfy $\langle h, \Delta_h\rangle \le 0$, again by Eq.~\eqref{eqn:sufficiency} 
	implies that $h$ is a minimizer of the optimization problem. 
\end{itemize}
\paragraph{Necessity}
Let $h\opt$ be a minimizer. Then we know that $\asL_{\psi, \kappa}(h\opt+ t \Delta_h) \ge  \asL_{\psi, \kappa}(h\opt)$ for any 
$\Delta_h, t$ such that $h\opt + t\Delta_h \in \{h: \norm{h} \le 1\}$. Again, we divide our discussion into two cases. 
\begin{itemize}
\item Suppose the minimizer $h\opt$ also satisfies $\norm{h\opt} < 1$. By Eq.~\eqref{eqn:necessity}, we know that, for any 
	$\Delta_h \in \cL^2(\P)$,
	\begin{equation}
		 \sup_{g \in \partial \asL_{\psi, \kappa}(h\opt)} \langle g, \Delta_h\rangle \ge 0.
	\end{equation}
	Thus we must have $\partial \asL_{\psi, \kappa}(h\opt) = \{0\}$. Hence, the pair $(h, s) = (h\opt, 0)$ 
	satisfies the KKT conditions \eqref{eqn:KKT-opt-general}).
\item Suppose the minimum $h\opt$ also satisfies $\norm{h_{\min}} = 1$. By Eq.~\eqref{eqn:necessity}, we know for any 
	$\Delta_h \in \cL^2(\P)$ such that $\langle h\opt, \Delta_h \rangle < 0$ (such $\Delta_h$ is a feasible direction, 
	i.e, $h + t\Delta_h \in \{h: \norm{h} \le 1\}$ for  $t > 0$ small enough), we must have 
	\begin{equation}
	\label{eqn:g_h-Delta_h-pos}
		 \sup_{g \in \partial \asL_{\psi, \kappa}(h\opt)} \langle g, \Delta_h\rangle \ge 0.
	\end{equation}
	Since $\partial \asL_{\psi, \kappa}(h)$ is bounded, a standard perturbation argument implies that 
	Eq.~\eqref{eqn:g_h-Delta_h-pos} continues to hold for any $\Delta_h \in \cL^2(\P)$ 
	such that $\langle h\opt, \Delta_h \rangle \le 0$. Thus, if we define the convex set
	$\mathcal{C} = \{-sh\opt: s\ge 0\}$, we have
	\begin{equation}
	\label{eqn:condition-separate}
		\inf_{h^\prime \in \mathcal{C}} \langle h^\prime, \Delta_h\rangle \ge 0 
			~~\text{for some $\Delta_h \in \cL^2(\P)$}
			\Longrightarrow 
		 \sup_{g \in \partial \asL_{\psi, \kappa}(h)} \langle g, \Delta_h\rangle \ge 0.
	\end{equation}
	Assume $\mathcal{C} \cap \partial \asL_{\psi, \kappa}(h\opt) = \emptyset$. Since $\mathcal{C}, \partial \asL_{\psi, \kappa}(h\opt)$
	are closed and convex, and moreover $\partial \asL_{\psi, \kappa}(h\opt)$ is bounded, Lemma~\ref{lemma:Hahn-Banach-Separation}
	 implies the existence of $\Delta_h$ such that 
	 \begin{equation}
		\inf_{h^\prime \in \mathcal{C}} \langle h^\prime, \Delta_h\rangle >  
			\sup_{g \in \partial \asL_{\psi, \kappa}(h)} \langle g, \Delta_h\rangle.
	 \end{equation}
	 Since $\mathcal{C}$ is a cone, this implies $\inf_{h^\prime \in \mathcal{C}} \langle h^\prime, \Delta_h\rangle = 0$. This 
	 shows the existence of $\Delta_h$ such that 
	 \begin{equation}
	 \inf_{h^\prime \in \mathcal{C}} \langle h^\prime, \Delta_h\rangle = 0~~\text{and}~~
	  \sup_{g_h \in \partial L_{\psi, \kappa}(h)} \langle g_h, \Delta_h\rangle < 0, 
	 \end{equation}
	 which thus contradicts Eq.~\eqref{eqn:condition-separate}. Therefore, 
	 $\mathcal{C} \cap \partial \asL_{\psi, \kappa}(h\opt) \neq \emptyset$. This means that there exists $s$
	 such that the pair $(h\opt, s)$ satisfies the KKT conditions \eqref{eqn:KKT-opt-general}).
\end{itemize}
This concludes the proof.
\end{proof}



\section{Large width asymptotics for the random features model: Proof of Proposition \ref{prop:asymptoticRF}}
\label{sec:AsymptoticsRF}

In this section we study the random features model in the wide limit
$\psi_1\to\infty$, and prove Proposition \ref{prop:asymptoticRF}.
We begin by summarizing some notations and definitions for the
reader's convenience.

\subsection{Setup}

 The quantities $\psi, \psi_1, \psi_2$ satisfy $\psi =
  \psi_1/\psi_2$. We will fix $\psi_2$ throughout the proof so that
  taking the  limit $\psi \to \infty$ is the same as taking the limit
  $\psi_1 \to \infty$.  We will use the notation
  \begin{align}
    \kbar :=\frac{\kappa}{\sqrt{\psi}}\, ,
  \end{align}
    for the normalized margin.


We denote by $\tilde{\mu}_{\psi_1}$ the Marchenko-Pastur's law of Eq.~\eqref{eq-MPlaw}.
  The notation $\mu_{\psi_1}$ stands for the joint 
	distribution of $(X, W)$ where 
\begin{align}
\label{eqn:RF:XWdist}
X = \gamma_1^2\tX+\gamma_*^2\, ,\;\;\;\;\;\;\;\;\;\;
W = \frac{\gamma_1\sqrt{\psi_1 \tX}\, \tilde{G}}{C_0 (\gamma_1^2\tX+\gamma_*^2)^{1/2}}\, ,\;\;\;\;\;\;\;\;\;\;
C_0 = \E\Big\{\frac{\gamma_1^2 \psi_1 \tX}{(\gamma_1^2\tX+\gamma_*^2)}\Big\} ^{1/2},
\end{align}
for independent $\tX \sim \tilde{\mu}_{\psi_1}$ and $\tilde{G} \sim \normal(0,1)$. 
The quantities $\gamma_1,\gamma_{*}>0$ (defined in Eq.~\eqref{def:psionepsitwo}) are absolute constants independent of $\psi_1$.

  We next recall the predictions for the asymptotic margin
  $\kappa\opt(\mu_{\psi_1},\psi)$ and prediction error
  $\Pred\opt(\mu_{\psi_1},\psi)$. We introduce the function
  $F_{\kappa,\tau}$: 
\begin{equation}
\label{eqn:def-F-kappa-tau}
F_{\kappa,\tau}(c_1, c_2) = \left(\E \left[(\kappa- c_1 Y_{\tau}G - c_2 Z)_+^2\right]\right)^{1/2}
~~\text{where}~
	\begin{cases}
		Z \perp (Y, G, G')\, ,\\
		\P(Y_{\tau} = +1 \mid G) = \E [h(\sqrt{1-\tau^{2}}G+ \sqrt{\tau}G')|G] \, ,\\
		\P(Y_{\tau} = -1 \mid G) = 1-\E [h(\sqrt{1-\tau^{2}}G+ \sqrt{\tau}G')|G] \, ,\\
                Z , G, G'\iid \normal(0, 1)\, .
              \end{cases}
      \end{equation}
      (Notice that this definition is the same as in  Eq.~\eqref{eq:FkDef}, with theonly difference that we keep track of the dependence on $\tau$.)
Further, let $\tau(\psi_1)$ be defined  as in Eq.~\eqref{eqn:def:tau}, namely
\begin{equation}
\label{eqn:def-tau-psi-1-RF}
\tau(\psi_1)^2 = 1-\psi_1\E\Big\{\frac{\gamma_1^2 \tX}{\gamma_1^2\tX+\gamma_*^2}\Big\}.
\end{equation}

For the purpose of deriving the $\psi_1\to\infty$ asymptotics, it is most convenient to
use the variational characterization of $\kappa\opt(\mu_{\psi_1},\psi)$ and $\Pred\opt(\mu_{\psi_1},\psi)$
which is obtained in  Appendix~\ref{sec:all-property-optimization} (we will slighly modify those notations 
for later convenience).
We introduce the optimization problem
\begin{equation} 
\label{eqn:optimization:RF}
\begin{split}
  \asL\opt_{\psi_1}(\kbar,\tau)&=  \min_{ \E_{\psi_1}[h^2] \leq 1}\asL_{\psi_1}(h;\kbar,\tau)\, ,\\
    \asL_{\psi_1}(h;\kbar,\tau)&=\psi^{-1/2}F_{\kbar\sqrt{\psi},\tau}\Big(\E_{\psi_1}[X^{1/2}Wh],\big(\E_{\psi_1}[Xh^2]-(\E_{\psi_1}[X^{1/2}W h])^{2}\big)^{1/2}\Big)+\E_{\psi_1}[X^{1/2}Gh],
\end{split}
\end{equation}
where all the expectation is taken with respect to the $\P_{\psi_1}$, a measure in $\R^3$ with coordinates $(g, x, w)$: 
\begin{equation*}
\P_{\psi_1}\equiv \normal(0,1)\otimes\mu_{\psi_1}\, .
\end{equation*}
We denote by $G(g, x, w) =g$, $X(g, x, w) = x$, $W(g, x, w) = w$ the corresponding random variables. Note that, by \eqref{eqn:RF:XWdist}, $\tX, \tG$ are functions of $(X,W)$, hence $\tX, \tG$ can also be regarded as elements in $\cL^2(\P_{\psi_1})$.

We observe that the quantity $T(\psi, \kappa)$ of Proposition  \ref{proposition:system-of-Eq-T} coincides
with $\asL\opt_{\psi_1}$, namely $T(\psi, \kappa) = \asL\opt_{\psi_1} (\kappa\psi^{-1/2},\tau)$. In particular,
the asymptotic prediction for the maximum rescaled margin is (adapting Definition \ref{def:KappaE})
\begin{equation}
\label{eqn:def-kappa-opt-psi}
\kbar\opt(\psi_1) = \inf\big\{\kbar \ge 0: \asL\opt_{\psi_1}(\kbar,\tau(\psi_1))= 0\big\}. 
\end{equation}
Using Lemma \ref{lemma:technical-3} and Corollary \ref{coro:PsiLarge}, the asymptotic prediction error
can also be expressed in terms of the variational principle \eqref{eqn:optimization:RF}
Denoting by $h\opt(\psi_1)$ the optimizer (which we know is unique by Lemma \ref{lemma:technical-3}), we define
$c_1^*(\psi_1)$, $c_2^*(\psi_1)$ via
\begin{equation}
\label{eqn:characterization:c-opt-psi}
c_1\opt(\psi_1)\equiv \E_{\psi_1}[X^{1/2}Wh\opt(\psi_1)],~~c_2\opt(\psi_1)\equiv \big(\E_{\psi_1}[Xh\opt(\psi_1)^2]-(\E_{\psi_1}[X^{1/2}W h\opt(\psi_1)])^{2}\big)^{1/2}.
\end{equation}
(These coincide with the quantities defined in Proposition~\ref{proposition:system-of-Eq-T}.)
In terms of these quantities, the prediction error is given by:
\begin{equation}
\label{eqn:error-RF-explicit}
\begin{split}
\error\opt(\mu_{\psi_1},\psi) &=  \P\left(\nu\opt(\psi_1) Y_{\tau(\psi_1)}G+ \sqrt{1-\nu\opt(\psi_1)^2} Z \le 0\right)\, ,\\
  \nu\opt(\psi_1) & \equiv \frac{c_1\opt(\psi_1)}{\sqrt{(c_1\opt(\psi_1))^2 + (c_2\opt(\psi_1))^2}}\, ,
\end{split}
\end{equation} 

\subsection{Proof of Proposition \ref{prop:asymptoticRF}}
\label{subsec:proof-of-proposition-asymptoticRF}

We begin by stating some properties of the variational problem in the statement of Proposition \ref{prop:asymptoticRF}.
The proof of the next lemma is deferred to Section \ref{subsec:RF:proofs}.
\begin{lemma}\label{lemma:unique-d1-d2:T-increasing}
  For $\psi_2,\gamma_1,\gamma_{*}>0$, define  $T_{\infty}(\,\cdot\,; \psi_2,\gamma_1,\gamma_{*})$
  as in the statement of Eq.~\eqref{eqn:def:T-infty:0}. Namely
    \begin{equation}
    \begin{split}
   \label{eqn:def:T-infty}
        R(\kbar,\tau,d_1,d_2)&\equiv F_{\kbar,\tau}\big(\sqrt{\psi_2}\gamma_1 d_1,\sqrt{\psi_2}\gamma_1 d_2\big)-\gamma_1 d_2-\gamma_{*}\sqrt{1-d_1^2-d_2^2}\, ,\\
        T_{\infty}(\kbar) &= \min_{\substack{d_1^2+d_2^2 \leq 1,\\d_2\geq 0}}  R(\kbar,\tau,d_1,d_2)\, .
    \end{split}
    \end{equation}

\begin{enumerate}
\item[$(a)$] 
    Then this optimization problem has a unique minimizer  $(d_1\opt,d_2\opt)=(d_1\opt(\kbar),d_2\opt(\kbar))$
    which lies in the open set $\mathscr{S}\equiv\{(d_1,d_2):d_1^2+d_2^2<1,d_2>0\}$.  Further
    $(d_1\opt,d_2\opt)$ is uniquely characterized by the first order stationarity conditions
    $\grad_{d_1,d_2}R(\kbar,\tau,d_1\opt,d_2\opt)=0$.
    \item[(b)] The function $\kbar \to T_{\infty}(\kbar ; \psi_2,\gamma_1,\gamma_{*})$ is strictly increasing and satisfies
    \begin{equation}
    \label{eqn:T:uniquezero}
        \lim_{\bar{\kappa} \downarrow 0} T_{\infty}(\bar{\kappa})<0<\lim_{\bar{\kappa} \uparrow \infty} T_{\infty}(\bar{\kappa}).
\end{equation}
Hence, $T_{\infty}(\,\cdot\, ; \psi_2,\gamma_1,\gamma_{*})$ has a unique zero, which coincides
with  $\kbar\owid (\psi_2,\gamma_1,\gamma_{*}) $.
\end{enumerate}
\end{lemma}

Note that, for $\alpha>0, c_1,c_2\in \R$ and $\tau\ge 0$,
\begin{equation}
\label{eqn:RF:F:scale}
    F_{\alpha\kappa,\tau}(\alpha c_1,\alpha c_2)= \alpha F_{\kappa,\tau}(c_1,c_2)\, .
  \end{equation}
  This implies
\begin{equation}
\label{eqn:asymptoticRF:betterexpression}
\asL_{\psi_1}(h;\kbar,\tau)=F_{\kbar, \tau(\psi_1)}\Big(\cbar_{1,\psi_1}(h),\cbar_{2,\psi_1}(h)\Big)+\cbar_{3,\psi_1}(h),
\end{equation}
where, for $h \in \cL^{2}(\P_{\psi_1})$, we define
\begin{equation}
\label{eqn:def:cbar-123}
\cbar_{1,\psi_1}(h)\equiv \psi^{-1/2} \E_{\psi_1}[X^{1/2}Wh],~~\cbar_{2,\psi_1}(h)\equiv\psi^{-1/2}\big(\E_{\psi_1}[Xh^2]-(\E_{\psi_1}[X^{1/2}W h])^{2}\big)^{1/2},~~\cbar_{3,\psi_1}(h)\equiv \E_{\psi_1}[X^{1/2}Gh].
\end{equation}
The main idea of the proof is to approximate $\cbar_{i,\psi_1}(h),i=1,2,3$ uniformly over $h\in \cL^{2}(\P_{\psi_1}), \E_{\psi_1}[h^2]\leq 1$, when $\psi_1 \to \infty$. Observe that if $\psi_1>1$,
\begin{align}
\label{eqn:conditionaldistn:Xtilde}
    \tX=0\quad\text{w.p.}\quad 1-\frac{1}{\psi_1}\quad\text{and}\quad\tX\mid \tX \neq 0 \stackrel{d}{=}\psi_1 V\quad\text{for}\quad V\sim \nu_{\frac{1}{\psi_1}}.
\end{align}
With \eqref{eqn:conditionaldistn:Xtilde} in mind, we compute $\cbar_{i,\psi_1}(h)$ by conditioning on to the events $\tX=0$ and $\tX\neq 0$. It turns out that $\cbar_{i,\psi_1}(h)$ can be approximated by the following quantities. For $h \in \cL^{2}(\P_{\psi_1})$, define
\begin{equation}
\label{eqn:def:dh}
\begin{split}
    &d_{1,\psi_1}(h) \equiv \E_{\psi_1}[\psi_1^{-1/2}h \tG\mid \tX \neq 0],\\
    &d_{2,\psi_1}(h) \equiv \Big(\E_{\psi_1}[\psi_1^{-1}h^2\mid \tX \neq 0]-(E_{\psi_1}[\psi_1^{-1/2}h\tG \mid\tX \neq 0])^{2}\Big)^{1/2},\\
    &d_{3,\psi_1}(h) \equiv \E_{\psi_1}[\psi_1^{-1/2}hG \mid \tX\neq 0],\\
    &d_{4,\psi_1}(h) \equiv \E_{\psi_1}[(1-\psi_1^{-1})^{1/2}hG \mid \tX=0].
\end{split}
\end{equation}
The lemma below shows that we can approximate $\cbar_{i,\psi_1}(h),i=1,2,3$ by linear functions of $d_{i,\psi_1}(h),i=1,2,3,4$, along with an estimate on $\tau(\psi_1)$. The proof of Lemma \ref{lemma:RF:approx:c} is deferred to \ref{subsec:RF:proofs} along with the proof of Lemma \ref{lem:approx:asymptoticRF} and Lemma \ref{lemma:asymptoticRF:betterexpression}, to be stated below.
\begin{lemma}
\label{lemma:RF:approx:c}
There exists a constant $C=C(\psi_2,\gamma_1,\gamma_{*})>0$ such that
\begin{align}
    \sup_{\E_{\psi_1}[h^2] \leq 1} |\cbar_i(h)-\sqrt{\psi_2}\gamma_1 d_{i,\psi_1}(h)| &\leq C\psi_1^{-1/4}\quad\text{for}\quad i=1,2\quad\text{and}\label{eqn:RF:approximate:c12}\\
    \sup_{\E_{\psi_1}[h^2] \leq 1} |\cbar_3(h)-\gamma_1 d_{3,\psi_1}(h)-\gamma_{*}d_{4,\psi_1}(h)| &\leq C\psi_1^{-1/4}\label{eqn:RF:approximate:c3}\\
    \tau(\psi_1)&\leq C\psi_1^{-1/2}\label{eqn:RF:approximate:tau}
\end{align}
\end{lemma}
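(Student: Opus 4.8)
\textbf{Proof strategy for Lemma \ref{lemma:RF:approx:c}.}
The plan is to approximate each of the three functionals $\cbar_{i,\psi_1}(h)$, $i=1,2,3$, and the noise level $\tau(\psi_1)$ as $\psi_1\to\infty$, by conditioning on the event $\{\tX=0\}$ versus $\{\tX\neq 0\}$ and exploiting the explicit description \eqref{eqn:conditionaldistn:Xtilde} of $\tilde\mu_{\psi_1}$. The key structural observation is that when $\psi_1$ is large, $\tX$ is $0$ with probability $1-1/\psi_1$, and conditionally on $\tX\neq 0$ we have $\tX\ed \psi_1 V$ with $V\sim\nu_{1/\psi_1}$, so that $V\to 1$ in $\cL^2$ as $\psi_1\to\infty$ (the Marchenko--Pastur law $\nu_{1/\psi_1}$ concentrates near $1$ at rate $\psi_1^{-1/2}$). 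On this event $X=\gamma_1^2\tX+\gamma_*^2=\gamma_1^2\psi_1 V+\gamma_*^2$, so $X/\psi_1\to \gamma_1^2$ and $W=\gamma_1\sqrt{\psi_1\tX}\,\tG/(C_0 X^{1/2})$ simplifies: $W = \gamma_1\psi_1\sqrt V\,\tG/(C_0 X^{1/2})$, and since $C_0^2=\E\{\gamma_1^2\psi_1\tX/X\}\to \psi_2\gamma_1^2$ (using $n/d\to\psi_2$, so $\psi_1/\psi = \psi_2$), one gets $X^{1/2}W \approx \sqrt{\psi_2}\,\gamma_1\sqrt{\psi_1}\,\tG\cdot(1+o(1))$ on $\{\tX\neq 0\}$, while $X^{1/2}W$ carries the factor $\gamma_*$ on $\{\tX=0\}$ but there $W=0$. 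Plugging these asymptotics into the definitions \eqref{eqn:def:cbar-123} of $\cbar_{1,\psi_1},\cbar_{2,\psi_1},\cbar_{3,\psi_1}$ and the definitions \eqref{eqn:def:dh} of $d_{i,\psi_1}$ produces the claimed linear approximations.

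The steps I would carry out, in order. First, record precise quantitative bounds on the Marchenko--Pastur moments: $|\E_{\nu_{1/\psi_1}}[V]-1|\le C\psi_1^{-1}$ and $\E_{\nu_{1/\psi_1}}[(V-1)^2]\le C\psi_1^{-1}$ (these follow from $\lambda_\pm=(1\pm\psi_1^{-1/2})^2$ and the explicit density), which immediately give $\E_{\nu_{1/\psi_1}}|\sqrt V-1|\le C\psi_1^{-1/2}$. Second, compute $C_0^2$ exactly: $C_0^2=\E\{\gamma_1^2\psi_1\tX/(\gamma_1^2\tX+\gamma_*^2)\}=\psi_1\cdot\frac{1}{\psi_1}\E_{\nu_{1/\psi_1}}\{\gamma_1^2\psi_1 V/(\gamma_1^2\psi_1 V+\gamma_*^2)\}=\E_{\nu_{1/\psi_1}}\{\gamma_1^2\psi_1 V/(\gamma_1^2\psi_1 V+\gamma_*^2)\}$; here I would use $\psi_1/\psi=\psi_2$ somewhere — wait, more carefully $C_0$ as defined is $\E\{\gamma_1^2\psi_1\tX/X\}^{1/2}$ and one checks $C_0^2\to \gamma_1^2$ with error $O(\psi_1^{-1/2})$ since $\gamma_1^2\psi_1 V/(\gamma_1^2\psi_1 V+\gamma_*^2)=1+O(\psi_1^{-1})$; the factor $\sqrt{\psi_2}$ in the claim then comes from the relation $\psi=\psi_1/\psi_2$ hidden in the $\psi^{-1/2}$ prefactors of \eqref{eqn:def:cbar-123}. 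Third, for a fixed $h$ with $\E_{\psi_1}[h^2]\le 1$, decompose $\E_{\psi_1}[X^{1/2}Wh]=\E_{\psi_1}[X^{1/2}Wh\,\mathbf 1_{\tX\neq 0}]$ (the $\{\tX=0\}$ part vanishes since $W=0$ there), substitute $X^{1/2}W=\gamma_1\psi_1\sqrt V\,\tG/C_0$ and the estimate on $C_0$ and $\sqrt V$, and bound the resulting error by Cauchy--Schwarz using $\E_{\psi_1}[h^2]\le 1$ and the fact that $\E_{\psi_1}[\psi_1^{-1}\tG^2\mathbf 1_{\tX\neq0}]\le\psi_1^{-1}\cdot\P(\tX\neq0)\cdot\text{(bounded)}$, controlling the cross terms involving $(\sqrt V-1)$. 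Fourth, repeat for $\cbar_{2,\psi_1}$ (which is built from $\E_{\psi_1}[Xh^2]$ and $(\E_{\psi_1}[X^{1/2}Wh])^2$ — both handled the same way, noting $\E_{\psi_1}[Xh^2]=\E_{\psi_1}[Xh^2\mathbf 1_{\tX\neq0}]+\gamma_*^2\E_{\psi_1}[h^2\mathbf1_{\tX=0}]$ and the $\{\tX=0\}$ contribution is lower order relative to the $\psi_1$-sized $\{\tX\neq0\}$ contribution after the $\psi^{-1/2}$ scaling) and for $\cbar_{3,\psi_1}=\E_{\psi_1}[X^{1/2}Gh]$, splitting into $X^{1/2}=\sqrt{\gamma_1^2\psi_1 V+\gamma_*^2}\approx\gamma_1\sqrt{\psi_1 V}$ on $\{\tX\neq0\}$ (giving the $\gamma_1 d_{3,\psi_1}$ term after absorbing $\sqrt{\psi_1}$ appropriately — actually $d_{3,\psi_1}$ already carries $\psi_1^{-1/2}$) and $X^{1/2}=\gamma_*$ on $\{\tX=0\}$ (giving $\gamma_*d_{4,\psi_1}$). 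Fifth, take the supremum over $h$: because every error bound is a product of a deterministic $O(\psi_1^{-1/4})$ or $O(\psi_1^{-1/2})$ factor with an $\cL^2$-norm of $h$ (bounded by $1$) via Cauchy--Schwarz, the bounds are automatically uniform in $h$. Sixth, the estimate \eqref{eqn:RF:approximate:tau} on $\tau(\psi_1)$: from \eqref{eqn:def-tau-psi-1-RF}, $\tau(\psi_1)^2=1-\psi_1\E\{\gamma_1^2\tX/X\}=1-\E_{\nu_{1/\psi_1}}\{\gamma_1^2\psi_1 V/(\gamma_1^2\psi_1 V+\gamma_*^2)\}=\E_{\nu_{1/\psi_1}}\{\gamma_*^2/(\gamma_1^2\psi_1 V+\gamma_*^2)\}$, which is $O(\psi_1^{-1})$ on the support of $\nu_{1/\psi_1}$ (where $V$ is bounded below by $\lambda_-=(1-\psi_1^{-1/2})^2\ge 1/4$ for $\psi_1$ large), giving $\tau(\psi_1)\le C\psi_1^{-1/2}$.

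The main obstacle I anticipate is the bookkeeping of the powers of $\psi_1$: the quantities $X$, $X^{1/2}W$, and $X^{1/2}$ on the event $\{\tX\neq 0\}$ are of size $\psi_1$, $\psi_1$, and $\psi_1^{1/2}$ respectively, while the $\psi^{-1/2}=\psi_2^{1/2}\psi_1^{-1/2}$ prefactors in \eqref{eqn:def:cbar-123} and the $\psi_1^{-1/2},\psi_1^{-1}$ weights in the definitions \eqref{eqn:def:dh} of $d_{i,\psi_1}$ must be tracked so that the net rates come out to be $\psi_1^{-1/4}$. The $\psi_1^{-1/4}$ (rather than $\psi_1^{-1/2}$) rate suggests that somewhere a square root of a $\psi_1^{-1/2}$ bound is taken — most plausibly when passing from an $\cL^2$ bound on $X^{1/2}Wh - \sqrt{\psi_2}\gamma_1\sqrt{\psi_1}\tG h/\sqrt{\psi_1}$ to an $\cL^1$ bound via Cauchy--Schwarz combined with the $O(\psi_1^{-1/2})$ control on $\sqrt V-1$, or when the $\{\tX=0\}$ contribution to $\cbar_2$ (of size $\gamma_*^2\psi^{-1/2}\E[h^2\mathbf1_{\tX=0}]$, hence $O(\psi_1^{-1/2})$) gets absorbed into a square root in forming $\cbar_2=(\cdots)^{1/2}$ — one must be careful that $(a+\epsilon)^{1/2}-a^{1/2}$ is only $O(\epsilon^{1/2})$ when $a$ can be small, but here $a=\cbar_{2,\psi_1}(h)^2$ can indeed be near zero, so the honest bound is $O(\psi_1^{-1/4})$. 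I would handle this by writing $|\sqrt{a}-\sqrt{b}|\le\sqrt{|a-b|}$ whenever the cleaner Lipschitz bound is unavailable, which is exactly the source of the $\psi_1^{-1/4}$ exponent. Everything else is routine Cauchy--Schwarz and moment estimates for the Marchenko--Pastur law.
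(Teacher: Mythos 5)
Your proposal follows essentially the same route as the paper: condition on $\{\tX=0\}$ versus $\{\tX\neq 0\}$, use the compact-support concentration of the Marchenko--Pastur law ($|V-1|\le 3\psi_1^{-1/2}$ a.s.), control the $\tau(\psi_1)$ term directly, handle uniformity in $h$ by Cauchy--Schwarz against $\E_{\psi_1}[h^2]\le 1$, and obtain the $\psi_1^{-1/4}$ rate in $\cbar_2$ from $|\sqrt a - \sqrt b|\le\sqrt{|a-b|}$. One small slip in your bookkeeping: the $O(\psi_1^{-1/2})$ error in $\psi^{-1}\E_{\psi_1}[Xh^2]$ is driven by the $V-1$ fluctuation on $\{\tX\neq 0\}$, not by the $\{\tX=0\}$ contribution (which is actually $O(\psi_1^{-1})$ since $\psi^{-1}=\psi_2/\psi_1$), but this does not affect the validity of the argument.
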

Lemma \ref{lemma:RF:approx:c} suggests that for large $\psi_1$, the optimization problem for $T(\psi_1,\kappa)$ \eqref{eqn:asymptoticRF:betterexpression} can be approximated by the optimization below for $\tau=0$. For $\kbar\geq0,\tau \in [0,1]$, define 
    \begin{equation}
        \begin{split}
\label{eqn:def:asymptoticRF-optimization}
    \asG\opt(\bar{\kappa},\tau)&\equiv\min_{\E_{\psi_1}[h^2]\leq 1}\asG(h;\kbar,\tau)\, ,\\
    \asG(h;\kbar, \tau)&\equiv F_{\bar{\kappa},\tau}\Big(\psi_2^{1/2}\gamma_1 d_{1,\psi_1}(h), \psi_2^{1/2}\gamma_1d_{2,\psi_1}(h)\Big)+\gamma_1 d_{3,\psi_1}(h)+\gamma_{*}d_{4,\psi_1}(h),
\end{split}
\end{equation}
where $d_{i,\psi_1}(h)$ are defined in \eqref{eqn:def:dh}. 
We are most interested in the case  $\asG\opt(\bar{\kappa},\tau=0)$, but our proof makes use of the function $\asG\opt(\bar{\kappa},\tau)$ for general $\tau \in [0,1]$.

The next lemma shows that $\asG$ is a good approximation ot $\asL_{\psi_1}$, when $\psi_1$ is large.
\begin{lemma}
\label{lem:approx:asymptoticRF}
For $\psi_1,\kappa>0$ and $h \in \cL^2(\P_{\psi_1})$, define 
\begin{equation}
\label{eqn:RF:def:E}
    E_{\psi_1}(h;\kbar) \equiv\asL_{\psi_1}(h;\kbar,\tau(\psi_1))-\asG(h;\kbar,\tau(\psi_1)).
\end{equation}
Then, we have that
\begin{equation}
    \lim_{\psi_1\to\infty} \sup_{\kappa>0}\sup_{\E_{\psi_1}[h^2]\leq 1} |E_{\psi_1}(h;\kbar)| =0\, .
\end{equation}
\end{lemma}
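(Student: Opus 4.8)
The plan is to show that the difference $E_{\psi_1}(h;\kbar) = \asL_{\psi_1}(h;\kbar,\tau(\psi_1))-\asG(h;\kbar,\tau(\psi_1))$ is controlled uniformly in $h$ and $\kbar$ by combining Lemma~\ref{lemma:RF:approx:c} with a Lipschitz-continuity estimate for the function $F_{\kappa,\tau}$. Using the rescaling identity \eqref{eqn:RF:F:scale}, both $\asL_{\psi_1}$ and $\asG$ can be written in the common form \eqref{eqn:asymptoticRF:betterexpression}, i.e.\ as $F_{\kbar,\tau(\psi_1)}$ evaluated at a pair of arguments plus a linear term: $\asL_{\psi_1}(h;\kbar,\tau(\psi_1)) = F_{\kbar,\tau(\psi_1)}\big(\cbar_{1,\psi_1}(h),\cbar_{2,\psi_1}(h)\big)+\cbar_{3,\psi_1}(h)$, while $\asG(h;\kbar,\tau(\psi_1)) = F_{\kbar,\tau(\psi_1)}\big(\sqrt{\psi_2}\gamma_1 d_{1,\psi_1}(h),\sqrt{\psi_2}\gamma_1 d_{2,\psi_1}(h)\big)+\gamma_1 d_{3,\psi_1}(h)+\gamma_*d_{4,\psi_1}(h)$. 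Thus
\begin{equation*}
|E_{\psi_1}(h;\kbar)| \le \big|F_{\kbar,\tau(\psi_1)}(\cbar_1,\cbar_2)-F_{\kbar,\tau(\psi_1)}(\sqrt{\psi_2}\gamma_1 d_1,\sqrt{\psi_2}\gamma_1 d_2)\big| + |\cbar_3 - \gamma_1 d_3 - \gamma_* d_4|,
\end{equation*}
and the second term is $\le C\psi_1^{-1/4}$ directly by \eqref{eqn:RF:approximate:c3}.

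For the first term, the plan is to establish that $(c_1,c_2)\mapsto F_{\kbar,\tau}(c_1,c_2)$ is $1$-Lipschitz in $(c_1,c_2)$ uniformly over $\kbar\ge0$ and $\tau\in[0,1]$. This follows from the same Minkowski/triangle-inequality argument used in the proof of Lemma~\ref{lemma:xi-1-xi-2} (point $(c)$ there): writing $Z(c_1,c_2)=(\kbar - c_1 Y_\tau G - c_2 Z)_+$ and using $(a+b)_+\le a_+ + b_+$ together with $\mathbb{E}[(YG)^2]=\mathbb{E}[Z^2]=1$, one gets $|F_{\kbar,\tau}(c_1,c_2)-F_{\kbar,\tau}(c_1',c_2')|\le \sqrt{2}\,\|(c_1-c_1',c_2-c_2')\|_2$ (the constant is immaterial). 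Hence the first term is bounded by a constant times $\sum_{i=1,2}|\cbar_{i,\psi_1}(h)-\sqrt{\psi_2}\gamma_1 d_{i,\psi_1}(h)|$, which is $\le C\psi_1^{-1/4}$ by \eqref{eqn:RF:approximate:c12}. Note that the $\kbar$-dependence has now completely disappeared from the estimate: the bound is uniform in $\kbar>0$ because the Lipschitz constant of $F_{\kbar,\tau}$ in its two spatial arguments does not depend on $\kbar$, and the arguments $\cbar_i(h), d_i(h)$ themselves do not depend on $\kbar$.

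Combining the two pieces gives $\sup_{\kappa>0}\sup_{\mathbb{E}_{\psi_1}[h^2]\le1}|E_{\psi_1}(h;\kbar)| \le C'\psi_1^{-1/4}\to 0$ as $\psi_1\to\infty$, which is the claim. The main (and essentially only) subtlety is verifying the uniform Lipschitz property of $F_{\kbar,\tau}$ in its spatial arguments with a constant independent of $\kbar$ and $\tau$; this is a direct adaptation of the Minkowski-inequality computation already carried out in Appendix~\ref{sec:lemma:xi-1-xi-2}, the only new point being that the label distribution of $Y_\tau$ depends on $\tau$ but the bound $\mathbb{E}[(Y_\tau G)^2]=\mathbb{E}[G^2]=1$ is insensitive to this. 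Everything else is bookkeeping: unwinding the definitions \eqref{eqn:asymptoticRF:betterexpression}, \eqref{eqn:def:cbar-123}, \eqref{eqn:def:dh}, \eqref{eqn:def:asymptoticRF-optimization} and quoting Lemma~\ref{lemma:RF:approx:c}.
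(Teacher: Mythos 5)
Your proposal is correct and follows essentially the same strategy as the paper: decompose $E_{\psi_1}(h;\kbar)$ into the difference of $F_{\kbar,\tau}$ terms plus the difference of linear terms, establish that $F_{\kbar,\tau}(\cdot,\cdot)$ is Lipschitz in its two arguments with a constant independent of $\kbar$ and $\tau$, and then invoke Lemma~\ref{lemma:RF:approx:c}. The only (inessential) difference is the route to the uniform Lipschitz estimate: you derive it directly from $(a+b)_+\le a_+ + b_+$ and Minkowski's inequality (as in Appendix~\ref{sec:lemma:xi-1-xi-2}), while the paper obtains the bound $|\partial_i F_{\kappa,\tau}|\le 1$ by exploiting convexity of $F_{\kappa,\tau}$ together with the limiting values of $F_{\kappa,\tau}(c,c_2)/c$ as $c\to\pm\infty$; both routes give the needed $\kbar$-uniform control and the rest is identical bookkeeping.
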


The next lemma shows that$\asG\opt(\kbar,\tau)$ is in fact independent of $\psi_1$.
\begin{lemma}
\label{lemma:asymptoticRF:betterexpression}
    For $\kbar\geq0,\tau \in [0,1]$ and $d_1^2+d_2^2\leq 1$, define
    \begin{equation}
    \label{eqn:def:R}
        R(\kbar,\tau,d_1,d_2)\equiv F_{\bar{\kappa},\tau}\big(\sqrt{\psi_2}\gamma_1 d_1,\sqrt{\psi_2}\gamma_1 d_2\big)-\gamma_1 d_2-\gamma_{*}\sqrt{1-d_1^2-d_2^2}.
    \end{equation}
    Then, we have a simplified expression for $\asG\opt(\kbar,\tau)$:
    \begin{equation}
    \label{eqn:asymptoticRF-kbar-tau-betterexpression}
        \asG\opt(\bar{\kappa},\tau)= \min_{\substack{d_1^{2}+d_2^{2}\leq 1\\ d_2 \geq 0}}R(\kbar,\tau,d_1,d_2).
    \end{equation}
    In particular, recalling $T_{\infty}(\kbar)$ defined in \eqref{eqn:def:T-infty}, we have that
     $\asG\opt(\kbar,0)=T_{\infty}(\kbar)$
     for any $\kbar>0$.

     Finally, for any $\kbar> 0$, $\tau\in [0,1]$,
     \begin{align}
       \asG(h;\kbar,\tau)\ge R(\kbar,\tau,d_{1,\psi_1}(h),d_{2,\psi_1}(h))\, .\label{eq:LB_asG}
       \end{align}
\end{lemma}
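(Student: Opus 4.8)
\textbf{Proof plan for Lemma \ref{lemma:asymptoticRF:betterexpression}.}

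The plan is to reduce the infinite-dimensional optimization defining $\asG^*(\kbar,\tau)$ to the three-variable problem on the right-hand side of \eqref{eqn:asymptoticRF-kbar-tau-betterexpression} by showing that the map $h\mapsto (d_{1,\psi_1}(h),d_{2,\psi_1}(h),d_{3,\psi_1}(h),d_{4,\psi_1}(h))$ has range exactly $\{(d_1,d_2,d_3,d_4): d_1^2+d_2^2+d_3^2+d_4^2\le 1,\ d_2\ge 0\}$ (subject to a technical point about how $d_2$ is defined), and that $\asG(h;\kbar,\tau)$ depends on $h$ only through this tuple. First I would observe that, conditionally on $\tX\neq 0$, the triple $(G,\tG,h)$ lives in a well-behaved $\cL^2$ space; the quantities $d_{1,\psi_1}(h)$ and $d_{3,\psi_1}(h)$ are the inner products of $\psi_1^{-1/2}h$ with the orthonormal random variables $\tG$ and $G$ restricted to $\{\tX\neq 0\}$ (using that $G$ is independent of $(X,W)$ under $\P_{\psi_1}$, so $G\perp \tG$), while $d_{2,\psi_1}(h)^2$ is the squared norm of the component of $\psi_1^{-1/2}h$ orthogonal to $\tG$ on that event, and $d_{4,\psi_1}(h)$ is the inner product of $(1-\psi_1^{-1})^{1/2}h$ with $G$ restricted to the complementary event $\{\tX=0\}$. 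A direct bookkeeping of variances then gives $d_{1,\psi_1}(h)^2+d_{2,\psi_1}(h)^2+d_{3,\psi_1}(h)^2+d_{4,\psi_1}(h)^2 \le \E_{\psi_1}[h^2]\le 1$, so the constraint set of the reduced problem contains the image. Conversely, for any prescribed tuple in that ball with $d_2\ge 0$ one can explicitly build an $h$ achieving it (put the prescribed components along $\tG$, along $G$ on each event, one extra orthogonal direction for the $d_2$ part, and zero elsewhere), which shows the image is all of the ball; hence the infimum over $h$ equals the infimum of $F_{\kbar,\tau}(\sqrt{\psi_2}\gamma_1 d_1,\sqrt{\psi_2}\gamma_1 d_2)+\gamma_1 d_3+\gamma_* d_4$ over the ball.

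Next I would carry out the elementary minimization over $(d_3,d_4)$ for fixed $(d_1,d_2)$. Since the objective is linear in $(d_3,d_4)$ and those variables only enter through $\gamma_1 d_3+\gamma_* d_4$ subject to $d_3^2+d_4^2\le 1-d_1^2-d_2^2$, Cauchy--Schwarz gives the minimizer $(d_3,d_4) = -(\gamma_1,\gamma_*)\sqrt{1-d_1^2-d_2^2}/\sqrt{\gamma_1^2+\gamma_*^2}$ with minimal value $-\sqrt{\gamma_1^2+\gamma_*^2}\,\sqrt{1-d_1^2-d_2^2}$. Wait --- I should double-check against the claimed form $-\gamma_1 d_2-\gamma_*\sqrt{1-d_1^2-d_2^2}$ in \eqref{eqn:def:R}; the discrepancy is because the $\gamma_1 d_2$ term there comes from the first argument structure, not from $d_3$, so the correct reduction is: minimize over $d_4$ with $d_3$ reallocated. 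On reflection, the cleanest route is to note that $F_{\kbar,\tau}$ is increasing in its second argument (Lemma \ref{lemma:F-convex-increasing}(b)) and the only place $d_4$ appears is linearly, so one minimizes $d_4$ to its constraint boundary, and the structure of \eqref{eqn:def:R} then matches after identifying which of the four directions is absorbed; I would simply verify the bookkeeping carefully so that the surviving reduced problem is exactly \eqref{eqn:asymptoticRF-kbar-tau-betterexpression}. Once \eqref{eqn:asymptoticRF-kbar-tau-betterexpression} is established, the identity $\asG^*(\kbar,0)=T_\infty(\kbar)$ is immediate by comparing with \eqref{eqn:def:T-infty}, since $R(\kbar,0,\cdot,\cdot)$ is precisely the integrand of $T_\infty$.

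Finally, for the inequality \eqref{eq:LB_asG} I would note that it is a weakened version of the same variance bookkeeping: for \emph{any} $h$ with $\E_{\psi_1}[h^2]\le 1$ we have shown $d_{1,\psi_1}(h)^2+d_{2,\psi_1}(h)^2+d_{3,\psi_1}(h)^2+d_{4,\psi_1}(h)^2\le 1$, hence $(d_{3,\psi_1}(h),d_{4,\psi_1}(h))$ lies in the disk of radius $\sqrt{1-d_{1,\psi_1}(h)^2-d_{2,\psi_1}(h)^2}$, so $\gamma_1 d_{3,\psi_1}(h)+\gamma_* d_{4,\psi_1}(h)\ge -\sqrt{\gamma_1^2+\gamma_*^2}\sqrt{1-d_{1,\psi_1}(h)^2-d_{2,\psi_1}(h)^2}$ and, after the same reallocation as above, $\asG(h;\kbar,\tau)\ge R(\kbar,\tau,d_{1,\psi_1}(h),d_{2,\psi_1}(h))$. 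The main obstacle I anticipate is purely notational: getting the conditioning on $\{\tX=0\}$ versus $\{\tX\neq 0\}$ and the $\psi_1^{-1/2}$ scalings consistent with the definitions in \eqref{eqn:def:dh} and \eqref{eqn:def:cbar-123}, and making sure the "$d_2$" coordinate (which is itself a conditional standard deviation, not an unconstrained coordinate) is handled correctly --- in particular that the surjectivity construction respects $d_2\ge 0$ and that the $d_2$-direction is genuinely orthogonal to the $\tG$-direction so no cross terms are lost. There is no serious analytic difficulty beyond this; everything reduces to orthogonal decompositions in $\cL^2(\P_{\psi_1})$ and a one-line Cauchy--Schwarz.
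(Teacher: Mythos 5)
Your overall strategy---reduce $\asG\opt(\kbar,\tau)$ to a finite-dimensional problem by showing that $\asG(h;\kbar,\tau)$ depends on $h$ only through $(d_{1,\psi_1}(h),\dots,d_{4,\psi_1}(h))$, characterize the image of that map, and then minimize explicitly over $(d_3,d_4)$---is exactly the route the paper takes. But your characterization of the image is incorrect, and the error propagates into the minimization in a way you notice but do not resolve.

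You claim the image of $h\mapsto(d_1,\dots,d_4)$ over $\{\E_{\psi_1}[h^2]\le 1\}$ is the ball $\{d_1^2+d_2^2+d_3^2+d_4^2\le 1,\ d_2\ge 0\}$. It is not. Observe that $d_{2,\psi_1}(h)$ is by definition the full conditional $\cL^2$-norm of the component of $h$ orthogonal to $\tG$ on $\{\tX\neq 0\}$; since $G$ is one direction inside that orthogonal complement (using $G\perp\tG$), the component along $G$ --- namely $d_{3,\psi_1}(h)$ --- is already counted inside $d_{2,\psi_1}(h)$. The correct constraints are $|d_3|\le d_2$ (Bessel inside the complement of $\tG$) together with $d_1^2+d_2^2+d_4^2\le\E_{\psi_1}[h^2]\le 1$; $d_3$ does not enter the quadratic bound at all. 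Your accounting double-counts the $G$-direction and would only yield $d_1^2+d_2^2+d_3^2+d_4^2\le 1+d_3^2$. Indeed the two sets are genuinely different: $(d_1,d_2,d_3,d_4)=(0,\tfrac12,\tfrac12,\tfrac{\sqrt 3}{2})$ satisfies $|d_3|\le d_2$ and $d_1^2+d_2^2+d_4^2=1$, but has $d_1^2+d_2^2+d_3^2+d_4^2=\tfrac54$.

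This matters because the minimization you then set up, $\min\{\gamma_1 d_3+\gamma_* d_4:\ d_3^2+d_4^2\le 1-d_1^2-d_2^2\}$, gives $-\sqrt{\gamma_1^2+\gamma_*^2}\sqrt{1-d_1^2-d_2^2}$, which, as you yourself flag, does not match the $-\gamma_1 d_2-\gamma_*\sqrt{1-d_1^2-d_2^2}$ appearing in \eqref{eqn:def:R}. Your attempted fix (that the $\gamma_1 d_2$ ``comes from the first argument structure, not from $d_3$,'' followed by an unspecified ``reallocation'') is not a valid resolution: with the correct constraints the two variables decouple --- $d_3$ is constrained only by $|d_3|\le d_2$, yielding $\min\gamma_1 d_3=-\gamma_1 d_2$, and $d_4$ is constrained only by $d_4^2\le 1-d_1^2-d_2^2$, yielding $\min\gamma_* d_4=-\gamma_*\sqrt{1-d_1^2-d_2^2}$, and these add --- and that is exactly why \eqref{eqn:def:R} has the form it does. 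The same decoupled lower bounds $d_3\ge -d_2$ and $d_4\ge -\sqrt{1-d_1^2-d_2^2}$ are what you need for \eqref{eq:LB_asG}. So the missing idea is the constraint $|d_3|\le d_2$ in place of your Euclidean ball; without it the proof does not close.
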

We are now in position to prove Proposition \ref{prop:asymptoticRF}.

\paragraph{Proof of Proposition \ref{prop:asymptoticRF}, Eq.~\eqref{eq:Kwide} (margin)}
Recall the definition of $\kbar\opt(\psi)$ in Eq.~\ref{eqn:def-kappa-opt-psi}, and $\kbar\owid$
in the statement of Proposition \ref{prop:asymptoticRF}.$(a)$ (further using Lemma \ref{lemma:asymptoticRF:betterexpression}):
\begin{align}
  \kbar\opt(\psi_1) & = \inf\big\{\kbar \ge 0: \asL\opt_{\psi_1}(\kbar,\tau(\psi_1))= 0\big\}\, ,\\
  \kbar\owid & = \inf\big\{\kbar \ge 0: \asG\opt(\kbar,0)= 0\big\}\, .
\end{align}
Further notice that both functions $\kbar \mapsto  f_{\psi_1}(\kbar) \equiv \asL\opt_{\psi_1}(\kbar,\tau(\psi_1))$
and $\kbar\mapsto f_{\infty}(\kbar) \equiv \asG\opt(\kbar,0)$ are monotone increasing and continuous in $\kbar$,
with  the latter \emph{strictly} increasing by  Lemma \ref{lemma:unique-d1-d2:T-increasing}.
Finally $f_{\psi_1}(\kbar)\to f_{\infty}(\kbar)$ pointwise as $\psi_1\to\infty$
by Lemma \ref{lem:approx:asymptoticRF} and using the facts that $\tau(\psi_1)\to 0$ (by Lemma \ref{lemma:RF:approx:c}) and
that $(\kbar,\tau)\mapsto \asG\opt(\kbar,\tau)$ is continuous (by the expression in Lemma
\ref{lemma:asymptoticRF:betterexpression}). This implies that the zeros of  $f_{\psi_1}(\kbar)$ converge to the
(unique) zero of $f_{\infty}(\kbar)$, thus proving the claim.
\hfill\ensuremath{\blacksquare}

\paragraph{Proof of Proposition \ref{prop:asymptoticRF}, Eq.~\eqref{eq:Predwide} (prediction error)}
First of all, recall the definitions of $\Pred\opt(\psi_1)$ and $\Pred\owid(\psi_2,\gamma_1,\gamma_{*})$ in
Eqs.~\eqref{eqn:error-RF-explicit} and \eqref{eq:PredErrorWide}.
Since $\lim_{\psi_1\to\infty}\tau(\psi_1)=0$ by Eq.~\eqref{eqn:RF:approximate:tau}, it suffices to show that
\begin{equation}
\label{eqn:goal-RFproposition-b}
    \lim_{\psi_1 \to \infty} \frac{c_1\opt(\psi_1)}{c_2\opt(\psi_1)} =\frac{d_1\owid}{d_2\owid}\, ,
  \end{equation}
  where $d_i\owid = d_i\opt(\kbar\owid)$, and $(d_1\opt(\kbar),d_2\opt(\kbar))$ the unique minimizer
  of $R(\kbar,0,d_1,d_2)$ in $\{d_1^2+d_2^2 \leq 1,\, d_2\geq 0\}$ as per Lemma \ref{lemma:unique-d1-d2:T-increasing}.
  Recalling the characterization of $c_i\opt(\psi_1)$ in Eq.~\eqref{eqn:characterization:c-opt-psi}, we plug
  $h=h\opt(\psi_1)$ in \eqref{eqn:RF:approximate:c12}to get:
\begin{equation}
\label{eqn:approximate-c12-opt-psi}
\begin{split}
    &\mid \psi^{-1/2}c_1\opt(\psi_1)-\psi_2^{1/2}\gamma_1d_{1,\psi_1}\left(h\opt(\psi_1)\right) \mid \leq C\psi_1^{-1/4}\\
    &\mid \psi^{-1/2}c_2\opt(\psi_1)-\psi_2^{1/2}\gamma_1d_{2,\psi_1}\left(h\opt(\psi_1)\right) \mid \leq C\psi_1^{-1/4}.
\end{split}
\end{equation}
From now on, we denote $d_{i,\psi_1}\opt\equiv d_{i,\psi_1}\left(h\opt(\psi_1)\right),i=1,2$ for brevity. By \eqref{eqn:approximate-c12-opt-psi}, it suffices to show that 
\begin{equation}
\label{eqn:goal:mainprop:RF}
    \lim_{\psi_1\to\infty}d_{i,\psi_1}\opt=d_i\owid\quad\text{for}\quad i=1,2\, .
  \end{equation}
  
The rest of the proof establishes Eq.~\eqref{eqn:goal:mainprop:RF}. First recall the definition of $R(\kbar,\tau,d_1,d_2)$ in \eqref{eqn:def:R}. We state a few properties of $R(\kbar,\tau,d_1,d_2)$ which will play a crucial role:
\begin{itemize}
    \item There exists a constant $C=C(\psi_2,\gamma_1,\gamma_{*})<\infty$ such that 
    \begin{equation}
    \label{eqn:Gprop-2}
        \mid R(\kbar_1,\tau,d_1,d_2)-R(\kbar_{2},\tau,d_1,d_2)\mid\leq C|\kbar_1-\kbar_2|,
    \end{equation}
 for every $\kbar_1,\kbar_2>0,\tau \in [0,1]$ and $d_1^2+d_2^2\leq 1,d_2\geq0$. Indeed, if we denote $\partial_{\kbar}F_{\kbar_{0},\tau}(d_1,d_2)$ to be the partial derivative of $\kbar \to F_{\kbar,\tau}(d_1,d_2)$ evaluated at $\kbar=\kbar_{0}$, 
    \begin{equation*}
    \begin{split}
         \mid R(\kbar_1,\tau,d_1,d_2)-R(\kbar_{2},\tau,d_1,d_2)\mid &\leq  \bigg[\sup_{\kbar_{0}>0}\sup_{\tau \in [0,1]}\sup_{\substack{d_1^2+d_2^2 \leq 1\\ d_2\geq 0}}\big\lvert \partial_{\kbar} F_{\kbar_{0},\tau}\big(\sqrt{\psi_2}\gamma_1 d_1,\sqrt{\psi_2}\gamma_1 d_2\big) \big\rvert\bigg]|\kbar_1-\kbar_2|\\
         & \eqqcolon C|\kbar_1-\kbar_2|.
    \end{split}
    \end{equation*}
    The constant $C$ defined above is finite, since a direct computation gives
    \begin{equation*}
\label{eqn:kappatoinfinity:partialF}
\begin{split}
    \limsup_{\kbar_{0}\to\infty}\sup_{\tau \in [0,1]}\sup_{\substack{d_1^2+d_2^2 \leq \psi_2\gamma_1^2\\ d_2\geq 0}}\big\lvert \partial_{\kbar} F_{\kbar_{0},\tau}\big(d_1,d_2\big) \big\rvert&=\limsup_{\kbar_0\to\infty}\sup_{\tau \in [0,1]}\sup_{\substack{d_1^2+d_2^2 \leq \psi_2\gamma_1^2\\ d_2\geq 0}}\frac{\E(\kbar_{0}-d_1 Y_{\tau}G-d_2 Z)_{+}}{(\E[(\kbar_{0}-d_1 Y_{\tau}G-d_2 Z)_{+}^{2}])^{1/2}}\\
    &=1.
\end{split}
\end{equation*}
    \item By dominated convergence, the following is true for every $\kbar>0$:
    \begin{equation}
    \label{eqn:Gprop-3}
        \lim_{\tau \to 0}\sup_{d_1^{2}+d_2^{2} \leq 1} \big\lvert R(\kbar,\tau,d_1,d_2)-R(\kbar,0,d_1,d_2)\big\rvert=0.
    \end{equation}
    \item By Lemma \ref{lemma:unique-d1-d2:T-increasing} and the definition of $\kbar^{**},d_1^{**},d_2^{**}$, there exists $c_0=c_0(\psi_2,\gamma_1,\gamma_{*})>0$ such that 
    \begin{equation}
    \label{eqn:Gprop-4}
    \begin{split}
        R(\kbar\owid,0,d_1,d_2)&\geq T(\kbar\owid)+c_0\big( (d_1-d_1\owid)^{2}+(d_2-d_2\owid)^{2}\big)\\
        &=c_0\big( (d_1-d_1\owid)^{2}+(d_2-d_2\owid)^{2}\big),
    \end{split}
    \end{equation}
    for every $d_1^2+d_2^2\leq 1, d_2 \geq 0$.
  \end{itemize}

  \begin{align}
    \asG\big(h\opt(\psi_1),\kbar\opt(\psi_1),\tau(\psi_1)\big)&
                                                             \stackrel{(a)}{\ge} R\big(\kbar\opt(\psi_1),\tau(\psi_1),d_{1,\psi_1}\opt,d_{2,\psi_1}\opt\big)\nonumber\\
                                                              & \stackrel{(b)}{\ge} R\big( \kbar\owid,\tau(\psi_1),d_{1,\psi_1}\opt,d_{2,\psi_1}\opt\big)-C|\kbar\opt(\psi_1)-\kbar\owid|\nonumber\\
                                                              &  \stackrel{(c)}{\ge} R\big( \kbar\owid,\tau(\psi_1),d_{1,\psi_1}\opt,d_{2,\psi_1}\opt\big)-C|\kbar\opt(\psi_1)-\kbar\owid|-\eps(\psi_1)\nonumber\\
                                                              &\stackrel{(d)}{\ge}  c_0\big( (d_{1,\psi_1}\opt-d_1\owid)^{2}+
                                                                (d_{2,\psi_1}\opt-d_2\owid)^{2}\big)-C|\kbar\opt(\psi_1)-\kbar\owid|-\eps(\psi_1)\nonumber\\
    &\stackrel{(e)}{\ge}  c_0\big( (d_{1,\psi_1}\opt-d_1\owid)^{2}+
                                                                (d_{2,\psi_1}\opt-d_2\owid)^{2}\big)-2\eps(\psi_1)\, .\label{eq:Boundd12}
  \end{align}
  Here $(a)$ follows from Eq.~\eqref{eq:LB_asG}; $(b)$~from Eq.~\eqref{eqn:Gprop-2}; $(c)$~holds for some function $\eps(\psi_1)$
  such that $\lim_{\psi_1\to\infty}\eps(\psi_1)=0$ by Eq.~\eqref{eqn:Gprop-3}, and Lemma \ref{lemma:RF:approx:c};
  $(d)$~by Eq.~{eqn:Gprop-4}; $(e)$~ follows by the convergence of the margin proved in the previous part (i.e. by Eq.~\eqref{eq:Kwide}),
  eventually redefining $\eps(\psi_1)$.
  
  Now recall that 
\begin{equation}
  \label{eqn:mainpropRF:intermediate-1}
  0 \stackrel{(a)}{=}\lim_{\psi_1 \to \infty}
  \asL_{\psi_1}\big(h\opt(\psi_1);\kbar\opt(\psi_1),\tau(\psi_1)\big) \stackrel{(b)}{=}
\lim_{\psi_1 \to \infty}\asG\big(\kbar(\psi_1),\tau(\psi_1),h\opt(\psi_1)\big)\, ,
\end{equation}
where  
$(a)$ follows from \eqref{eqn:def-kappa-opt-psi} and $(b)$  from Lemma \ref{lem:approx:asymptoticRF}.
Therefore, taking the limit $\psi_1\to\infty$ in Eq.~\eqref{eq:Boundd12}, we get
\begin{align}
  \lim_{\psi_1\to\infty}\big( (d_{1,\psi_1}\opt-d_1\owid)^{2}+ (d_{2,\psi_1}\opt-d_2\owid)^{2}\big) = 0\,,.
\end{align}
This concludes the proof of the claim \eqref{eqn:goal:mainprop:RF}, and therefore the proof of the proposition.
\hfill\ensuremath{\blacksquare}

\subsection{Proof of Lemma \ref{lemma:unique-d1-d2:T-increasing}, \ref{lemma:RF:approx:c},\ref{lem:approx:asymptoticRF} and \ref{lemma:asymptoticRF:betterexpression}}
\label{subsec:RF:proofs}
\paragraph{Proof of Lemma \ref{lemma:unique-d1-d2:T-increasing} (a)}
Recall that, by Eq.~\eqref{eqn:def:R}, we 
\begin{equation}
     R(\kbar,0,d_1,d_2) = F_{\kbar,0}(\sqrt{\psi_2}\gamma_1 d_1,\sqrt{\psi_2}\gamma_1d_2)-\gamma_1d_2-\gamma_{*}\sqrt{1-d_1^2-d_2^2}.
\end{equation}
We make the following two remarks about $R(\kbar,0,d_1,d_2)$.
\begin{enumerate}
    \item Observe that $(d_1,d_2) \mapsto -\sqrt{1-d_1^2-d_2^2}$ is strictly convex. Since $F_{\kbar,0}(\cdot)$ is also strictly convex by Lemma \ref{lemma:F-convex-increasing}, $(d_1,d_2)\mapsto R(\kbar,0,d_1,d_2)$ is strictly convex for every $\kbar>0$. Hence, there exists a unique minimizer $(d_1\opt,d_2\opt)=\big(d_1\opt(\kbar),d_2\opt(\kbar)\big)$ of $(d_1,d_2)\mapsto R(\kbar,0,d_1,d_2)$ in the set $\{(d_1,d_2):d_1^2+d_2^2\leq 1, d_2\geq 0\}$.
    \item By definition of $F_{\kbar,0}$, $F_{\kbar,0}(d_1,-d_2)=F_{\kbar,0}(d_1,d_2)$ for every $(d_1,d_2)\in \R^2$. Hence, for $d_2 \geq 0$
    \begin{equation}
    R(\kbar,0,d_1,-d_2)=R(\kbar,0,d_1,d_2)+2\gamma_1 d_2\geq R(\kbar,0,d_1,d_2).\label{eq:d2Flip}
    \end{equation}
    Therefore, in the definition of $T_{\infty}(\kbar)$ in \eqref{eqn:def:T-infty}, the constraint $d_2\geq 0$ can be removed: 
    \begin{equation}
    \label{eqn:T-infty-constraint-removed}
    T_{\infty}(\kbar)=\min_{d_1^2+d_2^2\leq 1} R(\kbar,0,d_1,d_2),
    \end{equation}
    and the unique minimizer of $R(\kbar,0,d_1,d_2)$ in the set $\{d_1^2+d_2^2\leq 1\}$ is given by $(d_1\opt,d_2\opt)\in \{(d_1,d_2):d_1^2+d_2^2\leq 1,d_2> 0\}$. (Note that, by Eq.~\eqref{eq:d2Flip}, $\partial_{d_2}R(\kbar,0,d_1,d_2=0)\le-2\gamma_1$.)
\end{enumerate}
We now aim to show that $(d_1\opt)^2+(d_2\opt)^2<1$. Assume, by contradiction, that $(d_1\opt)^2+(d_2\opt)^2=1$. Then, for every $0<\alpha<1$, we have
\begin{equation}
\label{eqn:R-minimizer-technical}
    R(\kbar,0,(1-\alpha)d_1\opt,(1-\alpha)d_2\opt)\geq R(\kbar,0,d_1\opt,d_2\opt),
\end{equation}
by definition of $(d_1\opt,d_2\opt)$. Expanding \eqref{eqn:R-minimizer-technical} gives
\begin{equation}
    F_{\kbar,0}\big(\sqrt{\psi_2}\gamma_1(1-\alpha)d_1\opt,\sqrt{\psi_2}\gamma_1(1-\alpha)d_2\opt\big)+\gamma_1\alpha d_2\opt-\gamma_{*}\sqrt{1-(1-\alpha)^2}\geq F_{\kbar,0}\big(\sqrt{\psi_1}\gamma_1d_1\opt,\sqrt{\psi_2}\gamma_1d_2\opt\big),
\end{equation}
since we are assuming $(d_1\opt)^2+(d_2\opt)^2=1$. Dividing by $\alpha>0$ on each side gives
\begin{equation}
    -\gamma_{*}\alpha^{-1}\sqrt{2\alpha-\alpha^2}\geq \alpha^{-1}\Big(F_{\kbar,0}\big(\sqrt{\psi_1}\gamma_1d_1\opt,\sqrt{\psi_2}\gamma_1d_2\opt\big)- F_{\kbar,0}\big(\sqrt{\psi_1}\gamma_1(1-\alpha)d_1\opt,\sqrt{\psi_2}\gamma_1(1-\alpha)d_2\opt\big)\Big)-\gamma_1d_2\opt.
\end{equation}
Observe that if we send to $\alpha \to 0$ on both sides, the LHS above tends to $-\infty$. However the right-hand side above has a finite limit as $\alpha \to 0$, by differentiability of $F_{\kbar,0}(\, \cdot\, )$ guaranteed by Lemma \ref{lemma:F-convex-increasing}. Therefore, it is a contradiction and we have proved  $(d_1\opt,d_2\opt) \in \{(d_1,d_2):d_1^2+d_2^2 <1\}$.

Finally, since $(d_1,d_2)\mapsto R(\kbar,0,d_1,d_2)$ is differentiable in the open set $\mathscr{S}$, and achieves its minimum at
$(d_1\owid,d_2\owid)$, its gradient $\grad_{d_1,d_2}R(\kbar,0,d_1,d_2)$ must vanish at $(d_1\owid,d_2\owid)$.

\paragraph{Proof of Lemma \ref{lemma:unique-d1-d2:T-increasing}.$(b)$}
Note that for every $d_1,d_2 \in \R$, $\bar{\kappa} \to F_{\bar{\kappa},0}(d_1,d_2)$ is strictly increasing. Hence, $\bar{\kappa} \to T_{\infty}(\bar{\kappa})$ in \eqref{eqn:def:T-infty} is obtained by minimizing strictly increasing function in $\kbar$ over a compact set, $\{(d_1,d_2):d_1^2+d_2^2\leq 1\,,
d_2\geq 0\}$. Therefore, $T_{\infty}(\cdot)$ is strictly increasing. Next, plugging in $d_1=d_2=0$ in \eqref{eqn:def:T-infty} gives
\begin{equation}
    \lim_{\bar{\kappa} \downarrow 0} T_{\infty}(\bar{\kappa}) \leq \lim_{\bar{\kappa} \downarrow 0} (\bar{\kappa}-\gamma_*)=-\gamma_*<0.
\end{equation}
Next, we compute $\lim_{\kbar \to \infty} T_{\infty}(\kbar)$. For any $d_1,d_2\in \R$,
\begin{equation}
\label{eqn:kappatoinfinity:F}
\begin{split}
    \left(F_{\kbar,0}(d_1,d_2)\right)^{2}&= \E[(\kbar-d_1 Y_{0}G-d_2 Z)^{2}\bfone(\kbar\geq d_1 Y_{0}G+d_2 Z)]\\
    &\geq \kbar^2\P(d_1 Y_{0}G+d_2 Z\leq \kbar) - 2\kbar\E[|d_1 Y_{0}G+d_2 Z|].
\end{split}
\end{equation}
For $d_1^2+d_2^2 \leq 1$, we can bound $d_1Y_0 G+d_2 Z\leq |d_1 Y_0G+d_2 Z| \leq |Y_0G|+|Z| = |G|+|Z|$, almost surely. Thus,
\begin{equation}
    \min_{d_1^2+d_2^2\leq 1} F_{\kbar,0}(d_1,d_2)\geq\big(\kbar^2 \P(|G|+|Z|\leq \kbar)-4\kbar\E|Z|\big)^{1/2}\,.
  \end{equation}
  Since  $\P(|G|+|Z|\leq \kbar)\to 1$ as $\kbar\to\infty$, the right-hand side
 tends to $\infty$ as $\kbar\to\infty$. Therefore
\begin{equation}
    \lim_{\kbar \uparrow \infty} T_{\infty}(\kbar) \geq  \lim_{\kbar \uparrow \infty}\left( \min_{d_1^2+d_2^2\leq 1} F_{\kbar,0}(d_1,d_2)-\max_{d_1^2+d_2^2 \leq 1} \left(\gamma_1 d_2+\gamma_{*}(1-d_1^2-d_2^2)^{1/2}\right)\right)=\infty>0,
\end{equation}
which finishes the proof of \eqref{eqn:T:uniquezero}.
\hfill\ensuremath{\blacksquare}

\paragraph{Proof of Lemma \ref{lemma:RF:approx:c}.}
Throughout, we make use of the big-$O$ notation in $\psi_1$:
\begin{equation}
\label{eqn:bigO:psione}
    f(\psi_1,\psi_2,\gamma_1,\gamma_*,h)=O(g(\psi_1)) \;\;\Leftrightarrow
    \;\; \big|f(\psi_1,\psi_2,\gamma_1,\gamma_*,h)\big| \leq Cg(\psi_1),
\end{equation}
for some $C=C(\psi_2,\gamma_1,\gamma_*)>0$. It is convenient to consider the conditional law of $(G,X,W,h(G,X,W)$ given $\tX=0$ and $\tX\neq 0$. Recall that we can express the conditional law of $(G,X,W)$ given $\tX=0$ and $\tX\neq0$ in terms of $(G,\tG,V)$, where $V\sim \nu_{\psi_1^{-1}}$ as in \eqref{eqn:conditionaldistn:Xtilde}. Explicitly, we denote $h_{=0}$ and $h_{\neq 0}$ to be
measurable functions of $(G,\tG,V)$ such that
\begin{equation}
\label{eqn:conditional-law-GXWh}
\begin{split}
    &{\sf Law}\big(G,X,W,h(G,X,W)\mid \tX =0\big) ={\sf Law}(G,\gamma_{*}^2, 0,(1-\psi_1^{-1})^{-1/2}h_{=0})\\
    &{\sf Law}\big(G,X,W,h(G,X,W)\mid \tX \neq 0\big) ={\sf Law}(G,\gamma_1^2\psi_1V+\gamma_{*}^2, \frac{\gamma_1\psi_1\sqrt{V}\, \tilde{G}}{C_0 (\gamma_1^2\psi_1 V+\gamma_*^2)^{1/2}},\psi_1^{1/2}h_{\neq 0}),
\end{split}
\end{equation}
where $G,\tG \iid \normal(0,1)$, independent of $V\sim \nu_{\psi_1^{-1}}$. Before proceeding, we make five remarks about \eqref{eqn:conditional-law-GXWh}, which will be crucial for the proof.
\begin{enumerate}
\item Recall that $\P_{\psi_1}(\tX=0)=1-\psi_1^{-1},\P_{\psi_1}(\tX\neq 0)=\psi_1^{-1}$ as stated \eqref{eqn:conditionaldistn:Xtilde}.
\item The normalization for $h_{=0}$ and $h_{\neq 0}$ was chosen so that for $\E_{\psi_1}[h^2]\leq 1$,
    \begin{equation}
    \label{eqn:h-equalzero-notequalzero-bounded}
    \begin{split}
        \E[h_{=0}^2]+\E[h_{\neq 0}^2]&=(1-\psi_1^{-1})\E_{\psi_1}[h^2\mid \tX =0]+\psi_1^{-1}\E_{\psi_1}[h^2\mid \tX\neq 0]\\
        &=\P_{\psi_1}(\tX=0)\E_{\psi_1}[h^2\mid \tX =0]+\P_{\psi_1}(\tX\neq0)\E_{\psi_1}[h^2\mid \tX\neq 0]\\
        &=\E_{\psi_1}[h^2]\leq 1.
        \end{split}
    \end{equation}
\item We can express $d_{i,\psi_1}(h), 1\leq i \leq 4$, in terms of $h_{=0}$ and $h_{\neq 0}$ as
\begin{equation}
\begin{split}
    &d_{1,\psi_1}(h) = \E[h_{\neq0}\tG],\quad\quad~ d_{2,\psi_1}(h)= \Big(\E[h_{\neq 0}^2]-(\E[h_{\neq 0}\tG])^{2}\Big)^{1/2},\\
    &d_{3,\psi_1}(h) = \E[h_{\neq 0}G],\quad~~ d_{4,\psi_1}(h) = \E[h_{=0}G].
\end{split}
\end{equation}
    \item The following bound, which is a direct consequence of Cauchy Schwarz, will be used throughout the proof.
    \begin{equation}
    \label{eqn:innerprod-hconditional-bounded}
        \E\big[|Gh_{=0}|\big]|,\, \E\big[|\tG h_{=0}|\big],\,\E\big[|Gh_{\neq0}|\big],\,\E\big[|\tG h_{\neq0}|\big] \leq 1.
    \end{equation}
    \item Note that $\nu_{\psi_1^{-1}}$ has support $[(1-\psi_1^{-1/2})^2,(1+\psi_1^{-1/2})^2]$ for $\psi_1>1$ by definition of Marchenko-Pastur's law in \eqref{eq-MPlaw}. Hence, $V\sim \nu_{\psi_1^{-1}}$ implies that
\begin{equation}
\label{eqn:approximate-V}
    |V -1|, |\sqrt{V}-1|\leq 3\psi_1^{-1/2},\quad\text{almost surely.}
\end{equation}
\end{enumerate}
With this setup, we first show \eqref{eqn:RF:approximate:tau}. By the definition of $\tau(\psi_1)$ in \eqref{eqn:def-tau-psi-1-RF},
\begin{equation}
\label{eqn:tau-psi-technical}
\begin{split}
    \tau(\psi_1)^{2}&=1-\gamma_1^2\E_{\psi_1}[\frac{\psi_1 \tX}{\gamma_1^2\tX+\gamma_{*}^2}]=1-\gamma_1^2\E_{\psi_1}[\frac{\psi_1 \tX}{\gamma_1^2\tX+\gamma_{*}^2}\mid \tX\neq 0]\P_{\psi_1}(\tX \neq 0)\\
    &=1-\gamma_1^2\E[\frac{\psi_1 V}{\gamma_1^2\psi_1 V +\gamma_{*}^2}]=\E[\frac{\gamma_{*}^2}{\gamma_1^2\psi_1 V +\gamma_{*}^2}]=O(\psi_1^{-1}),
\end{split}
\end{equation}
where in the last bound, we used the fact that $V$ is bounded away from $0$, as stated in \eqref{eqn:approximate-V}.  Similarly we compute $\cbar_{1,\psi_1}(h)$ by conditioning on to the events $\tX=0$ and $\tX\neq 0$. By \eqref{eqn:conditional-law-GXWh},
\begin{equation}
\begin{split}
    \cbar_{1,\psi_1}(h)&=\psi^{-1/2}\E_{\psi_1}[X^{1/2}Wh]=\psi^{-1/2}\E_{\psi_1}[X^{1/2}Wh\mid \tX \neq 0]\P_{\psi_1}(\tX\neq0)\\
    &=\psi^{-1/2}\E[(C_0)^{-1}\gamma_1\psi_1\sqrt{V}\tG\psi_1^{1/2}h_{\neq0}]\psi_1^{-1}=(C_0)^{-1} \psi_2^{1/2}\gamma_1 \E[\sqrt{V}\tG h_{\neq 0}].
\end{split}
\end{equation}
By the estimate $C_0^2=1-\tau(\psi_1)^2=1+O(\psi_1^{-1})$ and \eqref{eqn:approximate-V}, the right-hand side equals 
\begin{equation}
\label{eqn:asymptoticRF:technical-1}
\begin{split}
     \cbar_{1,\psi_1}(h) =(C_0)^{-1} \psi_2^{1/2}\gamma_1 \E[\sqrt{V}\tG h_{\neq 0}]&=\big(1+O(\psi_1^{-1/2})\big) \big(\psi_2^{1/2}\gamma_1\E[\tG h_{\neq0}]+O(\psi_1^{-1/2})\E[|\tG h_{\neq 0}|]\big)\\
    &=\psi_2^{1/2}\gamma_1 d_{1,\psi_1}(h)+O(\psi_1^{-1/2}),
\end{split}
\end{equation}
where the last bound is by \eqref{eqn:innerprod-hconditional-bounded}. Proceeding in the same fashion to compute $\psi^{-1}\E_{\psi_1}[Xh^2]$, which appears in the definition of $\cbar_{2,\psi_1}(h)$, we have
\begin{equation}
    \begin{split}
        \psi^{-1} \E_{\psi_1}[Xh^2]&=\psi^{-1}\E_{\psi_1}[Xh^2\mid\tX=0]\P_{\psi_1}(\tX=0)+\psi^{-1}\E_{\psi_1}[Xh^2\mid \tX\neq 0]\P_{\psi_1}(\tX\neq0)\\
        &=\psi^{-1}\gamma_{*}^2\E[h_{=0}^2]+\psi^{-1}\E[(\gamma_1^2 \psi_1 V+\gamma_{*}^2)h_{\neq0}^2]=\psi_2\gamma_1^2\E[h_{\neq0}^2]+O(\psi_1^{-1/2}),
    \end{split}
\end{equation}
where in the last bound is due to \eqref{eqn:h-equalzero-notequalzero-bounded} and \eqref{eqn:approximate-V}. Therefore,
\begin{equation}
\begin{split}
    \cbar_{2,\psi_1}(h)&=\big(\psi^{-1}\E_{\psi_1}[Xh^2]-(\psi^{-1/2}\E_{\psi_1}[X^{1/2}W h])^{2}\big)^{1/2}=\big(\psi^{-1}\E_{\psi_1}[Xh^2]-\cbar_{1,\psi_1}(h)^2\big)^{1/2}\\
    &=\Big(\psi_2\gamma_1^2\E[h_{\neq0}^2]+O(\psi_1^{-1/2})-\big(\psi_2^{1/2}\gamma_1\E[\tG h_{\neq0}]+O(\psi_1^{-1/2})\big)^{2}\Big)^{1/2}\\
    &=\big(\psi_2\gamma_1^2\E[h_{\neq0}^2]-(\psi_2^{1/2}\gamma_1\E[\tG h_{\neq0}])^{2}+O(\psi_1^{-1/2})\big)^{1/2},
\end{split}
\end{equation}
where the last bound is by \eqref{eqn:innerprod-hconditional-bounded}. Now, we use the inequality $|(x+y)^{1/2}-x^{1/2}|\leq |y|^{1/2}$ for every $x>0,y\in \R$ to bound
\begin{equation}
    \cbar_{2,\psi_1}(h)=\big(\psi_2\gamma_1^2\E[h_{\neq0}^2]-(\psi_2^{1/2}\gamma_1\E[\tG h_{\neq0}])^{2}\big)^{1/2}+O(\psi_1^{-1/4})=\psi_2^{1/2}\gamma_1 d_{2,\psi_1}(h)+O(\psi_1^{-1/4}).
\end{equation}
Finally we compute $\cbar_{3,\psi_1}(h)=\E_{\psi_1}[X^{1/2}Gh]$:
\begin{equation}
\begin{split}
    \cbar_{3,\psi_1}(h)&=\E_{\psi_1}[X^{1/2}Gh\mid \tX=0]\P_{\psi_1}(\tX=0)+\E_{\psi_1}[X^{1/2}Gh\mid \tX\neq0]\P_{\psi_1}(\tX\neq0)\\
    &=\gamma_{*}\E[G(1-\psi_1^{-1})^{-1/2}h_{=0}](1-\psi_1^{-1})+\E[(\gamma_1^2\psi_1 V+\gamma_{*}^2)^{1/2}G\psi_1^{1/2}h_{\neq 0}]\psi_1^{-1}\\
    &=\gamma_{*}\E[Gh_{=0}](1-\psi_1^{-1})^{1/2}+\E[(\gamma_1^2V+\psi_1^{-1}\gamma_{*}^2)^{1/2}Gh_{\neq0}].
\end{split}
\end{equation}
By \eqref{eqn:approximate-V}, 
$(\gamma_1^2V+\psi_1^{-1}\gamma_{*}^2)^{1/2}=\big(\gamma_1^2+O(\psi_1^{-1/2})\big)^{1/2}=\gamma_1+O(\psi_1^{-1/4})$ holds, almost surely. Therefore, \eqref{eqn:innerprod-hconditional-bounded} shows that the right-hand side above can be estimated as
\begin{equation}
    \cbar_{3,\psi_1}(h)=\gamma_{*}\E[Gh_{=0}]+\gamma_1\E[Gh_{\neq0}]+O(\psi_1^{-1/4})=\gamma_{1}d_{3,\psi_1}(h)+\gamma_{*}d_{4,\psi_1}(h)+O(\psi_1^{-1/4}).
\end{equation}
\hfill\ensuremath{\blacksquare}

\paragraph{Proof of Lemma \ref{lem:approx:asymptoticRF}}
We first show that first partial derivatives of $(c_1,c_2)\mapsto F_{\kappa,\tau}(c_1,c_2)$ are bounded. Observe that by convexity of $F_{\kappa,\tau}$ in Lemma \ref{lemma:F-convex-increasing}, $c_1 \mapsto \partial_1F_{\kappa,\tau}(c_1,c_2)$ is increasing. Thus, for every $c_1,c_2\in\R$ and $\kappa>0,\tau\in[0,1]$,
\begin{equation*}
\partial_1 F_{\kappa,\tau}(c_1,c_2)\leq \limsup_{c \to \infty}\partial_1 F_{\kappa,\tau}(c,c_2)= \limsup_{c \to\infty} \frac{F_{\kappa,\tau}(c,c_2)}{c}=\E[(-Y_{\tau}G)_{+}^{2}]\leq \E[|G|^2]=1. 
\end{equation*}
Analogously, we can lower bound
\begin{equation*}
\partial_1 F_{\kappa,\tau}(c_1,c_2)\geq \liminf_{c \to -\infty}\partial_1 F_{\kappa,\tau}(c,c_2)= \liminf_{c \to-\infty} \frac{F_{\kappa,\tau}(c,c_2)}{c}=-\E[(Y_{\tau}G)_{+}^{2}]\geq -\E[|G|^2]=-1. 
\end{equation*}
Proceeding in the same fashion, the analogous bound for $\partial_{2}F_{\kappa,\tau}(c_1,c_2)$ holds as well. Thus, for $i=1,2$
\begin{equation}
    \sup_{\kappa>0,\tau\in[0,1]}\sup_{c_1,c_2\in\R}|\partial_i F_{\kappa,\tau}(c_1,c_2)|\leq 1.
\end{equation}
Therefore, for every $\kappa>0,\tau \in [0,1]$ and $ c_1,c_2,c_1^\prime,c_2^\prime \in \R$, we have that
\begin{equation}
\label{eqn:bound-difference-F-kappa-tau}
    |F_{\kappa,\tau}(c_1,c_2)-F_{\kappa,\tau}(c_1^\prime,c_2^\prime)| \leq |c_1-c_1^\prime|+|c_2-c_2^\prime|.
\end{equation}
Now, we use \eqref{eqn:bound-difference-F-kappa-tau} to bound $|E_{\psi_1,\kappa}(h)|$. Recalling the expression of $\asL_{\psi_1,\kappa}(h)$ in \eqref{eqn:asymptoticRF:betterexpression} and the definition of $\asG(\kappa,\tau,h)$ in \eqref{eqn:def:asymptoticRF-optimization},
\begin{equation}
\label{eqn:upperbound-E-diff-cd}
\begin{split}
   |E_{\psi_1,\kappa}(h)|=|\asL_{\psi_1,\kappa}(h)&-\asG(\psi^{-1/2}\kappa,\tau(\psi_1),h)|\\
   =\bigg\lvert F_{\psi^{-1/2}\kappa, \tau(\psi_1)}&\Big(\cbar_{1,\psi_1}(h),\cbar_{2,\psi_1}(h)\Big)+\cbar_{3,\psi_1}(h)\\
   &-F_{\psi^{-1/2}\kappa,\tau(\psi_1)}\Big(\psi_2^{1/2}\gamma_1 d_{1,\psi_1}(h), \psi_2^{1/2}\gamma_1d_{2,\psi_1}(h)\Big)-\gamma_1 d_{3,\psi_1}(h)-\gamma_{*}d_{4,\psi_1}(h)\bigg\rvert\\
   \leq \sum_{i=1,2}&|\cbar_{i,\psi_1}(h)-\psi_2^{1/2}\gamma_1 d_{i,\psi_1}|+|\cbar_{3,\psi_1}(h)-\gamma_1 d_{3,\psi_1}(h)-\gamma_{*}d_{4,\psi_1}(h)|.
\end{split}
\end{equation}
Observe that there is no dependence on $\kappa$ on the RHS above. Also, by Lemma \ref{lemma:RF:approx:c}, 
\begin{equation}
\label{eqn:diffcd-zero}
    \lim_{\psi_1\to\infty}\sup_{\E_{\psi_1}[h^2]\leq1}\bigg(\sum_{i=1,2}|\cbar_{i,\psi_1}(h)-\psi_2^{1/2}\gamma_1 d_{i,\psi_1}|+|\cbar_{3,\psi_1}(h)-\gamma_1 d_{3,\psi_1}(h)-\gamma_{*}d_{4,\psi_1}(h)|\bigg)=0.
\end{equation}
Therefore, by \eqref{eqn:upperbound-E-diff-cd} and \eqref{eqn:diffcd-zero}, $\lim_{\psi_1\to\infty} \sup_{\kappa>0}\sup_{\E_{\psi_1}[h^2]\leq 1} |E_{\psi_1,\kappa}(h)| =0$.
\paragraph{Proof of Lemma \ref{lemma:asymptoticRF:betterexpression}}
We consider $\psi_1>1$ and fix $\kbar>0,\tau\in [0,1]$ throughout the proof. Define the valid set of $\{\big(d_{i,\psi_1}(h)\big)_{1\leq i \leq 4}:\E_{\psi_1}[h^2]\leq 1\}$ as
\begin{equation}
     \cT_{\psi_1} \equiv \{(d_i)_{1\leq i \leq 4}\in\R^4:\text{there exists $h\in\cL^2(\P_{\psi_1}),\E_{\psi_1}[h^2]\leq 1$ such that $d_i=d_{i,\psi_1}(h), 1\leq i \leq 4$}\}.
\end{equation}
We will show that $\cT_{\psi_1}$ is the same for all $\psi_1>1$ and equals
\begin{equation}
\label{eqn:asymptoticRF-same-valid-set}
    \cT_{\psi_1}=\cT\equiv \{(d_1,d_2,d_3,d_4)\in \R^4:|d_3|\leq d_2,\quad d_1^2+d_2^2+d_4^2 \leq 1\}.
\end{equation}
First, we show that $\cT_{\psi_1}\subset \cT$. Note that for every $\E_{\psi_1}[h^2]\leq 1$, 
\begin{equation}
\label{eqn:decompose-h-norm}
\begin{split}
    d_{1,\psi_1}(h)^2+d_{2,\psi_1}(h)^2+d_{4,\psi_1}(h)^2
    &=\P_{\psi_1}(\tX\neq 0)\E_{\psi_1}[h^{2}\mid \tX \neq 0]+\P_{\psi_1}(\tX=0)(\E_{\psi_1}[Gh\mid \tX=0])^{2}\\
    &\leq \P_{\psi_1}(\tX\neq 0)\E_{\psi_1}[h^{2}\mid \tX \neq 0]+\P_{\psi_1}(\tX=0)\E_{\psi_1}[h^{2}\mid \tX=0]\E_{\psi_1}[G^2\mid \tX=0]\\
    &=\E_{\psi_1}[h^{2}\bfone(\tX \neq 0)]+\E_{\psi_1}[h^2\bfone(\tX=0)]=\E_{\psi_1}[h^2]\leq 1,
\end{split}
\end{equation}
where the first inequality is by Cauchy-Schwarz. We now show that $|d_{3,\psi_1}(h)|\leq d_{2,\psi_1}(h)$. Indeed, if we consider any $\alpha \in \R$, 
\begin{equation}
\begin{split}
   \psi_1 \big(\alpha d_{1,\psi_1}(h)+d_{3,\psi_1}(h)\big)^{2}&=(\E_{\psi_1}[h(\alpha \tG+G)\mid \tX\neq0])^{2}\\
    &\leq \E_{\psi_1}[h^2\mid \tX\neq 0]\E_{\psi_1}[(\alpha \tG+G)^2\mid \tX\neq0]=\psi_1(d_{1,\psi_1}(h)^{2}+d_{2,\psi_1}(h)^2)(\alpha^2+1),
\end{split}
\end{equation}
by Cauchy-Schwarz. Expanding the above inequality gives
\begin{equation}
    d_{2,\psi_1}(h)^2\alpha^2-2 d_{1,\psi_1}(h)d_{3,\psi_1}(h)\alpha+d_{1,\psi_1}(h)^2+d_{2,\psi_1}(h)^2-d_{3,\psi_1}(h)^2 \geq 0,
\end{equation}
for any $\alpha \in \R$. Hence, the discriminant of the above quadratic form is non-positive:
\begin{equation}
\label{eqn:discriminant-d2-leq-d3}
\begin{split}
    0 &\geq d_{1,\psi_1}(h)^2d_{3,\psi_1}(h)^2-d_{2,\psi_1}(h)^2(d_{1,\psi_1}(h)^2+d_{2,\psi_1}(h)^2-d_{3,\psi_1}(h)^2)\\
    &=(d_{1,\psi}(h)^2+d_{2,\psi_1}(h)^2)(d_{3,\psi_1}(h)^2-d_{2,\psi_1}(h)^2).
\end{split}
\end{equation}
In the case of $0=d_{1,\psi}(h)^2+d_{2,\psi_1}(h)^2=\psi_1^{-1}\E_{\psi_1}[h^2\mid \tX\neq 0]$, it must be that $h\bfone(\tX=0)=0$ $\P_{\psi_1}$-a.s., thus $d_{2,\psi_1}(h)=d_{3,\psi_1}(h)=0$. Therefore, \eqref{eqn:discriminant-d2-leq-d3} implies that $|d_{3,\psi_1}(h)|\leq d_{2,\psi_1}(h)$ and together with \eqref{eqn:decompose-h-norm}, $\cT_{\psi_1}\subset\cT$.

Conversely for any $(d_i)_{1\leq i \leq 4} \in \cT$, if we let $h_0 \in \cL^{2}(\P_{\psi_1})$ to be
\begin{equation}
    h_0=(1-\psi_1^{-1})^{-1/2} d_4G\bfone(\tX=0)+\psi_1^{1/2}(d_1\tG+d_3 G+\sqrt{d_2^2-d_3^2}) \bfone(\tX \neq 0),
\end{equation}
then $d_{i,\psi_1}(h_0)=d_i,1\leq i \leq 4$ holds and $\E_{\psi_1}[h_0^2]=d_1^2+d_2^2+d_4^2 \leq 1$, so $(d_i)_{1\leq i \leq 4} \in \cT_{\psi_1}$. Therefore $\cT_{\psi_1}=\cT$ for any $\psi_1>1$ as claimed in \eqref{eqn:asymptoticRF-same-valid-set}.

Now, we can express $\asG\opt(\kbar,\tau)$ as an $\psi_1$-independent quantity:
\begin{equation}
\begin{split}
    \asG\opt(\kbar,\tau)&=\min_{(d_1,d_2,d_3,d_4)\in \cT_{\psi_1}}\big\{F_{\kbar,\tau}(\psi_2^{1/2}\gamma_1d_1,\psi_2^{1/2}\gamma_1d_2)+\gamma_1d_3+\gamma_{*}d_4\big\}\\
    &=\min_{(d_1,d_2,d_3,d_4)\in \cT}\big\{F_{\kbar,\tau}(\psi_2^{1/2}\gamma_1d_1,\psi_2^{1/2}\gamma_1d_2)+\gamma_1d_3+\gamma_{*}d_4\big\}.
\end{split}
\end{equation}
Observe that in the above expression, fixing $d_1,d_2$, the minimizer for $d_3$ and $d_4$ is given by $d_3\opt =-d_2$ and $d_4\opt=-\sqrt{1-d_1^{2}-d_2^2}$. This is because $(d_1,d_2,d_3,d_4)\in \cT$ implies that $|d_3|\leq d_2,|d_4|\leq \sqrt{1-d_1^{2}-d_2^2}$. Moreover, the set for $(d_1,d_2)$ which there exists $(d_3,d_4)\in \R^2$ such that $(d_1,d_2,d_3,d_4)\in\cT$ is clearly given by $\{(d_1,d_2):d_1^2+d_2^2\leq 1, d_2\geq 0\}$. Therefore, we have the simplified expression of $\asG\opt(\kbar,\tau)$:
\begin{equation}
\begin{split}
    \asG\opt(\kbar,\tau)&=\min_{\substack{d_1^2+d_2^2\leq 1\\d_2\geq 0}}\big\{ F_{\bar{\kappa},\tau}\big(\sqrt{\psi_2}\gamma_1 d_1,\sqrt{\psi_2}\gamma_1 d_2\big)-\gamma_1 d_2-\gamma_{*}\sqrt{1-d_1^2-d_2^2}\big\}\\
    &=\min_{\substack{d_1^2+d_2^2\leq 1\\d_2\geq 0}}R(\kbar,\tau,d_1,d_2),
\end{split}
\end{equation}
which finishes the proof of our goal \eqref{eqn:asymptoticRF-kbar-tau-betterexpression}.

We conclude by proving Eq.~\eqref{eq:LB_asG}. For any $\E_{\psi_1}[h^2]\leq 1$, we have shown that $\left(d_{i,\psi_1}(h)\right) \in \cT$, thus,
\begin{equation}
\label{eqn:proof-Gprop1}
\begin{split}
    \asG(h;\kbar,\tau)&= F_{\kbar,\tau}\big(\psi_2^{1/2}\gamma_1d_{1,\psi_1}(h),\psi_2^{1/2}\gamma_1d_{2,\psi_1}(h)\big)+\gamma_1d_{3,\psi_1}(h)+\gamma_{*}d_{4,\psi_1}(h)\\
    &\geq F_{\kbar,\tau}\big(\psi_2^{1/2}\gamma_1d_{1,\psi_1}(h),\psi_2^{1/2}\gamma_1d_{2,\psi_1}(h)\big)-\gamma_1 d_{2,\psi_1}(h)-\gamma_{*}\sqrt{1-d_{1,\psi_1}(h)^2-d_{2,\psi_1}(h)^2}\\
    &=R(\kbar,\tau,d_{1,\psi_1}(h),d_{2,\psi_1}(h)).
\end{split}
\end{equation}
\hfill\ensuremath{\blacksquare}


%

%
%
\section{Proof of Proposition \ref{propo:Soft-Margin}}
\label{sec:appendix:soft:margin}
Notice that the soft margin $\kappa_n^{\sSM}(\by,\bX)$ (which is the value of the optimization problem
\eqref{eq:SMClassifier}) can be equivalently defined by
\begin{align}
\mbox{maximize}&\;\;\;\;\;\;\;\;\min_{i\le n}
                \big[\gamma_1y_i\<\btheta,\bx_i\>+\gamma_*u_i\big]\,,\\
  \mbox{subj. to}&\;\;\;\;\;\;\;\;
                   \|\btheta\|_2^2+\frac{\|\bu\|_2^2}{d}\le  1\, .
\end{align}
Notice indeed that the constraint    $\|\btheta\|_2^2+\|\bu\|_2^2/d= 1$ can be replaced by the inequality
constraint because any optimizer of the above problem satisfies $\bu\ge 0$, and its
value is non-decreasing if we increase the norm of $\bu$ by replacing $\bu$ by $\bu+c\bfone$
for some $c>0$.
         
  We next define
  \begin{align}
    \omega_{n, \psi_2,\kappa}\equiv
    \frac{1}{\sqrt{d}}\min_{\btheta\in\reals^d,\, \bu\in\reals^n}\left\{
\big\|\big(\kappa\psi_2^{-1/2}\cdot\bfone_n-\gamma_1\by\odot
    \bX\btheta-\gamma_{*}\bu\big)_+\big\|_2\, 
    \|\btheta\|_2^2+\frac{\|\bu\|_2^2}{d}\le  1\right\}\, .
 \end{align}
 The relation between this quantity and the soft margin $\kappa_n^{\sSM}(\by,\bX)$ is straightforward:
 \begin{align}
   \omega_{n, \psi_2,\kappa}=0 \;\;\; \Leftrightarrow \;\;\; \kappa^{\sSM}_n(\by,\bX)\ge \kappa\psi_2^{-1/2}\, .
 \end{align}
 Notice that the optimization over $\bu$ at fixed $\|\bu\|_2$ can be performed explicitly.
 Indeed, it is easy to check that, for any vector $\bv\in\reals^n$,
 \begin{align}
   \min\big\{\|(\bv-\bu)_+\|_2:\; \|\bu\|_2\le r \big\} =
   (\|\bv_+\|_2-r)_+\, .\label{eq:ExplicitOpt}
 \end{align}
 Further, whenever  $\|\bv_+\|_2-r\ge 0$ the minimum is uniquely achieved for $\bu = r\bv/\|\bv_+\|_2$.
 We are therefore led to define
 \begin{align}
    \omega^{(1)}_{n, \psi_2,\kappa}\equiv
    \min_{\btheta\in\reals^d, \|\btheta\|_2\le 1}\left\{
\frac{1}{\sqrt{d}}\big\|\big(\kappa\psi_2^{-1/2}\cdot\bfone_n-\gamma_1\by\odot
    \bX\btheta\big)_+\big\|_2-\gamma_*\sqrt{1-\|\btheta\|_2^2}\,\right\}\, .\label{eq:Omega1def}
 \end{align}
 By Eq.~\eqref{eq:ExplicitOpt} we have $\omega_{n,\psi_2,\kappa}= (\omega^{(1)}_{n,\psi_2,\kappa})_+$,
 and therefore
 \begin{align}
   \omega^{(1)}_{n, \psi_2,\kappa}\le 0 \;\;\; \Leftrightarrow \;\;\; \kappa^{\sSM}_n(\by,\bX)\ge \kappa\psi_2^{-1/2}\, .
   \label{eq:OmegaKappa}
 \end{align}
 By a simple rescaling,
 \begin{align}
    \omega^{(1)}_{n, \psi_2,\kappa}&=\min_{s\in [0,1]}
   \left\{
\frac{1}{\sqrt{d}} \min_{\btheta\in\reals^d,\|\btheta\|_2=s}\big\|\big(\kappa\psi_2^{-1/2}\cdot\bfone_n-\gamma_1\by\odot
   \bX\btheta\big)_+\big\|_2-\gamma_*\sqrt{1-s^2}\,\right\}\\
   &=\min_{s\in [0,1]}
   \left\{
     \frac{\gamma_1s}{\sqrt{d}} \min_{\btheta\in\reals^d,\|\btheta\|_2=1}\big\|\big(\frac{\kappa}{\gamma_1s\sqrt{\psi_2}}
     \cdot\bfone_n-\by\odot
   \bX\btheta\big)_+\big\|_2-\gamma_*\sqrt{1-s^2}\,\right\}\, .\label{eq:Omega1}
 \end{align}
 By Lemma \ref{lemma:xi-0-xi-1}, \ref{lemma:xi-1-xi-2} and Proposition \ref{proposition:xi-n-2-xi-infty},
 the following limit holds in probability, for any fixed $\tilde\kappa\in [0,\infty]$,
 \begin{align}
\lim_{n\to\infty} \xi_{n,\psi_2^{-1},\tilde\kappa}& = T(\psi_2^{-1}, \tilde\kappa)\,, \label{eq:ConvergencePerKappa}\\
   \xi_{n,\psi_2^{-1},\tilde\kappa} & = \frac{1}{\sqrt{d}} \min_{\btheta\in\reals^d,\|\btheta\|_2=1}\big\|\tilde{\kappa}
     \cdot\bfone_n-\by\odot
     \bX\btheta\big)_+\big\|_2
 \end{align}
 (Notice that $\xi_{n,\psi_2^{-1},\tilde\kappa}$ is defined as in Section \ref{sec:ProofMain},
 with $p$ replaced by $d$, and $\psi$ by $\psi_2^{-1}$.)
 Further, using Corollary~\ref{coro:PsiLarge} and the definition \eqref{eqn:def-L-star}
 we get
 \begin{align}
   T(\psi_2^{-1}, \tilde\kappa) &= \inf_{\|h\|_{\P}\le 1}\asL_{\psi_2^{-1}, \tilde\kappa, \P}(h)\\
                                & = \min_{c\in [0,1]}\sqrt{\psi_2}\, F_{\tilde\kappa}(c,\sqrt{1-c^2})-\sqrt{1-c^2}\,.
 \end{align}
 The second equality follows by using the definition of $\asL_{\psi_2^{-1}, \tilde\kappa, \P}(h)$ in Eq.~\eqref{eq:asLDef}
 and the fact that in the present case $(G,X,W)\sim\normal(0,1)\otimes \delta_0\otimes\normal(0,1)$.
 Notice that both the left and right-hand sides of Eq.~\eqref{eq:ConvergencePerKappa}
 are non-decreasing function of $\tilde\kappa$, and Lipchitz continuous in $\tilde\kappa$,
 with Lipschitz constant $\sqrt{\psi_2}$
 (the latter follows because they are minima of Lipchitz-continuous functions).
 Therefore the convergence of Eq.~\eqref{eq:ConvergencePerKappa} takes place uniformly over compacts.
 Namely, for any $K>0$ we have
 \begin{align}
 \lim_{n\to\infty} \sup_{\tilde\kappa\in[0,K]}
   \big|\xi_{n,\psi_2^{-1},\tilde\kappa}- T(\psi_2^{-1}, \tilde\kappa)\big| = 0\, .
 \end{align}
 Using this result in Eq.~\eqref{eq:Omega1}, we obtain that the following limit holds in probability
 \begin{align}
   \lim_{n\to\infty}    \omega^{(1)}_{n, \psi_2,\kappa} &=\min_{s\in [0,1]}
   \left\{
   \gamma_1s \, T\Big(\psi_2^{-1}, \frac{\kappa}{\gamma_1s\sqrt{\psi_2}}\Big)
                                                          -\gamma_*\sqrt{1-s^2}\,\right\}\\
   &=\min_{c,s\in [0,1]}
   \left\{
   \gamma_1s \, \sqrt{\psi_2}F_{\kappa/\gamma_1s\sqrt{\psi_2}}\Big(c,\sqrt{1-c^2}\Big)
 -\gamma_1s\sqrt{1-c^2}    -\gamma_*\sqrt{1-s^2}\,\right\}\\
   &=\min_{c,s\in [0,1]}
   \left\{
   \gamma_1s \, \sqrt{\psi_2}F_{\kappa/\gamma_1s\sqrt{\psi_2}}\Big(c,\sqrt{1-c^2}\Big)
     -\gamma_1s\sqrt{1-c^2}    -\gamma_*\sqrt{1-s^2}\,\right\}\\
   &=\min_{c,s\in [0,1]}
   \left\{
   F_{\kappa}\Big(\gamma_1\sqrt{\psi_2} sc,\gamma_1\sqrt{\psi_2} s\sqrt{1-c^2}\Big)
     -\gamma_1s\sqrt{1-c^2}    -\gamma_*\sqrt{1-s^2}\,\right\}\, .
 \end{align}
 Here in the last step we used the homogeneity property $aF_{\kappa}(c_1,c_2) = F_{a\kappa}(ac_1,ac_2)$,
 which holds for $a\ge 0$.
 Comparing the last expression with Eq.~\eqref{eqn:def:T-infty}, we get
 (identifying $d_1=sc$ and $d_2=s\sqrt{1-c^2}$):
 \begin{align}
   \lim_{n\to\infty}    \omega^{(1)}_{n, \psi_2,\kappa} &=
                                                          T_{\infty}(\kappa; \psi_2,\gamma_1,\gamma_{*})\, .
 \end{align}
 Hence the claim \eqref{eq:Kwide} follows from Eq.~\eqref{eq:OmegaKappa} by noticing
 that $\kappa\mapsto  T_{\infty}(\kappa; \psi_2,\gamma_1,\gamma_{*})$ is strictly monotone decreasing
 in $\kappa$ with a unique zero $\kbar\owid(\psi_2,\gamma_1,\gamma_{*}$
 (the last property follows by specializing Proposition \ref{proposition:system-of-Eq-T}).

 Finally \eqref{eq:Predwide} follows by keeping track of the minimizer \eqref{eq:Omega1def} in the above derivation.

\section{Special examples and numerical illustrations}
\label{sec:Special}

In this section we further illustrate our main results by
considering a few special cases, namely special sequences of the true parameter vector $\btheta_{*,n}$, and covariance matrix $\bSigma_n$. 
We also discuss some statistical insights that can be drawn from the analysis of these cases. 

\subsection{Isotropic well specified model}
\label{sec:Isotropic}

\begin{figure}[!ht]
\phantom{A}\hspace{-0.5cm}\includegraphics[width = 0.54\linewidth]{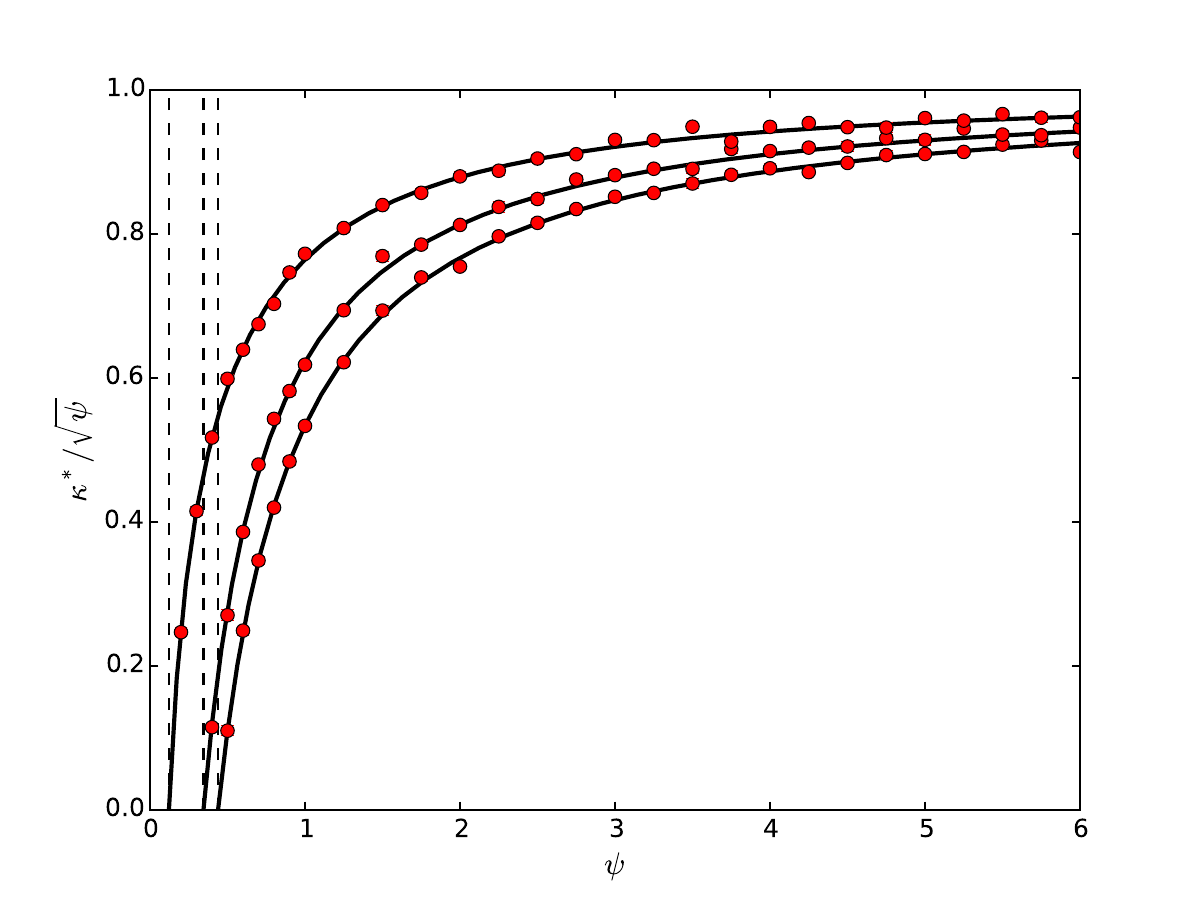}\hspace{-0.5cm}
\includegraphics[width = 0.54\linewidth]{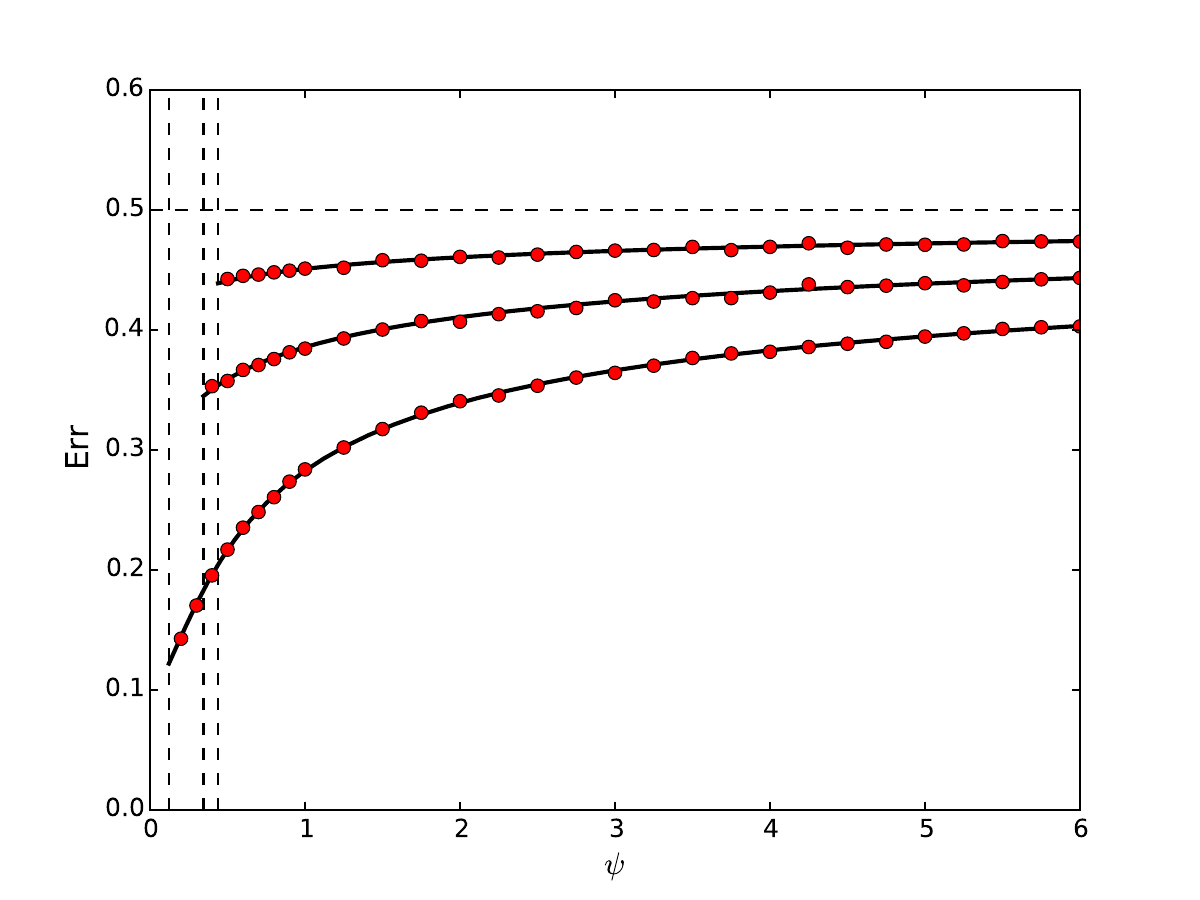}
\caption{Max-margin linear classification for isotropic well specified
  data. Left: maximum margin (scaled by $\sqrt{\psi} = \sqrt{p/n}$). Right: test error. Labels are generated using  the logistic function 
$f(x) = (1+e^{-\beta x})^{-1}$ with $\beta =1, 2, 8$ (from bottom to top on the left, and from top to bottom on the right). 
Vertical dashed lines correspond to the interpolation threshold $\psi\opt$,
and continuous lines to the analytical predictions of Corollary \ref{coro:Isotropic}. Symbols are empirical results for the max-margin 
(left) and prediction error (right).
Here $p=800$ and we vary $n=p/\psi$, averaging results over $20$ instances. Error bars (barely visible) report standard errors on the empirical means of $20$ instances.}
\label{fig:Isotropic}
\end{figure}
We begin by considering the simplest case, namely isotropic covariates
$\bx_i\sim \normal(\bzero, \id_p)$, (i.e $\bSigma_n=\id_p$). In this case, by  rotational invariance, the margin and prediction error do not depend on the vector 
$\btheta_{*,n}$ which has unit norm.
Figure \ref{fig:Isotropic} report the results of numerical experiments with $p=800$ and various values of $n$. We observe that the classification
error decreases as $n$ increase, i.e. as $\psi$ decreases, until it crosses a threshold below which the data is no longer separable.

 In order to state our characterization of the maximum margin and prediction error,
we introduce the function $F_{\kappa}: \R \times \R_+ \to \R_+$ (for $\kappa\in\reals$):
\begin{equation}\label{eq:FkDef}
F_{\kappa}(c_1, c_2) = \left(\E \left[(\kappa - c_1 YG - c_2 Z)_+^2\right]\right)^{1/2}
~~\text{where}~
	\begin{cases}
		Z \perp (Y, G) \, ,\\
		Z \sim \normal(0, 1), G\sim \normal(0, 1)\, , \\
		\P(Y = +1 \mid G) = f(G) \, ,\\
		\P(Y = -1 \mid G) = 1-f(G)\, .
	\end{cases}
\end{equation}
The next corollary is an immediate consequence of our main result, Theorem \ref{theorem:main}.
\begin{corollary}\label{coro:Isotropic}
  Consider the isotropic model, and let $f$ satisfy Assumption \ref{assumption:non-degenerate-f}.
  Then the following hold:
\begin{enumerate}
\item[$(a)$] The maximum margin $\kappa_{n}(\by,\bX)$ converges almost surely to a strictly positive limit if and only if
$\psi>\psi\opt_{\siso}$, where the interpolation threshold is given by
\begin{align}
\psi\opt_{\siso} = \min_{c\ge 0} F_0(c,1)^2\, .
\end{align}
\item[$(b)$] For any $\psi>\psi\opt_{\siso}$ the asymptotic maximum margin is given by $\lim_{n\to\infty}\kappa\opt_{n}(\by,\bX)\to \kappa\opt_{\siso}(\psi)$, 
where 
\begin{align}
\kappa\opt_{\siso}(\psi) =  \inf\Big\{\, \kappa\ge 0 :\;\; F_{\kappa}(c,\sqrt{1-c^2})-\sqrt{\psi(1-c^2)}>0 \;\;\;\;\forall c\in [0,1]\Big\}\, .
\end{align}
\item[$(c)$] The asymptotic prediction error is given by  $\lim_{n\to\infty}\Pred_n(\by,\bX) = \Pred^*_{\siso}(\psi)$, where
\begin{align}
\Pred\opt_{\siso}(\psi)& = \P\Big(c\opt_{\siso}(\psi)YG+\sqrt{1-c\opt_{\siso}(\psi)^2}\,  Z\le 0\Big)\, ,\\
c\opt_{\siso}(\psi) &\equiv \arg\min_{c\in [0,1]}\, \Big\{F_{\kappa=\kappa\opt_{\siso}}(c,\sqrt{1-c^2})-\sqrt{\psi(1-c^2)}\Big\}\, .
\end{align}
(Here expectation is taken with respect to the random variables $(G,Y,Z)$ with joint distribution defined in Eq.~\eqref{eq:FkDef}.)
\end{enumerate}
\end{corollary}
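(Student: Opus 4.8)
The plan is to obtain Corollary \ref{coro:Isotropic} by specializing the general characterization of Theorem \ref{theorem:main} to $\bSigma_n = \id_p$; no new probabilistic argument is needed beyond the rotational invariance of the isotropic Gaussian. First I would check that the isotropic model is admissible for Theorem \ref{theorem:main}. Assumption \ref{assumption:Lambdas} holds with $c = C = 1$. For Assumption \ref{assumption:converge}, I would use the fact that, since $\bx_i \sim \normal(\bzero,\id_p)$ and $\P(y_i = +1 \mid \bx_i) = f(\langle\btheta_{*,n},\bx_i\rangle)$, the laws of $\kappa_n(\by,\bX)$ and of $\Pred_n(\by,\bX)$ are unchanged under $\btheta_{*,n} \mapsto R\btheta_{*,n}$ for any orthogonal $R$ (change variables $\bx_i \mapsto R^\sT\bx_i$, $\btheta \mapsto R^\sT\btheta$ in \eqref{eq:MMdef}). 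Hence we may choose the convenient unit-norm representative $\btheta_{*,n} = p^{-1/2}\bfone$, for which $\lambda_i \equiv 1$, $\rho_n = \|\btheta_{*,n}\|_2 = 1$ and $\bar w_i \equiv 1$, so that $p^{-1}\sum_i \delta_{(\lambda_i,\bar w_i)} = \delta_{(1,1)} =: \mu$, which converges trivially in $W_2$, with $\int w^2\mu = 1$ and $1/\rho^2 = \int (w^2/\lambda)\mu = 1$, i.e. $\rho = 1$. Assumption \ref{assumption:non-degenerate-f} is assumed. So Theorem \ref{theorem:main} applies with the point mass $\mu = \delta_{(1,1)}$ and $\rho = 1$.

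Next I would plug $\mu = \delta_{(1,1)}$ into the formulas of Theorem \ref{theorem:main}. The dependence of the general expressions for $\kappa\opt(\mu,\psi)$, $\psi\opt(\mu)$ and $\Pred\opt(\mu,\psi)$ on the covariance enters only through expectations in the $(\lambda,w)$ variables against $\mu$ (this is the structure of the equivalent Gordon problem). Evaluating these integrals at the atom $\lambda = w = 1$ collapses the general auxiliary functional to exactly the function $F_\kappa(c_1,c_2) = (\E[(\kappa - c_1 YG - c_2 Z)_+^2])^{1/2}$ of \eqref{eq:FkDef}, with the scalar order parameter of the general solution that measures the alignment of $\hbtheta^{\sMM}$ with $\btheta_{*,n}$ playing the role of $c \in [0,1]$ and, because $\rho = 1$ and $\|\btheta\|_2 = 1$, the orthogonal-noise scale forced to $\sqrt{1-c^2}$. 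Concretely: part $(a)$ follows since separability with positive margin is equivalent to feasibility of the Gordon problem for some $\kappa > 0$, and at $\kappa = 0$ this reduces to $\min_{c\ge 0}F_0(c,1)^2 < \psi$, so $\psi\opt_{\siso} = \psi\opt(\delta_{(1,1)}) = \min_{c\ge 0}F_0(c,1)^2$; part $(b)$ is the statement that the asymptotic margin equals the infimum of those $\kappa\ge 0$ for which $F_\kappa(c,\sqrt{1-c^2}) > \sqrt{\psi(1-c^2)}$ holds for all $c \in [0,1]$; and part $(c)$ identifies $c\opt_{\siso}(\psi)$ as the minimizer in that constraint at $\kappa = \kappa\opt_{\siso}$ and reads off $\Pred\opt_{\siso}(\psi) = \P(c\opt_{\siso}(\psi) YG + \sqrt{1-c\opt_{\siso}(\psi)^2}\, Z \le 0)$ from the limiting law of $y^\snew\langle\hbtheta^{\sMM},\bx^\snew\rangle$, which is that of $c\opt_{\siso}(\psi) YG + \sqrt{1-c\opt_{\siso}(\psi)^2}\, Z$ with $(G,Y,Z)$ as in \eqref{eq:FkDef}.

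The only point requiring care — and the main, if shallow, obstacle — is the bookkeeping dictionary between the order parameters of Theorem \ref{theorem:main} in its general (possibly infinite-dimensional) form and the single scalar $c$ here: one must verify that the various Lagrange-type scalars appearing in the general saddle point either drop out or are pinned down when $\mu = \delta_{(1,1)}$, so that the feasibility region is genuinely parametrized by $c \in [0,1]$ alone, and that the normalization $\rho = 1$ makes the coefficient multiplying $YG$ equal to $c$ rather than a rescaled multiple. Attainedness and uniqueness of the infimum defining $\kappa\opt_{\siso}(\psi)$ and of the argmin defining $c\opt_{\siso}(\psi)$ are inherited directly from the corresponding statements in Theorem \ref{theorem:main}, since $\delta_{(1,1)}$ is an admissible limit distribution. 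Everything else is routine substitution.
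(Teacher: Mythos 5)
Your proposal is correct and follows the same route the paper takes: the paper states that Corollary \ref{coro:Isotropic} ``is an immediate consequence'' of Theorem \ref{theorem:main}, i.e.\ exactly the substitution $\mu = \delta_{(1,1)}$, $\rho = 1$ that you make, and your explanation of why the Gordon functional collapses to the one-parameter family over $c\in[0,1]$ (because Lemma \ref{lemma:technical-3} pins the optimizer on the sphere $c_1^2+c_2^2=1$, so that $c_2 = \sqrt{1-c_1^2}$ and $\nu^* = c_1^*$) is the right bookkeeping. The only point you gloss over is that the corollary's formulas restrict to $c\in[0,1]$ whereas the general definitions use $c\in\R$, which is harmless precisely when the minimizer lies at $c\ge 0$; but this is a presentational detail of the paper, not a gap in your derivation.
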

In Figure \ref{fig:IsoBound} we compare the theoretical prediction for the maximum margin and 
test error given in the last corollary with the numerical results: the agreement is excellent. 
Our analytical predictions confirm the observation made above: the error is monotone increasing in  $\psi$, for $\psi>\psi\opt$.
It is possible to show that $\Pred^*_{\siso}(\psi)\to 1/2$ as $\psi\uparrow \infty$, while $\Pred^*_{\siso}(\psi)\to\Pred^*_{\siso}(\psi\opt)$
as $\psi\downarrow\psi\opt$, where (in general) $\Pred^*_{\siso}(\psi\opt)\in (0,1/2)$.

Notice that this behavior is different from the one observed for min-norm least squares \cite{belkin2019two,hastie2022surprises}, under 
isotropic covariates for a similarly well-specified model. In that case, at small signal-to-noise ratio (SNR), the error is monotone decreasing
for $\psi>\psi\opt$, while at high SNR it is monotone decreasing in an interval $\psi\in (\psi\opt,\psi_{\min})$, and increasing for $\psi>\psi_{\min}$. 
This different behavior can be explained, at least in part, by the observation that the square loss is unbounded and diverges (for min-norm
least squares) at the interpolation threshold. Hence it is necessarily decreasing right above that threshold. 
In contrast, since the classification error is bounded,  it can be monotone increasing with the overparametrization ratio $\psi = p/n$.

\begin{figure}[!ht]
\centering
\includegraphics[width = 0.6\linewidth]{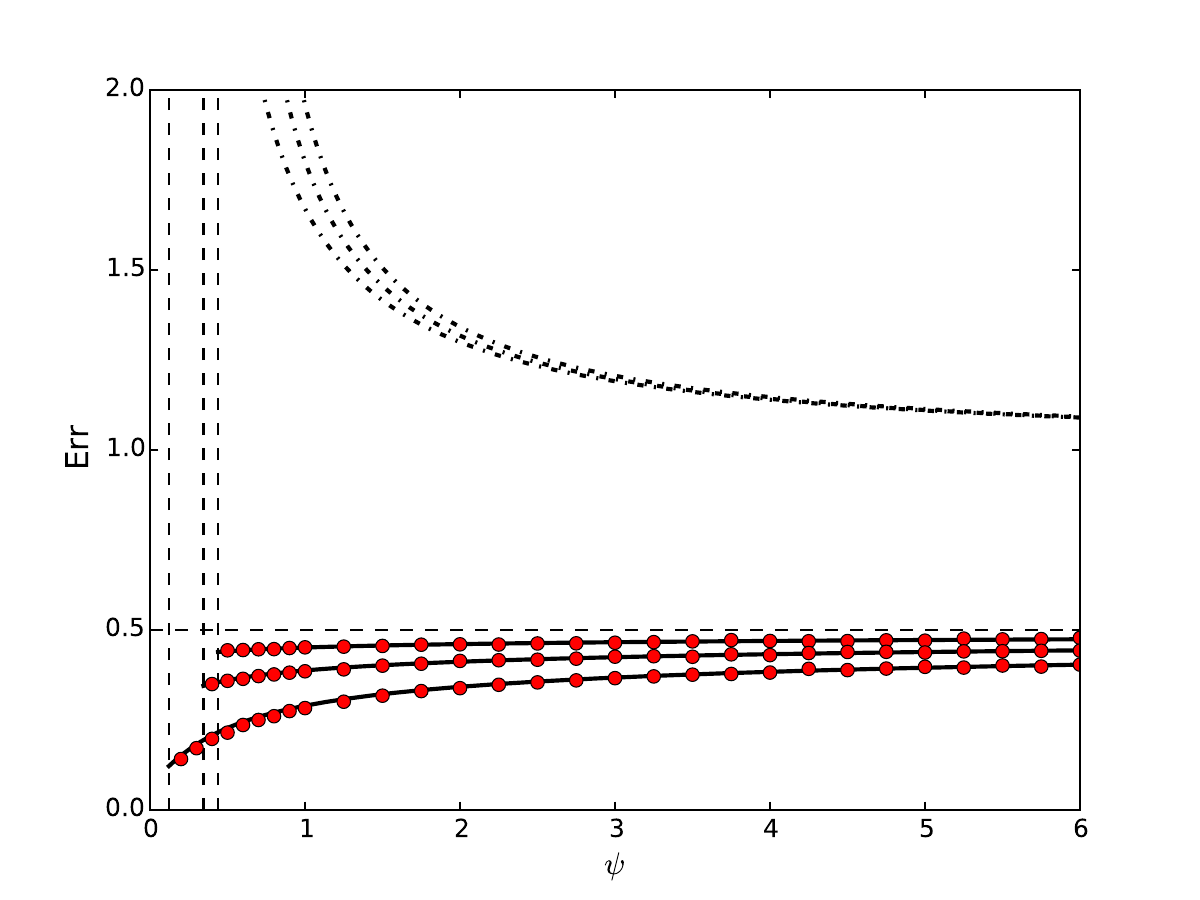}
\caption{Isotropic well-specified model: comparing margin-based bounds and actual test error. 
In the lower part of the plot, analytical predictions (continuous lines) and numerical simulations (circles) for the test error: same data 
as in Figure \ref{fig:Isotropic}. Dot-dashed lines are margin-based estimates  of the test error $\sqrt{\psi}/\kappa\opt(\psi)$ (see text).}
\label{fig:IsoBound}
\end{figure}
The maximum margin $\kappa\opt_{\siso}(\psi)$ is also monotone increasing with $\psi$, with $\kappa\opt_{\siso}(\psi)\downarrow 0$
as $\psi\downarrow \psi\opt$,  and $\kappa\opt_{\siso}(\psi)/\sqrt{\psi}\to 1$ as $\psi\to\infty$. 
Notice that the relation between margin and classification error is somewhat counterintuitive. 
On the basis of classical margin theory \cite{shalev2014understanding}, one would expect that the test error increases when the margin decreases.
The opposite happens in Fig.~\ref{fig:Isotropic}: as $\psi$ decreases both the error and the margin decrease. The explanation to 
this counterintuitive phenomenon is related to the fact that, in the present regime, the margin does not concentrate near its population value 
(and is not separable in this case).

In order to further clarify this phenomenon, in Figure \ref{fig:IsoBound} we compare the actual test error (both numerical simulations, and the predictions of
 Corollary \ref{coro:Isotropic}), with a margin-based bound from \cite[Theorem 26.14]{shalev2014understanding}. The latter implies, with our notations,
\begin{align}
\Pred_n(\by,\bX) \le \frac{4r\sqrt{\psi}}{\kappa\opt(\psi)} +o_n(1)\, ,\;\;\;\;\; r^2\equiv \int x\, \mu(\de x,\de w)\, . \label{eq:MarginBound:1}
\end{align}
Here $r$ is the typical (normalized) radius of the feature vectors, namely the asymptotic value of  $r_n^2 = \E\|\bx_1\|^2/p$
 (in the present case, $r=1$).
Even discarding the factor $4$ (which we do in Figure \ref{fig:IsoBound}), this upper bound has the wrong qualitative dependence
on $\psi$ and is never non-trivial in the present setting (never smaller than 1).

We conclude that the  isotropic well-specified data distribution does not capture the benefits of overparametrization
discussed in the introduction. This is not unexpected: as $p/n$ increases in this setting we are increasing the complexity of the model, but also the complexity of the target function.

\subsection{Isotropic misspecified models}

Assuming the model to be well specified can be unrealistic. In this section we keep considering isotropic covariates, but
introduce a simple misspecification structure to capture the approximation benefits of adding more covariates.

We assume that label $y_i$ depend on a potentially infinitely dimensional feature vector $\bz_i\in\reals^{\infty}$, $\bz_i\sim\normal(0,\id_{\infty})$,
via
\begin{align}
\P\big(y_i=+1\big|\bz_i\big) = f_0(\<\bbeta_{*},\bz_i\>)\, . \label{eq:Misspecified}
\end{align}
Note that this makes mathematical sense as long as $\bbeta_{*}\in
\ell_2$.  Without loss of generality, we can assume $\|\bbeta_{*}\|_2=1$.
We  learn a max-margin classifier over the first $p$ features. Namely,
we write $\bz_i = (\bx_i, \tbz_i)$ where $\bx_i\in\reals^p$ contains the first $p$ coordinates of $\bz_i$, and $\tbz_i$ contains the other coordinates. We then apply max-margin
classification to data $\{(y_i,\bx_i)\}_{i\le n}$.   Given a vector $\bv\in\reals^{\infty}$, we write $\proj_{\le \ell}\bv$ for the $\ell$-dimensional vector formed by the
 first $\ell$ entries of $\bv$.

\begin{figure}[!ht]
\phantom{A}\hspace{-0.5cm}\includegraphics[width = 0.54\linewidth]{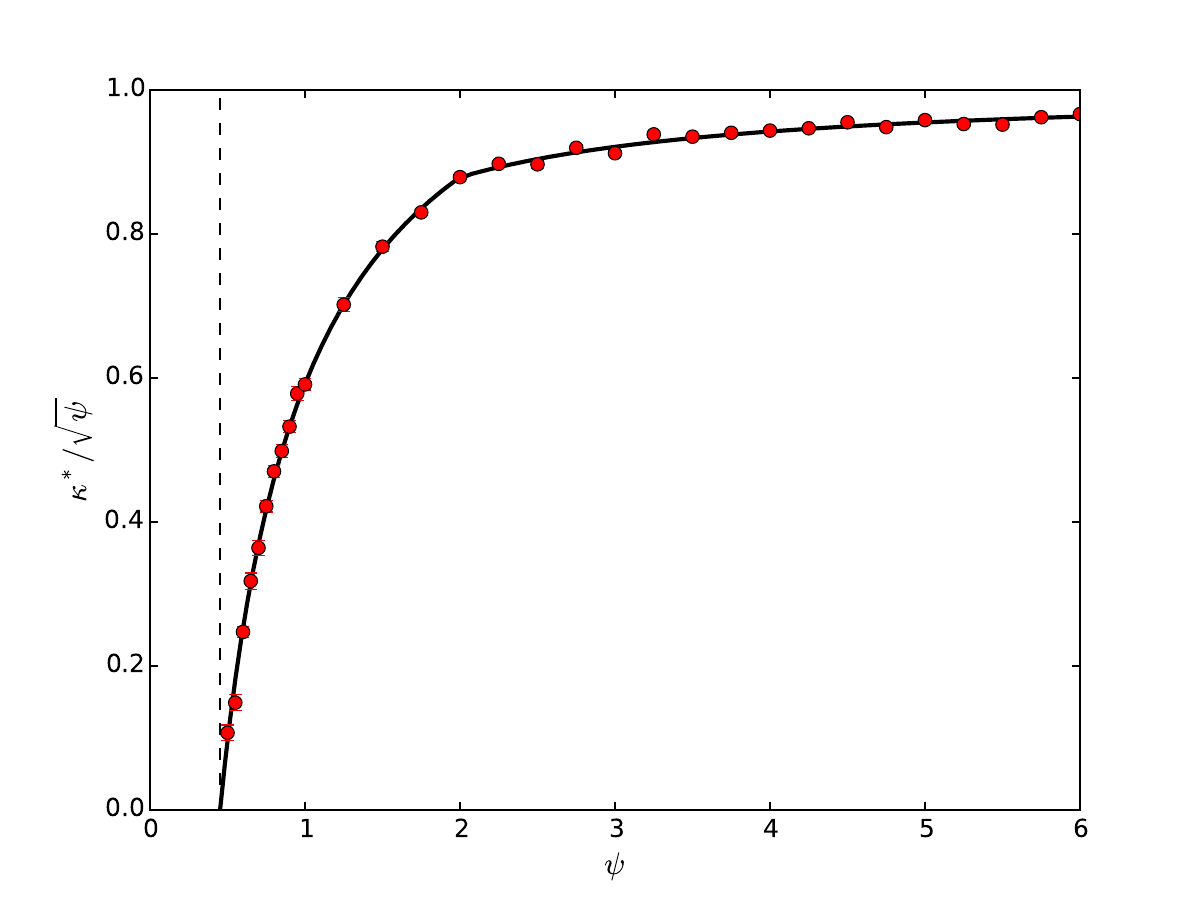}\hspace{-0.5cm}
\includegraphics[width = 0.54\linewidth]{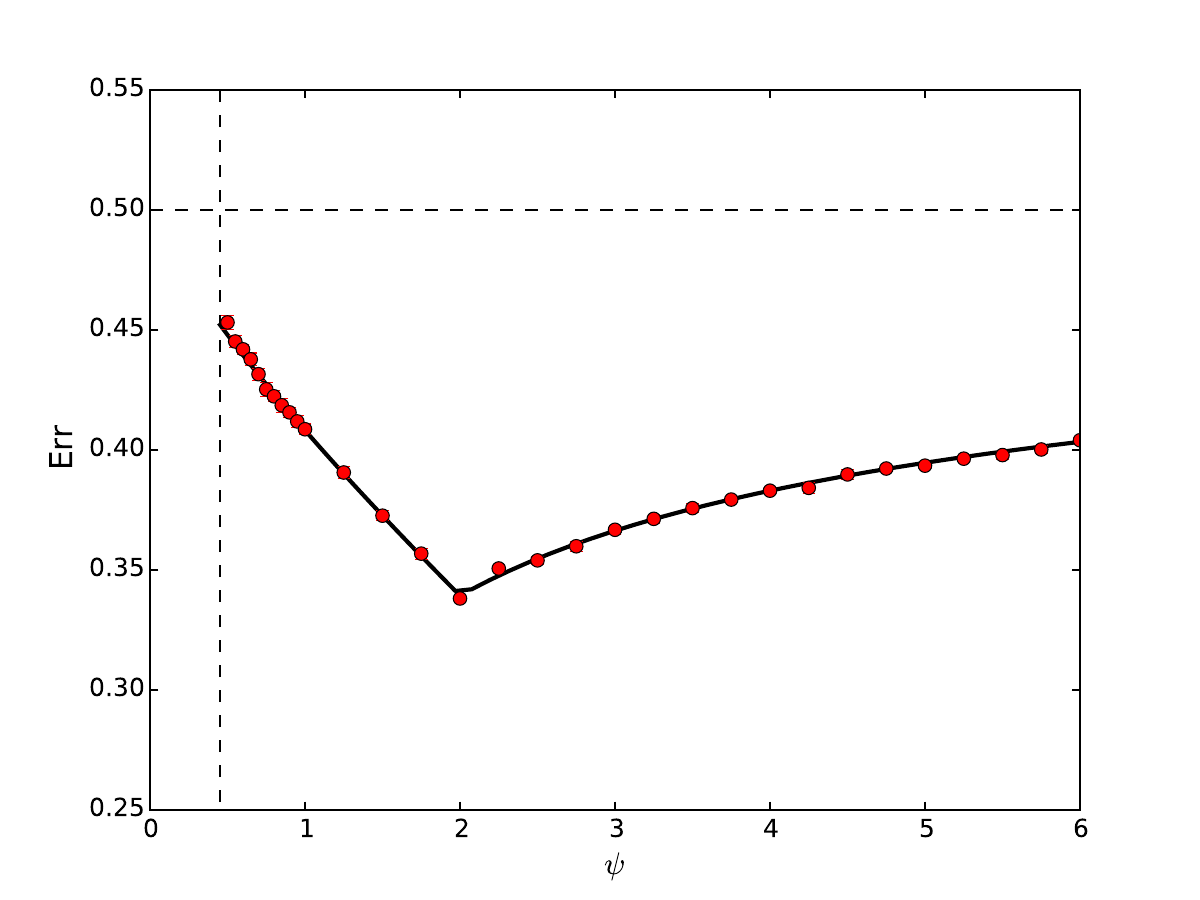}
\caption{Isotropic miss-specified model. Left: maximum margin (scaled by $\sqrt{\psi} = \sqrt{p/n}$). Right: test error. 
Labels are generated according to Eq.~\eqref{eq:Misspecified}, with $f_0(x) = (1+e^{-\beta x})^{-1}$, with $\beta=8$.
The vertical dashed line corresponds to the interpolation threshold $\psi\opt_{\smiss}$, and the continuous line to the analytical prediction from Corollary 
\ref{coro:Misspecified}.  Symbols are empirical results for the prediction error,
for $n=400$, $p_0=800$ and varying $p=n\psi$, averaged over $20$ instances. Error bars (barely visible) report standard errors on the empirical means of $20$ instances.}\label{fig:mis}
\end{figure}
This setting can be reduced to the one in the previous section (isotropic well-specified model), whereby  labels are assigned according
to Eq.~\eqref{eq:LabelProbability}, with
$f(x) = \E\{f_0(\sqrt{\gamma_n} x+ \sqrt{1-\gamma_n}G')\}$ with $G'\sim\normal(0,1)$ and $\gamma_n = \|\proj_{\le p}\bbeta_{*,n}\|_2$.
Further $\btheta_{*,n} = \proj_{\le p}\bbeta_{*,n}/\gamma_n$.
 In words, those
features that are not included in the model, and that correspond to non-zero entries in $\bbeta_{*,n}$, act as additional noise in the labels.
As more and more features are added to the model, the corresponding noise variance $(1-\gamma_n)$ decreases.

In order to  state the asymptotic characterization of the max margin classifier, we modify the function  of Eq.~\eqref{eq:FkDef}
as follows
\begin{equation}\label{eq:FkDefMiss}
F_{\kappa,\gamma}(c_1, c_2) = \left(\E \left[(\kappa - c_1 Y_{\gamma}G - c_2 Z)_+^2\right]\right)^{1/2}
~~\text{where}~
	\begin{cases}
		Z \perp (Y, G)\, ,\\
		\P(Y_{\gamma} = +1 \mid G) = \E [f_0(\sqrt{\gamma}G+\sqrt{1-\gamma} G')|G] \, ,\\
		\P(Y_{\gamma} = -1 \mid G) = 1-\E [f_0(\sqrt{\gamma}G+\sqrt{1-\gamma} G')|G] \, ,\\
                Z , G, G'\sim_{iid} \normal(0, 1)\, .
	\end{cases}
\end{equation}
\begin{corollary}\label{coro:Misspecified}
Consider the misspecified isotropic model, and let $f_0$ satisfy Assumption \ref{assumption:non-degenerate-f}. 
Further assume $\bbeta_{*,n}\in\reals^{\infty}$ to be such that $\|\bbeta_{*,n}\|_2=1$ and $\|\proj_{\le p(n)}\bbeta_{*,n}\|^2_2\to \gamma(\psi)$
as $n\to\infty$.
For  any $\psi>0$,  define
\begin{align}
\kappa\opt_{\smiss}(\psi) =  \inf\Big\{\, \kappa\ge 0 :\;\; F_{\kappa,\gamma(\psi)}(c,\sqrt{1-c^2})-\sqrt{\psi(1-c^2)}>0 \;\;\;\; \forall c\in [0,1]\Big\}\, .
\end{align}
\begin{enumerate}
\item[$(a)$] The maximum margin $\kappa\opt_{n}(\by,\bX)$ converges almost surely to a strictly positive limit if and only if
$\psi>\psi\opt_{\smiss}\equiv \inf\{\psi>0:\; \kappa\opt_{\smiss}(\psi) >0\}$.
\item[$(b)$] For $\psi>\psi\opt_{\smiss}$,
the asymptotic max-margin is given by $\lim_{n\to\infty}\kappa\opt_{n}(\by,\bX)\to \kappa\opt_{\smiss}(\psi)$.
\item[$(c)$] The asymptotic prediction error is given by  $\lim_{n\to\infty}\Pred_n(\by,\bX) = \Pred\opt_{\smiss}(\psi)$, where
\begin{align}
\Pred\opt_{\smiss}(\psi)& = \P\Big(c\opt_{\smiss}(\psi)Y_{\gamma(\psi)}G+\sqrt{1-c\opt_{\smiss}(\psi)^2}\,  Z\le 0\Big)\, ,\\
c\opt_{\smiss}(\psi) &\equiv \arg\min_{c\in [0,1]}\, \Big\{F_{\kappa=\kappa\opt_{\smiss}(\psi),\gamma(\psi)}(c,\sqrt{1-c^2})-\sqrt{\psi(1-c^2)}\Big\}\, .
\end{align}
\end{enumerate}
\end{corollary}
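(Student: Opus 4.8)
The plan is to deduce the corollary from the isotropic well‑specified result, Corollary~\ref{coro:Isotropic} (hence ultimately from Theorem~\ref{theorem:main}), by the reduction already indicated in the text, and then to control the one nontrivial point: that the asymptotic characterization survives the mild $n$‑dependence this reduction introduces. Concretely, I would first write $\proj_{\le p}\bbeta_{*,n} = \sqrt{\gamma_n}\,\btheta_{*,n}$ with $\gamma_n \equiv \|\proj_{\le p}\bbeta_{*,n}\|_2^2$ and $\|\btheta_{*,n}\|_2 = 1$ (the degenerate case $\gamma_n = 0$, where the fitted features carry no signal, being trivial). Since $\bz_i \sim \normal(\bzero,\id_{\infty})$, conditionally on $\bx_i = \proj_{\le p}\bz_i$ the increment $\<\proj_{>p}\bbeta_{*,n},\proj_{>p}\bz_i\>$ is $\normal(0,1-\gamma_n)$ and independent of $\bx_i$, so marginalizing the unobserved coordinates yields
\begin{align*}
\P\big(y_i = +1 \,\big|\, \bx_i\big) = \E_{G'}\!\left[ f_0\!\left(\sqrt{\gamma_n}\,\<\btheta_{*,n},\bx_i\> + \sqrt{1-\gamma_n}\,G'\right)\right] =: f_n\!\left(\<\btheta_{*,n},\bx_i\>\right), \qquad G' \sim \normal(0,1),
\end{align*}
and the identical computation applies to the fresh pair $(y^{\snew},\bx^{\snew})$. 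Hence the joint law of $(\by,\bX,y^{\snew},\bx^{\snew})$ coincides with that of the isotropic model of Eq.~\eqref{eq:LabelProbability} with $\bSigma_n = \id_p$, unit‑norm signal $\btheta_{*,n}$, and label function $f_n$; since $\kappa_n$, $\hbtheta^{\sMM}$ and $\Pred_n$ are functions of those variables only, it suffices to establish (a)--(c) in that model, which satisfies all hypotheses of Corollary~\ref{coro:Isotropic} except that $f_n$ now varies with $n$.

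Next I would pass to the limit in the label function. Because $\gamma_n \to \gamma \equiv \gamma(\psi)$ and $f_0$ is bounded and continuous, $f_n \to f_\infty$ uniformly on $\reals$, where $f_\infty(x) = \E_{G'}[f_0(\sqrt{\gamma}\,x + \sqrt{1-\gamma}\,G')]$; one also checks that $f_n$ and $f_\infty$ are continuous and inherit the non‑degeneracy required in Assumption~\ref{assumption:non-degenerate-f} from $f_0$ (immediate when $\gamma<1$, since then $f_\infty$ is valued in $(0,1)$ unless $f_0$ is constant). I would then argue that the conclusions of Theorem~\ref{theorem:main}, and hence of Corollary~\ref{coro:Isotropic}, are unaffected by replacing the fixed label function with the convergent sequence $f_n$: in the proof, $f$ enters only through empirical averages of functions of $\big(y_i\<\btheta_{*,n},\bx_i\>\big)_{i\le n}$ together with an independent Gaussian, and by the law of large numbers and $\sup_x|f_n(x)-f_\infty(x)|\to 0$ these concentrate on exactly the limits one gets from $f_\infty$. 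With this in hand Corollary~\ref{coro:Isotropic} applies with $f = f_\infty$; since $F_{\kappa,\gamma}$ of Eq.~\eqref{eq:FkDefMiss} is precisely $F_\kappa$ of Eq.~\eqref{eq:FkDef} for this $f_\infty$ (its random variable being the $Y_\gamma$ of \eqref{eq:FkDefMiss}), the formulas for $\kappa\opt_{\smiss}(\psi)$, $c\opt_{\smiss}(\psi)$ and $\Pred\opt_{\smiss}(\psi)$ are literally those of Corollary~\ref{coro:Isotropic} after this substitution, which gives (b) and (c). For the threshold form of (a) I would additionally record that $\psi \mapsto \kappa\opt_{\smiss}(\psi)$ is non‑decreasing — checked from the variational formula, using that raising $\psi$ both increases the overparametrization and lowers the noise level $1-\gamma(\psi)$ — so that $\{\psi : \kappa\opt_{\smiss}(\psi) > 0\} = (\psi\opt_{\smiss},\infty)$ and the per‑$\psi$ dichotomy ``separable with positive margin iff $\kappa\opt_{\smiss}(\psi)>0$'' becomes the stated threshold statement.

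I expect the reduction itself to be routine; the real obstacle is the stability claim of the second step — that the characterization of Theorem~\ref{theorem:main} is continuous under a uniformly convergent perturbation of the label function. I would attack this either by carrying the convergence $f_n \to f_\infty$ explicitly through the Gordon comparison and the subsequent scalarization (verifying that $f$ only ever appears inside empirical averages that concentrate uniformly in the relevant parameters), or, treating Corollary~\ref{coro:Isotropic} as a black box, by proving joint continuity of $(\psi,f)\mapsto(\psi\opt,\kappa\opt,\Pred\opt)$ in the uniform topology on $f$ and sandwiching $f_n$ between small perturbations of $f_\infty$. A minor secondary point is the monotonicity of $\psi\mapsto\kappa\opt_{\smiss}(\psi)$ underlying the clean threshold form of (a).
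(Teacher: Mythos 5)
Your reduction to the isotropic well-specified setting is exactly the one the paper has in mind (the text records $f(x) = \E\{f_0(\sqrt{\gamma_n}\,x + \sqrt{1-\gamma_n}\,G')\}$ and then states the corollary without further proof), and you have correctly isolated the real content, which the paper glosses over: after the reduction the label function $f_n$ depends on $n$ through $\gamma_n$, whereas Theorem~\ref{theorem:main} and Corollary~\ref{coro:Isotropic} are stated for a fixed $f$.

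The one concrete flaw in your proposal is the assertion that $f_n\to f_\infty$ \emph{uniformly} on $\reals$. For $\gamma<1$ write $f_n(x)=(f_0*\phi_{\sigma_n})(\sqrt{\gamma_n}\,x)$ with $\sigma_n^2=1-\gamma_n$ and $\phi_\sigma$ the $\normal(0,\sigma^2)$ density; then $f_n(x)-f_\infty(x)$ contains the piece $(f_0*\phi_{\sigma})(\sqrt{\gamma_n}\,x)-(f_0*\phi_{\sigma})(\sqrt{\gamma}\,x)$, which for non-constant $f_0$ is of order $|\sqrt{\gamma_n}-\sqrt{\gamma}|\,|x|$, hence $o(1)$ only locally uniformly. (For $\gamma=1$ matters are worse still unless $f_0$ is itself uniformly continuous.) So your second proposed route, which treats Corollary~\ref{coro:Isotropic} as a black box and invokes joint continuity of $(\psi,f)\mapsto(\psi\opt,\kappa\opt,\Pred\opt)$ in the sup-norm on $f$, cannot be carried out as stated.

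Fortunately uniform convergence of $f_n$ is not what the argument needs, and your first route is both correct and already set up by the paper. The label law enters the Gordon machinery only in the reduction from $\xi^{(1)}$ to $\xi^{(2)}$ (Lemma~\ref{lemma:xi-1-xi-2}), where the paper already copes with an $n$-dependent conditional law $\P(y=1\mid u)=f(\rho_n u)$, $\rho_n\to\rho$. In Appendix~\ref{sec:lemma:xi-1-xi-2} the convergence of $\bar g_{n,\psi,\kappa}^{(1)}(\nu,q)$ to $g_{\psi,\kappa}^{(2)}(\nu,q)$ is obtained pointwise in $(\nu,q)$ by dominated convergence, using only that the conditional law of $y$ given $u$ converges pointwise and is bounded by $1$, and is then upgraded to uniform convergence on the relevant compact set of $(\nu,q)$ via a $3\psi^{-1/2}$ Lipschitz (equicontinuity) bound that holds regardless of the label distribution, followed by Arzelà--Ascoli. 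Replacing $f(\rho_n u)$ by $f_n(u)=\E[f_0(\sqrt{\gamma_n}\,u+\sqrt{1-\gamma_n}\,G')]$ changes nothing in that argument: $f_n\to f_\infty$ pointwise with $|f_n|\le1$ is all that is used. This is the route to take; it requires essentially no new work once you recognize the paper already handles an analogous $n$-dependence through $\rho_n$.

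Two secondary comments. Your verification of Assumption~\ref{assumption:non-degenerate-f} for $f_\infty$ is fine, but the ``$(0,1)$-valued'' remark only applies when $\gamma<1$; when $\gamma(\psi)=1$ (as happens for $\psi\ge\psi_0$ in the paper's example) you have $f_\infty=f_0$ and simply invoke the hypothesis on $f_0$. And you rightly flag that the threshold form of part~(a) requires $\{\psi:\kappa\opt_{\smiss}(\psi)>0\}$ to be an interval, which is not automatic: increasing $\psi$ raises $\sqrt{\psi(1-c^2)}$ but also changes the law of $Y_{\gamma(\psi)}$, whose effect on $F_{\kappa,\gamma}$ is not uniformly signed. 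What rescues the statement is that the per-$\psi$ interpolation threshold (the quantity $\min_c F_0(c,1)^2$ of Eq.~\eqref{eq:Psistar0}, evaluated with the label law at that $\gamma$) is nonincreasing in the signal fraction $\gamma$, and that $\gamma(\psi)$ is nondecreasing --- automatic when $\bbeta_{*,n}$ has a limit in $\ell_2$, since $\|\proj_{\le p}\bbeta_*\|_2^2$ is nondecreasing in $p$ --- so that $\psi$ minus its own threshold is strictly increasing and crosses zero at most once. This step deserves to be written out rather than asserted.
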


Note that the misspecified model has in important conceptual advantage over the well specified one:
the data distribution \eqref{eq:Misspecified} is independent of the number of features 

In Figure \ref{fig:mis} we consider a misspecified problem in which $\bbeta_{*,n}$ puts equal asymptotically 
weight over the first $p_0$ features, where $p_0/n\to \psi_0\in(0,\infty)$. Explicitly, we assume
$\beta_{*,i}\in \{+1/\sqrt{p_0},-1/\sqrt{p_0}\}$ for $i\le p_0$, and $\beta_{*,i}=0$ for $i>p_0$.
This results in  $\gamma(\psi) = \psi/\psi_0$ if $\psi\le \psi_0$ and $\gamma(\psi) = 1$ for $\psi>\psi_0$. 
Note that the same limiting function $\gamma(\psi)$ is obtained for other choices of the vector $\bbeta_*,n$. For instance, if $\bbeta_*$ is a uniformly random vector drawn
 independent for each $n$, with unit norm and support on $\{1,\dots,p_0\}$, the assumptions of Corollary \ref{coro:Misspecified} are satisfied again,
with $\gamma(\psi) = \min(\psi/\psi_0,1)$ as before.

In this example the test error decreases in the overparametrized regime for $\psi\opt_{\smiss}<\psi<\psi_0$, and then increases again for $\psi_0<\psi$.
As explained above, adding more features reduces the approximation error, and hence results in smaller test error.
A similar behavior was observed in \cite{hastie2022surprises} for the case of min-norm least squares regression. 
Notice that the maximum margin is monotone increasing in $\psi$, with $\kappa\opt(\psi)/\sqrt{\psi}<1$.
Further, since $\kappa\opt(\psi)/\sqrt{\psi}<1$,
the classical margin-based bound of Eq.~\eqref{eq:MarginBound:1} is always larger than one.
As for the well-specified model, the margin  does not seem to capture the behavior of the actual test error. 

While the present misspecified data distribution is richer than the well specified distribution of the previous section,
it seems too simplistic to capture the benefits of overparametrization in modern machine learning. In particular,
the optimum overparametrization ratio $\psi$ is bounded.

 \bibliographystyle{amsalpha}
\bibliography{all-bibliography}


\end{document}